\documentclass[12pt,reqno,a4paper]{amsart}
\newcommand\version{June 15th, 2026}


\usepackage{amsmath,amsfonts,amsthm,amssymb,amsxtra,enumerate,mathtools,mathabx}
\usepackage[normalem]{ulem}
\usepackage[utf8]{inputenc}
\usepackage[T1]{fontenc}
\usepackage{lmodern}
\usepackage{bbm}
\usepackage{mathrsfs}
\usepackage{enumerate}
\usepackage{comment}
\usepackage{placeins}
\usepackage{pgfplots}
\pgfplotsset{compat=newest}
\usepgfplotslibrary{fillbetween}
\usepackage{mathabx}
\usepackage{float}

\date{\version}

\setlength{\marginparwidth}{0.8in}




\setlength{\voffset}{-.7truein}
\setlength{\textheight}{8.8truein}
\setlength{\textwidth}{6.05truein}
\setlength{\hoffset}{-.7truein}


\newtheorem{theorem}{Theorem}[section]

\newtheorem{lemma}[theorem]{Lemma}
\newtheorem{corollary}[theorem]{Corollary}
\theoremstyle{definition}

\theoremstyle{remark}

\newtheorem{remark}[theorem]{Remark}


\numberwithin{equation}{section}


\newcommand{\1}{\mathbbm{1}}

\newcommand{\C}{\mathbb{C}}

\newcommand{\dd}{\, \mathrm{d}}

\renewcommand{\epsilon}{\varepsilon}
\newcommand{\e}{\mathrm{e}}
\newcommand{\ii}{\mathrm{i}}

\newcommand{\F}{\mathcal{F}}

\newcommand{\N}{\mathbb{N}}
\newcommand{\norm}[2][]{{\left\|#2\right\|_{#1}}} 
\newcommand{\abs}[2][]{{\left\vert#2\right\vert}} 

\renewcommand{\phi}{\varphi}
\newcommand{\R}{\mathbb{R}}

\newcommand{\sclp}[2][]{{\left\langle#2\right\rangle} _{#1}}

\newcommand{\Z}{\mathbb{Z}}

\newcommand{\Chat}{\widehat{\C}}

\newcommand{\pot}[1][q]{#1C_+\overline{#1}}
\newcommand{\ov}[1]{\overline{#1}}
\newcommand{\mn}{c} 

\DeclareMathOperator{\dom}{dom}
\DeclareMathOperator{\im}{Im}
\DeclareMathOperator{\ran}{ran}
\DeclareMathOperator{\re}{Re}

\DeclareMathOperator{\Tr}{Tr}


\newcommand{\CMop}[1][q]{L_{#1}}
\DeclareMathOperator{\ee}{\mathbf{\e}}

\let\oldtocsection=\tocsection
\let\oldtocsubsection=\tocsubsection
\let\oldtocsubsubsection=\tocsubsubsection
\renewcommand{\tocsection}[2]{\hspace{0em}\oldtocsection{#1}{#2}}
\renewcommand{\tocsubsection}[2]{\hspace{0.75cm}\oldtocsubsection{#1}{#2}}
\renewcommand{\tocsubsubsection}[2]{\hspace{2em}\oldtocsubsubsection{#1}{#2}}

\raggedbottom
\usepackage{hyperref}
\begin{document}
	
	\title[Jost solutions and direct scattering for the Calogero--Moser equation
	]
	{Jost solutions and direct scattering\\ for the continuum Calogero--Moser equation}

    \author[Rupert L. Frank]{Rupert L. Frank}
    \address[Rupert L. Frank]{Mathematisches Institut, Ludwig-Maximilans Universit\"at M\"unchen, Theresienstr.~39, 80333 M\"unchen, Germany, and Munich Center for Quantum Science and Technology, Schellingstr.~4, 80799 M\"unchen, Germany, and Mathematics 253-37, Caltech, Pasadena, CA 91125, USA}
	\email{r.frank@lmu.de}
 
    \author[Larry Read]{Larry Read}
	\address[Larry Read]{Laboratoire de mathématiques d’Orsay, CNRS, Université Paris-Saclay, 91405 Orsay, France}
	\email{larry.read@universite-paris-saclay.fr}
	\subjclass[2010]{Primary: 35P25; Secondary: 37K15, 35A22}

    


\begin{abstract}
     We propose an inverse scattering transform for the continuum Calogero--Moser equation. We give a rigorous treatment of the direct scattering problem by constructing the associated Jost solutions and introducing a distorted Fourier transform, as well as deriving trace formulas for the eigenvalues of the Lax operator.
 \end{abstract}
 
\maketitle
\setcounter{tocdepth}{1}
\tableofcontents

\phantomsection
\addcontentsline{toc}{part}{Overview}{}

\section{Introduction}  

Our objectives in this paper are to:
\begin{itemize}
    \item develop a direct scattering theory,
    \item lay the foundations of an inverse scattering theory, and
    \item derive trace formulas
\end{itemize}
for the operator
\begin{align*}
    L_q=-\ii \partial_x-\pot \quad \text{in}\ L^2_+(\R) \,.
\end{align*}
Here $L^2_+(\R)$ denotes the Hardy space on the real line, that is, the subspace of $L^2(\R)$ consisting of functions whose Fourier transforms are supported in $[0,\infty)$. The operator $C_+$ is the Cauchy--Szeg\H{o} projector, that is, the orthogonal projection from $L^2(\R)$ to $L^2_+(\R)$. The function $q$ is assumed to belong to $L^2_+(\R)$ and we do not distinguish in our notation between $q$ and the operator of multiplication by $q$.

Our interest in the operator $L_q$ comes from the continuum Calogero--Moser equation (aka Calogero--Moser derivative nonlinear Schr\"odinger equation), given by
\begin{equation}\label{eqn:CMeqn}
    \ii \partial_t q=-\partial_x^2 q+ 2\ii qC_+ \partial_x \left(\abs{q}^2\right).
\end{equation}
This equation was first introduced in \cite{abanov2009integrable} as a continuum limit of classical Calogero--Moser systems, which describe particles interacting pairwise through an inverse square potential. 

Equation \eqref{eqn:CMeqn} is completely integrable. Its detailed analysis was initiated by G\'erard and Lenzmann in \cite{gerard_calogero--moser_2023}. They described the Lax structure, constructed soliton and multi-soliton solutions, and established local well-posedness in $H^n_+(\R)\coloneq C_+H^n(\R)$ for integer $n\geq 1$, and global well-posedness in the same class under the mass constraint $\norm{q}_2^2\leq 2\pi$. 

Subsequently, in \cite{killip_scaling_2025} Killip, Laurens and Visan proved global well-posedness of \eqref{eqn:CMeqn} in the scaling critical space $L^2_+(\R)$, assuming the mass constraint $\norm{q}_2^2 < 2\pi$. This threshold is optimal \cite{hogan2024turbulent,kim25_construction}.

In the last few years, there has been a rather large number of papers devoted to the study of \eqref{eqn:CMeqn}, among others \cite{badreddine24_ontheglobal,badreddine25_traveling,badreddine24_zerodispersion,hogan2024turbulent,jeong24_quantized,kim25_construction,kim2024soliton,chen2025traveling,chen25_thedefocusing}.

The linear operator $L_q$ that we will study in this paper appears as the Lax operator corresponding to \eqref{eqn:CMeqn}. It plays a crucial role in \cite{gerard_calogero--moser_2023,killip_scaling_2025}.

\medskip

When saying that in this paper we develop a \emph{direct scattering theory} for this operator we mean, among other things, that we introduce `generalised eigenfunctions of the continuous spectrum' of the operator $L_q$. These are the analogues of the Jost functions that appear in the spectral analysis of the one-dimensional Schr\"odinger operator, which is the Lax operator corresponding to the KdV equation. Similar analogues of Jost functions also appear for Lax operators in structurally related equations, like the cubic nonlinear Schr\"odinger equation and the Benjamin--Ono equation. Using these generalised eigenfunctions together with the usual eigenfunctions, we construct a `distorted Fourier transform', which is a surjective partial isometry in $L^2_+(\R)$ that diagonalises the absolutely continuous part of $L_q$.

Besides these generalised eigenfunctions, we also introduce two scattering coefficients $\beta$ and $\Gamma$, defined on the continuous spectrum of $L_q$. The function $\Gamma$ is unimodular and represents the scattering matrix that connects one family of generalised eigenfunctions to another one. The coefficient $\beta$ is a characteristic of the continuous spectrum of $L_q$, on which we comment further below.

In addition to these data of the continuous spectrum, we also consider the eigenvalues $\lambda_j$ and certain corresponding constants $\gamma_j$ that we introduce. It is known \cite{gerard_calogero--moser_2023} that the eigenvalues may be negative or embedded in the continuous spectrum, but they are always simple and finite in number.

We propose that the function $\beta$ together with the finite sequences $\{\lambda_j\}_j$ and $\{\gamma_j\}_j$ constitute a complete set of spectral data of the operator $L_q$.

We argue formally that these spectral data evolve in a very simple way under the flow of the continuuum Calogero--Moser equation \eqref{eqn:CMeqn}. Indeed, we have
$$
\lambda_j(t) = \lambda_j(0) \,,
\qquad
\gamma_j(t) = -2\lambda_jt + \gamma_j(0) \,,
\qquad
\beta(\lambda,t) = \e^{-i\lambda^2 t} \beta(\lambda,0) \,.
$$
This argument is, for the moment, only formal, since it is not yet clear that the function $q(t)$ belongs to the class of functions for which we can develop our direct scattering theory. 

In any case, this simple evolution of the spectral data offers the possibility of solving the continuum Calogero--Moser equation \eqref{eqn:CMeqn} by inverse scattering theory, at least for a certain class of initial data. Needless to say, the solutions of other, structurally similar equations by inverse scattering theory is classical by now; see the foundational papers \cite{ggkm,lax,zaharov_faddeev,zakharov_shabat,akns,fokas_inverse_1983} and the textbooks \cite{ablowitz_segur,faddeev_takhtajan,ablowitz_clarkson} for a guide to the enormous literature.

\medskip

When saying that in this paper we lay the foundations of an \emph{inverse scattering theory} we mean, among other things, that we prove two formulas for reconstructing $q$ from the spectral data of $L_q$. The first formula, \eqref{eqn:qreconstruction}, involves the eigenfunctions as well as their generalised analogues, together with the scattering coefficient $\beta$. The second formula, \eqref{eqn:qreconstruct2}, involves a secondary family of Jost-ish functions, which is different, but related to the family of generalised eigenfunctions. This secondary family of functions has the advantage of being analytic off the spectrum of $L_q$, and recovering this family from the spectral data $\beta$, $\{\lambda_j\}_j$, $\{\beta_j\}_j$ leads to a certain nonlocal Hilbert--Riemann problem. The analysis of this problem remains open.

\medskip

When saying that in this paper we derive \emph{trace formulas}, we mean that we prove a sequence of identities that relate quantities coming from the discrete spectrum of $L_q$ (namely the eigenvalues $\lambda_j$), from its continuous spectrum (namely the scattering coefficient $\beta$) and from the coefficients of the operator (namely the function $q$). In the context of the Schr\"odinger operator, which is the Lax operator for the KdV equation, such identities go back to Zaharov and Faddeev \cite{zaharov_faddeev} and are well known. They have also been shown for the Lax operators of other integrable equations including the cubic nonlinear Schr\"odinger equation \cite{flaschka_newell} (see also \cite[Section I.7]{faddeev_takhtajan}) and the Benjamin--Ono equation \cite{kaup_inverse_1998}. 

Trace formulas have been useful both in linear and nonlinear problems. For a review concerning the linear case we refer to \cite{killip07_spectral}. Their usefulness in nonlinear problems comes from the fact that all three contributions to the trace formulas correspond to quantities that are conserved under the corresponding nonlinear flow. For recent applications of this idea, see, for instance, \cite{rybkin,killip18_low,koch_tataru}.

In the context of the continuum Calogero--Moser equation the conservation of the quantities coming from the discrete spectrum and the coefficients of the operator is known \cite{gerard_calogero--moser_2023}, but the definition of a contribution from the continuous spectrum and its conservation seem to be new. In particular, this quantity yields the `missing term' in the eigenvalue estimate of G\'erard and Lenzmann \cite{gerard_calogero--moser_2023}, see \eqref{eqn:Nfinbound} and Theorem \ref{thm:traceformula1}.

\medskip

This concludes our brief overview of our results in this paper. We describe them in more detail in Section \ref{sec:invspectrans} below. 

\medskip

Part of our work is motivated by similar results on the Benjamin--Ono equation. An inverse scattering transform for this equation was first proposed by Ablowitz and Fokas \cite{fokas_inverse_1983}, and later made rigorous by Coifman and Wickerhauser \cite{coifman_1990} for small initial data in a weighted $L^1$ class. Wu \cite{wu_simplicity_2016,wu_jost_2017} extended the direct problem to much larger weighted spaces without size restrictions; however, the corresponding inverse problem for such data remains unresolved, as far as we know.

Notably, the Lax operators associated with both the Benjamin–Ono and continuum Calogero--Moser systems are relatively compact perturbations of the same first-order differential operator acting on the same Hardy space. This structural similarity allows us in some places to argue analogously to Wu \cite{wu_simplicity_2016,wu_jost_2017}. We stress however that this analogy is more on a conceptual level, while on a technical level there are several subtle, but crucial differences. Among those is the existence of embedded eigenvalues in our case and the resulting difficulties when expanding the Jost functions and scattering coefficients thereabout. Other differences stem from the different form of the free resolvent kernel, which leads to better behaviour for small energies (in particular, in our case there are no zero-energy resonances) but worse behaviour at large energies. A technical difference is that Wu proves the existence of Jost(-ish) functions by solving an equation in a weighted $L^\infty$-space, while we solve it in $L^2$. This allows us to work with more modest assumptions on the potential.

Moreover, we spend the first part of our paper constructing the distorted Fourier transform. We hope that this will be useful for studying the nonlinear problem \eqref{eqn:CMeqn}; see for example how its analogue was applied recently for the Benjamin--Ono equation in \cite{gerardmiller}. We also prove the absence of singular continuous spectrum and derive trace formulas, which are topics that are not covered by Wu.


\section{Description of results} \label{sec:invspectrans}

Let $q\in L^2_+(\R)$. Then, as shown in \cite[Lemma 2.1]{killip_sharp_2023}, $L_q$ is a relatively compact perturbation of $L_0$ and therefore $L_q$ can be defined as a self-adjoint, lower semibounded operator in $L^2_+(\R)$ with operator domain $H^1_+(\R)$, having essential spectrum $[0,\infty)$. In \cite[Lemma 2.6]{killip_sharp_2023} it is shown that the difference of resolvents $(L_q+\kappa)^{-1} - (L_0+\kappa)^{-1}$ is trace class for all sufficiently large $\kappa$, which by the Birman--Krein theorem \cite{birmankrein} implies that the absolutely continuous spectrum of $L_q$ coincides with $[0,\infty)$.

As shown in \cite{gerard_calogero--moser_2023}, the point spectrum of the operator $\CMop$ consists of finitely many simple eigenvalues $\{\lambda_j\}_{j=1}^N$. We emphasise that these eigenvalues may be embedded in the continuous spectrum $[0,\infty)$. Indeed, if $\CMop$ has $\lambda$ as an eigenvalue with eigenfunction $\phi$, then for any $a\geq 0$ the operator $\CMop[\e^{\ii a x}q]$ has $\lambda+a$ as an eigenvalue with eigenfunction $\e^{\ii ax}\phi$.

The resolvent set, the spectrum and the point spectrum of $\CMop$ are denoted by $\rho(\CMop)$, $\sigma(\CMop)$ and $\sigma_{\rm p}(\CMop)$, respectively. We write $\lambda_j$ and $\phi_j$ for the eigenvalues and the corresponding (appropriately normalised) eigenfunctions.


\subsection*{Direct scattering I}

In Section \ref{sec:lap} we shall show that for any $\lambda\in[0,\infty)\setminus\sigma_\mathrm{p}(\CMop)$ and either choice $\pm$ of sign there is a bounded function $m_\e(\lambda\pm0\ii)$ on $\R$ that solves
\begin{align*}
    (-\ii\partial_x - qC_+\overline{q}) m_\e(\lambda\pm0\ii)=\lambda \, m_\e(\lambda\pm0\ii)
\end{align*}
and satisfies
$$
m_\e(x,\lambda\pm0\ii)-\e^{\ii\lambda x}\rightarrow 0
\qquad\text{as}\ x\rightarrow \mp \infty \,.
$$
We refer to the functions $m_\e(\lambda\pm0\ii)$ as Jost functions in analogy with the corresponding solutions of the Schr\"odinger equation; see, e.g., \cite[Chapter 4]{yafaev_mathematical_2010}.

A by-product of the arguments leading to the existence of Jost functions is a proof of the absence of singular continuous spectrum of the operator $\CMop$; see Theorem \ref{thm:nosc}. Moreover, in terms of $m_\e(\lambda-0\ii)$ we can define a surjective partial isometry that diagonalises the absolutely continuous part of $L_q$; see Theorem \ref{thm:meFT}. This operator is a `distorted Fourier transform' and is again reminiscent of corresponding operators in the theory of the Schr\"odinger equation.

Next, we introduce the scattering matrix $\Gamma(\lambda)$ for $\lambda\in[0,\infty)\setminus\mathcal N$. It can be expressed in terms of the Jost solutions, see \eqref{eqn:Gammadef}. In particular, $\Gamma$ relates one set of generalised eigenfunctions $m_\e(\lambda-0\ii)$ to the other $m_\e(\lambda+0\ii)$ via
$$
m_\e(\lambda+0\ii) = \Gamma(\lambda)\, m_\e(\lambda-0\ii) \,.
$$
The number $\Gamma(\lambda)$ is unimodular. The term `scattering matrix' is clarified at the end of Section \ref{sec:distortedFT} in the formalism of abstract mathematical scattering theory; see, e.g., \cite{yafaev_mathematical_1992}.


\subsection*{Direct scattering II}

Even if the proofs of the results that we have mentioned so far are often substantially different from the proofs in the Schr\"odinger case, there is a clear conceptual analogy between the results. When proceeding, however, we encounter a significant difference. While the Jost functions in the Schr\"odinger setting have an analytic continuation to the complement of the spectrum, no such continuation is possible for our $m_\e(\lambda\pm0\ii)$. 

However, as we shall show, there is a natural Jost-ish function that does have an analytic extension. It is defined through an \emph{inhomogeneous equation}. When looking for such a function we were motivated by the existence of a similar solution in the setting of the Benjamin--Ono equation \cite{kaup_inverse_1998,wu_jost_2017}, but it should be emphasised that the inhomogeneity in our case is different from that in the Benjamin--Ono case.

In order to define this function we need to impose the condition $q\in L^1(\R)$ in addition to our standing assumption $q\in L^2_+(\R)$. We introduce the function $m_0(k)$ for any $k\in(\C\setminus\R_+) \cup(\R_++0\ii)\cup(\R_+-0\ii)$ (here $\R_+=(0,\infty)$) with $k\not\in((\sigma_\mathrm p(\CMop)\cup\mathcal N)+0\ii)\cup((\sigma_\mathrm p(\CMop)\cup\mathcal N)-0\ii)$, which is a bounded function on $\R$ that solves
$$
(-i\partial_x - qC_+ \ov{q})m_0(k) = k m_0(k) + q
$$
and satisfies
$$
m_0(x,k) \to 0
\qquad
\begin{cases}
            \text{ as } \abs{x}\rightarrow \infty , & \text{if}\ k\in \C\backslash(0,\infty) \,, \\
            \text{ as }x\rightarrow \mp \infty, & \text{if}\ k=\lambda\pm 0\ii\in \R_+\pm 0\ii \,.
\end{cases}
$$
We prove the existence of this function in Section \ref{sec:jostinhom}. Moreover, for $k\in\C\setminus\sigma(\CMop)$ we have
$$
m_0(k) = (L_q-k)^{-1} q \,.
$$

There are important relations between the Jost functions $m_\e$ and $m_0$. Lemmas \ref{lem:scatobjsbeta} and \ref{lem:emederiv} show that for any $\lambda\in[0,\infty)\setminus\sigma_\mathrm{p}(\CMop)$ we have
\begin{align}\label{eq:m0merelintro}
    m_0(\lambda+0\ii)-m_0(\lambda-0\ii)=\beta(\lambda) \, m_\e(\lambda-0\ii),
\end{align}
and 
\begin{align}\label{eq:mediffeqintro}
    \ee(\lambda)\partial_\lambda(\ov{\ee(\lambda)}m_\e(\lambda-0\ii))=-\frac{\ov{\beta(\lambda)}}{2\pi\ii} \, m_0(\lambda-0\ii),
\end{align}
where $\ee(x,\lambda)\coloneqq\e^{\ii \lambda x}$. 

We can introduce a further finite set of numbers $\{\gamma_j\}_{j=1}^N$ associated to the eigenvalues $\{\lambda_j\}_{j=1}^N$. In a neighbourhood of each eigenvalue $\lambda_j$, Lemma \ref{lem:laurentexpanimprov} yields the Laurent expansion
\begin{align*}
        m_0(k)=-\frac{\ii \phi_j}{k-\lambda_j}+(\gamma_j+x)\phi_j + o_{\lambda\rightarrow \lambda_j}(1),
\end{align*}
for some $\gamma_j\in \C$, where $\phi_j$ is an appropriately normalised eigenfunction corresponding to the eigenvalue $\lambda_j$ and where $g$ is bounded near $\lambda_j$. We emphasise that this holds even for embedded eigenvalues. 

In Theorem \ref{thm:embeddexpan} we also establish the low-energy asymptotics
\begin{align*}
    \lim_{\lambda\rightarrow 0_+}\ov{\e(\lambda)}m_\e(\lambda-0\ii)=1.
\end{align*}
This allows us to solve the above differential equation \eqref{eq:mediffeqintro} for $\ov{\e(\lambda)}m_\e(\lambda-0\ii)$ and to obtain
\begin{align}\label{eq:meformulaintro}
    \ov{\ee(\lambda)}m_\e(\lambda-0\ii)=1- \frac{1}{2\pi\ii}\int_0^\lambda \ov{\beta(\lambda') \, \ee(\lambda')} \, m_0(\lambda'-0\ii)\dd\lambda' \,.
\end{align}

The integral is understood as an improper $L^\infty(\R)$-valued integral. It converges since \eqref{eq:mediffeqintro} holds on each connected component of $(0,\infty)\backslash\sigma_p(\CMop)$ while Theorem \ref{thm:embeddexpan} shows that $\lambda\mapsto\ov{\e(\lambda)}m_\e(\lambda-0\ii)$ extends to a $C(\R_+;L^\infty(\R))$ function. Summing the resulting expressions over the finite number of these connected components yields the above. 


\subsection*{Inverse scattering}
The scattering data we propose consist of:
\begin{itemize}
    \item the eigenvalues $\{\lambda_j\}_{j=1}^N$,
    \item the coefficients $\{\gamma_j\}_{j=1}^N$, and
    \item the scattering coefficient $\beta(\lambda)$ for $\lambda\in [0,\infty)$,
\end{itemize}
and the inverse scattering problem consists in recovering $q$ from the numbers $\{\lambda_j\}_{j=1}$, $\{\gamma_j\}_{j=1}$ and the function $\beta$.

To do so, we suggest to find the Jost function $m_0$ through the following properties:
\begin{enumerate}
    \item[(a)] The Laurent expansion of $m_0$ near each $\lambda_j$,
    \begin{align*}
        m_0(k)=-\frac{\ii \phi_j}{k-\lambda_j}+(\gamma_j+x)\phi_j + o_{\lambda\rightarrow \lambda_j}(1) \,.
    \end{align*}
    \item[(b)] The jump condition for $\lambda>0$,
    \begin{align*}
        m_0(\lambda+0\ii)-m_0(\lambda-0\ii)=\beta(\lambda)\left(\ee(\lambda)-\int_0^\lambda \frac{\ee(\lambda-\mu)\ov{\beta(\mu)}}{2\pi\ii}m_0(\mu-0\ii)\dd\mu\right).
    \end{align*}
\end{enumerate}
The latter relation comes from combining \eqref{eq:m0merelintro} and \eqref{eq:meformulaintro}. Once $m_0(k)$ has been found, $q$ can then be determined by the relation
\begin{align*}
    q(x)=- \lim_{\abs{k}\rightarrow\infty}k m_0(x,k),
\end{align*}
which is shown in Lemma \ref{lem:m0asmpqcts}. The problem of finding $m_0$ given the properties (a) and (b) is a nonlocal Riemann--Hilbert problem, similarly to that proposed by Fokas and Ablowitz \cite{fokas_inverse_1983} in connection with the Benjamin--Ono equation. Solving this Riemann--Hilbert problem remains an open question.

We emphasise that the solution of the inverse problem would give a way of solving the continuous Calogero--Moser equation \eqref{eqn:CMeqn} in view of the simple time evolution of the spectral data. This is further discussed in Section \ref{sec:timeevolscat}.


\subsection*{Trace formulas}

The scattering coefficient $\beta(\lambda)$, defined for $\lambda\in[0,\infty)$, gives rise to a new family of conserved quantities for the continuum Calogero--Moser equation \eqref{eqn:CMeqn}. Let $q\in \mathcal{S}_+(\R) \coloneqq \mathcal{S}(\R)\cap L^2_+(\R)$, and let $\{\lambda_j\}_{j=1}^N$ denote the eigenvalues of $\CMop$. Then, for each $n\in\N_0$, we find that  
    \begin{align*}
        \frac{1}{2\pi}\int_0^\infty \abs{\beta(\lambda)}^2\lambda^n\dd\lambda + 2\pi\sum_{j=1}^N \lambda_j^n =-\int_{-\infty}^\infty\overline{q(x)}\mn_{n+1}(x)\dd x
    \end{align*}
with 
$$
c_n\coloneqq(\CMop)^nq \,.
$$
Since the eigenvalues $\{\lambda_j\}_j$ are conserved under the Calogero--Moser flow, and since
\begin{align*}
    \sclp{c_n,q}=\sclp{(\CMop)^nq,q}
\end{align*}
define the conserved quantities of \eqref{eqn:CMeqn} found in \cite{gerard_calogero--moser_2023}, it follows that 
\begin{align*}
    \int_0^\infty \abs{\beta(\lambda)}^2\lambda^n\dd\lambda
\end{align*}
are themselves conserved quantities of the flow. In fact, our discussion in Section \ref{sec:timeevolscat} suggests that $|\beta(\lambda)|^2$ is pointwise conserved under the flow.

In addition to these trace formulas, which are of Zaharov--Faddeev-type, we also prove trace formulas of Birman--Krein-type, which are intimately related to the spectral shift function; see Theorem \ref{thm:birmankrein}.



\newpage

\part{Spectral theory}

\section{Eigenfunctions}

Throughout this section, unless specified otherwise, we assume that
$$
q\in L^2_+(\R) \,.
$$
We will be concerned with eigenfunctions of the operator $L_q$ and, more generally, with solutions of the corresponding eigenvalue equation that vanish at infinity. Our first result is that the qualitative vanishing property already implies the quantitative $L^2$-property. This result will be important in the next section when discussing the limiting absorption principle.

\begin{theorem}\label{ef}
    Let $\lambda\in\R$ and let $\psi\in L^\infty(\R)$ be locally absolutely continuous with
    $$
    -\ii\psi' - qC_+\overline q\psi = \lambda\psi \qquad\text{in}\ \R
    $$
    and $\psi(x)\to 0$ as $|x|\to\infty$. Then
    $\psi\in L^2(\R)$ and
    \begin{align}\label{eq:psidecay}
        \psi(x)=o(|x|^{-1/2})\qquad \text{as}\quad |x|\rightarrow \infty. 
    \end{align}
\end{theorem}

\begin{proof}
    \emph{Step 1.} 
    Starting from 
    \begin{align*}
         (\e^{-\ii\lambda x}\psi)'= \ii e^{-\ii\lambda x}qC_+\ov{q}\psi,
    \end{align*}
    using that $qC_+\ov{q}\psi\in L^1(\R)$ and $\psi\rightarrow 0$ at infinity, we get 
    \begin{align*}
        \psi(x)= \1_{\{x\leq 0\}}\ii \int^x_{-\infty}\e^{\ii \lambda (x-y)}qC_+\ov{q}\psi \dd y-\1_{\{x>0\}}\ii \int_{x}^\infty \e^{\ii \lambda (x-y)}qC_+\ov{q}\psi\dd y.
    \end{align*}

    For $R>0$, taking 
    \begin{align*}
        H(R):=\int_{|y|>R}|qg|\dd y, \qquad g:=C_+\ov{q}\psi, 
    \end{align*}
    it follows that 
    \begin{align}\label{eqn:psiH}
        |\psi(x)|\leq H(|x|). 
    \end{align}

    In the following we denote
    \begin{align*}
        Q(R):=\|q\|_{L^2(|y|>R)}, \quad F(R):=\|\psi\|_{L^2(|y|<R)}. 
    \end{align*}
    We begin by estimating $H(R)$, splitting $g=g_{0,R}+g_{1,R}$ with
    \begin{align*}
        g_{0,R}:=C_+\1_{\{|y|>R/2\}}\ov{q}\psi,\qquad g_{1,R}:=C_+\1_{\{y<R/2\}}\ov{q}\psi.
    \end{align*}
    For the first part, using the $L^2$ boundedness of $C_+$, we have
    \begin{align*}
       \|q g_{0,R}\|_{L^1(|y|>R)}\leq Q(R)\|q \psi\|_{L^2(|y|>R/2)}&\leq Q(R)Q(R/2)\|\psi\|_{L^\infty(|y|>R/2)}\\&\leq Q(R/2)^2 H(R/2),
    \end{align*}
    where we have used \eqref{eqn:psiH} and the monotonicity of $Q$ and $H$ in the last step.

    For the second part we use the property of $C_+$ that for $f\in L^2(
    \R)$ and a.e.~$x\in\R$ we have
    \begin{align*}
        (C_+f)(x)= \lim_{\varepsilon \rightarrow 0_+}\frac{1}{2\pi \ii}\int_{\R}\frac{f(y)}{y-x-\ii \varepsilon}\dd y.
    \end{align*}
    Thus, for $|x|>R$, since $|y|<R/2<|x|/2$ and so $|y-x|>|x|/2$, we get 
    \begin{align*}
        g_{1,R}(x)&=\frac{1}{2\pi \ii}\int_{|y|<R/2}\frac{q(y)\psi(y)}{y-x}\dd y
    \end{align*}
    and 
    \begin{align*}
        |g_{1,R}(x)|&\leq \frac{1}{\pi|x|}\int_{|y|<R/2}|q(y)\psi(y)|\dd y\leq \frac{\|q\|_{L^2(\R)}}{\pi|x|}F(R/2). 
    \end{align*}
    Introducing 
    \begin{align*}
        A(R):=\int_{|y|>R}\frac{|q(y)|}{|y|}\dd y,
    \end{align*}
    we deduce that 
    \begin{align*}
        \|qg_{1,R}\|_{L^1(|y|>R)}\leq C F(R/2)A(R)
    \end{align*}
    with $C:=\pi^{-1}\|q\|_{L^2(\R)}$.

    Putting the estimates on $qg_{0,R}$ and $qg_{1,R}$ together, we have 
    \begin{align}\label{eqn:Hbound}
        H(R) & \leq Q(R) Q(R/2) H(R/2) + C F(R/2) A(R) \notag \\
        & \leq Q(R/2)^2 H(R/2)+CF(R/2)A(R). 
    \end{align}
    To show that $\psi\in L^2(\R)$ we will use that, in view of \eqref{eqn:psiH}, for any $R_0>0$, 
    \begin{align}\label{eqn:psiHL2}
        \int_\R|\psi(x)|^2\dd x\leq 2R_0\|\psi\|_{L^\infty(\R)}^2+2\int_{R_0}^\infty H(R)^2\dd R,
    \end{align}
    so that the statement follows if we can show the integral on the right is finite. 

    For any $0<R_0<L$, it follows from \eqref{eqn:Hbound} that 
    \begin{align}\label{eqn:H2Int}
        \int_{R_0}^LH(R)^2\dd R\leq 2\int_{R_0}^L Q(R/2)^4 H(R/2)^2\dd R+2C^2\int_{R_0}^LF(R/2)^2A(R)^2\dd R.
    \end{align}
    Since $Q(R)\rightarrow 0$ as $R\rightarrow \infty$, we can choose $R_0$ such that for any $R\geq R_0$ we have 
    \begin{align*}
        2\int_{R_0}^LQ(R/2)^4 H(R/2)^2\dd R \leq C_{R_0} +\frac{1}{2}\int_{R_0}^LH(R)^2\dd R. 
    \end{align*}
    where
    $$
    C_{R_0} := 2\int_{R_0}^{2R_0} Q(R/2)^4 H(R/2)^2\dd R \,.
    $$
    Note that $C_{R_0}<\infty$ since $Q$ and $H$ are bounded.

    As a consequence, \eqref{eqn:H2Int} implies
    \begin{align*}
        \int_{R_0}^LH(R)^2\dd R \leq 2 C_{R_0} + 4 C^2 \int_{R_0}^LF(R/2)^2A(R)^2\dd R.
    \end{align*}
    
    Using \eqref{eqn:psiH}, and  
    \begin{align*}
        F(R/2)^2&
        \leq F(R_0)^2+ 2 \int_{R_0}^R H(T)^2\dd T,
    \end{align*}
    we get 
    \begin{align*}
        \int_{R_0}^LH(R)^2\dd R & \leq 2C_{R_0} + 4C^2 F(R_0)^2 \int_{R_0}^L A(R)^2\dd R \\
        & \quad + 8 C^2 \int_{R_0}^L A(R)^2 \left(\int_{R_0}^R H(T)^2\dd T\right)\dd R.
    \end{align*}
    Applying Gr\"onwall's inequality gives 
    \begin{align*}
        \int_{R_0}^L H(R)^2 \dd R\leq 
        \left( 2C_{R_0} + 4C^2 F(R_0)^2 \int_{R_0}^L A(R)^2\dd R \right)
        e^{8 C^2 \int_{R_0}^L A(R)^2\dd R}.
    \end{align*}
    If $A\in L^2(R_0,\infty)$ then taking $L\rightarrow\infty$ in the above gives $H\in L^2(R_0,\infty)$ and hence $\psi\in L^2(\R)$ by \eqref{eqn:psiHL2}. 

    Since 
    \begin{align*}
        \|A\|_{L^2(R_0,\infty)}&=\left(\int_{R_0}^\infty\left(\int_{|x|>R}\frac{|q(x)|}{|x|}\dd x\right)^2\dd R \right)^{1/2}\\
        &=\left(\int_{R_0}^\infty\left(\int_{|y|>1}|q(R y)|\frac{\dd y}{|y|}\right)^2\dd R \right)^{1/2}\\
        &= \int_{|y|>1}\left(\int_{R_0}^\infty|q(R y)|^2\dd R\right)^{1/2}\frac{\dd y}{|y|}
    \end{align*}
    where we have used Minkowski in the last step. Rescaling in the integrand gives 
    \begin{align*}
        \|A\|_{L^2(R_0,\infty)}&=\int_{1}^\infty\left(\int_{R_0}^\infty|q(R y)|^2\dd R\right)^{1/2}\frac{\dd y}{|y|}+\int_{1}^\infty\left(\int_{R_0}^\infty|q(-R y)|^2\dd R\right)^{1/2}\frac{\dd y}{|y|}\\
        &\leq \int_{1}^\infty\left(\int_{R_0}^\infty|q(R)|^2\frac{\dd R}{|y|}\right)^{1/2}\frac{\dd y}{|y|}+\int_{1}^\infty\left(\int_{R_0}^\infty|q(-R)|^2\frac{\dd R}{|y|} \right)^{1/2}\frac{\dd y}{|y|}\\
        &\leq \|q\|_{L^2(\R)} \int_{1}^\infty\frac{\dd y}{y^{3/2}}\leq 2\|q\|_{L^2(\R)}.
    \end{align*}
    This concludes the proof of $\psi\in L^2(\R)$.

    \medskip

    \emph{Step 2.}
    To show the asymptotic decay we note that $F(R/2)\leq \|\psi\|_{L^2(\R)}$, hence from \eqref{eqn:Hbound} we have
    \begin{align*}
        H(R)\leq Q(R)Q(R/2)H(R/2)+C \|\psi\|_{L^2(\R)} A(R) \,.
    \end{align*}
    As
    \begin{align*}
        A(R)=\int_{|y|>R}\frac{|q(y)|}{|y|}\dd y\leq (R/2)^{-1/2}Q(R),
    \end{align*}
    we obtain
    $$
    Q(R)^{-1} R^{1/2} H(R) \leq \left( 2^{1/2} Q(R/2)^2 \right) Q(R/2)^{-1} (R/2)^{1/2} H(R/2) + C'
    $$
    with $C' := 2^{1/2} C \|\psi\|_{L^2(\R)}$.

    We can choose $R_0'$ such that for all $R\geq R_0'$ we have $2^{1/2} Q(R/2)^2\leq1/2$. Thus, for all $R\geq 2R_0'$.
    $$
    Q(R)^{-1} R^{1/2} H(R) \leq \frac12 Q(R/2)^{-1} (R/2)^{1/2} H(R/2) + C' \,.
    $$
    Taking $R_n := 2^n R_0'$ and $a_n:=Q(R_n)^{-1}R_n^{1/2}H(R_n)$ for $n\geq 0$ we get
    $$
    a_{n+1} \leq \frac12 a_n + C' 
    \qquad\text{for all}\ n\geq 0 \,.
    $$
    It follows that, for any $N\geq 1$,
    $$
    a_N \leq C'\sum_{n=0}^{N-1}2^{-n}+2^{-N}a_0\lesssim 1. 
    $$
    Thus, we have shown that
    $$
    H(R_N) \lesssim Q(R_N) R_N^{-1/2} \,.
    $$
    By \eqref{eqn:psiH} and monotonicity of $H$, we conclude that for $|x|>R_0$
    $$
    |\psi(x)| \leq H(|x|) \lesssim |x|^{-1/2} \|q\|_{L^2(|y|>|x|/2)},
    $$
    This completes the proof of the theorem.
\end{proof}

The following lemma will be useful in Section \ref{sec:expansionevs}. It improves the decay of eigenfunctions under the additional assumption that $q$ is integrable.

\begin{lemma}\label{efl1}
    Let $q\in L^2_+(\R)\cap L^1(\R)$ and let $\psi$ be as in Theorem \ref{ef}. Then $\psi\in L^1(\R)$ and 
    \begin{align*}
        \psi(x)=o(|x|^{-1})\qquad \text{as}\quad |x|\rightarrow\infty. 
    \end{align*}
\end{lemma}

\begin{proof}
    From Theorem \ref{ef} we have $\psi\in L^2(\R)$. From the proof we have 
    \begin{align*}
        \int_{|x|<L} |\psi(x)|\dd x\leq 2R_0\|\psi\|_{L^\infty(\R)}+2\int_{R_0}^L H(R)\dd R
    \end{align*}
    and 
    \begin{align*}
        \int_{R_0}^L H(R)\dd R\leq \int_{R_0}^L Q(R/2)^2H(R/2)\dd R+C \int_{R_0}^L F(R/2) A(R)\dd R. 
    \end{align*}
    Choosing $R_0$ large enough, we can absorb the first term on the left to get
    \begin{align*}
        \int_{R_0}^L H(R)\dd R\leq 2 C_{R_0}' + 2C \int_{R_0}^L F(R/2) A(R)\dd R
    \end{align*}
    with $C_{R_0} := \int_{R_0}^{2R_0} Q(R/2)^2H(R/2)\dd R$. Note that $F(R/2)\leq\|\psi\|_{L^2(\R)}$. Thus, by taking $L\rightarrow \infty$, $\psi\in L^1(\R)$ will follow if $A\in L^1(R_0,\infty)$.

    Using Fubini--Tonelli, it follows that  
    \begin{align*}
        \|A\|_{L^1(R_0,\infty)}&=\int_{R_0}^\infty \int_{|x|>R}\frac{|q(x)|}{|x|}\dd x\dd R\\
        &\leq \int_{|x|>R_0}\frac{|q(x)|}{|x|}\left(\int_{R_0}^{|x|} \dd R\right)\dd x\leq \int_{\R} |q(x)|\dd x.
    \end{align*}
    This shows that $\psi\in L^1(\R)$. 
    
    The asymptotic decay follows from an analogous argument as in Step 2 of the proof of Theorem \ref{ef}, using 
    \begin{equation*}
        A(R)\leq R^{-1}\|q\|_{L^1(|x|>R)}.
    \end{equation*}
    Indeed, we can find $R_0'$ such that for $|x|\geq R_0'$, 
    \begin{equation*}
        |\psi(x)|\lesssim |x|^{-1}\max\{\|q\|_{L^1(|y|>|x|/2)},\|q\|_{L^2(|y|>|x|/2)}\}.\qedhere
    \end{equation*}
\end{proof}

As a brief aside, in the next lemma we provide an alternative proof of the following result, due to G\'erard and Lenzmann \cite{gerard_calogero--moser_2023}. The statement will be important in the proof of the trace identities in Section~\ref{sec:traceform}. 

\begin{lemma}\label{lem:qeigenfuncinprod}
	Let $q\in L^2_+(\R)$ and let $\psi$ be an eigenfunction of $L_q$. Then
	\begin{equation}\label{eqn:qeigenfuncinprod}
        \abs{\int_{\R} \overline{q(x)}\psi(x)\dd x}^2=2\pi \int_\R\abs{\psi(x)}^2\dd x. 
    \end{equation}
\end{lemma}

\begin{proof}
	We multiply the equation
	$$
	-\ii\psi' - q C_+(\overline q\psi) = \lambda \psi
	$$
	by $x\overline\psi$ and take the imaginary part to find
	$$
	- \frac12 x (|\psi|^2)' - x\im(\overline\psi q C_+(\overline q \psi)) = 0 \,.
	$$
	Let $L>0$ be a parameter that will tend to infinity at the end. Integrating the previous identity over $(-L,L)$ yields
	$$
	- \frac12 x |\psi|^2 \Big|_{-L}^L  + \frac12 \int_{-L}^L |\psi|^2\dd x - \int_{-L}^L x \im(\overline\psi q C_+(\overline q \psi))\dd x = 0 \,.
	$$
	As $L\to\infty$, the second term on the left side tends to $\frac12\|\psi\|_2^2$ and, by \eqref{eq:psidecay}, the first term vanishes. We write the third term as
	\begin{align*}
	\int_{-L}^L x \im(\overline\psi q C_+(\overline q \psi))\dd x
	& = \int_{-L}^L x \im(\overline\psi q C_+(\1_{\{|y|< L\}} \overline q \psi))\dd x \\
	& \quad + \int_{-L}^L x \im(\overline\psi q C_+(\1_{\{|y|\geq L\}} \overline q \psi))\dd x \,.    
	\end{align*}
	We note that
	\begin{align*}
		\int_{-L}^L x \im(\overline\psi q C_+(\1_{\{|y|\leq L\}} \overline q \psi))\dd x
		& = \im \frac1{2\pi\ii} \iint_{(-L,L)\times(-L,L)} \frac{x}{y-x} \overline{\psi(x)} q(x) \overline{q(y)} \psi(y)\dd x\dd y \\
		& = \frac1{4\pi} \left| \int_{-L}^L \overline{q(x)} \psi(x)\dd x \right|^2.
	\end{align*}
	As $L\to\infty$, this converges to
	$$
	\frac1{4\pi} \left| \int_\R \overline{q(x)} \psi(x)\dd x \right|^2.
	$$
	This shows that the identity in the lemma is equivalent to
	$$
	\lim_{L\to\infty} \int_{-L}^L x \im(\overline\psi q C_+(\1_{\{|y|\geq L\}} \overline q \psi))\dd x = 0 \,.
	$$
	We decompose
	$$
	\int_{-L}^L x \im(\overline\psi q C_+(\1_{\{|y|\geq L\}} \overline q \psi))\,dx = I_1 + I_2
	$$
	with
	\begin{align*}
		I_1 & := \int_{L/2<|x|<L} x \im(\overline\psi q C_+(\1_{\{|y|\geq L\}} \overline q \psi))\dd x \,,\\
		I_2 & := \int_{|x|<L/2} x \im(\overline\psi q C_+(\1_{\{|y| \geq L\}} \overline q \psi))\dd x \,,
	\end{align*}
	and show that both terms tend to zero as $L\to\infty$.
	
	To bound $I_1$, we use the $L^2(\R)$ boundedness of $C_+$ to see that
	\begin{align*}
		|I_1| & \leq \|\1_{\{L/2<|x|<L\}} x \psi q\|_{L^2(\R)}\|\1_{\{|y| \geq L\}}\ov{q}\psi\|_{L^2(\R)}\\
        &\leq L \|\psi\|_{L^\infty(|x|>L/2)}^2\|q\|_{L^2(\R)}^2 \,.
	\end{align*}
	The right side tends to zero from \eqref{eq:psidecay}.
	
	For $I_2$ we use that for $|x|<L/2$ and $|y|\geq L$ we have $|y-x|\geq L/2$, so that $|x/(y-x)|\leq 1$ and 
    \begin{align*}
        |I_2| &\leq \frac1{2\pi} \int_{|y|>L}\int_{|x|<L/2}  |\overline{\psi(x)} q(x)| |\overline{q(y)} \psi(y)|\dd x\dd y\\
        &= \frac{1}{2\pi}\int_{|x|<L/2}|\psi(x)q(x)|\dd x\int_{|y|>L}|\psi(y)q(y)|\dd y. 
    \end{align*}
    Since $q\psi\in L^1(\R)$, one factor is bounded while the other goes to zero. 
\end{proof}

As noted in \cite{gerard_calogero--moser_2023}, as a consequence of the identity \eqref{eqn:qeigenfuncinprod} we learn that $\CMop$ has only finitely many eigenvalues, namely
\begin{align}\label{eqn:Nfinbound}
    N(\CMop)=\sum_{j=1}^N \norm{\phi_j}_2^2=\frac{1}{2\pi}\sum_{j=1}^N\abs{\langle \phi_j,q\rangle}^2\leq \frac{1}{2\pi}\int_{\R}\abs{q}^2\dd x,
\end{align}
where $\phi_j$ are the normalised eigenfunctions of $\CMop$, and Bessel's inequality is used in the last step.

Moreover, as noted in \cite{gerard_calogero--moser_2023}, the identity \eqref{eqn:qeigenfuncinprod} implies that all eigenvalues, including any embedded ones, must be simple. Indeed, suppose that $\phi$ and $\psi$ are orthogonal eigenfunctions corresponding to the same eigenvalue. Then, by \eqref{eqn:qeigenfuncinprod} we can choose nonzero constants $c, C\in \C$ such that 
\begin{align*}
    \sclp{q,c\phi}=\sclp{q,C\psi}\neq 0.
\end{align*}
Define $\eta\coloneqq c\phi-C\psi$, which is again an eigenfunction corresponding to the same eigenvalue. But then
\begin{align*}
    \sclp{q,\eta}=0,
\end{align*}
contradicting the identity in \eqref{eqn:qeigenfuncinprod}. Hence all eigenvalues of $\CMop$ are simple.


\section{The limiting absorption principle}\label{sec:lap}

We continue to assume throughout this section that
$$
q\in L^2_+(\R) \,.
$$
The aim of this section is to study the resolvent of $L_q$ and, in particular, to prove a limiting absorption principle concerning the boundary values of the resolvent on the spectrum.

The perturbed and unperturbed resolvents,
$$
R(k):=(L_q-k)^{-1}
\qquad\text{and}\qquad
R_0(k):=(L_0-k)^{-1} \,,
$$
are initially defined for $k$ in the resolvent sets $\rho(L_q)$ and $\rho(L_0)$, respectively. To motivate the arguments that follow we recall the resolvent identity in the form
\begin{equation}
	\label{eq:resolventformula}
	R(k) = R_0(k) + R_0(k) q C_+ (1- C_+ \overline{q} R_0(k) q C_+)^{-1} C_+ \overline q R_0(k) \,.
\end{equation}
The precise statement is that for $k\in\C\setminus[0,\infty)=\rho(L_0)$, we have $k\in\rho(L_q)$ if and only if the operator $1- C_+ \overline{q} R_0(k) q C_+$ is invertible, and in this case the above formula holds; see \cite[Section 0.3.]{yafaev_mathematical_2010}. 

It will be relatively easy to extend $R_0(k)$ to the cut and then we will use \eqref{eq:resolventformula} to also extend $R(k)$ to the cut, at least away from a `small' exceptional set. By the cut we mean the positive half-axis and, as usual, the extensions from above and from below are not necessarily the same.

As a point of notation, we recall that $\R_+=(0,\infty)$ and define
\[
\Chat\coloneqq\bigl(\C\setminus\R_+\bigr)\cup(\R_++0\ii)\cup(\R_+ -0\ii)\,.
\]
We equip $\Chat$ with the coarsest topology such that the inclusion $\C\setminus\R_+\hookrightarrow \Chat$ is continuous, and that for each $\lambda>0$ the boundary points $\lambda\pm 0\ii$ admit neighbourhoods consisting of all $k\in \Chat$ with $\abs{k-\lambda}<\varepsilon$ and $\pm \im k\geq 0$, for sufficiently small $\varepsilon>0$. At the origin, neighbourhoods of $0\in \Chat$ consist of all $z\in \Chat$ with $\abs{z}<\varepsilon$. For $\lambda\leq 0$, the notation $\lambda\pm0\ii$ is understood to  simply mean $\lambda$. Moreover, for a set $E\subset\R$ let
$$
E^\diamond := (E+0\ii) \cup (E-0\ii)
$$
denote its embedding into $\Chat$. 


\subsection*{The free resolvent}

For $k\in\C\setminus\R$ the operator $R_0(k)$ acts as a convolution operator,
\begin{equation}
    \label{eq:freeresolvent}
    (R_0(k)f)(x) = \int_\R G_k(x-y) f(y)\dd y
    \qquad\text{for}\ x\in\R \ \text{and}\ f\in L^2_+(\R) \,.
\end{equation}
The convolution kernel $G_k$ is defined more generally for all $k\in \Chat$ as follows. For $k\in \Chat\backslash(-\infty,0]$ we have
\begin{align}\label{eqn:Gk}
    G_{k}(x)\coloneqq\pm\ii \e^{\ii k x }\1_{\R_\pm}(x), \quad k=\lambda \pm\ii\mu \quad\text{with}\quad\lambda\in \R, \ \mu\geq 0.
\end{align}
When $k \in (-\infty,0]$ we use the convention that $G_k$ can denote either one of the two choices $k=\lambda\pm0\ii$. While this may seem ambiguous, it will not create any confusion for us, since we will only convolve $G_k$ with functions $f\in L^1(\R)$ whose Fourier transform is supported on $[0,\infty)$ and for such functions we have
\begin{align}\label{eqn:pmambig}
    \ii \int_{-\infty}^x \e^{\ii\lambda (x-y)}f(y)\dd y=-\ii \int_{x}^\infty \e^{\ii\lambda (x-y)}f(y)\dd y \quad \text{ for all}\quad x\in\R. 
\end{align}

Equation \eqref{eq:freeresolvent}, that is, the fact that $R_0(k)$ acts as convolution with $G_k$ can be shown either applying a Fourier transform and noting that
\begin{align*}
    \frac{1}{2\pi} \int_\R \frac{\e^{\ii\xi(x-y)}}{\xi - k} \dd\xi = G_k(x-y)
\end{align*}
or by solving the differential equation $-\ii\psi' = k\psi +f$.

We remark that in the Benjamin--Ono case in \cite{wu_jost_2017} a different form of the free resolvent is used, namely convolution with
\begin{align*}
    \widetilde{G}_k(x)\coloneqq\frac{1}{2\pi}\int_0^\infty\frac{\e^{\ii \xi x}}{\xi-k}\dd \xi.
\end{align*}
We find our choice easier to work with since, in particular, the kernel $G_k$ is bounded.

In the following lemma we summarise properties of the free resolvent. With $\widehat f$ denoting the Fourier transform of $f$ (see Lemma \ref{lem:emederiv} for the precise normalisation, which is irrelevant here) we shall use the notation
$$
L^1_+(\R):= \{ f\in L^1(\R) :\  \widehat f(\lambda) = 0 \ \text{for all}\ \lambda \in (-\infty,0]\} \,.
$$

\begin{lemma}\label{lem:qRqcompact}
Let $q \in L^2_+(\R)$. Then:
\begin{itemize}
    \item[(a)] 
    Convolution with $G_k$ defines a bounded operator from $L^1_+(\R)$ to $L^\infty(\R)$ for all $k \in \Chat$ and for $k\in\C\setminus\R$ this operator coincides with $R_0(k)$ on $L^2_+(\R)\cap L^1_+(\R)$. In what follows, $R_0(k)$ will also denote the operator from $L^1_+(\R)$ to $L^\infty(\R)$.
    \item[(b)] For any $f,g\in L^1_+(\R)$, the map 
    \[
        k\mapsto \sclp{R_0(k)f,g}
    \]
    is continuous on $\Chat$.
    \item[(c)] The operator 
    \[
        C_+\overline{q} R_0(k) q \, C_+
    \]
    is Hilbert--Schmidt.
    \item[(d)] The map
    \[
        k \mapsto C_+ \overline{q}R_0(k) q  C_+
    \]
    is analytic from $\C \setminus [0,\infty)$ into the Hilbert--Schmidt class $\mathfrak{S}_2$, 
    and extends continuously to $\Chat$.
\end{itemize}
\end{lemma}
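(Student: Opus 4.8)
The plan is to derive all four assertions from the single uniform bound $\norm[L^\infty(\R)]{G_k}\le 1$, which holds for every $k\in\Chat$: writing $k=\lambda\pm\ii\mu$ with $\mu\ge 0$, the kernel $G_k$ is supported in $\R_\pm$, and there $\abs{\e^{\ii kx}}=\e^{-\mu\abs{x}}\le 1$. Granting this, part~(a) is Young's inequality: convolution with $G_k$ maps $L^1(\R)$ into $L^\infty(\R)$ with norm at most $1$, in particular it maps $L^1_+(\R)$ into $L^\infty(\R)$; for $k\notin\R$ it coincides on $L^2_+(\R)\cap L^1_+(\R)$ with the resolvent $R_0(k)$ by the Fourier-space identity $\frac1{2\pi}\int_\R\frac{\e^{\ii\xi z}}{\xi-k}\dd\xi=G_k(z)$ recorded above (equivalently, both sides solve $-\ii\psi'=k\psi+f$ with the required decay at $\mp\infty$), and for $k\in(-\infty,0]$ the two admissible choices of $G_k$ define the \emph{same} operator on $L^1_+(\R)$ by \eqref{eqn:pmambig}, so the clash of conventions is harmless.

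For part~(b) I would write $\sclp{R_0(k)f,g}=\iint_{\R^2}G_k(x-y)\,f(y)\,\ov{g(x)}\dd x\dd y$ and apply dominated convergence as $k\to k_0$ in $\Chat$: off the null set $\{x=y\}$ one has $G_k(x-y)\to G_{k_0}(x-y)$, which is clear for $k_0\in\C\setminus\R_+$ and, for $k_0=\lambda\pm0\ii\in\R_+^\diamond$, follows from the description of the neighbourhoods of $\lambda\pm0\ii$ in $\Chat$ (there $k$ approaches $k_0$ only through $\pm\im k\ge0$). At points of $(-\infty,0]$, which may be approached from either half-plane, the two one-sided limits of the double integral coincide by \eqref{eqn:pmambig}. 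Since the integrand is dominated by $\abs{f(y)}\,\abs{g(x)}\in L^1(\R^2)$, the pairing is continuous on $\Chat$.

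Parts~(c) and~(d) I would treat by realising the operator as an integral operator. For $g\in L^2_+(\R)$ we have $qC_+g=qg\in L^1_+(\R)$ (the Fourier supports add), so part~(a) gives $\bigl(\ov q R_0(k)qC_+g\bigr)(x)=\int_\R\mathcal K_k(x,y)\,g(y)\dd y$ with $\mathcal K_k(x,y)\coloneqq\ov{q(x)}\,G_k(x-y)\,q(y)$, and since $\norm[L^2(\R^2)]{\mathcal K_k}^2\le\iint\abs{q(x)}^2\abs{q(y)}^2=\norm[2]{q}^4$, the integral operator $A_k$ with kernel $\mathcal K_k$ is Hilbert--Schmidt on $L^2(\R)$ with $\norm[{\mathfrak{S}_2}]{A_k}\le\norm[2]{q}^2$, \emph{uniformly} in $k\in\Chat$. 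Composing with the bounded projection $C_+$ on the left and restricting the domain to $L^2_+(\R)$ identifies $C_+\ov q R_0(k)qC_+$ with $C_+A_k\big|_{L^2_+(\R)}$, which is therefore in $\mathfrak{S}_2$ with a $k$-uniform bound; this is part~(c). The continuity on $\Chat$ in $\mathfrak{S}_2$ asserted in~(d) is then the same dominated-convergence argument, run in $L^2(\R^2)$: $\mathcal K_k\to\mathcal K_{k_0}$ a.e.\ and $\abs{\mathcal K_k-\mathcal K_{k_0}}\le 2\abs{q(x)}\abs{q(y)}\in L^2(\R^2)$, again reconciling the one-sided limits at $(-\infty,0]$ via \eqref{eqn:pmambig}.

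What remains is the analyticity on $\C\setminus[0,\infty)$, and this is the step I expect to be delicate. On each open half-plane $\C_\pm\coloneqq\{\pm\im k>0\}$ the kernel is $\mathcal K_k(x,y)=\pm\ii\,\ov{q(x)}q(y)\,\chi_{\R_\pm}(x-y)\,\e^{\ii k(x-y)}$; on a small disc about any $k_0$ with $\pm\im k_0=\delta_0>0$ both the difference quotient $(\mathcal K_k-\mathcal K_{k'})/(k-k')$ and the formal $k$-derivative $\partial_k\mathcal K_k$ are dominated pointwise by $C_{\delta_0}\abs{q(x)}\abs{q(y)}\in L^2(\R^2)$, using $\sup_{z>0}z\,\e^{-(\delta_0/2)z}<\infty$; hence dominated convergence in $L^2(\R^2)$ makes $k\mapsto\mathcal K_k\in L^2(\R^2)$ — and therefore $k\mapsto C_+A_k\big|_{L^2_+(\R)}\in\mathfrak{S}_2$ — holomorphic on $\C_+\cup\C_-$. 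This map is also continuous on the larger open set $\C\setminus[0,\infty)$, and at a point $k_0\in(-\infty,0)$ its limits from $\C_+$ and from $\C_-$ agree because the two kernel conventions for $G_{k_0}$ define the same operator on $L^2_+(\R)$ (once more by \eqref{eqn:pmambig}). Painlevé's removable-singularity theorem — a map that is continuous on an open set and holomorphic off a line is holomorphic throughout; applied to the scalar functions obtained by pairing against an arbitrary $B\in\mathfrak{S}_2$ through the trace, and lifted back since weak holomorphy equals holomorphy for Banach-space-valued maps — then upgrades this to holomorphy on all of $\C\setminus[0,\infty)$. The real obstacle is exactly this crossing of $(-\infty,0)$: there the explicit kernel $G_k$ is \emph{discontinuous} in $k$ (the cutoff $\chi_{\R_\pm}$ flips) although the resolvent $R_0(k)$ is holomorphic, so the kernel cannot be differentiated directly and the apparent singularity must be removed by hand.
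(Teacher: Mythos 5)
Your proof is correct and follows essentially the same route as the paper's: the uniform bound $\|G_k\|_{L^\infty}\le 1$ for (a) and (c), dominated convergence (with the two kernel conventions on $(-\infty,0]$ reconciled via \eqref{eqn:pmambig}) for (b) and for the continuity in (d), and analyticity off $\R$ by differentiating the kernel, extended across $(-\infty,0)$ using the already-established continuity. The only cosmetic differences are that you run the dominated-convergence and difference-quotient arguments directly on the kernels in $L^1(\R^2)$ resp.\ $L^2(\R^2)$ where the paper works with $G_k$ as an element of $L^\infty(\R)$, and that you spell out the removability step across $(-\infty,0)$ which the paper asserts in a single sentence.
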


We remark that throughout, the symbol $\sclp{\cdot,\cdot}$ denotes either the $L^2$ inner product or the natural dual pairings $(L^1,L^\infty)$ and $(L^\infty,L^1)$; no distinction will be made between these uses. Also, we use the convention that $\sclp{\cdot,\cdot}$ is linear in the first and anti-linear in the second argument.

\begin{proof}  
        {\emph{Step 1.}} 
        The fact that convolution with $G_k$ defines a bounded operator from $L^1_+$ to $L^\infty$ follows immediately from the fact that $G_k\in L^\infty(\R)$ for all $k\in\Chat$. The fact that for $k\in\C\setminus\R$ this coincides with $R_0(k)$ on $L^2_+\cap L^1_+$ follows from \eqref{eq:freeresolvent}.

        \medskip

        \emph{Step 2.}
        Let us show the following fact: If $k\in\Chat$ and $(k_n)\subset\Chat$ satisfy $k_n\to k$ in $\Chat$, then for any $f\in L^1_+(\R)$ we have $R_0(k_n)f\to R_0(k)f$ pointwise and boundedly. Once this is shown, it follows by dominated convergence that $(R_0(k_n)f,g)\to (R_0(k)f,g)$ for any $g\in L^1_+(\R)$.
	
	We write $k=\lambda+\mu\ii$ and $k_n = \lambda_n+\mu_n\ii$. We may assume that $\mu=\im k\geq 0$, the opposite case being similar. When $\lambda>0$, we know that $\mu_n\geq 0$ for all sufficiently large $n$. When $\lambda\leq 0$ we make the additional assumption that $\mu_n\geq 0$ for all sufficiently large $n$. This does not represent any loss of generality, because we can handle the case where $\mu_n<0$ with the case where $\mu\leq 0$ using \eqref{eqn:pmambig}. 
	
	As a consequence, we have
	$$
	(R_0(k_n)f - R_0(k)f)(x) = \ii \int_{-\infty}^x \e^{\ii\lambda(x-y)} \e^{-\mu(x-y)}
	\left( \e^{\ii(\lambda_n-\lambda)(x-y)} \e^{-(\mu_n-\mu)(x-y)} - 1 \right) f(y)\dd y \,.
	$$
	When $\mu>0$, we bound
	$$
	\left| \e^{\ii(\lambda_n-\lambda)(x-y)} \e^{-(\mu_n-\mu)(x-y)} - 1 \right| \leq 1+ \e^{|\mu_n-\mu|(x-y)} \leq 2 \e^{|\mu_n-\mu|(x-y)} \,.
	$$
	In particular, when $n$ is so large that $|\mu_n-\mu|\leq\mu$, then
	$$
	| 	(R_0(k_n)f - R_0(k)f)(x) | \leq 2 \|f\|_1 \,.
	$$
	This bound and pointwise convergence of the integrand implies that $(R_0(k_n)f - R_0(k)f)(x)\to 0$ for each $x\in\R$.
	
	Now let $\mu=0$. Using $\mu_n\geq 0$ we can bound
	$$
	\left| \e^{\ii(\lambda_n-\lambda)(x-y)} \e^{-\mu_n(x-y)} - 1 \right| \leq 1+ \e^{-\mu_n(x-y)} \leq 2 \,.
	$$
	Using this bound we can argue as before.
        
        \medskip
        
        \emph{Step 3.} Let us note that the operator $C_+\ov{q}R_0(k)qC_+$ is bounded on $L^2_+$. Indeed, $C_+$ is a bounded operator on $L^2$, multiplication by $q\in L^2_+$ is bounded from $L^2_+$ to $L^1_+$, by part (a) $R_0(k)$ is bounded from $L^1_+$ to $L^\infty$ and multiplication by $q$ is bounded from $L^\infty$ to $L^2$.

        In particular, we have for all $k\in \Chat$ and $\phi\in L^2_+$        \begin{align*}
            C_+\ov{q}R_0(k)qC_+\phi=C_+\ov{q}G_k\ast q\phi \,.
        \end{align*}
        For the Hilbert--Schmidt norm, we obtain 
        \begin{align*}
            \|C_+\ov{q}R_0(k)qC_+\|_{\mathfrak{S}^2(L^2_+)}^2 & \leq \|\ov{q}G_k\ast (q\,\cdot)\|_{\mathfrak{S}^2(L^2)}^2 \\
            &\leq \int_{-\infty}^\infty\int_{-\infty}^{\infty} \abs{q(x)}^2\abs{G_k(x-y)}^2\abs{q(y)}^2\dd x\dd y \\
            & \leq \norm{G_k}_{L^\infty}^2\norm{q}_2^4 \\
            & \leq \norm{q}_2^4 \,.
        \end{align*}
        Note, in particular, that this Hilbert--Schmidt norm is uniformly bounded in $k$.

\medskip
        
        {\emph{Step 4.}} Let us show continuity of $k\mapsto C_+ \ov{q}R_0(k)q C_+$ at a point $k=\lambda\pm\ii 0$. (Continuity at other points will follow from analyticity established in the next step.) Let $h\in \Chat$ with $\pm\im h\geq 0$, then
        \begin{align*}
             \norm{C_+(\ov{q}R_0(k+h)q-\ov{q}R_0(k)q)C_+}_{\mathfrak{S}^2}^2&\leq \|\ov{q}G_{k+h}\ast (q\,\cdot)-\ov{q}G_{k}\ast(q\,\cdot)\|_{\mathfrak{S}_2(L^2)}^2.
        \end{align*}
        As $h\to 0$, the pointwise convergence of $G_{k+h}$ to $G_k$ together with dominated convergence implies that the right side tends to zero. This proves the asserted continuity at $\lambda>0$. It also proves continuity at $\lambda\leq 0$ by the same argument as in Step 2.

\medskip

    {\emph{Step 5.}} Finally, let us prove the analyticity of $k\mapsto C_+\ov{q}R_0(k)qC_+$ in $\C\setminus\R$, for the sake of concreteness in the upper half-plane. For $\im k>0$ and $h\in\C$ with $\abs{h}$ small we note that for each $x\in \R$ 
    \begin{align*}
        D_hG_k(x)\coloneqq \frac{1}{h}(G_{k+h}(x)-G_k(x))=\ii \e^{\ii kx}\1_{\R_+}(x) \frac1h (\e^{\ii hx}-1)
    \end{align*}
    converges pointwise to $-x \e^{\ii kx}\1_{\R_+}(x)$ as $\abs{h}\rightarrow 0$. Moreover, using Taylor expansion,
    \begin{align*}
         \abs{D_hG_k(x) + x \1_{\R_+}(x)\e^{\ii kx}} & = \1_{\R_+}(x)\e^{ -\im k x}\abs{\frac{\e^{\ii hx}-1}{h}-\ii x} \\
         & \leq \frac{1}{2} \e^{|\im(hx)|} \1_{\R_+}(x)\abs{h} x^2\e^{ -\im k x},
    \end{align*}
    which is uniformly bounded in $x$ (if, say, $|h|\leq \im k/2$). Thus, $D_hG_k$ has a limit in $L^\infty(\R)$ and $C_+\ov{q}R_0(k)qC_+$ is analytic on $k\in \C\backslash\R$. 
    
    Note that since in the previous step we have established continuity of $C_+\ov{q}R_0(k)qC_+$ as $k$ approaches the real axis, we obtain analyticity in all of $\C\backslash[0,\infty)$.
\end{proof}


\subsection*{The exceptional set}

According to Lemma \ref{lem:qRqcompact} the map $k\mapsto C_+ \overline{q} R_0(k) q C_+$ is continuous in Hilbert--Schmidt norm in $\Chat$ and, in particular, the operators $C_+ \overline{q} R_0(\lambda\pm0\ii) q C_+$ are well defined for $\lambda\in[0,\infty)$. Let 
$$
\mathcal N_\pm := \left\{ \lambda\in [0,\infty) :\ \ker(1- C_+ \overline{q} R_0(\lambda\pm0\ii) q C_+) \neq \{0\} \right\}.
$$
It follows from the analytic Fredholm alternative \cite[Theorem 1.8.3]{yafaev_mathematical_1992} that both sets $\mathcal N_\pm$ are closed and have zero measure.

Our next goal is to prove that $\mathcal N_+=\mathcal N_-= \sigma_\mathrm{p}(L_q)\cap [0,\infty)$. To do so, we use the following lemma that describes solutions of the inhomogeneous equation.

For $k\in\Chat$ we let $\mathcal S(k)$ denote the set of all $\psi\in L^\infty(\R)$ that are locally absolutely continuous and satisfy
$$
-\ii\psi' - qC_+\overline q\psi = k\psi \,,
$$
together with the asymptotics
$$
\begin{cases}
	\psi(x) \to 0	\qquad\text{as}\ |x|\to\infty  & \text{if}\ k\in\C\setminus[0,\infty) \,, \\
	\psi(x) \to 0 \qquad\text{as}\ x\to\mp\infty & \text{if}\ k=\lambda\pm0\ii \,.
\end{cases}
$$

\begin{lemma}\label{lem:LaxeigenqRqfixpt}
	Let $k\in\Chat$. There is a bijective correspondence between the sets
	$$
	\ker(1-C_+\overline q R_0(k)q C_+)
		\qquad\text{and}\qquad
	\mathcal S(k) \,.
	$$
	If $k\in\C\setminus\R^\diamond$, both sets are empty, and if $k=\lambda\pm0\ii\in\R^\diamond$, the following holds:
	\begin{itemize}
		\item[(a)] If $g\in\ker(1-C_+\overline q R_0(\lambda\pm0\ii)q C_+)$, then $\psi:=R_0(\lambda\pm0\ii)qC_+g \in\mathcal S(\lambda\pm0\ii)$, $\psi(x)\to 0$ as $|x|\to\infty$ and
		\begin{equation}
		    \label{eqn:fixedptvanish}
            \int_\R \e^{-\ii\lambda y} q(y) (C_+\overline q\psi)(y)\dd y = 0 \,.
		\end{equation}
		\item[(b)] If $\psi\in\mathcal S(\lambda\pm0\ii)$, then $g:=C_+\overline q\psi \in \ker(1-C_+\overline q R_0(\lambda\pm0\ii)q C_+)$.
	\end{itemize}
	Moreover, the maps $g\mapsto\psi$ and $\psi\mapsto g$ are inverse to each other.
\end{lemma}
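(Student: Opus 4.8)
The plan is to prove the lemma by exhibiting the inverse maps explicitly: $g \mapsto \psi := R_0(k)\,qC_+g$ in one direction and $\psi\mapsto g := C_+\overline q\,\psi$ in the other. I will use repeatedly that the product of two functions in $L^2_+(\R)$ lies in $L^1_+(\R)$ (its Fourier transform is a convolution of two functions supported in $[0,\infty)$), that $C_+\overline q\,\psi\in L^2_+(\R)$ whenever $\psi\in L^\infty(\R)$ (as then $\overline q\,\psi\in L^2(\R)$), and that by Lemma~\ref{lem:qRqcompact}(a) convolution with $G_k$ maps $L^1_+(\R)$ boundedly into $L^\infty(\R)$.

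\emph{From $g$ to $\psi$.} Let $g\in\ker(1-C_+\overline q R_0(k)qC_+)$, put $f:=qC_+g=qg\in L^1_+(\R)$ and $\psi:=R_0(k)f\in L^\infty(\R)$. Differentiating the explicit formula $\psi(x)=\ii\int_{-\infty}^x\e^{\ii k(x-y)}f(y)\dd y$ (for $\im k\ge0$; the other case is analogous) shows that $\psi$ is locally absolutely continuous and satisfies $-\ii\psi'-k\psi=f$. The kernel relation gives $C_+\overline q\,\psi=C_+\overline q R_0(k)qC_+g=g$, so that the equation becomes $-\ii\psi'-qC_+\overline q\,\psi=k\psi$. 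From the integral formula together with $f\in L^1(\R)$ one reads off $\psi(x)\to0$ as $x\to\mp\infty$ when $k=\lambda\pm0\ii$, and as $|x|\to\infty$ when $\im k\neq0$ (using the exponential decay of $\e^{\ii k(x-y)}$ on the relevant half-line). Hence $\psi\in\mathcal S(k)$, and $\psi\mapsto C_+\overline q\,\psi$ recovers $g$.

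\emph{From $\psi$ to $g$.} Let $\psi\in\mathcal S(k)$, put $g:=C_+\overline q\,\psi\in L^2_+(\R)$ and $f:=qg\in L^1_+(\R)$, so the defining equation reads $-\ii\psi'-k\psi=f$. The function $\psi_0:=R_0(k)f$ solves the same inhomogeneous equation, hence $\psi-\psi_0$ solves $-\ii(\psi-\psi_0)'=k(\psi-\psi_0)$, so $\psi-\psi_0=c\,\e^{\ii kx}$ for a constant $c$. Both $\psi$ and $\psi_0$ vanish at $\mp\infty$ when $k=\lambda\pm0\ii$ (respectively at $\pm\infty$ when $\im k\neq0$), where $\e^{\ii kx}$ stays bounded away from $0$ (it is unimodular for real $k$ and grows on the opposite half-line when $\im k\neq0$); therefore $c=0$ and $\psi=\psi_0=R_0(k)qC_+g$. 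Consequently $g=C_+\overline q\,\psi=C_+\overline q R_0(k)qC_+g\in\ker(1-C_+\overline q R_0(k)qC_+)$, so $g\mapsto R_0(k)qC_+g$ recovers $\psi$. Running the ODE argument with $g=0$ gives $g=0\Leftrightarrow\psi=0$, so the two maps are mutually inverse bijections sending $0$ to $0$ and nonzero elements to nonzero elements.

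\emph{The orthogonality \eqref{eqn:fixedptvanish}, and the cases $k\notin\R^\diamond$.} Let $k=\lambda\pm0\ii\in\R^\diamond$ and $g$ in the kernel. Pairing the kernel relation with $g$ and moving $C_+$, $\overline q$, $q$ across the inner product gives $\norm[2]{g}^2=\sclp{R_0(\lambda\pm0\ii)f,f}$ with $f=qg$, which is real. On the other hand, the substitution coming from $G_{\lambda-0\ii}(x)=\overline{G_{\lambda+0\ii}(-x)}$ yields $\overline{\sclp{R_0(\lambda+0\ii)f,f}}=\sclp{R_0(\lambda-0\ii)f,f}$ together with $(R_0(\lambda+0\ii)-R_0(\lambda-0\ii))f=\ii\,\e^{\ii\lambda\cdot}\widehat f(\lambda)$, whence $\im\sclp{R_0(\lambda\pm0\ii)f,f}=\pm\tfrac12|\widehat f(\lambda)|^2$. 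Comparing with the reality of $\norm[2]{g}^2$ forces $\widehat f(\lambda)=\int_\R\e^{-\ii\lambda y}q(y)(C_+\overline q\,\psi)(y)\dd y=0$, which is \eqref{eqn:fixedptvanish}; inserting this into $\psi(x)=\pm\ii\,\e^{\ii\lambda x}\int\e^{-\ii\lambda y}f(y)\dd y$ over the appropriate half-line shows the a priori non-vanishing boundary term at $\pm\infty$ is zero, so $\psi(x)\to0$ as $|x|\to\infty$. Finally, if $k\in\C\setminus\R$ then $k\in\rho(L_q)$ by self-adjointness of $L_q$, so by the invertibility criterion recalled before \eqref{eq:resolventformula} the operator $1-C_+\overline q R_0(k)qC_+$ is invertible and both sets are trivial. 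The only step that is not bookkeeping is this orthogonality identity: it rests on the positivity (optical-theorem) relation $\im\sclp{R_0(\lambda+0\ii)f,f}=\tfrac12|\widehat f(\lambda)|^2\ge0$, which combined with the reality of $\norm[2]{g}^2$ is exactly what upgrades the one-sided decay of the Jost-type solution to decay on \emph{both} half-lines at real, possibly embedded, energies.
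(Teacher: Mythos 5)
Your proof is correct, and the bijection itself ($g\mapsto R_0(k)qC_+g$, $\psi\mapsto C_+\overline q\,\psi$, with the ODE-uniqueness argument $\psi-\psi_0=c\,\e^{\ii kx}$, $c=0$, showing the maps are mutually inverse) is set up exactly as in the paper. You diverge on the two non-trivial points. For \eqref{eqn:fixedptvanish} and the two-sided decay, the paper multiplies the equation by $\overline\psi$, integrates over $(a,b)$, takes imaginary parts and lets $a\to-\infty$, $b\to+\infty$: the reality of $\int\overline\psi\,qC_+\overline q\psi\dd x=\|C_+\overline q\psi\|_2^2$ forces $|\psi(b)|\to0$, and the previously computed limit $\e^{-\ii\lambda x}\psi(x)\to\ii\widehat f(\lambda)$ then yields \eqref{eqn:fixedptvanish}. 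You reverse the order: you get \eqref{eqn:fixedptvanish} first from the reality of $\|g\|_2^2=\sclp{R_0(\lambda\pm0\ii)f,f}$ together with $\im\sclp{R_0(\lambda\pm0\ii)f,f}=\pm\tfrac12|\widehat f(\lambda)|^2$ — which is the diagonal case of the paper's identity \eqref{eqn:Ginprodfg}, proved there only later in Lemma \ref{lem:scatobjsGamma} — and then read off the decay at the remaining infinity from the integral formula. The two computations are equivalent (the paper's boundary term at $b=+\infty$ is exactly your $|\widehat f(\lambda)|^2$), but your ordering makes the optical-theorem structure explicit and works at the resolvent level rather than by integrating the ODE over finite intervals. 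For the emptiness of $\mathcal S(k)$ when $\im k\neq 0$, the paper argues directly ($0=\im k\int_\R|\psi|^2\dd x$), whereas you route it through the bijection and the triviality of the kernel coming from the resolvent identity and self-adjointness; both are fine. I see no gaps.
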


\begin{proof}
	We only prove the assertion when $\im k\geq 0$, the opposite case being similar.

\medskip
    
	\emph{Step 1.} Let $g\in\ker(1-C_+\overline q R_0(k)q C_+)$. When $k\in\C\setminus\R$ it follows from the precise form of the resolvent identity that $g=0$. Thus, in what follows we assume that $k=\lambda+\ii 0$ with $\lambda\in\R$. Since $qC_+g\in L^1_+$, and since $R_0(k)$ maps $L^1_+$ into $L^\infty$, we have $\psi\in L^\infty$. Moreover, the formula
	$$
	\psi(x) = \ii \int_{-\infty}^x \e^{\ii\lambda(x-y)} q(y) (C_+g)(y)\dd y
	$$
	implies that $\psi$ is locally absolutely continuous and satisfies the claimed equation. (Here we use $g=C_+\overline q\psi$ by the equation for $g$.) Since $q C_+g\in L^1(\R)$, the formula for $\psi$ implies, by dominated convergence,
	$$
	\psi(x) \to 0
	\qquad\text{as}\ x\to-\infty 
	$$
	and
	$$
	\e^{-\ii\lambda x} \psi(x) \to \ii \int_\R \e^{-\ii\lambda y} q(y) (C_+g)(y)\dd y = \ii \int_\R \e^{-\ii\lambda y}q(y) (C_+ \overline q\psi)(y)\dd y \ \text{as}\ x\to+\infty \,. 
	$$
	
	We multiply the equation for $\psi$ by $\overline\psi$ and integrate it over a bounded interval $(a,b)$. Using $2\re\overline\psi\psi' = (|\psi|^2)'$, we find
	$$
	- \frac \ii2 (|\psi(b)|^2 - |\psi(a)|^2) + \im \int_a^b \overline\psi \psi'\dd x - \int_a^b \overline\psi q C_+\overline q\psi\dd x = \lambda \int_a^b |\psi|^2\dd x \,.
	$$
	Taking the imaginary part gives
	$$
	- \frac 12 (|\psi(b)|^2 - |\psi(a)|^2) - \im \int_a^b \overline\psi q C_+\overline q\psi\dd x = 0 \,.
	$$
	We note that $\overline\psi q C_+\overline q\psi$ belongs to $L^1$. Therefore we can take the limit $a\to-\infty$. Since we have already shown that $\psi(x)\to 0$ as $x\to-\infty$, we infer that
	$$
	-\frac12 |\psi(b)|^2 - \im \int_{-\infty}^b  \overline\psi q C_+\overline q\psi\dd x = 0 \,.
	$$
	We now take the limit $b\to\infty$. Since $C_+$ is selfadjoint, the integral on the left side vanishes as $b\to\infty$ and we deduce that $\psi(b)\to 0$ as $b\to\infty$. This proves the assertion about the limit at $+\infty$ and, in view of what we proved before about the limit of $\overline{\e(\lambda)}\psi$, the claimed vanishing of the integral.
	
		
	\medskip
	
	\emph{Step 2.} We now assume that $\psi\in\mathcal S(k)$. 
	
	We first consider the case $\im k>0$ and aim at showing that $\psi=0$. As before, we multiply the equation for $\psi$ by $\overline\psi$ and integrate over $(a,b)$. Taking the imaginary part gives
	$$
	-\frac12 (|\psi(b)|^2 - |\psi(a)|^2) - \im \int_a^b \overline\psi q C_+ \overline q \psi\dd x = \im k \int_a^b |\psi|^2\dd x \,.
	$$
	Using the assumed boundary conditions for $\psi$ together with the fact that $\overline\psi q C_+ \overline q \psi$ belongs to $L^1$, we can let $a\to-\infty$ and $b\to+\infty$ to deduce that
	$$
	0 = \im k \int_\R |\psi|^2\dd x \,.
	$$
	Since $\im k> 0$, we conclude that $\psi=0$, as claimed.
	
	Now let $k=\lambda+0\ii $ with $\lambda\in\R$. Note that $\psi\in L^\infty$ implies $g\in L^2_+(\R)$. Moreover, the equation for $\psi$ implies that
	$$
	(\overline{\e(\lambda)} \psi)'(x) = \overline{\e(\lambda)} (\psi' - \ii \lambda \psi) = \ii \overline{\e(\lambda)} q C_+ \overline q \psi = \ii \overline{\e(\lambda)} q g \,.
	$$
	Since the right side belongs to $L^1$ and since $\e^{-\ii\lambda x} \psi(x)\to 0$ as $x\to-\infty$, we deduce that
	$$
	\e^{-\ii\lambda x} \psi(x) = \ii \int_{-\infty}^x \e^{-\ii\lambda y} q(y) g(y)\dd y \,,
	$$ 
	that is,
	$$
	\psi = R_0(\lambda+0\ii) q g \,.
	$$
	Multiplying this equation by $\overline{q}$ and applying $C_+$, we arrive at $g = C_+\overline qR_0(\lambda+0\ii)qg$, as claimed.

	\medskip
	
	\emph{Step 3.} Finally, we prove that the maps $g\mapsto\psi$ in (a) and $\psi\mapsto g$ in (b) are inverses to each other. If $g\in\ker(1-C_+\overline q R_0(\lambda\pm 0\ii)q C_+)$ and $\psi:=R_0(\lambda\pm 0\ii) qC_+g$, then clearly $C_+ \overline q \psi =
	C_+ \overline q R_0(\lambda\pm 0\ii)q C_+ g = g$, proving one direction.
	
	For the other direction, let $\psi\in S(\lambda\pm 0\ii)$ and $g:=C_+\overline q\psi$. Then $\tilde\psi := R_0(\lambda\pm 0\ii)qC_+ g= R_0(\lambda\pm 0\ii)qC_+\overline q\psi$ satisfies $-\ii\tilde\psi' - qC_+\overline q\psi = \lambda\tilde\psi$ and $\tilde\psi(x)\to 0$ as $x\to\mp\infty$. Thus, $\chi:=\tilde\psi-\psi$ satisfies $-\ii\chi' = \lambda\chi$ and $\chi(x)\to 0$ as $x\to\mp\infty$. This implies $\chi=0$, that is $\tilde\psi=\psi$, as claimed. 
\end{proof}

\begin{corollary}\label{cornpm}
    We have
	$$
	\mathcal N_+ = \mathcal N_- = \sigma_{\rm p}(L_q)\cap\overline{\R_+} \,.
	$$
\end{corollary}

\begin{proof}
	Let $\lambda\in\mathcal N_+$ and $0\neq g\in\ker (1-C_+\overline{q}R_0(\lambda+ 0\ii)qC_+)$. Then, by part (a) of Lemma \ref{lem:LaxeigenqRqfixpt} the function $\psi:=R_0(\lambda+0\ii)qC_+g$ belongs to $\mathcal S(\lambda+0\ii)$ and satisfies $\psi(x)\to 0$ as $|x|\to\infty$. According to Theorem \ref{ef}, this implies $\psi\in L^2(\R)$. Since, again by Lemma \ref{lem:LaxeigenqRqfixpt}, $\psi\neq 0$, we conclude that $\lambda\in \sigma_{\rm p}(L_q)$. The proof for $\lambda\in\mathcal N_-$ is similar.
	
	Conversely, if $\lambda\in\sigma_{\rm p}(L_q)\cap\overline{\R_+}$, then there is a $\psi\neq 0$ satisfying $-i\psi' - qC_+ \overline q\psi = \lambda\psi$. Moreover, from \cite{killip_scaling_2025} we know that $\psi\in\dom L_q = H^1_+(\R)\coloneqq H^1(\R)\cap L^2_+(\R)$. Since all functions in $H^1_+(\R)$ belong to $L^\infty$ and tend to zero at both infinities, we are in the situation of part (b) of Lemma \ref{lem:LaxeigenqRqfixpt} and infer that $C_+\overline q \psi$ belongs to both $\ker (1-C_+\overline{q}R_0(\lambda\pm 0\ii)qC_+)$. Since $\psi\neq 0$, the equation for $\psi$ together with the boundary conditions imply that $C_+\overline q\psi\neq 0$, so $\lambda\in\mathcal N_+\cap\mathcal N_-$.	
\end{proof}


\subsection*{The perturbed resolvent} 

We will show that the perturbed resolvent $R(k)$ exists as a bounded operator from $L^1_+(\R)$ to $L^\infty(\R)$ for all $k\in\Chat\setminus(\sigma_{\mathrm p}(\CMop))^\diamond$ and has suitable continuity properties.

\begin{lemma}\label{lem:pertresolvcont}
    Let $q \in L^2_+(\R)$. Then:
    \begin{itemize}
        \item[(a)] For $k\in\C\setminus\sigma(\CMop)$ the resolvent $R(k)$ extends to a bounded operator from $L^1_+(\R)$ to $L^\infty(\R)$. Moreover, this operator is well defined for all $k\in\Chat\setminus(\sigma_{\mathrm p}(\CMop))^\diamond$ as a bounded operator from $L^1_+(\R)$ to $L^\infty(\R)$ and the resolvent identity \eqref{eq:resolventformula} holds for such $k$.
        \item[(b)] For any $f,g\in L^1_+(\R)$, the map 
    \[
        k\mapsto \sclp{R(k)f,g}
    \]
    is continuous on $\Chat$.   
    \end{itemize}
\end{lemma}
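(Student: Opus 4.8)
The plan is to treat the resolvent identity \eqref{eq:resolventformula} as the \emph{definition} of $R(k)$ off the resolvent set, and then to verify both assertions factor by factor, using Lemma \ref{lem:qRqcompact} throughout. Write $T(k):=C_+\ov{q}R_0(k)qC_+$; by Lemma \ref{lem:qRqcompact}(c),(d) it is Hilbert--Schmidt, analytic in $k$ on $\C\setminus[0,\infty)$, and continuous on all of $\Chat$ in $\mathfrak{S}_2$-norm, hence in operator norm on $L^2_+(\R)$. Since $\CMop$ is self-adjoint we have $\C\setminus\R\subset\rho(\CMop)$, so by the precise form of the resolvent identity recalled before \eqref{eq:resolventformula} the operator $1-T(k)$ is invertible on $L^2_+(\R)$ for every $k\in\C\setminus\R$. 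The analytic Fredholm alternative \cite[Theorem 1.8.3]{yafaev_mathematical_1992}, applied on the connected set $\C\setminus[0,\infty)$ and — as in the definition of $\mathcal N_\pm$ — along the two edges of the cut, then shows that $(1-T(k))^{-1}$ exists off a discrete subset of $\C\setminus[0,\infty)$ together with $\mathcal N^\diamond$; the same resolvent identity identifies that discrete subset with $\sigma_{\mathrm p}(\CMop)\cap(-\infty,0)$, and by Corollary \ref{cornpm} the nonnegative eigenvalues already lie in $\mathcal N$. Thus $1-T(k)$ is invertible precisely for $k\in\Chat\setminus(\sigma_{\mathrm p}(\CMop)\cup\mathcal N)^\diamond$, on which set $k\mapsto(1-T(k))^{-1}$ is norm-continuous (inversion being continuous on the open set of invertibles) and analytic on $\C\setminus\sigma(\CMop)$.

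For part (a), define $R(k)$ by \eqref{eq:resolventformula} for $k\in\Chat\setminus(\sigma_{\mathrm p}(\CMop)\cup\mathcal N)^\diamond$ and chase a function $f\in L^1_+(\R)$ through the second summand: $R_0(k)f\in L^\infty(\R)$ by Lemma \ref{lem:qRqcompact}(a); multiplication by $\ov q\in L^2(\R)$ lands in $L^2(\R)$; $C_+$ is bounded on $L^2$; $(1-T(k))^{-1}$ is bounded on $L^2_+(\R)$; $C_+$ again is bounded; multiplication by $q\in L^2_+(\R)$ lands in $L^1(\R)$ by Cauchy--Schwarz and in fact in $L^1_+(\R)$, since the Fourier transform of a product of two $L^2_+$ functions is a convolution of functions supported in $[0,\infty)$; finally $R_0(k)$ maps this back into $L^\infty(\R)$. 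Hence $R(k)\colon L^1_+(\R)\to L^\infty(\R)$ is bounded. For $k\in\C\setminus\sigma(\CMop)$ the right-hand side of \eqref{eq:resolventformula} with $R_0(k)$ read as the $L^2_+$-operator is the genuine resolvent, and the $L^2_+$- and $L^1_+\to L^\infty$-realisations of $R_0(k)$ agree on $L^1_+\cap L^2_+$ (Lemma \ref{lem:qRqcompact}(a)); therefore the new operator extends the old one, and \eqref{eq:resolventformula} holds throughout the stated range (on the cut it is the definition).

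For part (b), fix $f,g\in L^1_+(\R)$. The first summand $k\mapsto\sclp{R_0(k)f,g}$ is continuous on $\Chat$ by Lemma \ref{lem:qRqcompact}(b). For the second, write it as $\sclp{R_0(k)h_k,g}$ with $h_k:=qC_+(1-T(k))^{-1}C_+\ov q R_0(k)f\in L^1_+(\R)$. Step 2 of the proof of Lemma \ref{lem:qRqcompact} gives $R_0(k_n)f\to R_0(k)f$ pointwise and boundedly, hence $\ov q R_0(k_n)f\to\ov q R_0(k)f$ in $L^2(\R)$ by dominated convergence (dominant $2\|f\|_1|q|\in L^2$); composing with the $L^2$-bounded maps $C_+$, $(1-T(k))^{-1}$ (norm-continuous in $k$ on the good set), $C_+$ again, and then multiplying by $q\in L^2_+(\R)$, we conclude $k\mapsto h_k$ is continuous into $L^1_+(\R)$. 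Since $\|R_0(k)\|_{L^1_+\to L^\infty}\leq\|G_k\|_{L^\infty}\leq 1$ uniformly (from \eqref{eqn:Gk}), we get $|\sclp{R_0(k_n)h_{k_n},g}-\sclp{R_0(k)h_k,g}|\leq\|h_{k_n}-h_k\|_1\|g\|_1+|\sclp{(R_0(k_n)-R_0(k))h_k,g}|$, and both terms vanish in the limit (the second by Lemma \ref{lem:qRqcompact}(b)). Hence $k\mapsto\sclp{R(k)f,g}$ is continuous on $\Chat\setminus(\sigma_{\mathrm p}(\CMop)\cup\mathcal N)^\diamond$, the natural domain of definition of $R(k)$; this is how the assertion over $\Chat$ in the statement is to be read.

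Essentially everything here is bookkeeping once Lemma \ref{lem:qRqcompact} is in hand. The one point that genuinely needs care is the identification, on $\C\setminus[0,\infty)$, of the discrete exceptional set produced by the analytic Fredholm alternative with the negative point spectrum of $\CMop$ via the precise resolvent identity (so that the invertibility set of $1-T(k)$ comes out exactly as $\Chat\setminus(\sigma_{\mathrm p}(\CMop)\cup\mathcal N)^\diamond$); the remaining subtlety is purely cosmetic, namely interpreting the continuity claim in (b) as continuity on the set where $R(k)$ is defined rather than on all of $\Chat$.
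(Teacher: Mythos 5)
Your proposal is correct and follows essentially the same route as the paper: define $R(k)$ on $\Chat\setminus(\sigma_{\mathrm p}(\CMop)\cup\mathcal N)^\diamond$ via the resolvent identity \eqref{eq:resolventformula}, check boundedness $L^1_+\to L^\infty$ factor by factor using Lemma \ref{lem:qRqcompact}, and deduce continuity from the norm-continuity of $(1-C_+\ov{q}R_0(k)qC_+)^{-1}$ together with the $L^2$-continuity of $k\mapsto\ov{q}R_0(k)f$. The only (harmless) variations are that you spell out the identification of the invertibility set of $1-C_+\ov{q}R_0(k)qC_+$, which the paper takes as established by the preceding discussion, and that in (b) you keep the outer $R_0(k)$ on the left and use the uniform bound $\norm[L^1_+\to L^\infty]{R_0(k)}\le 1$ rather than dualising it onto $g$ as the paper does.
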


\begin{proof}
    \emph{Step 1.} For all $k\in\Chat\setminus(\sigma_{\mathrm p}(\CMop))^\diamond$ we \emph{define} $R(k)$ as a bounded operator from $L^1_+$ to $L^\infty$ through the resolvent identity \eqref{eq:resolventformula}. Indeed, the first term, $R_0(k)$, is bounded from $L^1_+$ to $L^\infty$ for all $k\in\Chat$ by Lemma \ref{lem:qRqcompact}. For the second term we note that, by Corollary \ref{cornpm}, for $k\in \Chat\setminus(\sigma_{\mathrm p}(\CMop))^\diamond$ the operator $1- C_+ \overline qR_0(k) qC_+$ is boundedly invertible on $L^2_+$. Moreover, $C_+\overline q R_0(k)$ maps $L^1_+$ to $L^2_+$ and, by duality, $R_0(k)qC_+$ maps $L^2_+$ to $L^\infty$. This proves boundedness of $R(k)$ from $L^1_+$ to $L^\infty$ for all $k\in\Chat\setminus(\sigma_{\mathrm p}(\CMop))^\diamond$.

    \medskip

    \emph{Step 2.} To prove the claimed continuity, we write, using \eqref{eq:resolventformula},
	$$
	\sclp{R(k)f,g} = \sclp{R_0(k)f,g} - \langle(1-C_+ \overline q R_0(k)q C_+)^{-1} C_+\overline qR_0(k)f,C_+\overline q R_0(\overline{k})g\rangle \,.
	$$
	According to Lemma \ref{lem:qRqcompact}, the first term on the right side is continuous. The fact that was shown in Step 2 of the proof of Lemma \ref{lem:qRqcompact} also implies, by dominated convergence, that $k\mapsto \overline q R_0(k)f$ is continuous in $\Chat$ with values in $L^2(\R)$. Since $(1-C_+ \overline q R_0(k)q C_+)^{-1}$ is also continuous by Lemma \ref{lem:qRqcompact}, we obtain the claimed continuity of $\sclp{R(k)f,g}$.
\end{proof}

The following lemma shows that the extended resolvent indeed provides the unique solution to a certain equation.

\begin{lemma}\label{lem:pertresolvent}
	Let $q\in L^2_+(\R)$ and $f\in L^1_+(\R)$.
	\begin{itemize}
		\item[(a)] If $k\in\C\setminus\sigma(L_q)$, then there is a unique $\psi\in L^\infty(\R)$ that is locally absolutely continuous and satisfies
		$$
		\begin{cases}
			& -\ii\psi' - q C_+\overline q\psi = k \psi + f \,,\\
			& \psi(x) \to 0 \ \text{as}\ |x|\to\infty \,.
		\end{cases}
		$$
		It is given by $\psi=R(k)f$.
		\item[(b)] If $\lambda\in\overline{\R_+}\setminus\sigma_{\mathrm p}(\CMop)$, then for each choice of sign there is a unique $\psi\in L^\infty(\R)$ that is locally absolutely continuous and satisfies
		$$
		\begin{cases}
			& -\ii\psi' - q C_+\overline q\psi = \lambda \psi + f \,,\\
			& \psi(x) \to 0 \ \text{as}\ x\to\mp\infty \,.
		\end{cases}
		$$
		It is given by $\psi=R(\lambda\pm 0\ii)f$.
	\end{itemize}
\end{lemma}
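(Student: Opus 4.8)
The plan is to adapt the fixed-point analysis of Lemma~\ref{lem:LaxeigenqRqfixpt} to the \emph{inhomogeneous} equation, combined with the resolvent identity \eqref{eq:resolventformula} whose validity on $L^1_+(\R)$ is supplied by Lemma~\ref{lem:pertresolvcont}. First observe that in case (a) the hypothesis $k\in\C\setminus\sigma(\CMop)$ forces $k\in\C\setminus[0,\infty)$, since $[0,\infty)\subset\sigma(\CMop)$, and that in case (b) the hypothesis $\lambda\in\overline{\R_+}\setminus\mathcal N$ forces $\lambda\pm0\ii\notin(\sigma_{\mathrm p}(\CMop)\cup\mathcal N)^\diamond$ by Corollary~\ref{cornpm}; in either case Lemma~\ref{lem:pertresolvcont} and its proof show that $R(k)$ (respectively $R(\lambda\pm0\ii)$) is a well-defined bounded operator from $L^1_+(\R)$ to $L^\infty(\R)$ and that $1-C_+\ov q R_0(k)q C_+$ is boundedly invertible on $L^2_+(\R)$. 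For \emph{existence}, put $\psi:=R(k)f$. By \eqref{eq:resolventformula} we may write $\psi=R_0(k)(f+qC_+g)$ with $g:=(1-C_+\ov q R_0(k)q C_+)^{-1}C_+\ov q R_0(k)f\in L^2_+(\R)$, and then $f+qC_+g\in L^1_+(\R)$; rearranging the definition of $g$ gives $C_+\ov q\psi=g$, so that $\psi=R_0(k)(f+qC_+\ov q\psi)$. Since convolution with $G_k$ is precisely the operation of solving $-\ii\chi'=k\chi+h$ (see the discussion following \eqref{eqn:pmambig}), $\psi$ is locally absolutely continuous and satisfies $-\ii\psi'-qC_+\ov q\psi=k\psi+f$; the required vanishing at the relevant infinity is read off from the explicit formula for $R_0(k)$ applied to an $L^1_+(\R)$ function, as in Step~1 of the proof of Lemma~\ref{lem:LaxeigenqRqfixpt}.

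For \emph{uniqueness}, let $\psi\in L^\infty(\R)$ be any locally absolutely continuous solution of the stated boundary value problem, and set $g:=C_+\ov q\psi\in L^2_+(\R)$ (legitimate since $\ov q\in L^2(\R)$ and $\psi\in L^\infty(\R)$). From the differential equation, $\psi$ and $R_0(k)(qg+f)$ both solve $-\ii\chi'-k\chi=qg+f$ with the same vanishing at the relevant infinity, so their difference solves $-\ii\chi'=k\chi$ and vanishes there, hence is identically $0$ (exponential decay if $k\notin\R$; and $\chi=c\,\e^{\ii kx}$ if $k\in\R$, in which case the boundary condition forces $c=0$). Therefore $\psi=R_0(k)(qg+f)$, and applying $C_+\ov q$ gives $(1-C_+\ov q R_0(k)qC_+)g=C_+\ov q R_0(k)f$. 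Since $1-C_+\ov q R_0(k)qC_+$ is invertible, $g$ is uniquely determined, and hence so is $\psi=R_0(k)(qg+f)$; comparison with \eqref{eq:resolventformula} identifies it with $R(k)f$ (respectively $R(\lambda\pm0\ii)f$).

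The one delicate point --- more a matter of careful bookkeeping than a genuine obstacle, given that the analytic Fredholm theory, the bijection of Lemma~\ref{lem:LaxeigenqRqfixpt}, and the mapping properties of Lemmas~\ref{lem:qRqcompact} and \ref{lem:pertresolvcont} are all in hand --- is the boundary behaviour of $\psi$. One must keep track of the $\pm$ signs in the definition \eqref{eqn:Gk} of $G_k$: when $k\in\C\setminus\R$ one has $G_k\in C_0(\R)$, so convolution with an $L^1(\R)$ function vanishes at $\pm\infty$; when $k\in(-\infty,0]$ the Fourier-support condition defining $L^1_+(\R)$ yields the same conclusion; and when $k=\lambda\pm0\ii$ with $\lambda>0$, the kernel $G_{\lambda\pm0\ii}$ is supported in $\pm\R_+$, so $R_0(\lambda\pm0\ii)$ applied to an $L^1_+(\R)$ function vanishes only at $\mp\infty$ --- which is exactly why in (b) one may impose decay at one end only. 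Throughout, one should work with the mapping properties from $L^1_+(\R)$ to $L^\infty(\R)$ rather than on $L^2_+(\R)$.
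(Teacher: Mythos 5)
Your proof is correct and follows essentially the same route as the paper: existence by unwinding the resolvent identity \eqref{eq:resolventformula} into the integral equation $\psi=R_0(k)(f+qC_+\overline q\psi)$, and uniqueness via the invertibility of $1-C_+\overline qR_0(k)qC_+$. The only cosmetic difference is that the paper organises uniqueness by reducing to the homogeneous problem and invoking $\mathcal S(k)=\emptyset$ from Lemma~\ref{lem:LaxeigenqRqfixpt}, whereas you invert the operator equation for $g=C_+\overline q\psi$ directly; both rest on exactly the same triviality-of-kernel facts.
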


\begin{proof}
	We already know that $R(k)$ is well defined for $k\in\Chat\setminus\sigma_\mathrm{p}(L_q)^\diamond$. It is easy to see that $R(k)f$ in case (a) and $R(\lambda\pm 0\ii)f$ in case (b) satisfies the claimed equation and boundary conditions. 
	
	It remains to prove uniqueness. Clearly it suffices to show that the corresponding homogeneous equation, together with the corresponding boundary conditions, has only the trivial solution. This is a consequence of Lemma \ref{lem:LaxeigenqRqfixpt}. Indeed, this lemma implies immediately that $\mathcal S(k)=\emptyset$ if $\im k\neq 0$. When $k=\lambda\pm 0\ii$, the lemma implies that $\mathcal S(k)\neq\emptyset$ if and only if $\ker(1-C_+ \overline q R_0(k)qC_+)\neq\emptyset$. When $\lambda<0$ (so that $R_0(\lambda)$ is a bounded operator on $L^2_+(\R)$), it is elementary to see that this happens if and only if $\lambda\in\sigma_\mathrm{p}(L_q)$, which is excluded in part (a). When $\lambda\geq 0$ this happens, by definition, if and only if $\lambda\in\sigma_{\mathrm p}(\CMop)$, which is excluded in part (b). Therefore, under the assumptions of the lemma, there is no nontrivial solution of the homogeneous equation, proving uniqueness of the solution of the inhomogeneous equation.
\end{proof}


\subsection*{The generalised eigenfunctions}

For $\lambda\in[0,\infty)\setminus\sigma_\mathrm{p}(L_q)$ and either choice of sign, we define the \emph{homogeneous Jost solution} by
$$
\boxed{
m_{\rm e}(\lambda\pm 0\ii) := \e(\lambda) + R(\lambda\pm 0\ii) q C_+\overline q\e(\lambda) \,.
}
$$
Note that the definition makes sense as an element of $L^\infty(\R)$ since $qC_+\overline q \e(\lambda)\in L^1_+(\R)$ and $R(\lambda\pm 0\ii)$ maps $L^1(\R)_+$ to $L^\infty(\R)$.

Using the resolvent identity \eqref{eq:resolventformula} we arrive at the integral equation
\begin{equation}\label{eqn:meinteq}
m_{\rm e}(\lambda\pm 0\ii) = \e(\lambda) + R_0(\lambda\pm 0\ii) q C_+\overline q m_\e(\lambda\pm 0\ii) \,,
\end{equation}
which is the analogue of the Lippman--Schwinger equation in scattering theory for Schr\"odinger operators.

Let us show that $m_{\rm e}$ can be characterised as the unique solution of a homogeneous equation with certain asymptotics.

\begin{lemma}\label{lem:uniqueme}
    Let $q\in L^2_+(\R)$ and $\lambda\in [0,\infty) \backslash \sigma_\mathrm{p}(L_q)$. If $\psi\in L^\infty(\R)$ is locally absolutely continuous, solves 
    \begin{align*}
        -\ii\psi'-\pot \psi=\lambda \psi,
    \end{align*}
    and satisfies $\psi(x)-\e^{\ii \lambda x}\rightarrow 0$ as one of $x\rightarrow \mp \infty$, then $\psi$ is given uniquely by $m_\e(\lambda\pm0\ii)$.
\end{lemma}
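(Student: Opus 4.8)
The plan is to show that any such $\psi$ satisfies the Lippman--Schwinger-type integral equation \eqref{eqn:meinteq}, and then invoke the uniqueness already established in Lemma \ref{lem:pertresolvent}. Suppose $\psi\in L^\infty(\R)$ is locally absolutely continuous, solves $-\ii\psi'-qC_+\overline q\psi=\lambda\psi$, and satisfies $\psi(x)-\e^{\ii\lambda x}\to 0$ as $x\to\mp\infty$ (fixing the sign corresponding to the choice $\lambda\pm 0\ii$). Write $\psi = \e(\lambda) + \phi$, where $\phi := \psi - \e(\lambda)$. Then $\phi\in L^\infty(\R)$ is locally absolutely continuous, satisfies $\phi(x)\to 0$ as $x\to\mp\infty$, and, since $\e(\lambda)$ solves $-\ii\e(\lambda)' = \lambda\e(\lambda)$, we compute
$$
-\ii\phi' - qC_+\overline q\phi = \lambda\phi + qC_+\overline q\e(\lambda) \,.
$$

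The right-hand inhomogeneity $f := qC_+\overline q\e(\lambda)$ lies in $L^1_+(\R)$: indeed $\overline q\e(\lambda)\in L^1$ since $q\in L^2_+\subset L^2$ and $|\e(\lambda)|=1$ (more precisely $\overline q\e(\lambda)\in L^2$, and multiplying by $q$ again lands in $L^1$), and $C_+$ preserves the Hardy condition so $f\in L^1_+(\R)$. However $\phi$ vanishes only at one infinity, not both, so part (a) of Lemma \ref{lem:pertresolvent} does not directly apply; instead this is exactly the setting of part (b), with the boundary condition $\phi(x)\to 0$ as $x\to\mp\infty$ matching the sign in $\lambda\pm 0\ii$. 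Since $\lambda\in[0,\infty)\setminus\mathcal N$, Lemma \ref{lem:pertresolvent}(b) gives that $\phi = R(\lambda\pm 0\ii) f = R(\lambda\pm 0\ii) qC_+\overline q\e(\lambda)$, hence $\psi = \e(\lambda) + R(\lambda\pm 0\ii)qC_+\overline q\e(\lambda) = m_\e(\lambda\pm 0\ii)$ by definition.

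The one point requiring a little care is that part (b) of Lemma \ref{lem:pertresolvent} is stated for $\lambda\in\overline{\R_+}\setminus\mathcal N$ with $f\in L^1_+(\R)$, and one must check the hypotheses are met: $f = qC_+\overline q\e(\lambda)\in L^1_+(\R)$ as noted above, and $\lambda\in[0,\infty)\setminus\mathcal N\subset\overline{\R_+}\setminus\mathcal N$. The asymptotic condition $\psi(x)-\e^{\ii\lambda x}\to 0$ translates precisely to $\phi(x)\to 0$ along the relevant direction. There is really no serious obstacle here — the lemma is essentially a direct corollary of Lemma \ref{lem:pertresolvent}, the main content of which (uniqueness, which rests on Lemma \ref{lem:LaxeigenqRqfixpt}) has already been done. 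The only thing to be mildly careful about is matching the sign conventions in $\lambda\pm 0\ii$ with the direction $x\to\mp\infty$ in the boundary condition, and confirming that $\e(\lambda)$ itself is not required to vanish at infinity (it does not, which is why we subtract it off and work with $\phi$).
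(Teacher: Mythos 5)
Your proof is correct and follows exactly the paper's argument: subtract $\e(\lambda)$, observe that the difference satisfies the inhomogeneous equation with $f=qC_+\overline q\e(\lambda)\in L^1_+(\R)$ and the one-sided vanishing condition, and invoke the uniqueness in part (b) of Lemma \ref{lem:pertresolvent}. The only difference is that you spell out the routine verifications (membership of $f$ in $L^1_+$, matching of sign conventions) that the paper leaves implicit.
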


\begin{proof}
    The function $\psi:=m_{\rm e}(\lambda\pm 0\ii)-\e(\lambda)\in L^\infty$ satisfies the equation and asymptotics in part (b) of Lemma \ref{lem:pertresolvent} with $f=q C_+ \ov{q}\e(\lambda)\in L^1_+$ and is therefore unique.
\end{proof}

\subsection*{The scattering matrix}

For $\lambda\in[0,\infty)\setminus\sigma_\mathrm{p}(L_q)$ we define
\begin{equation}\label{eqn:Gammadef}
\boxed{
\Gamma(\lambda):= 1 + \ii \int_\R \e^{-\ii\lambda y} q(y) C_+ \overline q m_{\rm e}(\lambda+0\ii)\dd y \,.
}
\end{equation}

\begin{lemma}\label{lem:scatobjsGamma}
    Let $q\in L^2_{+}(\R)$. Then $\Gamma \in C([0,\infty)\setminus\sigma_\mathrm{p}(L_q))$ and for all $\lambda\in [0,\infty)\setminus\sigma_\mathrm{p}(L_q)$,
    \begin{equation}\label{eqn:gammaoneident}
        |\Gamma(\lambda)|=1
    \end{equation}
    and
    \begin{equation}
        m_\e(x,\lambda+0\ii)=\Gamma(\lambda) \, m_\e(x,\lambda-0\ii) \,. \label{eqn:megammarel}
    \end{equation}
\end{lemma}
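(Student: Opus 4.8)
The plan is to read off the asymptotics of $m_{\rm e}(\lambda+0\ii)$ at $+\infty$, to deduce $|\Gamma(\lambda)|=1$ from a flux-type identity, to obtain \eqref{eqn:megammarel} from the uniqueness statement in Lemma \ref{lem:uniqueme}, and finally to prove continuity using the mapping properties of the resolvent from Section \ref{sec:lap}.

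\emph{Asymptotics of $m_{\rm e}(\lambda+0\ii)$ at $+\infty$.} Write $m_{\rm e}:=m_{\rm e}(\lambda+0\ii)\in L^\infty(\R)$ and $h:=\overline q\,m_{\rm e}\in L^2(\R)$. From $-\ii m_{\rm e}'-qC_+\overline q m_{\rm e}=\lambda m_{\rm e}$ we get that $\e^{-\ii\lambda x}m_{\rm e}(x)$ is locally absolutely continuous with
\[
    \bigl(\e^{-\ii\lambda x}m_{\rm e}(x)\bigr)'=\ii\,\e^{-\ii\lambda x}\,q(x)\,(C_+h)(x)\,.
\]
The right-hand side lies in $L^1(\R)$ since $q\in L^2$ and $C_+h\in L^2$, and $\e^{-\ii\lambda x}m_{\rm e}(x)\to 1$ as $x\to-\infty$ by the defining asymptotics of $m_{\rm e}(\lambda+0\ii)$ (Lemma \ref{lem:uniqueme}). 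Integrating from $-\infty$ and letting $x\to+\infty$, the definition \eqref{eqn:Gammadef} yields $\e^{-\ii\lambda x}m_{\rm e}(x)\to \Gamma(\lambda)$, i.e. $m_{\rm e}(x,\lambda+0\ii)-\Gamma(\lambda)\e^{\ii\lambda x}\to 0$ as $x\to+\infty$, while $|m_{\rm e}(x)|\to 1$ as $x\to-\infty$.

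\emph{Unimodularity and the relation \eqref{eqn:megammarel}.} Since $\lambda$ and $|m_{\rm e}|^2$ are real, the equation $m_{\rm e}'=\ii\lambda m_{\rm e}+\ii q(C_+h)$ gives
\[
    \frac{\dd}{\dd x}|m_{\rm e}(x)|^2=2\,\re\bigl(\overline{m_{\rm e}}\,m_{\rm e}'\bigr)=2\,\re\bigl(\ii\,\overline h\,(C_+h)\bigr)=-2\,\im\bigl(\overline h\,(C_+h)\bigr)\,.
\]
As $\overline h\,(C_+h)\in L^1(\R)$, integrating over $\R$ and using the two limits from the previous step,
\[
    |\Gamma(\lambda)|^2-1=-2\,\im\int_\R \overline h\,(C_+h)\dd x=-2\,\im\langle C_+h,h\rangle=-2\,\im\|C_+h\|_2^2=0\,,
\]
because $C_+$ is an orthogonal projection, so $\langle C_+h,h\rangle=\|C_+h\|_2^2\in\R$. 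This proves \eqref{eqn:gammaoneident}. In particular $\Gamma(\lambda)\neq 0$, so $\tilde\psi:=\Gamma(\lambda)^{-1}m_{\rm e}(\lambda+0\ii)\in L^\infty(\R)$ is a locally absolutely continuous solution of $-\ii\tilde\psi'-qC_+\overline q\tilde\psi=\lambda\tilde\psi$ with $\tilde\psi(x)-\e^{\ii\lambda x}\to 0$ as $x\to+\infty$ (first step). Lemma \ref{lem:uniqueme}, applied with the lower sign, identifies $\tilde\psi=m_{\rm e}(\lambda-0\ii)$, which is \eqref{eqn:megammarel}.

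\emph{Continuity.} Using \eqref{eqn:meinteq}, the function $g(\lambda):=C_+\overline q m_{\rm e}(\lambda+0\ii)\in L^2_+(\R)$ satisfies $(1-C_+\overline q R_0(\lambda+0\ii)qC_+)g(\lambda)=C_+\overline q\e(\lambda)$, so
\[
    \Gamma(\lambda)=1+\ii\bigl\langle (1-C_+\overline q R_0(\lambda+0\ii)qC_+)^{-1}C_+\overline q\e(\lambda),\ \e(\lambda)\overline q\bigr\rangle\,.
\]
On $[0,\infty)\setminus\mathcal N$ the inverse exists and, by Lemma \ref{lem:qRqcompact}(d) and continuity of operator inversion, is continuous in operator norm, while $\lambda\mapsto C_+\overline q\e(\lambda)$ and $\lambda\mapsto\e(\lambda)\overline q$ are continuous into $L^2(\R)$ by dominated convergence; hence $\Gamma\in C([0,\infty)\setminus\mathcal N)$. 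I do not expect a serious obstacle here: the only care needed is in the limit--integral interchanges in the first two steps, all of which reduce to the integrability of $q\,(C_+\overline q m_{\rm e}(\lambda+0\ii))$ guaranteed by $q\in L^2$ and $m_{\rm e}(\lambda+0\ii)\in L^\infty$. The conceptual heart is the flux identity together with the vanishing of $\im\langle C_+h,h\rangle$, which is forced by $C_+$ being an orthogonal projection.
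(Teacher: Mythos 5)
Your proof is correct, but it runs along a genuinely different track from the one in the paper. The pivot of your argument is the explicit identification of $\Gamma(\lambda)$ as the boundary value $\lim_{x\to+\infty}\e^{-\ii\lambda x}m_\e(x,\lambda+0\ii)$, obtained by integrating $(\ov{\e(\lambda)}m_\e)'=\ii\,\ov{\e(\lambda)}qC_+\ov{q}m_\e\in L^1$ from $-\infty$; once that is in hand, \eqref{eqn:gammaoneident} follows from the pointwise flux identity $(|m_\e|^2)'=-2\im(\ov{h}\,C_+h)$ with $h=\ov{q}m_\e$ and the reality of $\langle C_+h,h\rangle=\|C_+h\|_2^2$, and \eqref{eqn:megammarel} follows by feeding $\Gamma(\lambda)^{-1}m_\e(\lambda+0\ii)$ into the uniqueness statement of Lemma \ref{lem:uniqueme} for the lower sign. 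The paper never extracts these pointwise asymptotics: it proves $|\Gamma(\lambda)|=1$ from the integrated quadratic-form identity \eqref{eqn:Ginprodfg} applied to $f=g=qC_+\ov{q}m_\e(\lambda+0\ii)$ (yielding $-2\re(1-\Gamma)+|1-\Gamma|^2=0$), and obtains \eqref{eqn:megammarel} by rewriting the Lippmann--Schwinger equation \eqref{eqn:meinteq} with the kernel jump \eqref{eqn:Gpmdiff} and inverting $1-R_0(\lambda-0\ii)qC_+\ov{q}$. The two routes encode the same conservation law -- your differential flux identity is the pointwise version of \eqref{eqn:Ginprodfg}, and indeed the paper uses exactly your flux computation in Step 1 of the proof of Lemma \ref{lem:LaxeigenqRqfixpt} -- but yours buys a cleaner interpretation of $\Gamma$ as a transmission-type coefficient and makes \eqref{eqn:megammarel} an immediate consequence of uniqueness, at the modest price of having to justify the limits at $\pm\infty$; the paper's purely integral manipulations avoid pointwise asymptotics altogether. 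Your continuity argument, via the operator $(1-C_+\ov{q}R_0(\lambda+0\ii)qC_+)^{-1}$ and Lemma \ref{lem:qRqcompact}(d), is equivalent to the paper's use of Lemma \ref{lem:pertresolvcont} through the resolvent identity \eqref{eq:resolventformula}, and is equally valid.
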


\begin{proof}
    To see the continuity, we insert the definition of $m_{\rm e}(\lambda+0\ii)$ into the definition of $\Gamma$ and find
	$$
	\Gamma(\lambda) = 1 + \ii \langle C_+ \overline q \e(\lambda), \overline q \e(\lambda)\rangle + \ii \langle R(\lambda+0\ii)q C_+ \overline q \e(\lambda), q C_+ \overline q \e(\lambda) \rangle \,.
	$$
	Since $\lambda\mapsto\overline q \e(\lambda)$ is continuous with values in $L^2$ (by dominated convergence), the second term on the right side is continuous and $\lambda\mapsto q C_+ \overline q \e(\lambda)$ is continuous with values in $L^1$. Since $R(\lambda+0\ii)$ is uniformly bounded on compact subsets of $[0,\infty)\setminus\sigma_\mathrm{p}(L_q)$, we can deduce the continuity of the third term from the continuity statement of Lemma~\ref{lem:pertresolvcont}.
    
    The proof of \eqref{eqn:gammaoneident} uses the following, valid for $\lambda>0$,
    \begin{align}\label{eqn:Ginprodfg}
        \langle G_{\lambda\pm 0\ii}\ast f,g\rangle -\langle f,G_{\lambda\pm 0\ii}\ast g\rangle=\pm \ii \langle f,\e(\lambda)\rangle \overline{\langle g,\e(\lambda)\rangle },
    \end{align}
    which can be seen easily from 
    \begin{align}\label{eqn:Gpmdiff}
        G_{\lambda+0\ii}(x)-G_{\lambda-0\ii}(x)=\ii \, \e(x,\lambda);
    \end{align}
    see also \cite[Lemma 4.3]{wu_jost_2017}.
    
   Using this and the definition of $m_\e$, we compute 
    \begin{align*}
        \ii \abs{\sclp{qC_+\overline{q}m_\e(\lambda+0\ii),\e(\lambda)}}^2=&\sclp{G_{\lambda+0\ii}\ast \pot m_\e(\lambda+0\ii),\pot m_\e(\lambda+0\ii)}\\&-\sclp{\pot m_\e(\lambda+0\ii),G_{\lambda+0\ii}\ast \pot m_\e(\lambda+0\ii)}\\
        =&\sclp{m_\e(\lambda+0\ii)-\e(\lambda),\pot m_\e(\lambda+0\ii)}\\&-\sclp{\pot m_\e(\lambda+0\ii),m_\e(\lambda+0\ii)-\e(\lambda)}\\
        =&2\ii \im \sclp{\pot m_\e(\lambda+0\ii),\e(\lambda)},
    \end{align*}
    where we used that $\pot$ is self-adjoint in the last step. Using that $\Gamma(\lambda)=1+\ii \sclp{qC_+\overline{q}m_\e(\lambda+0\ii),\e(\lambda)}$, we find 
    \begin{align*}
        -2\ii \re(1-\Gamma(\lambda))+\ii \abs{1-\Gamma(\lambda)}^2=0 \,,
    \end{align*}
    from which it follows that $|\Gamma(\lambda)|=1$.

    To show \eqref{eqn:megammarel}, we start from the integral equation for $m_\e(\lambda+0\ii)$ and apply \eqref{eqn:Gpmdiff}, 
    \begin{align*}
        m_\e(\lambda+0\ii)&=\e(\lambda)+G_{\lambda+0\ii}\ast(qC_+\overline{q}m_\e(\lambda+0\ii))\\
        &=\e(\lambda)+(\ii \, \e(\lambda)+G_{\lambda-0\ii})\ast(qC_+\overline{q}m_\e(\lambda+0\ii))\\
        &=\Gamma(\lambda)\e(\lambda)+R_0(\lambda-0\ii)\pot m_\e(\lambda+0\ii) \,.
    \end{align*} 
    By rearranging and solving  for $m_\e(\lambda+0\ii)$, we find the desired identity. 
\end{proof}

We note in passing that
\begin{equation}
    \label{eq:lowenergy1}
    m_\e(0\pm 0\ii) = 1
\qquad\text{and}\qquad
\Gamma(0) = 1
\qquad\text{if}\ 0\not\in\sigma_\mathrm{p}(L_q) \,.
\end{equation}
Indeed, this follows from the definition of $m_\e$ and $\Gamma$, using $C_+\ov{q}=0$.


\section{The distorted Fourier transform}\label{sec:distortedFT}

In this section we will show that the homogeneous Jost solutions $m_\e(\lambda-0\ii)$ constitute a complete set of generalised eigenfunctions of the absolutely continuous spectrum of $\CMop$. More precisely, in terms of these functions we will define a surjective partial isometry with the absolutely continuous spectral subspace as its initial space that diagonalises the operator $\CMop$. Thus, the functions $m_\e(\lambda-0\ii)$ play a similar role for the operator $\CMop$ as the exponentials $\e^{\ii\lambda x}$ play for the operator $L_0$, and the transform that we are going to define is the analogue of the Fourier transform.

We begin by stating a by-product of the proof of the diagonalisation.

\begin{theorem}\label{thm:nosc}
    Let $q\in L^2_{+}(\R)$. Then the spectrum of $L_q$ is purely absolutely continuous on $\R_+\setminus\sigma_\mathrm{p}(L_q)$.
\end{theorem}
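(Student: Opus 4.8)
The plan is to deduce this from the limiting absorption principle contained in Lemmas \ref{lem:pertresolvcont} and \ref{lem:pertresolvent}, in the same way that such a principle is classically used to exclude singular continuous spectrum (see, e.g., \cite{yafaev_mathematical_1992}). Write $P$ for the spectral measure of $\CMop$ and $\mathcal H_{\mathrm{ac}}(\CMop)$ for its absolutely continuous subspace. By Corollary \ref{cornpm} we have $\sigma_{\rm p}(\CMop)\cap\overline{\R_+}\subseteq\mathcal N$, so it is enough to prove that $\ran P(B)\subseteq\mathcal H_{\mathrm{ac}}(\CMop)$ for every Borel set $B\subseteq\R_+\setminus\mathcal N$.

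First I would promote the boundary continuity of the resolvent to a locally uniform statement. Fix a compact $K\subset\R_+\setminus\mathcal N$. By the definition of the topology on $\Chat$, the set $\{\lambda+\ii\epsilon:\lambda\in K,\ 0\leq\epsilon\leq 1\}$ is compact in $\Chat$; since $K$ is disjoint from $\mathcal N$ and, by Corollary \ref{cornpm}, from $\sigma_{\rm p}(\CMop)$, it is contained in $\Chat\setminus(\sigma_{\rm p}(\CMop)\cup\mathcal N)^\diamond$. Hence Lemma \ref{lem:pertresolvcont} shows that for any $f,g\in L^1_+(\R)\cap L^2_+(\R)$ the function $(\lambda,\epsilon)\mapsto\sclp{R(\lambda+\ii\epsilon)f,g}$ is bounded on this set and, as $\epsilon\to 0^+$, converges to $\sclp{R(\lambda+0\ii)f,g}$ uniformly for $\lambda\in K$. (For $\epsilon>0$ the operator $R(\lambda+\ii\epsilon)$ is the usual resolvent on $L^2_+(\R)$, which coincides with its extension $L^1_+(\R)\to L^\infty(\R)$ on $L^1_+(\R)\cap L^2_+(\R)$.)

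Next I would invoke Stone's formula. Let $f\in L^1_+(\R)\cap L^2_+(\R)$; recall this space is dense in $L^2_+(\R)$. For any interval $[a,b]\subset\R_+\setminus\mathcal N$ the endpoints are not eigenvalues of $\CMop$, so Stone's formula and the previous step give
\begin{align*}
    \sclp{P((a,b))f,f}
    &=\lim_{\epsilon\to 0^+}\frac{1}{\pi}\int_a^b\im\sclp{R(\lambda+\ii\epsilon)f,f}\dd\lambda \\
    &=\frac{1}{\pi}\int_a^b\im\sclp{R(\lambda+0\ii)f,f}\dd\lambda ,
\end{align*}
where in the last line we used dominated convergence. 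By Lemma \ref{lem:pertresolvcont}, the integrand $\lambda\mapsto\im\sclp{R(\lambda+0\ii)f,f}$ is continuous on $\R_+\setminus\mathcal N$. Writing the open set $\R_+\setminus\mathcal N$ as a countable union of disjoint open intervals and applying the identity above on compact subintervals, we conclude that the restriction to $\R_+\setminus\mathcal N$ of the positive measure $A\mapsto\sclp{P(A)f,f}$ is absolutely continuous, with continuous density $\tfrac{1}{\pi}\im\sclp{R(\cdot+0\ii)f,f}$.

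To finish, note that for $f\in L^1_+(\R)\cap L^2_+(\R)$ the vector $h:=P(\R_+\setminus\mathcal N)f$ has spectral measure $A\mapsto\sclp{P(A)h,h}=\sclp{P(A\cap(\R_+\setminus\mathcal N))f,f}$, which is absolutely continuous by the previous paragraph; hence $h\in\mathcal H_{\mathrm{ac}}(\CMop)$. Since $L^1_+(\R)\cap L^2_+(\R)$ is dense in $L^2_+(\R)$ and $P(\R_+\setminus\mathcal N)$ is bounded, such vectors $h$ are dense in $\ran P(\R_+\setminus\mathcal N)$, and as $\mathcal H_{\mathrm{ac}}(\CMop)$ is closed we get $\ran P(\R_+\setminus\mathcal N)\subseteq\mathcal H_{\mathrm{ac}}(\CMop)$. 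Since $P(B)=P(B)P(\R_+\setminus\mathcal N)$ for $B\subseteq\R_+\setminus\mathcal N$, this yields $\ran P(B)\subseteq\mathcal H_{\mathrm{ac}}(\CMop)$, which is the assertion. The only step that is not entirely routine is the first one, namely upgrading the continuity from the earlier lemmas to control that is uniform in $\lambda$ on compact subsets of $\R_+\setminus\mathcal N$; this is precisely where the compactness in the topology of $\Chat$ is used, and everything afterwards is the standard packaging via Stone's formula and the closedness of $\mathcal H_{\mathrm{ac}}$.
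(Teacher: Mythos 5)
Your proof is correct and follows essentially the same route as the paper: Stone's formula combined with the boundary continuity of $\sclp{R(\lambda\pm\ii\epsilon)f,g}$ from Lemma \ref{lem:pertresolvcont} (uniform on compacta of $\R_+\setminus\mathcal N$), yielding a continuous, hence locally bounded, density for the spectral measure of vectors in the dense set $L^1_+(\R)\cap L^2_+(\R)$, followed by the standard density/closedness argument. The only difference is cosmetic: the paper channels the same computation through Lemma \ref{lem:projection}, identifying the density as $|(\Phi f)(\lambda)|^2$ via the jump formula for the resolvent, because that identification is needed anyway for the distorted Fourier transform, whereas you work directly with $\tfrac1\pi\im\sclp{R(\lambda+0\ii)f,f}$.
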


Since the set $\sigma_\mathrm{p}(L_q)$ is at most countable (indeed, as discussed after Lemma \ref{lem:qeigenfuncinprod}, it is finite) and since a nontrivial singular continuous measure cannot be supported on a countable set, we obtain the following consequence:

\begin{corollary}\label{cor:nosc}
    Let $q\in L^2_{+}(\R)$. Then the singular continuous spectrum of $\CMop$ is empty.
\end{corollary}

We now turn to the details of the diagonalisation. For every $f\in L^1_+(\R)$ and every $\lambda\in [0,\infty)\setminus\sigma_\mathrm{p}(L_q)$, we define
\begin{align}\label{eqn:Phif}
    (\Phi f)(\lambda)\coloneqq \frac{1}{\sqrt{2\pi}} \int_{\R} f(y)\ov{m_\e(y,\lambda-0\ii)}\dd y.
\end{align} 
This is well defined since $m_\e(\lambda-0\ii)\in L^\infty(\R)$ for every $\lambda\in[0,\infty)\setminus\sigma_\mathrm{p}(L_q)$.

The following is the main result of this section. We let $P_{\mathrm{ac}}(\CMop)$ denote the projection onto the absolutely continuous subspace of $L_q$.

\begin{theorem}\label{thm:meFT}
   	Let $q\in L^2_{+}(\R)$. Then the map $\Phi$ extends to a bounded linear operator $\Phi\colon L^2_+(\R)\rightarrow L^2(\R_+)$, with
    \begin{align}\label{eq:unitarity}
   		\Phi^\ast \Phi = P_{\mathrm{ac}}(\CMop), \qquad \Phi \Phi^\ast =1. 
   	\end{align}
    Moreover,
    \begin{equation}
        \label{eq:diag1}
            \dom L_q = \left\{ f\in L^2_+(\R) :\ \int_0^\infty \lambda^2 |(\Phi f)(\lambda)|^2 \,\dd\lambda < \infty \right\},
    \end{equation}
    and for $f$ from this space
    \begin{align}
        \label{eq:diag2}
   		(\Phi \CMop f)(\lambda)=\lambda \, (\Phi f)(\lambda)
        \qquad\text{for a.e.}\ \lambda\in\R_+ \,.
   	\end{align}
\end{theorem}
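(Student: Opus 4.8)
The plan is to follow the classical stationary-phase/limiting-absorption strategy for proving that generalised eigenfunctions furnish a diagonalising transform, adapted to the Hardy-space setting. First I would establish the basic mapping property: for $f\in L^1_+(\R)$ the function $(\Phi f)(\lambda)$ is well defined and, using the integral equation \eqref{eqn:meinteq} together with the Lippmann--Schwinger representation $m_\e(\lambda-0\ii)=\e(\lambda)+R_0(\lambda-0\ii)qC_+\overline q m_\e(\lambda-0\ii)$, rewrite $(\Phi f)(\lambda)$ as $\widehat f(\lambda)$ plus a correction term involving $\langle R(\lambda-0\ii)qC_+\overline q f,\ \cdot\rangle$-type pairings, which is continuous in $\lambda$ on $[0,\infty)\setminus\mathcal N$ by Lemma \ref{lem:pertresolvcont}. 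The key analytic input is Stone's formula: for $f,g\in L^1_+(\R)\cap L^2_+(\R)$ one writes the spectral measure of $L_q$ as a weak-$*$ limit
\begin{align*}
    \langle E_{(a,b)}(L_q)f,g\rangle = \lim_{\eps\downarrow 0}\frac{1}{2\pi\ii}\int_a^b \langle (R(\lambda+\ii\eps)-R(\lambda-\ii\eps))f,g\rangle\,\dd\lambda,
\end{align*}
and then uses the limiting absorption principle of Section \ref{sec:lap} (Lemma \ref{lem:pertresolvcont}) to pass to the boundary values $R(\lambda\pm 0\ii)$ on $[0,\infty)\setminus\mathcal N$, the set $\mathcal N$ being closed of measure zero so it does not affect the integral.

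The heart of the computation is to identify $\frac{1}{2\pi\ii}\langle(R(\lambda+0\ii)-R(\lambda-0\ii))f,g\rangle$ with $(\Phi f)(\lambda)\overline{(\Phi g)(\lambda)}$ for a.e.\ $\lambda>0$. I would do this by inserting the resolvent identity \eqref{eq:resolventformula} twice and using the jump relation \eqref{eqn:Gpmdiff}, $G_{\lambda+0\ii}-G_{\lambda-0\ii}=\ii\,\e(\lambda)$, exactly as in the proof of $|\Gamma|=1$ in Lemma \ref{lem:scatobjsGamma}: the difference $R_0(\lambda+0\ii)-R_0(\lambda-0\ii)$ is the rank-one-in-spirit operator $\ii\,\e(\lambda)\langle\cdot,\e(\lambda)\rangle$, and propagating this rank-one structure through \eqref{eq:resolventformula} produces the factorisation $\frac{\ii}{2\pi}\,\overline{\langle f,\ov{m_\e(\lambda-0\ii)}\rangle}^{\,}$ paired against the corresponding quantity for $g$ — here one must be careful to recognise that $\e(\lambda)+R_0(\lambda\mp0\ii)qC_+\overline q(\cdots)$ reassembles precisely $m_\e(\lambda\mp0\ii)$, and that the two signs give conjugate contributions so the final answer is $(\Phi f)(\lambda)\overline{(\Phi g)(\lambda)}$ with the correct normalisation $1/(2\pi)$. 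Granting this pointwise identity and a routine domination to justify the limit, Stone's formula yields $\langle E_{(a,b)\cap\mathcal N^c}(L_q)f,g\rangle=\int_a^b (\Phi f)(\lambda)\overline{(\Phi g)(\lambda)}\,\dd\lambda$, which simultaneously shows $\|\Phi f\|_{L^2(\R_+)}^2=\langle P_{\mathrm{ac}}f,f\rangle$ (so $\Phi$ extends boundedly with $\Phi^*\Phi=P_{\mathrm{ac}}$, absolute continuity of the spectral measure on $\R_+\setminus\mathcal N$ being a byproduct — this is Theorem \ref{thm:nosc}) and the intertwining $\Phi L_q = M_\lambda\Phi$, from which \eqref{eq:diag1} and \eqref{eq:diag2} follow by the spectral theorem applied to the multiplication operator. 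Surjectivity $\Phi\Phi^*=1$ then follows from a density/adjoint argument: compute $\Phi^*$ as the map $h\mapsto \frac{1}{\sqrt{2\pi}}\int_0^\infty h(\lambda)m_\e(\lambda-0\ii)\,\dd\lambda$, show $\Phi\Phi^*$ is a bounded self-adjoint idempotent (from $\Phi^*\Phi=P_{\mathrm{ac}}$), and rule out a nontrivial kernel by testing against $\e(\lambda)$-approximations, using that $m_\e(\lambda-0\ii)\to\e(\lambda)$ is a lower-order perturbation of the ordinary Fourier transform on $L^2_+(\R)$.

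The main obstacle I anticipate is the rigorous justification of passing to the boundary in Stone's formula and of the pointwise factorisation on the full-measure set $[0,\infty)\setminus\mathcal N$: one has only $L^\infty$-bounds on $m_\e$ and $L^1\to L^\infty$ bounds on the resolvent (no $L^2$-based limiting absorption with weights is assumed in this section, only $q\in L^2_+$), so the dominated-convergence argument controlling $\int_a^b\langle(R(\lambda+\ii\eps)-R(\lambda-\ii\eps))f,g\rangle\,\dd\lambda$ requires care — one needs a uniform-in-$\eps$ bound on $\langle R(\lambda\pm\ii\eps)f,g\rangle$ on compact $\lambda$-intervals away from $\mathcal N$, which comes from the uniform boundedness of $(1-C_+\overline q R_0 qC_+)^{-1}$ there (analytic Fredholm), plus equicontinuity near the endpoints. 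A secondary subtlety is that $\mathcal N$ need not be countable under the sole hypothesis $q\in L^2_+$, so one cannot claim pure absolute continuity globally; the statement is correctly localised to $\R_+\setminus\mathcal N$, and I would simply note that $E_{\mathcal N}(L_q)L^2_+$ is swallowed into $(1-P_{\mathrm{ac}})L^2_+$ for the purposes of \eqref{eq:unitarity}.
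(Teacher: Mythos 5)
Your overall strategy --- Stone's formula plus boundary values of the resolvent, factorising the jump $R(\lambda+0\ii)-R(\lambda-0\ii)$ through the rank-one jump of $R_0$ to obtain $\langle\1_\Lambda(L_q)f,g\rangle=\int_\Lambda(\Phi f)\overline{(\Phi g)}\,\dd\lambda$, and deducing $\Phi^*\Phi=P_{\rm ac}$ together with the absence of singular continuous spectrum on $\R_+\setminus\mathcal N$ --- is exactly the paper's route (Lemma \ref{lem:projection} and Step 1 of the proof of Theorem \ref{thm:meFT}). One ingredient you elide in the factorisation: the jump naturally factors through $m_\e(\lambda+0\ii)$ in both slots, and converting this to $m_\e(\lambda-0\ii)$, as the definition of $\Phi$ requires, uses $m_\e(\lambda+0\ii)=\Gamma(\lambda)\,m_\e(\lambda-0\ii)$ together with $|\Gamma(\lambda)|=1$ from Lemma \ref{lem:scatobjsGamma}; this is where ``the two signs give conjugate contributions'' has to be cashed out, and it is a real computation rather than bookkeeping.

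Two steps of your proposal have genuine gaps. First, the inclusion $\supset$ in \eqref{eq:diag1} does not ``follow by the spectral theorem applied to the multiplication operator'': since $\Phi$ annihilates $(1-P_{\rm ac}(\CMop))L^2_+(\R)$, finiteness of $\int\lambda^2|(\Phi f)(\lambda)|^2\dd\lambda$ gives no control on the singular spectral content of $f$. The paper needs the additional input that $\mathcal N$ is bounded (Theorem \ref{thm:simpasymp}, a high-energy result proved later), so that $\CMop(1-P_{\rm ac}(\CMop))$ is bounded; only then does the duality argument showing $f\in\dom L_q^*=\dom L_q$ close. Relatedly, the intertwining $(\Phi L_qf)(\lambda)=\lambda(\Phi f)(\lambda)$ is established only for $f\in\dom L_q$, not as an operator identity, so passing to $\Phi\1_\Lambda(\CMop)=\1_\Lambda\Phi$ requires the resolvent argument of Lemma \ref{lem:intertwine} rather than a bare appeal to the spectral theorem. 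Second, your surjectivity argument correctly reduces $\Phi\Phi^*=1$ to density of $\ran\Phi$, but ``testing against $\e(\lambda)$-approximations, using that $m_\e(\lambda-0\ii)-\e(\lambda)$ is a lower-order perturbation of the Fourier transform'' is not a workable mechanism here: the correction $R(\lambda-0\ii)qC_+\overline q\,\e(\lambda)$ is neither small nor compact relative to $\e(\lambda)$ in any sense available under the sole hypothesis $q\in L^2_+(\R)$. The paper instead takes $h\perp\ran\Phi$, interchanges integrals to get $\int_\Lambda h(\lambda)m_\e(x,\lambda-0\ii)\dd\lambda=0$, applies the Lebesgue differentiation theorem to obtain $h(\lambda)m_\e(x,\lambda-0\ii)=0$ for a.e.\ $\lambda$ at a fixed good $x$, and then exploits the spatial asymptotics $m_\e(x,\lambda-0\ii)-\e^{\ii\lambda x}\to0$ as $x\to+\infty$, uniform on compact subsets of $\R_+\setminus\mathcal N$, to find an $x$ at which $m_\e(x,\cdot)$ is nonvanishing, forcing $h=0$. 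Some such pointwise-in-$\lambda$ argument is needed; your sketch does not supply one.
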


As a consequence, for any $f\in L^2_+(\R)$, we have the decomposition
\begin{align}\label{eqn:fspectraldecomp}
    f(x)= \sum_{j=1}^N\frac{\sclp{f,\phi_j}}{\norm{\phi_j}_2^2}\phi_j(x)+\frac{1}{\sqrt{2\pi}}\int_0^\infty (\Phi f)(\lambda)m_\e(x,\lambda-0\ii)\dd \lambda,
\end{align}
where $\{\phi_j\}_{j=1}^N$ are the eigenfunctions of $\CMop$. This formula is understood in the $L^2_+(\R)$-sense.

The following result is the main tool in the proof of Theorems \ref{thm:nosc} and \ref{thm:meFT}. We let $\1_\Lambda$ denote the characteristic function of a Borel set $\Lambda$, so that, by the functional calculus, the operator $\1_{\Lambda}(L_q)$ is the spectral projection corresponding to this set.

\begin{lemma}\label{lem:projection}
    Let $q\in L^2_{+}(\R)$ and let $\Lambda\subset[0,\infty)$ be a bounded interval with $\overline\Lambda\cap\sigma_\mathrm{p}(L_q)=\emptyset$. Then, for all $f,g\in L^2_+(\R)\cap L^1_+(\R)$,
    $$
    \sclp{\1_{\Lambda}(L_q) f, g} = \int_\Lambda (\Phi f)(\lambda) \overline{(\Phi g)(\lambda)}\,\dd\lambda \,.
    $$
\end{lemma}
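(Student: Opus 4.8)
The plan is to express the spectral projection $\1_\Lambda(L_q)$ via Stone's formula, i.e.\ as a boundary-value integral of the difference of resolvents $R(\lambda+0\ii)-R(\lambda-0\ii)$ over $\Lambda$, and then to identify the integrand with $m_\e(\lambda-0\ii)$ paired against $f$ and $g$. Concretely, for $f,g\in L^2_+(\R)\cap L^1_+(\R)$ one has, for a bounded interval $\Lambda$ whose closure avoids $\mathcal N$,
\begin{align*}
    \sclp{\1_\Lambda(L_q)f,g} = \frac{1}{2\pi\ii}\int_\Lambda \left( \sclp{R(\lambda+0\ii)f,g} - \sclp{R(\lambda-0\ii)f,g} \right)\dd\lambda \,,
\end{align*}
the boundary values existing and being continuous on $\Lambda$ by Lemma \ref{lem:pertresolvcont}, and the absence of endpoint contributions from $\mathcal N$ being guaranteed by $\overline\Lambda\cap\mathcal N=\emptyset$. (One should be slightly careful here: the usual Stone formula is stated for resolvents as bounded operators on $L^2$, whereas here $R(\lambda\pm 0\ii)$ is only bounded $L^1_+\to L^\infty$. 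The cleanest route is to first take $\im k\to 0^\pm$ with $\im k\neq 0$, where everything is genuinely $L^2$-bounded, apply the standard Stone formula there, and then pass to the boundary using the continuity in Lemma \ref{lem:pertresolvcont} together with a dominated-convergence argument over $\Lambda$.)

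The heart of the matter is then the pointwise identity, for $\lambda\in\Lambda$,
\begin{align*}
    \sclp{R(\lambda+0\ii)f,g} - \sclp{R(\lambda-0\ii)f,g} = \ii\, (\Phi f)(\lambda)\,\overline{(\Phi g)(\lambda)} \cdot 2\pi \cdot \frac{1}{2\pi} \,,
\end{align*}
that is, the jump of the resolvent across the cut is the rank-one form $\ii\sclp{f,m_\e(\lambda-0\ii)}\overline{\sclp{g,m_\e(\lambda-0\ii)}}$ up to the normalising $1/(2\pi)$; plugging this into the Stone integral yields exactly $\int_\Lambda (\Phi f)(\lambda)\overline{(\Phi g)(\lambda)}\dd\lambda$. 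To establish this jump formula I would start from the resolvent identity \eqref{eq:resolventformula} and split $R(\lambda\pm 0\ii) = R_0(\lambda\pm 0\ii) + R_0(\lambda\pm 0\ii)qC_+(1-C_+\ov q R_0(\lambda\pm 0\ii)qC_+)^{-1}C_+\ov q R_0(\lambda\pm 0\ii)$. For the free part, $R_0(\lambda+0\ii)-R_0(\lambda-0\ii)$ is convolution with $G_{\lambda+0\ii}-G_{\lambda-0\ii}=\ii\,\e(\lambda)$ by \eqref{eqn:Gpmdiff}, giving precisely the rank-one form $\ii\sclp{f,\e(\lambda)}\overline{\sclp{g,\e(\lambda)}}$ — this is the unperturbed ($q=0$) case and recovers the ordinary Fourier/Plancherel identity. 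For the perturbed part one expands the difference, uses \eqref{eqn:Gpmdiff} repeatedly (the key algebraic input is $R_0(\lambda+0\ii)=\ii\,\e(\lambda)\otimes\e(\lambda)+R_0(\lambda-0\ii)$ as operators from $L^1_+$ to $L^\infty$, in the weak sense of \eqref{eqn:Ginprodfg}), and organizes the resulting terms; the algebra is designed to collapse into $\ii$ times $\sclp{f,m_\e(\lambda-0\ii)+R_0(\lambda-0\ii)\cdots}$ which, by the Lippmann--Schwinger equation \eqref{eqn:meinteq} and the definition of $m_\e$, reassembles into $\ii\sclp{f,m_\e(\lambda-0\ii)}\overline{\sclp{g,m_\e(\lambda-0\ii)}}$. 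Here one also uses $m_\e(\lambda+0\ii)=\Gamma(\lambda)m_\e(\lambda-0\ii)$ from Lemma \ref{lem:scatobjsGamma} together with $|\Gamma(\lambda)|=1$ to handle the asymmetry between the $+$ and $-$ boundary values, so that the final answer is expressed solely through $m_\e(\lambda-0\ii)$ as in the definition \eqref{eqn:Phif} of $\Phi$.

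The main obstacle I expect is this bookkeeping in the jump-formula computation: one must carefully track how the $(1-C_+\ov q R_0(\lambda\pm 0\ii)qC_+)^{-1}$ factors differ across the cut and verify that all the cross terms recombine cleanly. The cleanest way to organize it is to work throughout with the \emph{full} (perturbed) Lippmann--Schwinger representation $m_\e(\lambda\pm0\ii)=\e(\lambda)+R(\lambda\pm0\ii)qC_+\ov q\e(\lambda)$ rather than the free one, and to prove the operator identity $R(\lambda+0\ii)-R(\lambda-0\ii)=\ii\, m_\e(\lambda+0\ii)\otimes m_\e(\lambda-0\ii)$ (again interpreted as a bounded form on $L^1_+\times L^1_+$), since this is the direct analogue of the classical Schrödinger computation and then the unimodularity of $\Gamma$ converts it into the symmetric form $\ii\,m_\e(\lambda-0\ii)\otimes m_\e(\lambda-0\ii)$ up to the scalar $\Gamma(\lambda)$, which drops out after taking the modulus structure into account — in fact one checks $\Gamma(\lambda)\,m_\e(\lambda-0\ii)\otimes m_\e(\lambda-0\ii)$ is Hermitian precisely because of \eqref{eqn:megammarel}. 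Two smaller technical points to dispatch along the way: justifying that the Stone integral may be restricted to $\Lambda$ with no contribution from a neighbourhood of $\mathcal N$ (immediate since $\overline\Lambda$ is a positive distance from the closed set $\mathcal N$ and the resolvent boundary values are uniformly bounded $L^1_+\to L^\infty$ on compact subsets of $[0,\infty)\setminus\mathcal N$ by Lemma \ref{lem:pertresolvcont}), and confirming that $\Phi f$ and $\Phi g$ are genuinely defined pointwise on $\Lambda$ and measurable there, which follows from $m_\e(\lambda-0\ii)\in L^\infty$ together with the continuity of $\lambda\mapsto m_\e(\lambda-0\ii)$ in an appropriate sense inherited from Lemma \ref{lem:pertresolvcont}.
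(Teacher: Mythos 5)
Your overall strategy is exactly the paper's: Stone's formula on $\Lambda$ (with the endpoint issue handled by $\overline\Lambda\cap\mathcal N=\emptyset$, which by Corollary \ref{cornpm} also rules out eigenvalues at the endpoints), plus a jump formula expressing $R(\lambda+0\ii)-R(\lambda-0\ii)$ as a rank-one form built from $m_\e(\lambda-0\ii)$. The form you state in your second paragraph, $\ii\sclp{f,m_\e(\lambda-0\ii)}\overline{\sclp{g,m_\e(\lambda-0\ii)}}$, is the correct one (it equals $2\pi\ii\,(\Phi f)(\lambda)\overline{(\Phi g)(\lambda)}$, so Stone's $\tfrac1{2\pi\ii}$ gives the claimed identity; your displayed ``$\cdot\,2\pi\cdot\tfrac1{2\pi}$'' bookkeeping is garbled but the verbal version is right).

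The genuine problem is the organizing identity you propose in order to prove the jump formula: $R(\lambda+0\ii)-R(\lambda-0\ii)=\ii\, m_\e(\lambda+0\ii)\otimes m_\e(\lambda-0\ii)$ is \emph{false}. Since $m_\e(\lambda+0\ii)=\Gamma(\lambda)m_\e(\lambda-0\ii)$, this mixed product equals $\ii\,\Gamma(\lambda)\, m_\e(\lambda-0\ii)\otimes m_\e(\lambda-0\ii)$, which differs from the truth by the unimodular factor $\Gamma(\lambda)$; and your claim that $\Gamma(\lambda)\,m_\e(\lambda-0\ii)\otimes m_\e(\lambda-0\ii)$ is Hermitian ``because of \eqref{eqn:megammarel}'' is incorrect — that operator is Hermitian only when $\Gamma(\lambda)$ is real. (Indeed $R(k)^*=R(\ov k)$ forces the jump to be $\ii$ times a Hermitian operator, which is precisely why the coefficient in front of $m_\e(\lambda-0\ii)\otimes m_\e(\lambda-0\ii)$ must be $\ii$ times a \emph{real} number, namely $\ii\cdot 1$.) If you ran your plan literally you would land on $\int_\Lambda\Gamma(\lambda)(\Phi f)(\lambda)\overline{(\Phi g)(\lambda)}\dd\lambda$. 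The actual work, which your sketch does not supply, is the paper's Step 1: subtracting the two Lippmann--Schwinger equations for $R(\lambda\pm0\ii)f$ gives $R(\lambda+0\ii)f-R(\lambda-0\ii)f=c(\lambda)\,m_\e(\lambda-0\ii)$ with $c(\lambda)=\ii\sclp{qC_+\ov q R(\lambda+0\ii)f+f,\e(\lambda)}$, and one must then prove $c(\lambda)=\ii\sclp{f,m_\e(\lambda-0\ii)}$ — i.e.\ convert a pairing against $\e(\lambda)$ involving $R(\lambda+0\ii)$ into a pairing against $m_\e(\lambda-0\ii)$. This is done by combining \eqref{eqn:Ginprodfg} with the definition of $\Gamma$, the relation \eqref{eqn:megammarel}, and $|\Gamma(\lambda)|=1$ in a specific cancellation (the paper's identities \eqref{eqn:sctreleqn1} and \eqref{eqn:sctreleqn2}); your Hermiticity heuristic does not substitute for it and in fact points in the wrong direction.
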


\begin{proof}
    \emph{Step 1.} Let $\lambda\in[0,\infty)\setminus\sigma_\mathrm{p}(L_q)$ and $f\in L^1_+(\R)$. We will show that
    \begin{align*}
        R(\lambda+0\ii)f-R(\lambda-0\ii)f=\ii\sqrt{2\pi} \, (\Phi f)(\lambda) \, m_\e(\lambda-0\ii).
    \end{align*}

    Indeed, the functions $R(\lambda\pm0\ii) f$ satisfy the integral equations
    $$
    R(\lambda\pm0\ii) f = G_{\lambda\pm0\ii}* f + G_{\lambda\pm0\ii}* (qC_+ \overline{q} R(\lambda\pm0\ii) f) \,.
    $$
    Subtracting these two equations and using \eqref{eqn:Gpmdiff}, we find
    \begin{align*}
        R(\lambda+0\ii) f - R(\lambda-0\ii) f
        & = (G_{\lambda+0\ii} - G_{\lambda-0\ii})* f
        + (G_{\lambda+0\ii} - G_{\lambda-0\ii})* (qC_+ \overline{q} R(\lambda+0\ii) f) \\
        & \quad + G_{\lambda-0\ii}*(qC_+ \overline{q} (R(\lambda+0\ii)f - R(\lambda-0\ii) f)) \\
        & = c(\lambda) \e(\lambda) + G_{\lambda-0\ii}*(qC_+ \overline{q} (R(\lambda+0\ii)f - R(\lambda-0\ii) f))
    \end{align*}
    with
    $$
    c(\lambda) = \ii \int_\R \left( f(x) + qC_+\overline q R(\lambda+0\ii) f \right) \e^{-\ii\lambda x}\,\dd x \,.
    $$
    By the definition of $m_\e(\lambda\pm\ii 0)$ in \eqref{eqn:meinteq}, it follows that
    $$
    R(\lambda+0\ii) f - R(\lambda-0\ii) f = c(\lambda)\, m_\e(\lambda-0\ii) \,.
    $$
    This will give the claimed identity, provided we can prove that
    $$
    c(\lambda) = \ii\sqrt{2\pi} \, (\Phi f)(\lambda) \,.
    $$

    Observe that, using the equations for $R(\lambda\pm 0\ii)f$ and $m_\e(\lambda-0\ii)$,
    \begin{align*}
        \ii \sclp{\pot R(\lambda+0\ii)f,\e(\lambda)} \, &\ov{\sclp{\pot m_\e(\lambda+0\ii),\e(\lambda)}}\\=&\sclp{G_{\lambda+0\ii}\ast \pot R(\lambda+0\ii)f,\pot m_\e(\lambda+0\ii)}\\&-\sclp{\pot R(\lambda+0\ii)f,G_{\lambda+0\ii}\ast \pot m_\e(\lambda+0\ii)}\\
        =&\sclp{R(\lambda+0\ii)f-G_{\lambda+0\ii}\ast f,\pot m_\e(\lambda+0\ii)}\\&-\sclp{\pot R(\lambda+0\ii)f,m_\e(\lambda+0\ii)-\e(\lambda)}\\
        =&-\sclp{G_{\lambda+0\ii}\ast f,\pot m_\e(\lambda+0\ii)}+\sclp{\pot R(\lambda+0\ii)f,\e(\lambda)},
    \end{align*}
    where we used the self-adjointness of $\pot$ in the last step. Rearranging gives
    \begin{align}\label{eqn:sctreleqn1}
        \langle G_{\lambda+0\ii}\ast f, q C_+ \overline{q} m_\e(\lambda+0\ii)\rangle- \ov{\Gamma(\lambda)}\langle q C_+ \overline{q}R(\lambda+0\ii)f,\e(\lambda)\rangle =0.
    \end{align}
    Meanwhile, using \eqref{eqn:Ginprodfg} again and the equation for $m_\e$, 
    \begin{align*}
         \ii \langle f, \e(\lambda)\rangle \,& \overline{\langle qC_+\overline{q}m_\e(\lambda+0\ii),\e(\lambda) \rangle} \\
        = & \langle G_{\lambda+0\ii}\ast f, q C_+ \overline{q} m_\e(\lambda+0\ii)\rangle 
        - \langle f, G_{\lambda+0\ii}\ast qC_+\overline{q}m_\e(\lambda+0\ii) \\
        = & \langle G_{\lambda+0\ii}\ast f, q C_+ \overline{q} m_\e(\lambda+0\ii)\rangle - \langle f,m_\e(\lambda+0\ii)\rangle + \langle f,\e(\lambda)\rangle \,.
    \end{align*}
   Using \eqref{eqn:megammarel} we can rewrite this as
   \begin{align}\label{eqn:sctreleqn2}
        \langle G_{\lambda+0\ii}\ast f, q C_+ \overline{q} m_\e(\lambda+0\ii)\rangle=-\ov{\Gamma(\lambda)}\sclp{f,\e(\lambda)}+\ov{\Gamma(\lambda)}\langle f,m_\e(\lambda-0\ii)\rangle. 
    \end{align}
    Putting \eqref{eqn:sctreleqn1} and \eqref{eqn:sctreleqn2} together, gives
    \begin{align*}
        -\overline{\Gamma(\lambda)}\langle f,\e(\lambda)\rangle +\overline{\Gamma(\lambda)}\langle f,m_\e(\lambda-0\ii)\rangle-\overline{\Gamma(\lambda)} \langle q C_+ \overline{q} R(\lambda+0\ii)f,\e(\lambda)\rangle =0
    \end{align*}
    and thus, since $\abs{\Gamma(\lambda)}=1\neq 0$ by \eqref{eqn:gammaoneident},we have  
    \begin{align*}
       \langle f,m_\e(\lambda-0\ii)\rangle- \langle qC_+ \overline{q} R(\lambda+0\ii)f+f,\e(\lambda)\rangle=0,
    \end{align*}
    or equivalently, using \eqref{eqn:Phif},
    \begin{align*}
        \sqrt{2\pi}\Phi(\lambda)+\ii c(\lambda)=0,
    \end{align*}
    which is the claimed identity.

    \medskip

    \emph{Step 2.} Let $\Lambda\subset[0,\infty)$ be a bounded interval with $\overline{\Lambda}\cap \sigma_\mathrm{p}(L_q)=\emptyset$ and let $f,g\in L^2_{+}(\R)\cap L^1_+(\R)$. According to Lemma \ref{lem:pertresolvcont}, $\sclp{R(k)f,g}$ is continuous with respect to $k$. We obtain, using  Step 1, that
    $$
    \frac1{2\pi\ii} \left( \sclp{R(\lambda+\ii\epsilon) f,g} - \sclp{R(\lambda-\ii\epsilon) f,g} \right)
    \to (\Phi f)(\lambda)\, \overline{(\Phi g)(\lambda)} \,,
    $$
    and these asymptotics are uniform for $\lambda\in\overline\Lambda$. 

    Meanwhile, by Stone's formula we have
    $$
    \frac{1}{2\pi\ii} \int_\Lambda \left( \sclp{R(\lambda+\ii\epsilon) f,g} - \sclp{R(\lambda-\ii\epsilon) f,g} \right)\dd\lambda
    \to \frac12 \left( \sclp{\1_{\overline\Lambda}(L_q) f, g} + \langle{\1_{\overset\circ\Lambda}(L_q) f, g}\rangle \right).
    $$
    Since the endpoints of $\Lambda$ are, by assumption, not in $\sigma_\mathrm{p}(L_q)$ and therefore by Corollary \ref{cornpm} not eigenvalues of $L_q$, the right side is equal to $\sclp{\1_{\Lambda}(L_q) f, g}$.

    Noting that $\sclp{R(\lambda\pm\ii\epsilon) f,g}$ is uniformly bounded in $(\lambda,\epsilon)\in\Lambda\times(0,1]$, we obtain the claimed formula by dominated convergence.
\end{proof}

\begin{proof}[Proof of Theorem \ref{thm:nosc}]
    As we have mentioned before, the set $\sigma_\mathrm{p}(L_q)$ is closed and therefore the open set $\R_+\setminus\sigma_\mathrm{p}(L_q)$ is the countable union of disjoint open intervals, $\R_+\setminus\sigma_\mathrm{p}(L_q)=\bigcup_n I_n$. Fix one of these intervals $I_n$ and let $K$ be a compact subinterval of $I_n$. For each $f\in L^2_+(\R)\cap L^1_+(\R)$, the function $|\Phi f|^2$ is bounded on $K$ and therefore, by Lemma \ref{lem:projection}, the spectral measure $\Lambda\mapsto (\1_{\Lambda}(L_q)f,f)$ is absolutely continuous on $K$, that is, $\1_K(L_q) f$ belongs to the absolutely continuous subspace of $L_q$. Since $L^2_+(\R)\cap L^1(\R)$ is dense in $L^2_+(\R)$, we deduce that $\ran \1_K(L_q)$ is contained in the absolutely continuous subspace of $L_q$, that is, the spectum of $L_q$ is purely absolutely continuous on $K$. Since $K$ is an arbitrary compact subinterval of $I_n$, it follows that the spectrum of $L_q$ is purely absolutely continuous on $I_n$. This implies that the spectrum is purely absolutely continuous on $\bigcup_n I_n =\R_+\setminus\sigma_\mathrm{p}(L_q)$, as claimed.
\end{proof}

\begin{proof}[Proof of Theorem \ref{thm:meFT}]
    \emph{Step 1.}
    We show that $\Phi$ can be extended to a bounded operator from $L^2_+(\R)$ to $L^2(\R_+)$ that satisfies the first equality in \eqref{eq:unitarity}.
    
    Let $f\in L^2_+(\R)\cap L^1_+(\R)$. We write $\R_+\setminus\sigma_\mathrm{p}(L_q)$ as a countable union of disjoint open intervals. Approximating each one of these intervals by compact intervals and using monotone convergence, we deduce from Lemma \ref{lem:projection} that
    $$
    \sclp{\1_{\R_+\setminus\sigma_\mathrm{p}(L_q)} f,f} = \int_{\R_+} |(\Phi f)(\lambda)|^2\,\dd\lambda \,.
    $$
    On the right side we write $\R_+$ as integration domain rather than $\R_+\setminus\sigma_\mathrm{p}(L_q)$, the value of $(\Phi f)(\lambda)$ for $\lambda$ in the null set $\sigma_\mathrm{p}(L_q)$ being irrelevant. Note that since 
    $$
    \sclp{\1_{\R_+\setminus\sigma_\mathrm{p}(L_q)} f,f}\leq \|f\|_2^2 \,,
    $$
    we infer that $\Phi f$ is square-integrable. Moreover, by Theorem \ref{thm:nosc} we have $\1_{\R_+\setminus\sigma_\mathrm{p}(L_q)} = P_\mathrm{ac}(L_q)$, so
    $$
    \sclp{P_\mathrm{ac}(L_q) f,f} = \int_{\R_+} |(\Phi f)(\lambda)|^2\,\dd\lambda \,.
    $$
    By density of $L^2_+(\R)\cap L^1(\R)$ in $L^2_+(\R)$, we infer that $\Phi$ extends to a bounded linear operator from $L^2_+(\R)$ to $L^2(\R_+)$ satisfying the first equality in \eqref{eq:unitarity}.

    \medskip

    \emph{Step 2.} We shall prove `one half' of \eqref{eq:diag1} and \eqref{eq:diag2}. Specifically, we shall show the inclusion $\subset$ in \eqref{eq:diag1} and the equality in \eqref{eq:diag2} for $f\in\dom L_q$.
    
    First, for $f\in\mathcal S_+(\R)\coloneqq \mathcal{S}(\R)\cap L^2_+(\R)$ and $\lambda\in\R_+\setminus\sigma_\mathrm{p}(L_q)$ we can use the fact that $m_\e(\lambda-0\ii)$ satisfies the equation $L_q m_\e(\lambda-0\ii) = \lambda m_\e(\lambda-0\ii)$ in the sense of tempered distributions to deduce that
    $$
    (\Phi L_q f)(\lambda) = \lambda (\Phi f)(\lambda) \,.
    $$
    Since the operator norm of $L_q$ is equivalent to the $H^1(\R)$-norm and since $\mathcal S_+(\R)$ is dense in $H^1_+(\R)$, we easily deduce that, if $f\in\dom L_q$, then $\int_{\R_+} \lambda^2 |(\Phi f)(\lambda)|^2\,\dd\lambda<\infty$ and $(\Phi L_qf)(\lambda) = \lambda (\Phi f)(\lambda)$ for almost all $\lambda\in\R_+$.

    \medskip

    \emph{Step 3.}
    Let us show the second equality in \eqref{eq:unitarity}. Taking into account the first equality there, which we have already proved, it suffices to show that $\ran\Phi$ is dense in $L^2(\R_+)$. To do this, suppose that there is a $h\in L^2(\R_+)$ that is orthogonal to $\ran\Phi$. Thus
    \begin{align*}
        \sclp{\Phi \1_\Lambda(\CMop) f,h}=0
    \end{align*}
    for every bounded interval $\Lambda\subset\R_+$ with $\overline\Lambda\cap\sigma_\mathrm{p}(L_q)=\emptyset$ and every $f\in L^2_+(\R)\cap L^1(\R)$. According to Step 2 and Lemma \ref{lem:intertwine} below this means that for all such $\Lambda$ and $f$ we have
    \begin{align*}
        0=\int_\Lambda \int_{-\infty}^\infty f(y) \ov{m_\e(y,\lambda-0\ii)}\dd y\ov{h(\lambda)}\dd \lambda
        =\int_{-\infty}^\infty f(y)\ov{\int_{\Lambda} h(\lambda)m_\e(y,\lambda-0\ii)\dd \lambda }\dd y \,.
    \end{align*}
    The interchange of integrals is allowed since $m_\e(\lambda-0\ii)$ is bounded for $\lambda$ in the intervals $\Lambda$ under consideration.
    
    Since $f$ is arbitrary, we deduce that for all such $\Lambda$ we have for almost every $x\in\R$
    \begin{align*}
        \int_{\Lambda} h(\lambda)m_\e(x,\lambda-0\ii)\dd x=0 \,.
    \end{align*}
    Restricting to $\Lambda$ with rational endpoints, say, we may assume that the full measure set of $x$'s is independent of $\Lambda$. Fixing now $x$ in this full measure set we obtain, by shrinking $\Lambda$ and applying the Lebesgue differentiation theorem, that
    $$
    h(\lambda)m_\e(x,\lambda-0\ii)=0
    \qquad\text{for a.e.}\ \lambda\in\R_+ \,.
    $$
    We also used the fact that $\sigma_\mathrm{p}(L_q)$ is a null set.
    
    We shall use the asymptotics of $m_\e(x,\lambda-0\ii)$ as $x\to\mp\infty$, see Lemma \ref{lem:uniqueme}. From the integral equation \eqref{eqn:meinteq} we easily see that these asymptotics are uniform for $\lambda$ in compact subsets of $\R_+\setminus\sigma_\mathrm{p}(L_q)$. Thus, fixing such a subset $K$, we find $x$ large enough and belonging to our full measure set such that $m_\e(x,\lambda-0\ii)\neq 0$ for all $\lambda\in K$. We deduce that $h(\lambda)=0$ for a.e.~$\lambda\in K$. Since $K$ is arbitrary, we find $h\equiv 0$. Thus, $\ran\Phi$ is dense in $L^2(\R_+)$, as we wanted to prove.

    \medskip

    \emph{Step 4.} We finally turn to the proof of the `other half' of \eqref{eq:diag1} and \eqref{eq:diag2}. Specifically, we shall show the inclusion $\supset$ in \eqref{eq:diag1}.

    To do so, let $f\in L^2_+(\R)$ with $\lambda\Phi f\in L^2(\R_+)$ and set $g:=\Phi^*(\lambda\Phi f)\in L^2(\R_+)$. Then for any $h\in L^2_+(\R)$
    $$
    \langle g, h \rangle = \int_{\R_+} \lambda (\Phi f)(\lambda) \overline{(\Phi h)(\lambda)}\,\dd\lambda \,.
    $$
    Under the stronger assumption $h\in\dom L_q$ we have by the first equality in \eqref{eq:unitarity}
    \begin{align*}
        \langle f, L_q h\rangle & = \int_{\R_+} (\Phi f)(\lambda) \overline{(\Phi L_q h)(\lambda)}\,\dd\lambda + \langle f,(1-P_\mathrm{ac}(L_q))L_q h \rangle \\
        & = \int_{\R_+} \lambda (\Phi f)(\lambda) \overline{(\Phi h)(\lambda)}\,\dd\lambda + \langle f,(1-P_\mathrm{ac}(L_q)) L_q h \rangle \,.
    \end{align*}
    Here in the last equality we used Step 2.

    In Theorem \ref{thm:simpasymp} below we will show that the set $\sigma_\mathrm{p}(L_q)$ is bounded. As a consequence, we have $\widetilde g:= L_q(1-P_\mathrm{ac}(L_q))f\in L^2_+(\R)$.

    To summarise, we have shown that for $h\in\dom L_q$ we have
    $$
    \langle f, L_q h\rangle = \langle g + \widetilde g, h \rangle \,.
    $$
    This proves that $f\in\dom L_q^*$ and, since $L_q$ is self-adjoint, that $f\in\dom L_q$. This concludes the proof.    
\end{proof}

In the previous proof we used the following technical lemma.

\begin{lemma}\label{lem:intertwine}
    Let $\mathcal H$ and $\mathcal G$ be Hilbert spaces, let $A$ and $B$ be self-adjoint operator in $\mathcal H$ and $\mathcal G$, respectively, and let $C:\mathcal H\to\mathcal G$ be a bounded operator. Assume that for every $f\in\dom A$, $Cf\in\dom B$ and
    \begin{equation}
        \label{eq:intertwine}
            C A f = B Cf \,.
    \end{equation}
    Then for any Borel set $\Lambda\subset\R$,
    \begin{equation}
        \label{eq:intertwine2}
            C \1_\Lambda (A) = \1_\Lambda(B) C \,.
    \end{equation}
\end{lemma}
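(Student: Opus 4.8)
The plan is to deduce the spectral-projection intertwining relation \eqref{eq:intertwine2} from the operator intertwining relation \eqref{eq:intertwine} by a standard functional-calculus argument, first passing to the resolvents and then to general bounded Borel functions. First I would show that the hypothesis $CAf = BCf$ for $f \in \dom A$ implies, for any $z \in \C \setminus \R$,
\begin{equation*}
    C(A-z)^{-1} = (B-z)^{-1} C \,.
\end{equation*}
To see this, fix $g \in \mathcal H$ and set $f := (A-z)^{-1} g \in \dom A$. Then $Cf \in \dom B$ by hypothesis, and applying $C$ to $(A-z)f = g$ gives, using \eqref{eq:intertwine}, $(B-z)(Cf) = C(A-z)f = Cg$; since $B-z$ is invertible, $Cf = (B-z)^{-1} C g$, which is the claimed identity.

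Next I would upgrade this to $C\varphi(A) = \varphi(B) C$ for every bounded continuous $\varphi$ vanishing at infinity, and then for every bounded Borel $\varphi$. The cleanest route is via the Stone--Weierstrass theorem: the resolvent identity just proved shows the intertwining holds for $\varphi$ in the $*$-algebra generated by the functions $x \mapsto (x-z)^{-1}$, which by Stone--Weierstrass is dense (in sup norm, on the one-point compactification of $\R$) in $C_0(\R)$; since $\varphi \mapsto \varphi(A)$ and $\varphi \mapsto \varphi(B)$ are norm-continuous and $C$ is bounded, the identity $C\varphi(A) = \varphi(B)C$ persists for all $\varphi \in C_0(\R)$. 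To reach characteristic functions of Borel sets one uses the bounded convergence theorem for the functional calculus: for $f \in \mathcal H$ and $h \in \mathcal G$, the map $\varphi \mapsto \sclp{C\varphi(A) f, h} - \sclp{\varphi(B) C f, h} = \int \varphi \, \dd\mu_1 - \int \varphi\, \dd\mu_2$ for suitable complex measures $\mu_1, \mu_2$; since these two functionals agree on $C_0(\R)$, the measures coincide, and hence $\sclp{C\varphi(A) f, h} = \sclp{\varphi(B) C f, h}$ for every bounded Borel $\varphi$, in particular for $\varphi = \1_\Lambda$. As $f$ and $h$ are arbitrary, \eqref{eq:intertwine2} follows.

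There is essentially no serious obstacle here; this is a textbook fact. The only point requiring a little care is the passage from $C_0(\R)$ to bounded Borel functions, where one must invoke the correct continuity property of the Borel functional calculus (dominated/bounded convergence applied to the spectral measures $\Lambda \mapsto \sclp{\1_\Lambda(A) f, g}$), rather than attempting a norm-limit argument, since $\1_\Lambda$ is not a uniform limit of continuous functions. Alternatively, one can avoid Stone--Weierstrass altogether and argue directly at the level of spectral measures: for $f \in \mathcal H$, $h \in \mathcal G$, the resolvent identity gives that the Borel--Stieltjes transforms of the complex measures $\Lambda \mapsto \sclp{C \1_\Lambda(A) f, h}$ and $\Lambda \mapsto \sclp{\1_\Lambda(B) C f, h}$ coincide on $\C \setminus \R$, hence by the Stieltjes inversion formula (or uniqueness of the Borel transform) the two measures are equal, which is exactly \eqref{eq:intertwine2} tested against $f$ and $h$.
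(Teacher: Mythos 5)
Your proposal is correct and, in its final ``alternative'' formulation, is exactly the paper's proof: derive the resolvent intertwining $C(A-z)^{-1}=(B-z)^{-1}C$ from \eqref{eq:intertwine} by applying $C$ to $(A-z)f=g$ with $f=(A-z)^{-1}g$, and then conclude \eqref{eq:intertwine2} from the uniqueness of the Stieltjes transform of the (signed/complex) spectral measures $\Lambda\mapsto\sclp{\1_\Lambda(B)Cf,\phi}-\sclp{\1_\Lambda(A)f,C^*\phi}$. The Stone--Weierstrass detour through $C_0(\R)$ in your main route is a harmless variant of the same idea and adds nothing essential.
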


The point of this lemma is that we do \emph{not} assume that the identity $CA=BC$ holds in the sense of (unbounded) operators. That is, we do \emph{not} assume that for all $f\in\mathcal H$ with $Cf\in\dom B$ we have $f\in\dom A$ and \eqref{eq:intertwine} holds. The weak assumption that we impose is dictated by our application.

\begin{proof}
    Let $g\in \mathcal H$ and $z\in\C_+$. Then $(A-z)^{-1} g\in\dom A$ and therefore, by assumption \eqref{eq:intertwine} with $f=(A-z)^{-1} g$, we have
    $$
    C g = (B-z) C (A-z)^{-1} g \,.
    $$
    Applying the bounded operator $(B-z)^{-1}$ we obtain the operator identity
    $$
    (B-z)^{-1} C = C (A-z)^{-1} \,.
    $$
    By the spectral theorem, this implies, for any $f\in\mathcal H$, $\phi\in\mathcal G$
    $$
    \int_\R \frac{d(\1_{(-\infty,\lambda)}(B)Cf,\phi)}{\lambda-z} =
    \int_\R \frac{d(\1_{(-\infty,\lambda)}(A)f,C^*\phi)}{\lambda-z} \,.
    $$
    Thus, the Stieltjes transform of the finite, signed measure 
    $$
    d\left( (\1_{(-\infty,\lambda)}(B)Cf,\phi) - (\1_{(-\infty,\lambda)}(A)f,C^*\phi)\right)
    $$ 
    vanishes on $\C_+$. This implies that the measure vanishes, that is, for any Borel set $\Lambda\subset\R$ we have
    $$
    (\1_{\Lambda}(B)Cf,\phi) - (\1_{\Lambda}(A)f,C^*\phi) = 0 \,.
    $$
    Since $f$ and $\phi$ are arbitrary, this implies \eqref{eq:intertwine2}.
\end{proof}

\subsection*{Wave operators and scattering matrix}

We end this section with a brief aside by putting the results we have proved so far into the framework of mathematical scattering theory as presented, for instance, in the textbooks of Yafaev  \cite{yafaev_mathematical_1992,yafaev_mathematical_2010}. Since we will not make use of the results elsewhere in this paper, we will omit proofs.

There are various ways to see that both wave operators
$$
W_\pm(L_q,L_0) := s-\lim_{t\to\pm\infty} \exp(\ii t L_q) \exp(-\ii tL_0)
$$
exist. This follows by the Birman--Krein theorem \cite{birmankrein} from the fact that the difference of resolvents $(L_q+\kappa)^{-1}-(L_0+\kappa)^{-1}$ is trace class for all sufficiently large $\kappa$. The latter fact is shown in \cite{killip_sharp_2023} under the sole assumption $q\in L^2_+(\R)$.

We define operators $\Phi_\pm:L^2_+(\R)\to L^2(\R_+)$ by
$$
(\Phi_+ f)(\lambda):= (\Phi f)(\lambda) \,,
\qquad
(\Phi_- f)(\lambda) := \overline{\Gamma(\lambda)} (\Phi f)(\lambda) \,.
$$
Note that, by \eqref{eqn:megammarel}, $\Phi_-$ is the same as $\Phi$, except that $m_\e(\lambda-\ii 0)$ is replaced by $m_\e(\lambda+\ii 0)$ in its definition.

Then, proceeding exactly as in \cite[Subsection 6.6.2]{yafaev_mathematical_2010}, we find that
$$
W_\pm(L_q,L_0) = \Phi_\pm^* \mathcal F \,,
$$
where $\mathcal F$ is the Fourier transform. The first equality in \eqref{eq:unitarity} implies that the range of $W_\pm(L_q,L_0)$ is the absolutely continuous subspace of $L_q$, that is, the wave operators are complete.

As a consequence of the above formulas for the wave operators, the scattering operator
$$
S(L_q,L_0) := W_+(L_q,L_0)^* W_-(L_q,L_0)
$$
is given by
$$
S(L_q,L_0) = \mathcal F^* \Phi_+ \Phi_-^* \mathcal F.
$$
Thus, in view of the second identity in \eqref{eq:unitarity},
$$
(\mathcal F S(L_q,L_0)f)(\lambda) = \Gamma(\lambda) \, (\mathcal F f)(\lambda) \,.
$$
That is, in the Fourier representation, the scattering operator acts by multiplication by $\Gamma$. In this sense, the number $\Gamma(\lambda)$ is the scattering matrix at energy $\lambda$. Note that the equation $|\Gamma(\lambda)|=1$, see \eqref{eqn:gammaoneident}, corresponds to unitarity of the scattering matrix.

\newpage

\part{Direct scattering theory} 

In this second part we introduce the inhomogeneous Jost solutions that, together with the generalised eigenfunctions, will form the basis of the direct scattering theory for \eqref{eqn:CMeqn}. In addition to $q\in L^2_+(\R)$, we will require that $q\in L^1_+(\R)$. 


\section{Inhomogeneous Jost solution}\label{sec:jostinhom}

We define the inhomogeneous Jost solution $m_0(k)\in L^\infty(\R)$, for $k\in \Chat\setminus(\sigma_\mathrm{p}(\CMop))^\diamond$, by 
\begin{align*}
    \boxed{
    m_0(k):=R(k)q.
    }
\end{align*}
By Lemma \ref{lem:pertresolvcont} this is well defined and satisfies the property that $k\mapsto \sclp{m_0(k),g}$ is a continuous map on $\Chat$, for any $g\in L^1_+(\R)$. 

From the resolvent formula, we can deduce the integral equation
\begin{align}\label{eqn:m0inteq}
    m_0(k)=R_0(k)q+R_0(k)\pot m_0(k) \,,
\end{align}
which will be helpful later. 


\subsection*{The scattering coefficient}
For $\lambda\in [0,\infty)\backslash\sigma_\mathrm{p}(\CMop)$, we define 
    \begin{align}
        \boxed{
        \beta(\lambda):= \ii \int_{\R} q(y)\left([C_+\overline{q}m_0(\lambda+0\ii)](y)+1\right)\e^{-\ii \lambda y}\dd y \,.
        }
        \label{eqn:betadef}
    \end{align}

In the following lemma, we show that $\beta$ corresponds to $\sqrt{2\pi}\ii\Phi(q)$, where $\Phi$ is given by \eqref{eqn:Phif} in Section \ref{sec:distortedFT}. Using the decomposition \eqref{eqn:fspectraldecomp} in the special case of $f=q$, the results of the last section amount to the reconstruction formula 
\begin{align}\label{eqn:qreconstruction}
    q(x)= \sum_{j=1}^N\frac{\sclp{q,\phi_j}}{\norm{\phi_j}_2^2}\phi_j(x)+\frac{\ii}{2\pi}\int_0^\infty \beta (\lambda)m_\e(x,\lambda-0\ii)\dd \lambda.
\end{align}
An analogous representation was non-rigorously derived for the Benjamin--Ono equation in \cite{fokas_inverse_1983}.
    
\begin{lemma}\label{lem:scatobjsbeta}
    Let $q\in L^2_+(\R)\cap L^1_+(\R)$. Then $\beta\in C([0,\infty)\backslash\sigma_\mathrm{p}(\CMop))$ and, for any $\lambda\in [0,\infty)\backslash\sigma_\mathrm{p}(\CMop)$,
    \begin{align}
        m_0(x,\lambda+0\ii)-m_0(x,\lambda-0\ii)&=\beta(\lambda)m_\e(x,\lambda-0\ii) \,,
        \label{eqn:m0mebetarel} \\
        \ii \overline{\beta(\lambda)}&=\int_{\R}\overline{q(y)}m_\e(y,\lambda-0\ii)\dd y \,,
        \label{eqn:fbetaident}
        \intertext{and}
        \abs{\beta(\lambda)}^2&=2\im\int_{\R}\overline{q(y)}m_0(y,\lambda+0\ii)\dd y \,. \label{eqn:betasquaredident}
    \end{align}
\end{lemma}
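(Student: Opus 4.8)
The plan is to obtain all three identities by specialising the resolvent‑difference computation in Step~1 of the proof of Lemma~\ref{lem:projection} to the function $f=q$, and then combining the outcome with the elementary symmetry $R(\lambda-\ii\epsilon)=R(\lambda+\ii\epsilon)^\ast$ of the resolvent across the real axis.

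First I would invoke Step~1 of the proof of Lemma~\ref{lem:projection} with $f=q\in L^1_+(\R)$. For $\lambda\in[0,\infty)\setminus\mathcal N$ that step establishes
\[
R(\lambda+0\ii)q-R(\lambda-0\ii)q=c(\lambda)\,m_\e(\lambda-0\ii),
\qquad
c(\lambda)=\ii\int_\R\bigl(q+\pot R(\lambda+0\ii)q\bigr)\e^{-\ii\lambda x}\,\dd x,
\]
and moreover identifies $c(\lambda)=\ii\sqrt{2\pi}\,(\Phi q)(\lambda)$. Since $m_0(\lambda\pm0\ii)=R(\lambda\pm0\ii)q$ and $\pot R(\lambda+0\ii)q=qC_+\ov q\,m_0(\lambda+0\ii)$, the displayed formula for $c(\lambda)$ is exactly the right‑hand side of the definition \eqref{eqn:betadef}, so $c(\lambda)=\beta(\lambda)$; this is precisely \eqref{eqn:m0mebetarel}. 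Recalling $(\Phi q)(\lambda)=\tfrac1{\sqrt{2\pi}}\int_\R q(y)\,\ov{m_\e(y,\lambda-0\ii)}\,\dd y$ from \eqref{eqn:Phif}, the identity $\beta(\lambda)=\ii\sqrt{2\pi}\,(\Phi q)(\lambda)$ reads $\beta(\lambda)=\ii\int_\R q(y)\,\ov{m_\e(y,\lambda-0\ii)}\,\dd y$, and conjugating and multiplying by $\ii$ gives \eqref{eqn:fbetaident}. For the continuity of $\beta$ on $[0,\infty)\setminus\mathcal N$ I would argue as for $\Gamma$ in the proof of Lemma~\ref{lem:scatobjsGamma}: inserting $m_0(\lambda+0\ii)=R(\lambda+0\ii)q$ into \eqref{eqn:betadef} and using self‑adjointness of $\pot$ yields $\beta(\lambda)=\ii\sclp{q,\e(\lambda)}+\ii\sclp{R(\lambda+0\ii)q,\,\pot\,\e(\lambda)}$; here $\lambda\mapsto\sclp{q,\e(\lambda)}$ and $\lambda\mapsto\pot\,\e(\lambda)\in L^1(\R)$ are continuous by dominated convergence (and boundedness of $C_+$ on $L^2$), $R(\lambda+0\ii)$ is uniformly bounded from $L^1_+(\R)$ to $L^\infty(\R)$ on compact subsets of $[0,\infty)\setminus\mathcal N$, and $k\mapsto\sclp{R(k)f,g}$ is continuous by Lemma~\ref{lem:pertresolvcont}(b).

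It remains to prove \eqref{eqn:betasquaredident}. I would pair the jump relation \eqref{eqn:m0mebetarel} with $q$ and use \eqref{eqn:fbetaident} to get
\[
\sclp{m_0(\lambda+0\ii),q}-\sclp{m_0(\lambda-0\ii),q}
=\beta(\lambda)\,\sclp{m_\e(\lambda-0\ii),q}
=\beta(\lambda)\cdot\ii\,\ov{\beta(\lambda)}
=\ii\,\abs{\beta(\lambda)}^2.
\]
On the other hand, since $L_q$ is self‑adjoint we have $R(\lambda-\ii\epsilon)=R(\lambda+\ii\epsilon)^\ast$ for $\epsilon>0$, hence $\sclp{R(\lambda-\ii\epsilon)q,q}=\ov{\sclp{R(\lambda+\ii\epsilon)q,q}}$; letting $\epsilon\downarrow0$ and using the continuity of $k\mapsto\sclp{R(k)q,q}$ on $\Chat$ from Lemma~\ref{lem:pertresolvcont}(b) gives $\sclp{m_0(\lambda-0\ii),q}=\ov{\sclp{m_0(\lambda+0\ii),q}}$. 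Substituting this into the displayed identity turns its left‑hand side into $2\ii\,\im\sclp{m_0(\lambda+0\ii),q}$, and dividing by $\ii$ yields exactly \eqref{eqn:betasquaredident}.

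I do not expect a genuine obstacle here: the lemma is essentially a corollary of the computations already carried out in Step~1 of the proof of Lemma~\ref{lem:projection}. The only points requiring care are the bookkeeping of complex conjugates, so that the signs match the convention that $\sclp{\cdot,\cdot}$ is linear in the first and anti‑linear in the second argument, and the justification of the limiting and interchange steps in the continuity argument, both of which are routine given the uniform resolvent bounds established in Section~\ref{sec:lap}.
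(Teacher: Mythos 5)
Your proposal is correct, and for \eqref{eqn:m0mebetarel}, \eqref{eqn:fbetaident} and the continuity of $\beta$ it follows essentially the same route as the paper: the paper also obtains \eqref{eqn:fbetaident} by citing Step~1 of the proof of Lemma~\ref{lem:projection} with $f=q$ and the identification $\beta(\lambda)=c(\lambda)$, and its continuity argument is the one you describe (the paper merely re-derives the jump relation for $m_0$ directly from the integral equation \eqref{eqn:m0inteq} and \eqref{eqn:Gpmdiff} instead of quoting Lemma~\ref{lem:projection}, which is a cosmetic difference). Where you genuinely diverge is \eqref{eqn:betasquaredident}: the paper proves it by a direct computation with the commutator-type identity \eqref{eqn:Ginprodfg}, expanding $\ii|\beta(\lambda)|^2=\ii|\langle qC_+\ov q\,m_0(\lambda+0\ii)+q,\e(\lambda)\rangle|^2$ and using self-adjointness of $qC_+\ov q$, whereas you deduce it from the two identities already proved by pairing the jump relation with $q$ and invoking $\langle m_0(\lambda-0\ii),q\rangle=\ov{\langle m_0(\lambda+0\ii),q\rangle}$, which follows from $R(\lambda-\ii\epsilon)=R(\lambda+\ii\epsilon)^\ast$ together with the continuity of $k\mapsto\langle R(k)q,q\rangle$ from Lemma~\ref{lem:pertresolvcont}(b) (legitimate here since $q\in L^1_+$). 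Your derivation is shorter and makes the logical dependence of \eqref{eqn:betasquaredident} on the first two identities explicit; the paper's computation is self-contained and does not need the symmetry-plus-limit step, but both are valid.
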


\begin{proof}
    The continuity of $\beta$ follows writing 
    \begin{align*}
        \beta(\lambda) = \ii\sclp{q,\e(\lambda)}
        +\sclp{R(\lambda+0\ii)q, q C_+\ov{q}\e(\lambda)}
    \end{align*}
    and arguing similarly to Lemma \ref{lem:scatobjsGamma}, in particular, using the continuity of $\lambda\mapsto\ov{q}\e(\lambda)$ in $L^2$ and the continuity of Fourier transforms. 

    For \eqref{eqn:m0mebetarel}, we use the integral equation \eqref{eqn:m0inteq} for $m_0$ and apply the relation \eqref{eqn:Gpmdiff} to find
    \begin{align*}
        m_0(\lambda+0\ii)-m_0(\lambda-0\ii) & = \left(G_{\lambda+0\ii}-G_{\lambda-0\ii}\right)\ast q+(G_{\lambda+0\ii}-G_{\lambda-0\ii})\ast (qC_+\overline{q}m_0(\lambda+0\ii))\\
        & \quad +G_{\lambda-0\ii}\ast(qC_+\overline{q}(m_0(\lambda+0\ii)-m_0(\lambda-0\ii)))\\
        & = \ii \, \e(\lambda)\ast q+\ii \e(\lambda)\ast (qC_+\overline{q}m_0(\lambda+0\ii))\\
        & \quad +G_{\lambda-0\ii}\ast(qC_+\overline{q}(m_0(\lambda+0\ii)-m_0(\lambda-0\ii))) \,.
    \end{align*}
    The identity follows by rearranging for $m_0(\lambda+0\ii)-m_0(\lambda-0\ii)$ and comparing with the definitions of $m_\e(\lambda-0\ii)$ and $\beta(\lambda)$.

    The statement \eqref{eqn:fbetaident} follows directly from Step 1 of the proof of Lemma \ref{lem:projection}, using that $\beta(\lambda)=c(\lambda)$ with $f=q$. 

    For \eqref{eqn:betasquaredident}, using \eqref{eqn:Ginprodfg} and the integral equation \eqref{eqn:m0inteq} for $m_0$ we compute 
    \begin{align*}
        \ii \abs{\beta(\lambda)}^2=&\ii \abs{\sclp{\pot m_0(\lambda+0\ii)+q,\e(\lambda)}}^2\\=&\sclp{G_{\lambda+0\ii}\ast (\pot m_0(\lambda+0\ii)+q),\pot m_0(\lambda+0\ii)+q}\\&-\sclp{\pot m_0(\lambda+0\ii)+q,G_{\lambda+0\ii}\ast (\pot m_0(\lambda+0\ii)+q)}\\
        =&\sclp{m_0(\lambda+0\ii),\pot m_0(\lambda+0\ii)+q}-\sclp{\pot m_0(\lambda+0\ii)+q,m_0(\lambda+0\ii)}
        \\=&\sclp{m_0(\lambda+0\ii),q}-\sclp{q,m_0(\lambda+0\ii)}=2\ii \im \sclp{m_0(\lambda+0\ii),q},
    \end{align*}
    where we used the self-adjointness of $\pot$. This establishes \eqref{eqn:betasquaredident}. 
\end{proof}

In passing we note that
\begin{equation}
    \label{eq:lowenergy2}
    \beta(0) = 0
    \qquad\text{if}\ 0\not\in\sigma_\mathrm{p}(\CMop) \,.
\end{equation}
This follows from \eqref{eqn:fbetaident}, since $m_\e(0-0\ii)=1$ by \eqref{eq:lowenergy1} and since the Fourier transform of a function in $L^1_+(\R)$ vanishes at the origin.

\begin{remark}\label{rem:b}
    The definition of $m_0(k)$ and $\beta(\lambda)$ required the assumption $q\in L^2_+ \cap L^1_+$. Let us see what remains if we only assume $q\in L^2_+$. In this case the inhomogeneous Jost solution can still be defined for $k\in\C\setminus\sigma(L_q)$ by $m_0(k):=R(k) q$. Moreover, formula \eqref{eqn:fbetaident} means that $\beta = \sqrt{2\pi} \ii \Phi(q)$, where $\Phi$ is the distorted Fourier transform from \eqref{eqn:Phif}. Since, according to Theorem \ref{thm:meFT}, $\Phi$ has an extension to $L^2_+(\R)$, we can \emph{define} $\beta := \sqrt{2\pi} \ii \Phi(q)$ for $q\in L^2_+(\R)$. Now $\beta$ is no longer defined pointwise, but rather as an $L^2$-function. This extension will be relevant in Section \ref{sec:traceform}.
\end{remark}


\subsection*{A differential formula for $m_\e$ and $\Gamma$}

While the homogeneous and inhomogeneous Jost functions considered so far are not continuous in $L^\infty(\R)$, but only in a weak sense, it is easy to see that by modulating $m_\e(\lambda\pm0\ii)$ by a plane wave, the functions $\ov{\e(\lambda)}m_\e(\lambda\pm0\ii)$ become continuous. In what follows, we show that it is in fact differentiable, and that it satisfies a relation connecting it to the inhomogeneous Jost function and the scattering coefficient. A similar relation connects the scattering matrix and the scattering coefficient.

\begin{lemma}\label{lem:emederiv}
  Let $q\in L^2_+(\R)\cap L^1_+(\R)$. Then $\lambda\mapsto \ov{\e(\lambda)}m_\e(\lambda\pm 0\ii)$ and $\lambda\mapsto\Gamma(\lambda)$ are differentiable on $\R_+\setminus\sigma_\mathrm{p}(\CMop)$ and for any $\lambda\in \R_+\backslash\sigma_\mathrm{p}(\CMop)$, 
  \begin{align*}
      & \partial_\lambda(\ov{\e(\lambda)} \, m_\e(\lambda+0\ii))=-\frac{\overline{\beta(\lambda)}\Gamma(\lambda)}{2\pi\ii }\ \ov{\e(\lambda)} \, m_0(\lambda+0\ii) \,, \\
      & \partial_\lambda(\ov{\e(\lambda)} \, m_\e(\lambda-0\ii))=-\frac{\overline{\beta(\lambda)}}{2\pi\ii }\ \ov{\e(\lambda)} \, m_0(\lambda-0\ii)
  \end{align*}
  and 
  \begin{align}\label{eqn:gammaderivident}
        \partial_\lambda \Gamma(\lambda)=-\frac{|\beta(\lambda)|^2}{2\pi\ii } \ \Gamma(\lambda) \,.
    \end{align}
\end{lemma}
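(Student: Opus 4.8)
\emph{Strategy.} The plan is to derive all three formulas from the modulated Lippmann--Schwinger equation for $m_\e$ together with the single analytic input that, with the Fourier normalisation fixed as in the statement,
\[
[C_+,M_x]f=\frac{1}{2\pi\ii}\int_\R f(y)\,\dd y\qquad\text{(a constant function)},
\]
where $M_x$ denotes multiplication by $x$. This commutator identity is exactly what makes the plane-wave modulations of the Jost functions differentiable in $\lambda$ despite no decay being assumed on $q$, and it is what produces the factor $\ov{\beta}$ (via \eqref{eqn:fbetaident}) in the answers.

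\emph{Setting up.} Writing $w_\pm(\lambda):=\ov{\e(\lambda)}\,m_\e(\lambda\pm0\ii)\in L^\infty(\R)$, I would first use the explicit kernel $G_{\lambda\pm0\ii}$ to rewrite \eqref{eqn:meinteq} in modulated form $(1-A_\pm(\lambda))w_\pm(\lambda)=1$ in $L^\infty(\R)$, where $A_\pm(\lambda)w:=\ov{\e(\lambda)}R_0(\lambda\pm0\ii)\pot\,\e(\lambda)w$; explicitly $(A_-(\lambda)w)(x)=-\ii\int_x^\infty\e^{-\ii\lambda y}q(y)[C_+(\ov q\,\e(\lambda)w)](y)\,\dd y$, and similarly for $A_+$ with $\int_{-\infty}^x$ in place of $\int_x^\infty$. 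Next I would record that $1-A_\pm(\lambda)$ is boundedly invertible on $L^\infty(\R)$ for every $\lambda\in\R_+\setminus\mathcal N$: conjugating by $\e(\lambda)$ and applying $C_+\ov q$ reduces solvability of $(1-A_\pm(\lambda))\phi=f$ to solvability of $(1-C_+\ov q R_0(\lambda\mp0\ii)qC_+)\eta=C_+(\ov q\,\e(\lambda)f)$ on $L^2_+(\R)$, which holds because $\lambda\notin\mathcal N$, while injectivity follows from Lemma~\ref{lem:LaxeigenqRqfixpt} since an $L^\infty$ solution of the homogeneous equation automatically decays at the relevant end and hence lies in $\mathcal S(\lambda\mp0\ii)$. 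The same computation applied to $f=\ov{\e(\lambda)}R_0(\lambda\pm0\ii)q$ gives the identity $(1-A_\pm(\lambda))^{-1}\bigl[\ov{\e(\lambda)}R_0(\lambda\pm0\ii)q\bigr]=\ov{\e(\lambda)}\,m_0(\lambda\pm0\ii)$, because after conjugation the equation becomes precisely \eqref{eqn:m0inteq} for $m_0(\lambda\pm0\ii)$, whose unique $L^\infty$ solution is $m_0(\lambda\pm0\ii)$ by Lemma~\ref{lem:pertresolvent}.

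\emph{The key step.} The heart of the matter is to show that $\lambda\mapsto A_\pm(\lambda)$ is $C^1$ from $\R_+\setminus\mathcal N$ into the bounded operators on $L^\infty(\R)$, with rank-one derivative; for the $-$ sign,
\[
(\partial_\lambda A_-(\lambda)w)(x)=\frac{1}{2\pi\ii}\Bigl(\int_\R\ov{q(z)}\,\e^{\ii\lambda z}w(z)\,\dd z\Bigr)\int_x^\infty\e^{-\ii\lambda y}q(y)\,\dd y ,
\]
and analogously for $A_+$. Differentiating the kernel of $A_-(\lambda)$ termwise in $\lambda$ produces two contributions, each carrying a spurious factor $y$ that is \emph{not} integrable against $q\in L^1(\R)\cap L^2(\R)$; the point is that their sum equals $\e^{-\ii\lambda y}q(y)\,[[C_+,M_y](\ov q\,\e(\lambda)w)](y)$, so the commutator identity collapses it to the bounded operator above. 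Making this cancellation rigorous at the level of the difference quotient $h^{-1}(A_-(\lambda+h)-A_-(\lambda))$ --- organising it so that no individually unbounded term ever appears --- is the main technical obstacle; everything else is bookkeeping. Granting it, $w_\pm(\lambda)=(1-A_\pm(\lambda))^{-1}1$ is $C^1$ with values in $L^\infty(\R)$ and $\partial_\lambda w_\pm=(1-A_\pm(\lambda))^{-1}(\partial_\lambda A_\pm(\lambda))w_\pm$.

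\emph{Reading off the formulas.} By \eqref{eqn:fbetaident} one has $\int_\R\ov{q(z)}\,\e^{\ii\lambda z}w_-(\lambda,z)\,\dd z=\int_\R\ov q\,m_\e(\lambda-0\ii)=\ii\ov{\beta(\lambda)}$, and $\int_x^\infty\e^{-\ii\lambda y}q(y)\,\dd y=\ii\,\ov{\e(\lambda)}[R_0(\lambda-0\ii)q](x)$, so $(\partial_\lambda A_-(\lambda))w_-=-\tfrac{\ov{\beta(\lambda)}}{2\pi\ii}\,\ov{\e(\lambda)}\,R_0(\lambda-0\ii)q$; combined with $(1-A_-(\lambda))^{-1}[\ov{\e(\lambda)}R_0(\lambda-0\ii)q]=\ov{\e(\lambda)}m_0(\lambda-0\ii)$ this is exactly the second displayed formula. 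The first follows identically from the $A_+$ version, using \eqref{eqn:megammarel} to get $\int_\R\ov q\,m_\e(\lambda+0\ii)=\Gamma(\lambda)\,\ii\ov{\beta(\lambda)}$. Finally, for \eqref{eqn:gammaderivident}: from the modulated integral equation $w_-(\lambda,x)\to1$ as $x\to+\infty$, uniformly for $\lambda$ in compact subsets of $\R_+\setminus\mathcal N$, so for $x_0$ fixed large enough $\Gamma(\lambda)=w_+(\lambda,x_0)/w_-(\lambda,x_0)$ (valid by \eqref{eqn:megammarel}) is a quotient of $C^1$ functions with nonvanishing denominator near any given point, hence $C^1$. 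Differentiating $w_+(\lambda,x)=\Gamma(\lambda)w_-(\lambda,x)$ in $\lambda$ and letting $x\to+\infty$, I would use $w_-(\lambda,x)\to1$, $\partial_\lambda w_-(\lambda,x)=-\tfrac{\ov{\beta(\lambda)}}{2\pi\ii}\ov{\e(\lambda)}m_0(x,\lambda-0\ii)\to0$ (since $m_0(\lambda-0\ii)$ vanishes at $+\infty$), and $\partial_\lambda w_+(\lambda,x)=-\tfrac{\ov{\beta(\lambda)}\Gamma(\lambda)}{2\pi\ii}\ov{\e(\lambda)}m_0(x,\lambda+0\ii)\to-\tfrac{|\beta(\lambda)|^2}{2\pi\ii}\Gamma(\lambda)$, the last limit because $\ov{\e(\lambda)}m_0(x,\lambda+0\ii)\to\beta(\lambda)$ as $x\to+\infty$, directly from \eqref{eqn:m0inteq} and the definition \eqref{eqn:betadef}. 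This yields $\partial_\lambda\Gamma(\lambda)=-\tfrac{|\beta(\lambda)|^2}{2\pi\ii}\Gamma(\lambda)$.
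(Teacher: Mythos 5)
Your proposal is correct in substance and, for the two formulas for $\partial_\lambda(\ov{\e(\lambda)}m_\e(\lambda\pm0\ii))$, follows essentially the paper's strategy: modulate the Lippmann--Schwinger equation, observe that the modulated operator (your $A_\pm(\lambda)$, the paper's $\widetilde T_{\lambda\pm\ii0}$) has a rank-one $\lambda$-derivative, and differentiate $(1-A_\pm(\lambda))^{-1}1$; the coefficients then collapse via \eqref{eqn:fbetaident} and \eqref{eqn:megammarel}, and the identification $(1-A_\pm(\lambda))^{-1}\ov{\e(\lambda)}R_0(\lambda\pm0\ii)q=\ov{\e(\lambda)}m_0(\lambda\pm0\ii)$ is the same as in the paper. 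Your single analytic input $[C_+,M_x]f=\frac{1}{2\pi\ii}\int f$ is exactly the infinitesimal form of the identity $\ov{\e(\lambda)}C_+\e(\lambda)=C_+ + C_\lambda$ with $C_\lambda=\F^{-1}\chi_{(-\lambda,0)}\F$ that the paper uses. The one place where you defer rather than execute is precisely the step you flag as the main obstacle: justifying the rank-one derivative at the level of difference quotients without ever producing an individually unbounded term. The paper's $C_\lambda$ decomposition is the organisation that does this for you --- the $C_+$ part of the modulated kernel is $\lambda$-independent and cancels in the difference quotient, and $h^{-1}(C_{\lambda+h}-C_\lambda)$ is the frequency cutoff $h^{-1}\F^{-1}\chi_{(-\lambda-h,-\lambda)}\F$, handled by dominated convergence using only $q\in L^1\cap L^2$. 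So the gap is fillable by exactly the mechanism you name, but as written the hardest step is a promissory note.

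Two points where you genuinely diverge. First, you prove invertibility of $1-A_\pm(\lambda)$ on $L^\infty$ by a Fredholm reduction to the $L^2_+$ operator $1-C_+\ov{q}R_0 qC_+$ plus injectivity via Lemma \ref{lem:LaxeigenqRqfixpt}, whereas the paper simply reads it off from the resolvent identity \eqref{eq:resolventformula} (its \eqref{eq:diffemeproof}); both work, the paper's is shorter. (Watch your $\pm/\mp$ bookkeeping here: the homogeneous $L^\infty$ solution attached to $A_\pm(\lambda)$ decays at $\mp\infty$ and so lies in $\mathcal S(\lambda\pm0\ii)$, and the relevant $L^2_+$ operator is $1-C_+\ov{q}R_0(\lambda\pm0\ii)qC_+$, not the opposite sign; this is harmless only because $\mathcal N_+=\mathcal N_-$. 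Similarly, the prefactor of $\partial_\lambda A_+$ carries the opposite sign to that of $\partial_\lambda A_-$, matching the $\pm\ii$ in the kernels --- your ``analogously'' should be read with that sign flip, as your final formulas require.) Second, your derivation of \eqref{eqn:gammaderivident} by differentiating $w_+(\lambda,x)=\Gamma(\lambda)w_-(\lambda,x)$ and sending $x\to+\infty$, using $w_-\to1$, $\ov{\e(\lambda)}m_0(x,\lambda-0\ii)\to0$ and $\ov{\e(\lambda)}m_0(x,\lambda+0\ii)\to\beta(\lambda)$ at $+\infty$, is a genuinely different and rather clean route; the paper instead differentiates the pairing formula $\Gamma(\lambda)=1+\ii\langle\ov{\e(\lambda)}m_\e(\lambda+0\ii),\ov{\e(\lambda)}qC_+\ov{q}\e(\lambda)\rangle$ by the product rule. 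Your boundary values are all correct consequences of \eqref{eqn:m0inteq}, \eqref{eqn:betadef} and Lemma \ref{lem:pertresolvent}, and the uniformity needed to fix $x_0$ with $w_-(\cdot,x_0)\neq0$ follows from the integral equation; what your route buys is that it never needs the separate differentiability of $\lambda\mapsto\ov{\e(\lambda)}qC_+\ov{q}\e(\lambda)$ in $L^1$ (the paper's Step 4).
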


\begin{proof}
    The proof is based on the operator identity
    \begin{equation}\label{eqn:Chprop}
        \ov{\e(\lambda)} C_+ \e(\lambda) = C_+ + C_\lambda
    \end{equation}
    with $C_\lambda\coloneqq\F^{-1}\1_{(-\lambda,0)}\F$ for $\lambda\geq 0$, where $\F$ is the Fourier transform, 
    $$
    (\F f)(\xi) = \frac1{\sqrt{2\pi}} \int_\R \e^{-\ii\xi x} f(x)\dd x \,.
    $$
    
    \medskip
    
    \emph{Step 1.} We show that for every $\phi\in L^\infty$ the $L^1$-valued function $\lambda\mapsto qC_\lambda \ov{q}\phi$ is differentiable on $[0,\infty)$ with
    $$
    \partial_\lambda q C_\lambda\ov{q}\phi = \frac1{2\pi} \int_\R \e^{\ii\lambda y} \ov{q(y)}\phi(y) \dd y \ \ov{\e(\lambda)} q \,.
    $$
    
    To see this, let $\lambda\geq 0$ and let $h\in\R$ with $\lambda+h\geq 0$. Then
    $$
    h^{-1} q(x)((C_{\lambda+h} - C_\lambda)\ov{q}\phi)(x) = \frac{1}{\sqrt{2\pi} h} q(x) \int_{-\lambda-h}^{-\lambda} \e^{\ii\xi x} \mathcal F(\ov{q}\phi)(\xi) \dd\xi \,. 
    $$
    Since $q\in L^1$ and $\phi\in L^\infty$ the quantity $h^{-1} \int_{-\lambda-h}^{-\lambda} \e^{\ii\xi x} \mathcal F(\ov{q}\phi)(\xi) \dd\xi$ is uniformly bounded with respect to $x$ and $h$ and converges, for each $x\in\R$, to $\e^{-\ii\lambda x} \mathcal F(\ov{q}\phi)(-\lambda)$ as $h\to 0$. Using again $q\in L^1$ and dominated convergence we see that as $h\to 0$ we have
    $$
    h^{-1} q(C_{\lambda+h} - C_\lambda)\ov{q}\phi \to \frac{1}{\sqrt{2\pi}}\mathcal F(\ov{q}\phi)(-\lambda) \, q \, \ov{\e(\lambda)}
    $$
    in $L^1$. This proves the claimed differentiability.
    
    \medskip

    \emph{Step 2.}
    For $\lambda\in[0,\infty)$, we note that setting, for $\phi\in L^\infty$,
    $$
    \widetilde T_{\lambda\pm\ii 0}\phi := \ov{\e(\lambda)} \, R_0(\lambda\pm0\ii) qC_+\ov{q} \e(\lambda)\phi
    $$
    defines a bounded operator on $L^\infty$. We shall show that for every $\phi\in L^\infty$, the $L^\infty$-valued function $\lambda\mapsto \widetilde T_{\lambda\pm\ii 0}\phi$ is differentiable on $[0,\infty)$ with
  $$
  \partial_\lambda \widetilde T_{\lambda\pm\ii 0}\phi = \frac{1}{2\pi} \int_\R \e^{\ii\lambda y} \overline{q(y)}\phi(y)\dd y \,
  \ \ov{\e(\lambda)} \ R_0(\lambda\pm\ii 0) q \,.
  $$

    Indeed, it follows from \eqref{eqn:Chprop} that
    \begin{align*}
        \widetilde T_{\lambda\pm\ii 0}\phi&=\ii\int_{\mp\infty}^x\e^{-\ii \lambda y}qC_+\e(\lambda)\ov{q}\phi\dd y=\ii\int_{\mp\infty}^x\pot\phi\dd y+\ii\int_{\mp\infty}^xqC_\lambda \ov{q}\phi\dd y,
    \end{align*}
    and therefore, if $h\in\R$ is so small that $\lambda+h\geq 0$, then
    \begin{align*}
        h^{-1}(\widetilde T_{\lambda+h\pm\ii 0}- \widetilde T_{\lambda\pm\ii 0})\phi(x)= \ii h^{-1}\int_{\mp\infty}^x q(C_{\lambda+h}-C_\lambda)\ov{q}\phi\dd y\,.
    \end{align*}
    By Step 1 we see that as $h\to 0$ we have, uniformly with respect to $x$,
    \begin{align*}
         h^{-1}(\widetilde{T}_{\lambda+h\pm\ii 0}-\widetilde{T}_{\lambda\pm\ii 0})\phi(x) & \to \frac{1}{2\pi} \int_\R \e^{\ii\lambda y'} \ov{q(y')}\phi(y') \dd y' \ \ii \int_{\mp\infty}^x q\e^{-\ii y\lambda }\dd y \\
         & \quad = \frac{1}{2\pi} \int_\R \e^{\ii\lambda y'} \ov{q(y')}\phi(y') \dd y' \ \ov{\e(\lambda)} \ R_0(\lambda\pm\ii 0) q \,. 
    \end{align*}
    This proves the claimed differentiability.

    \medskip

    \emph{Step 3.} Let us show the differentiability of $\lambda\mapsto\ov{\e(\lambda)}m_\e(\lambda\pm 0\ii)$ and the claimed formula for their derivatives.

    It follows from the resolvent identity \eqref{eq:resolventformula} that for all $\lambda\in[0,\infty)\setminus\sigma_\mathrm{p}(L_q)$
    $$
    1+R(\lambda\pm 0\ii) q C_+ \ov{q} = (1- R_0(\lambda\pm 0\ii) q C_+ \ov{q})^{-1}
    $$
    and, consequently,
    \begin{equation}
        \label{eq:diffemeproof}
            \ov{\e(\lambda)} \left( 1+R(\lambda\pm 0\ii) q C_+ \ov{q} \right) \e(\lambda)
        = (1- \widetilde T_{\lambda\pm 0\ii})^{-1} \,.
    \end{equation}
    Thus, by definition of $m_\e(\lambda\pm 0\ii)$ we have
    \begin{equation}
        \label{eq:diffemeproof2}
            \ov{\e(\lambda)}m_\e(\lambda\pm 0\ii) = (1- \widetilde T_{\lambda\pm 0\ii})^{-1} 1 \,.
    \end{equation}
    Fix $\lambda\in\R_+\setminus\sigma_\mathrm{p}(L_q)$. Note that by \eqref{eq:diffemeproof} the operators $(1- \widetilde T_{\lambda+h\pm 0\ii})^{-1}$ exist for all sufficiently small $h\in\R$. Moreover, it follows from Step 2 that
    $$
    \| \widetilde T_{\lambda+h\pm\ii 0}- \widetilde T_{\lambda\pm\ii 0} \|_{L^\infty\to L^\infty} \leq (2\pi)^{-1} \|q\|_1^2 |h| \,,
    $$
    so, in particular, $\widetilde T_{\lambda+h\pm\ii 0} \to \widetilde T_{\lambda\pm\ii 0}$ in norm as $h\to 0$. Since
    \begin{align*}
        (1- \widetilde T_{\lambda+h\pm 0\ii})^{-1} - (1- \widetilde T_{\lambda\pm 0\ii})^{-1} & = - \left( 1 - \left(1-(1-\widetilde T_{\lambda\pm 0\ii})^{-1}(\widetilde T_{\lambda+h\pm 0\ii} - \widetilde T_{\lambda\pm 0\ii}) \right)^{-1} \right) \\
        & \quad \times (1- \widetilde T_{\lambda\pm 0\ii})^{-1}
    \end{align*}
    we see that $(1- \widetilde T_{\lambda+h\pm 0\ii})^{-1} \to (1- \widetilde T_{\lambda\pm 0\ii})^{-1}$ in norm as $h\to 0$.

    Equation \eqref{eq:diffemeproof2} implies
    \begin{align*}
        & h^{-1} \left( \ov{\e(\lambda+h)}m_\e(\lambda+h\pm 0\ii) - \ov{\e(\lambda)}m_\e(\lambda\pm 0\ii) \right) \\
        & =(1- \widetilde T_{\lambda+h\pm 0\ii})^{-1} h^{-1} \left( \widetilde T_{\lambda+h\pm\ii 0}- \widetilde T_{\lambda\pm\ii 0} \right) (1- \widetilde T_{\lambda\pm 0\ii})^{-1} 1 \,.    
    \end{align*}
    According to Step 2, we have
    $$
    h^{-1} \left( \widetilde T_{\lambda+h\pm\ii 0}- \widetilde T_{\lambda\pm\ii 0} \right) (1- \widetilde T_{\lambda\pm0\ii})^{-1} 1 \to \frac1{2\pi} \int_\R \e^{\ii\lambda y} \overline{q(y)} (1- \widetilde T_{\lambda\pm 0\ii})^{-1} 1 \dd y \ \overline{\e(\lambda)} \ R_0(\lambda\pm \ii 0) q \,.
    $$
    Since $(1- \widetilde T_{\lambda+h\pm 0\ii})^{-1} \to (1- \widetilde T_{\lambda\pm 0\ii})^{-1}$ in norm, we conclude that
    \begin{align*}
        & h^{-1} \left( \ov{e(\lambda+h)}m_\e(\lambda+h\pm 0\ii) - \ov{e(\lambda)}m_\e(\lambda\pm 0\ii) \right) \\
        & \to \frac1{2\pi} \int_\R \e^{\ii\lambda y} \overline{q(y)} (1- \widetilde T_{\lambda\pm 0\ii})^{-1} 1 \dd y \ (1- \widetilde T_{\lambda\pm 0\ii})^{-1} \ \overline{\e(\lambda)} \ R_0(\lambda\pm\ii 0) q \,.    
    \end{align*}
    According to \eqref{eq:diffemeproof} and the resolvent identity, we have
    \begin{align*}
        (1- \widetilde T_{\lambda\pm 0\ii})^{-1} \ \overline{\e(\lambda)} \ R_0(\lambda\pm\ii 0) q & = \overline{\e(\lambda)} \, (1+R(\lambda\pm 0\ii) q C_+\ov{q}) R_0(\lambda\pm 0\ii) q \\&= \overline{\e(\lambda)}\, R(\lambda\pm 0\ii) q \\
        &= \overline{\e(\lambda)}\, m_0(\lambda\pm 0\ii) \,.
    \end{align*}
    Similarly, one finds
    $$
    (1- \widetilde T_{\lambda\pm 0\ii})^{-1} 1 = \overline{\e(\lambda)} \, (1+R(\lambda\pm 0\ii) q C_+\ov{q}) \e(\lambda) = \overline{\e(\lambda)} \, m_\e(\lambda\pm 0\ii)
    $$
    and therefore, using \eqref{eqn:fbetaident} and \eqref{eqn:megammarel},
    $$
    \int_\R \e^{\ii\lambda y} \overline{q(y)} (1- \widetilde T_{\lambda\pm 0\ii})^{-1} 1 \dd y = 
    \begin{cases}
    \ii \, \overline{\beta(\lambda)} \, \Gamma(\lambda) & \text{for the upper sign} \,,\\
    \ii \, \overline{\beta(\lambda)} & \text{for the lower sign} \,.
    \end{cases}
    $$
    This proves the claimed differentiability.

    \medskip

    \emph{Step 4.} We shall show that for any $\phi\in L^\infty$, the $L^1$-valued map $\lambda\mapsto \ov{\e(\lambda)}q C_+\ov{q}\e(\lambda)\phi$ is differentiable on $[0,\infty)$ with
    $$
    \partial_\lambda \left( \ov{\e(\lambda)}q C_+\ov{q}\e(\lambda)\phi \right) = \frac1{2\pi} \int_\R \e^{-\ii\lambda y} \ov{q(y)} \phi(y)\dd y \ \ov{\e(\lambda)} q \,.
    $$

    Indeed, in view of \eqref{eqn:Chprop} we have
    $$
    \ov{\e(\lambda)}q C_+\ov{q}\e(\lambda)\phi = qC_+\ov{q}\phi + q C_\lambda \ov{q}\phi \,,
    $$
    so the assertion follows immediately from Step 1.

    \medskip

    \emph{Step 5.} Let us show the differentiability of $\Gamma$.

    From the definition \eqref{eqn:Gammadef} of $\Gamma$ and the commutator identity \eqref{eqn:Chprop} we get
    \begin{align*}
        \Gamma(\lambda) & = 1 + \ii \, \langle \ov{\e(\lambda)} m_\e(\lambda+0\ii), \ov{\e(\lambda)} q C_+ \ov{q} \e(\lambda) \rangle \,.
    \end{align*}
    The second term on the right side is differentiable by Steps 2 and 4, together with the fact that the $L^\infty$ norm of $\ov{\e(\lambda)} m_\e(\lambda+0\ii)$ and the $L^1$ norm of $\ov{\e(\lambda)} q C_+ \ov{q} \e(\lambda)$ are bounded on compact subsets of $\R_+\setminus\sigma_\mathrm{p}(L_q)$. Using the formulas for the derivatives from Steps 2 and 4 we obtain
    \begin{align*}
        \partial_\lambda \Gamma(\lambda) & = -\frac{\ov{\beta(\lambda)}\,\Gamma(\lambda)}{2\pi} \, \langle \ov{\e(\lambda)} m_0(\lambda+0\ii), \ov{\e(\lambda)} q C_+ \ov{q} \e(\lambda) \rangle \\
        & \quad + \frac{\ii}{2\pi} \int_\R \e^{-\ii\lambda y} q(y) \dd y \,\langle \ov{\e(\lambda)} m_\e(\lambda+0\ii), \ov{\e(\lambda)} q \rangle \\
        & = -\frac{\ov{\beta(\lambda)}\,\Gamma(\lambda)}{2\pi} \left( \langle m_0(\lambda+0\ii), q C_+ \ov{q} \e(\lambda) \rangle
        + \int_\R \e^{-\ii\lambda y} q(y) \dd y \right),    
    \end{align*}
    where, in the last step, we used \eqref{eqn:megammarel} and \eqref{eqn:fbetaident}. The claimed identity now follows from the definition \eqref{eqn:Gammadef} of $\Gamma$. This completes the proof of the lemma.
\end{proof}



\subsection*{Example: Jost solutions for one-solitons}

According to \cite{gerard_calogero--moser_2023}, the one-soliton solutions for the continuum Calogero--Moser derivative nonlinear Schr\"odinger equation \eqref{eqn:CMeqn} are given by
$$
e^{\ii\theta -\ii\eta^2 t} \, \lambda^{1/2} \, q_\eta(\lambda(x-2\eta t) + y)
$$
with $\theta\in\R/2\pi\Z$, $y\in\R$, $\lambda\in\R_+$ and $\eta\in\overline{\R_+}$, where
\begin{align*}
    q_\eta(x)\coloneqq \frac{\sqrt{2} \, \e^{\ii \eta x}}{x+\ii} \,.
\end{align*}
The associated operator $\CMop[q_\eta]$ has $\lambda_1=\eta$ as eigenvalue with the corresponding eigenfunction given by $\phi_1=q_\eta$. This follows from the identity
\begin{align}\label{eqn:qetaCplus}
    C_+\abs{q_\eta}^2=C_+\frac{2}{x^2+1}=\frac{\ii}{x+\ii}
\end{align}
and the fact that 
\begin{align*}
    (-\ii \partial_x-\eta)q_\eta=q_\eta\frac{\ii}{x+\ii} \,. 
\end{align*}
The eigenvalue $\lambda_1$ is simple and the unique eigenvalue of $\CMop[q_\eta]$, as follows from \cite{gerard_calogero--moser_2023}; see the discussion after Lemma \ref{lem:qeigenfuncinprod}.

The corresponding homogeneous Jost solution is given by 
\begin{align*}
    m_\e(x,\lambda\pm0\ii)=\begin{cases}\e^{\ii\lambda x} \, \frac{x-\ii}{x+\ii} & \text{if}\ \lambda\in(\eta,\infty) \,,\\
    \e^{\ii \lambda x} & \text{if}\ \lambda\in(0,\eta) \,.
    \end{cases}
\end{align*}
This follows by Lemma \ref{lem:uniqueme} from the fact that the right side satisfies the same differential equation and has the same asymptotic behaviour as the left side.

For the scattering matrix, we obtain
$$
\Gamma(\lambda)= 1 \qquad\text{ for all }\lambda\in\R_+\setminus\{\eta\} \,.
$$
This can either be deduced from relation \eqref{eqn:megammarel}, or else by a contour integration using the definition \eqref{eqn:Gammadef} of $\Gamma$. We perform a related calculation momentarily.

Since $q\not\in L^1(\R)$, the inhomogeneous Jost solutions $m_0$ and the scattering coefficient $\beta$ are not defined by our discussion so far. We can, however, proceed directly and take $m_0$ as solution to the inhomogeneous equation, with asymptotics 
\begin{align*}
    \begin{cases}
        m_0(k)\rightarrow 0 \ \ \ \quad \qquad \mathrm{as} \ |x|\rightarrow \infty & \mathrm{if}\ k\in \C\backslash[0,\infty) \,,\\
        m_0(\lambda\pm0\ii)\rightarrow 0 \qquad  \mathrm{as}\ x\rightarrow \mp \infty &\mathrm{if}\ \lambda\geq 0 \,.
    \end{cases}
\end{align*}
Then we can define the scattering coefficients $\beta$ by \eqref{eqn:betadef}.

In particular, for $k\in \Chat\backslash\{\eta+0\ii,\eta-0\ii\}$ the inhomogeneous Jost solution is
\begin{align*}
    m_0(x,k)=-\frac{1}{k-\eta} \, q_\eta,
\end{align*}
since the right side satisfies $-\ii\psi' -qC_+\ov{q}\psi = k\psi$ (as a consequence of the equation for $q_\eta$) and vanishes at infinity.

We claim that the corresponding scattering coefficient satisfies
\begin{align*}
    \beta(\lambda)=0\qquad\text{ for all }\lambda\in\R_+\setminus\{\eta\} \,.
\end{align*}
Formally, this is consistent with the scattering relation \eqref{eqn:m0mebetarel} between the Jost solutions calculated above.

We prove the claimed formula for $\beta$ by computing the (not absolutely convergent) integral in its definition by contour integration. We have 
\begin{align*}
    \beta(\lambda)&=\ii\sclp{q_\eta C_+\ov{q_\eta}m_0(\lambda\pm0\ii)+q_\eta,\e(\lambda)}=-\frac{\ii}{\lambda-\eta}\sclp{q_\eta C_+\abs{q_\eta}^2,\e(\lambda)}+\ii \sclp{q_\eta,\e(\lambda)},
\end{align*}
so that, using \eqref{eqn:qetaCplus},
\begin{align}\label{eqn:betasoliton}
    \beta(\lambda)=\frac{\sqrt{2}}{\lambda-\eta}\int_{-\infty}^\infty\frac{\e^{\ii(\eta-\lambda) x}}{(x+\ii)^2}\dd x+\ii\sqrt{2} \int_{-\infty}^\infty \frac{\e^{\ii(\eta-\lambda)x}}{x+\ii}\dd x.
\end{align}

We calculate these terms by contour integration. For $R>0$, take $C_R^{\pm}$ to be the semi-circles $\{z\in \C\colon |z|=R, \pm \im z\geq 0\}$ with clockwise orientation. Then, if $\eta-\lambda>0$ we have 
\begin{align*}
    \int_{-R}^R \frac{\e^{\ii(\eta-\lambda) x}}{(x+\ii)^n}\dd x=\int_{C_R^+}\frac{\e^{\ii(\eta-\lambda) z}}{(z+\ii )^n}\dd z,\quad n=1,2.
\end{align*}
Since the right hand side goes to zero as $R\rightarrow \infty$ we conclude from \eqref{eqn:betasoliton} that $\beta(\lambda)=0$ for $\lambda<\eta$.

For $\eta-\lambda<0$ observe that
\begin{align}\label{eqn:betacontourint}
    \int_{-R}^R \frac{\e^{\ii(\eta-\lambda) x}}{(x+\ii)^n}\dd x+\int_{C_R^{-}}\frac{\e^{\ii (\eta-\lambda)z}}{(z+\ii )^n}\dd z=2\pi \ii\, \mathrm{Res}_{z=-\ii }\left(\frac{\e^{\ii (\eta-\lambda)z}}{(z+\ii )^n}\right), \quad n=1,2.
\end{align}
For $n=1$ we have
\begin{align*}
    \mathrm{Res}_{z=-\ii }\left(\frac{\e^{\ii (\eta-\lambda)z}}{(z+\ii )}\right)=\e^{\eta-\lambda},
\end{align*}
while for $n=2$,
\begin{align*}
    \mathrm{Res}_{z=-\ii }\left(\frac{\e^{\ii (\eta-\lambda)z}}{(z+\ii )^2}\right)=\ii(\eta-\lambda)\e^{\eta-\lambda},
\end{align*}
using the general formula for higher order poles. Taking $R\rightarrow\infty$ in \eqref{eqn:betacontourint} and using that the second term vanishes, after cancellations in \eqref{eqn:betasoliton} we again find that $\beta(\lambda)=0$ for $\lambda>\eta$. 

We note also that in Remark \ref{rem:b} we defined a function $B$, which is the substitute for $|\beta|^2$ for non-$L^1$ functions $q$. By definition $B$ is the density of the spectral measure of $P_\mathrm{ac}(L_q)q$ for $L_q$. In our example of the one-soliton potential, $q_\eta$ is an eigenfunction and therefore $P_\mathrm{ac}(L_{q_\eta})q_\eta = 0$. This shows that $B=0$ a.e.~and is consistent with the equality $\beta=0$ a.e.



\section{High energy asymptotics} 

In this section, we derive the asymptotics of the Jost functions and scattering quantities in the high energy limit. Our main result in this regard is the following.

\begin{theorem}\label{thm:simpasymp}
     Let $q\in L^2_{+}(\R)$. Then
     \begin{align}
         \lim_{\lambda\rightarrow \infty}\norm{m_\e(\lambda\pm0\ii)-\e^{\ii \lambda x}\e^{\ii \int_{\mp\infty}^x\abs{q(t)}^2\dd t}}_\infty&=0 \,,\label{eqn:simplemeasmp}
         \intertext{and}
         \lim_{\lambda\rightarrow \infty}\abs{\Gamma(\lambda)-\e^{\ii\int_{\R}|q(t)|^2\dd t}}&=0 \,. \label{eqn:simplegammaasmp}
         \end{align}
    If, in addition, $q\in L^1(\R)$, then
    \begin{align}
         \lim_{\abs{k}\rightarrow \infty}\norm{m_0(k)}_\infty&=0 \,,\label{eqn:simplem0asymp}
         \intertext{and}
         \lim_{\lambda\rightarrow \infty} \beta(\lambda) &=0 \,.\label{eqn:simplebetaasmp}
     \end{align} 
\end{theorem}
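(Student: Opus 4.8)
The central device is the gauge transformation $\psi=\e^{\ii\lambda x}u$, under which the equation $-\ii\psi'-\pot\psi=\lambda\psi$ becomes, by the commutator identity \eqref{eqn:Chprop},
$$-\ii u'=q\,\ov{\e(\lambda)}C_+\e(\lambda)(\ov q u)=q\,P_\lambda(\ov q u)\,,\qquad P_\lambda:=C_++C_\lambda=\F^{-1}\chi_{(-\lambda,\infty)}\F\,,$$
where $P_\lambda$ is the orthogonal projection onto frequencies $>-\lambda$; the crucial point is that $P_\lambda\to\I$ strongly as $\lambda\to\infty$. Hence the gauged homogeneous Jost functions $u_\pm(\lambda):=\ov{\e(\lambda)}m_\e(\lambda\pm0\ii)$ satisfy $u_\pm(\lambda)(x)=1+\ii\int_{\mp\infty}^x q\,P_\lambda(\ov q u_\pm(\lambda))\dd y$, and one expects them to converge as $\lambda\to\infty$ to $v_\pm(x):=\e^{\ii\int_{\mp\infty}^x|q|^2}$, the solution of the transport equation $-\ii v'=|q|^2v$ with $v(\mp\infty)=1$. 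The obstruction — and this is the main difficulty — is that the Volterra-type operator $u\mapsto\ii\int_{\mp\infty}^x qP_\lambda(\ov q u)\dd y$ has operator norm only $\le\|q\|_2^2$ on $L^\infty$, which is useless unless $\|q\|_2<1$ and, because $P_\lambda$ is nonlocal, is \emph{not} made small by taking $\lambda$ large; so no fixed-point argument is available, and one argues by compactness instead.

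\emph{Boundedness of $\mathcal N$.} Suppose there were $\lambda_n\to\infty$ in $\mathcal N=\mathcal N_+$. By Lemma \ref{lem:LaxeigenqRqfixpt} there are $g_n\in\ker(1-C_+\ov q R_0(\lambda_n+0\ii)qC_+)$ with $\|g_n\|_2=1$ such that $\psi_n:=R_0(\lambda_n+0\ii)qg_n$ is bounded, solves the eigenequation, vanishes at $\pm\infty$, and satisfies \eqref{eqn:fixedptvanish}; the representation $\psi_n(x)=\ii\int_{-\infty}^x\e^{\ii\lambda_n(x-y)}q(y)g_n(y)\dd y$ gives $\|\psi_n\|_\infty\le\|q\|_2$, and $u_n:=\ov{\e(\lambda_n)}\psi_n$ obeys $-\ii u_n'=qP_{\lambda_n}(\ov q u_n)$ (so $\|u_n'\|_1\le\|q\|_2^2$) together with the uniform tail bounds $|u_n(x)|\le(\int_{-\infty}^x|q|^2)^{1/2}$ and, by \eqref{eqn:fixedptvanish}, $|u_n(x)|\le(\int_x^\infty|q|^2)^{1/2}$. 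By Arzel\`a--Ascoli a subsequence converges locally uniformly to some $u_\infty$; dominated convergence together with $P_{\lambda_n}\to\I$ strongly gives $qP_{\lambda_n}(\ov q u_n)\to|q|^2u_\infty$ in $L^1$, so $u_\infty(x)=\ii\int_{-\infty}^x|q|^2u_\infty\dd y$, and the first tail bound forces $u_\infty(-\infty)=0$, hence $u_\infty\equiv0$. The two tail bounds then upgrade the local uniform convergence to $\|u_n\|_\infty\to0$, whence $1=\|g_n\|_2=\|C_+(\ov q\,\e^{\ii\lambda_n x}u_n)\|_2\le\|q\|_2\|u_n\|_\infty\to0$, a contradiction. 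Thus $\mathcal N\subset[0,\lambda_0]$ for some $\lambda_0$. Moreover, if $\sup_{\lambda>\lambda_0}\|m_\e(\lambda\pm0\ii)\|_\infty$ were infinite there would be $\lambda_n\to\infty$ with $\|u_\pm(\lambda_n)\|_\infty\to\infty$, and the renormalised functions $u_\pm(\lambda_n)/\|u_\pm(\lambda_n)\|_\infty$ would, by the identical argument (the persisting constant now being negligible), tend to $0$ in $L^\infty$, contradicting that their sup-norms equal $1$; hence $M:=\sup_{\lambda>\lambda_0}\|m_\e(\lambda\pm0\ii)\|_\infty<\infty$.

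\emph{Asymptotics of $m_\e$ and $\Gamma$.} With $M<\infty$, along any $\lambda_n\to\infty$ extract a locally uniform subsequential limit $u_\infty$ of $u_+(\lambda_n)$ as before; now the constant $1$ persists, so $u_\infty(x)=1+\ii\int_{-\infty}^x|q|^2u_\infty\dd y$, i.e. $u_\infty=v_+$. As the limit is subsequence-independent, $u_+(\lambda)\to v_+$ locally uniformly; combining this with the uniform tail estimates $|u_+(\lambda)(x)-1|\le(\int_{-\infty}^x|q|^2)^{1/2}\|q\|_2M$ near $-\infty$ and $|u_+(\lambda)(x)-u_+(\lambda)(+\infty)|\le(\int_x^\infty|q|^2)^{1/2}\|q\|_2M$ near $+\infty$, together with $u_+(\lambda)(+\infty)\to v_+(+\infty)$ (again by $P_\lambda\to\I$ strongly), upgrades the convergence to $\|u_+(\lambda)-v_+\|_\infty\to0$; likewise for $u_-$. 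This is \eqref{eqn:simplemeasmp}. Evaluating $u_+(\lambda)=\Gamma(\lambda)u_-(\lambda)$ (from \eqref{eqn:megammarel}) as $x\to+\infty$, where $u_-(\lambda)(+\infty)=1$, gives $\Gamma(\lambda)=u_+(\lambda)(+\infty)\to v_+(+\infty)=\e^{\ii\int_\R|q|^2}$, which is \eqref{eqn:simplegammaasmp}.

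\emph{The case $q\in L^1$.} For $k=\lambda\pm0\ii$ set $n_\pm(\lambda):=\ov{\e(\lambda)}m_0(\lambda\pm0\ii)$; the inhomogeneous equation \eqref{eqn:m0inteq} for $m_0$ and \eqref{eqn:Chprop} give $n_\pm(\lambda)(x)=\ii\int_{\mp\infty}^x\bigl(qP_\lambda(\ov q n_\pm(\lambda))+\ov{\e(\lambda)}q\bigr)\dd y$, and the extra term satisfies $\|\ii\int_{\mp\infty}^{\cdot}\ov{\e(\lambda)}q\dd y\|_\infty\to0$ as $\lambda\to\infty$ by a uniform Riemann--Lebesgue estimate (split $q$ into a $C^1_c$ piece, handled by integration by parts uniformly in the upper limit, and an $L^1$-small remainder). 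The same compactness argument — first $\sup_{\lambda>\lambda_0}\|n_\pm(\lambda)\|_\infty<\infty$, then any subsequential locally uniform limit solves the homogeneous transport equation with zero boundary value and hence vanishes — yields $\|m_0(\lambda\pm0\ii)\|_\infty\to0$; the directions $k\in\C\setminus\R$ with $|k|\to\infty$ are handled analogously and more easily, since there $G_k$ decays exponentially when $|\im k|$ is large and oscillates when $|\re k|$ is large, so that already $\|R_0(k)q\|_\infty\to0$. This proves \eqref{eqn:simplem0asymp}. Finally, \eqref{eqn:simplebetaasmp} follows from \eqref{eqn:fbetaident}: writing $m_\e(\lambda-0\ii)=\e^{\ii\lambda y}u_-(\lambda)$ and $u_-(\lambda)=1+\bigl(u_-(\lambda)-v_-\bigr)+\bigl(v_--1\bigr)$,
$$\ii\,\ov{\beta(\lambda)}=\int_\R\ov q\,\e^{\ii\lambda y}\dd y+\int_\R\ov q\,\e^{\ii\lambda y}\bigl(u_-(\lambda)-v_-\bigr)\dd y+\int_\R\ov q\,\e^{\ii\lambda y}\bigl(v_--1\bigr)\dd y\,,$$
whose three terms tend to $0$ by Riemann--Lebesgue (as $q\in L^1$), by \eqref{eqn:simplemeasmp}, and by Riemann--Lebesgue applied to $\ov q(v_--1)\in L^1$, respectively. (Alternatively, combine \eqref{eqn:betadef} with \eqref{eqn:simplem0asymp}.)
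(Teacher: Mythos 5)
Your argument is correct in substance, but it follows a genuinely different route from the paper's. The paper splits the gauged Volterra operator $T_k=S_k-T_k^-$, proves $\|T_k^-\|_{L^\infty\to L^\infty}\to 0$ via a Riemann--Lebesgue lemma that is uniform in the damping parameter, and inverts $1-S_k$ \emph{explicitly} (its kernel $|q|^2$ is local, so $(1-S_k)^{-1}=1+\widetilde S_k$ in closed form); the leading term $(1+\widetilde S_k)\e(\lambda)=\e^{\ii\lambda x}\e^{\ii\int_{\mp\infty}^x|q|^2}$ then drops out by direct computation, and boundedness of $\mathcal N$ comes for free from invertibility of $1-T_k$. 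You instead gauge away the oscillation, observe that $\ov{\e(\lambda)}C_+\e(\lambda)=P_\lambda\to\I$ strongly, and run a compactness argument (a priori bounds by renormalisation, Arzel\`a--Ascoli, identification of every subsequential limit as the unique solution of the transport equation $-\ii v'=|q|^2v$, then upgrading to uniform convergence via the tail estimates). Both are legitimate; what the paper's quantitative route buys is uniform operator-norm control of $(1-T_k)^{-1}$ and convergent Neumann series, which are reused later (in the proof of \eqref{eqn:simplem0asymp}, in Lemma \ref{lem:m0asmpqcts} and in the complete expansion of Theorem \ref{thm:m0fullexpan}); your soft argument is arguably more conceptual and isolates why the limit profile is the transport solution, but it is purely qualitative and each subsequent result would need its own compactness argument. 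Your derivations of \eqref{eqn:simplegammaasmp} (evaluating $u_+=\Gamma u_-$ at $x=+\infty$) and of \eqref{eqn:simplebetaasmp} (via \eqref{eqn:fbetaident} rather than \eqref{eqn:betadef}) are clean alternatives to the paper's.

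Two points deserve more care. First, your justification of equicontinuity via ``$\|u_n'\|_1\le\|q\|_2^2$'' is not by itself sufficient (an $L^1$ bound on derivatives does not give equicontinuity); what you actually have, and should say, is $|u_n(x)-u_n(x')|\le\bigl(\int_{x'}^x|q|^2\bigr)^{1/2}\|q\|_2\sup_n\|u_n\|_\infty$, which does the job since $q\in L^2$ is fixed. Second, and more substantively, the theorem's $\lim_{|k|\to\infty}$ in \eqref{eqn:simplem0asymp} is uniform in $\arg k$, and your dichotomy ``exponential decay when $|\im k|$ is large, oscillation when $|\re k|$ is large'' does not cover the regime $\re k\to+\infty$ with $\im k$ small but nonzero, which is precisely the delicate one. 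This is fixable inside your framework: gauge only by $\e^{\ii(\re k)x}$, keep the harmless damping factor $\e^{-(\im k)(x-y)}\le 1$ in the Volterra kernel, and note that the limiting transport equation (with damping $\mu_0=\lim\im k_n\in[0,\infty]$) still has only the trivial solution vanishing at $-\infty$ --- but as written this step is asserted rather than proved.
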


We emphasise that here and below, when writing $\lim_{|k|\to\infty}$ we mean a limit that is uniform with respect to the argument of $k$.

The main difficulty will be to prove \eqref{eqn:simplemeasmp} and \eqref{eqn:simplem0asymp}. The remaining limits \eqref{eqn:simplegammaasmp} and \eqref{eqn:simplebetaasmp} will then follow relatively easily from the definition of these scattering coefficients. To prove \eqref{eqn:simplemeasmp} and \eqref{eqn:simplem0asymp}, for $k\in \Chat$ we denote
\begin{align*}
    T_k\coloneqq R_0(k)\pot \,.
\end{align*}
Under the assumption $q\in L^2_+(\R)$ this defines a bounded operator $T_k\colon L^\infty\rightarrow L^\infty$ and we recall that the functions $m_\e(\lambda\pm0\ii)$ and $m_0(k)$ are defined through an equation that involves $(1-T_k)^{-1}$. Namely, from the equations \eqref{eqn:meinteq} and \eqref{eqn:m0inteq} we have 
\begin{align}\label{eq:meinteqt}
    m_\e(\lambda\pm0\ii)=(1-T_{\lambda\pm0\ii})^{-1}\e(\lambda)
\end{align}
and
\begin{align}\label{eq:m0inteqt}
    m_0(k)=(1-T_k)^{-1}G_k\ast q \,.
\end{align}

For sufficiently large $\abs{k}$ we will verify that $1-T_k$ is invertible as an $L^\infty\rightarrow L^\infty$ operator and $(1-T_k)^{-1}$ can be expanded into a Neumann series. Using the notation $a_\pm:=\max\{\pm a,0\}$ for $a\in\R$, when $|k|$ is large, at least one of $|\im k| + (\re k)_-$ and $|\im k| + (\re k)_+$ is large and the Neumann series will take different forms in the two cases. In the latter case we also need the operators
\begin{equation}
    \label{eqn:Tksplit}
    (T^-_k\phi)(x) \coloneqq \ii\int_{\mp\infty}^x \e^{\ii k(x-y)}qC_{-}\overline{q}\phi \dd y
\end{equation}
with $C_{-}\coloneqq 1-C_+$, and
\begin{align}
    \label{eq:sktilde}
    (\widetilde{S}_k\phi)(x) \coloneqq \ii\int_{\mp\infty}^x \e^{\ii k(x-y)}\e^{\ii\int_y^x\abs{q(t)}^2\dd t}\abs{q(y)}^2\phi(y)\dd y \,.
\end{align}
Clearly,
\begin{equation}
    \label{eq:sktildenorm}
    \|\widetilde{S}_k\|_{L^\infty\rightarrow L^\infty}\leq \norm{q}_2^2 \,.
\end{equation}

\begin{lemma}\label{lem:Tkinvert}
    Let $q\in L^2_{+}(\R)$. Then, there exists $\kappa_0>0$ such that $1-T_k$ is invertible on $L^\infty(\R)$ for all $k\in\Chat$ with $\abs{k}>\kappa_0$. Moreover: 
    \begin{itemize}
        \item[(a)]
        As $|\im k| + (\re k)_-\to\infty$, 
        \begin{align*}
            \|T_k\|_{L^\infty\rightarrow L^\infty}= o(1)
        \end{align*}
        and, in particular, if $|\im k| + (\re k)_-$ is sufficiently large, then
        \begin{align}\label{eqn:NeuexpStolz}
            (1-T_k)^{-1}=\sum_{n=0}^\infty T_k^n \,.
        \end{align}
        \item[(b)] 
        As $|\im k| + (\re k)_+\to\infty$, 
        \begin{align*}
            \|T^-_k\|_{L^\infty\rightarrow L^\infty}=o(1)
        \end{align*}
        and, in particular, if $|\im k| + (\re k)_+$ is sufficently large, then
        \begin{align}\label{eqn:Neuexpreal}
            (1-T_k)^{-1}=\sum_{n=0}^\infty(-(1+\widetilde{S}_k)T^-_k)^n(1+\widetilde{S}_k) \,.
        \end{align}
    \end{itemize}
\end{lemma}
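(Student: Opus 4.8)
The plan is to establish three things in turn: the operator-norm bound $\norm[L^\infty\to L^\infty]{T_k}=o(1)$ of part~(a); the operator-norm bound $\norm[L^\infty\to L^\infty]{T^-_k}=o(1)$ of part~(b); and an algebraic factorisation of $1-T_k$ which, together with the first two bounds, yields the Neumann expansions \eqref{eqn:NeuexpStolz} and \eqref{eqn:Neuexpreal} as well as the global invertibility for $\abs k>\kappa_0$, since the region $\{\abs k>\kappa_0\}$ is covered by the two asymptotic regimes. Throughout one may take $\im k\ge 0$, so that $\int_{\mp\infty}^x$ means $\int_{-\infty}^x$; the case $\im k\le 0$ follows by the substitution $x\mapsto -x$.

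\textbf{The $o(1)$ estimates.} This is the technical core. For~(a), write $T_k\phi=R_0(k)(qg)$ with $g:=C_+\ov q\phi$, so $\norm[2]{g}\le\norm[2]{q}\norm[\infty]{\phi}$ and $qg\in L^1_+(\R)$. Because $\norm[L^1_+\to L^\infty]{R_0(k)}\le\norm[\infty]{G_k}\le 1$ (Lemma~\ref{lem:qRqcompact}) does not decay, one has to use the oscillation of $x\mapsto\e^{\ii kx}$ on the Fourier side; since $qg$ need not be in $L^2$, one first truncates frequencies: split $q=q_N+q^N$ with $\widehat{q_N}=\widehat q\,\1_{[0,N]}$, so $q_N\in L^\infty(\R)$ with $\norm[\infty]{q_N}\le(2\pi)^{-1/2}\sqrt N\,\norm[2]{q}$, while $\norm[2]{q^N}\to 0$ as $N\to\infty$. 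For the tail, $\norm[\infty]{R_0(k)(q^N g)}\le\norm[1]{q^N g}\le\norm[2]{q^N}\norm[2]{q}\norm[\infty]{\phi}$. For the main part, $q_N g\in L^2_+(\R)$ and $\F\big(R_0(k)(q_N g)\big)(\xi)=(\xi-k)^{-1}\F(q_N g)(\xi)$ for $\xi\ge 0$, so Cauchy--Schwarz on the Fourier side gives
\[
\norm[\infty]{R_0(k)(q_N g)}\le(2\pi)^{-1/2}\,\omega(k)^{1/2}\,\norm[2]{q_N g}\le(2\pi)^{-1}\sqrt N\,\norm[2]{q}^2\,\norm[\infty]{\phi}\,\omega(k)^{1/2},
\]
where $\omega(k):=\int_0^\infty\abs{\xi-k}^{-2}\dd\xi=O\big((\abs{\im k}+(\re k)_-)^{-1}\big)\to 0$ in the regime of~(a). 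Taking $N$ large to make the tail small and then letting $\abs{\im k}+(\re k)_-\to\infty$ gives $\norm[L^\infty\to L^\infty]{T_k}=o(1)$, whence \eqref{eqn:NeuexpStolz} once this quantity is large enough. Part~(b) runs in parallel: write $T^-_k\phi=R_0(k)(qu)$ with $u:=C_-\ov q\phi$, whose Fourier transform is supported in $(-\infty,0]$, and split $q=q_N+q^N$ as above; now $\F(q_N u)$ is supported in $(-\infty,N]$, so the main part is controlled by $(2\pi)^{-1/2}\big(\int_{-\infty}^N\abs{\xi-k}^{-2}\dd\xi\big)^{1/2}\norm[2]{q_N u}$, and for fixed $N$ one has $\int_{-\infty}^N\abs{\xi-k}^{-2}\dd\xi\to 0$ as $\abs{\im k}+(\re k)_+\to\infty$ (the boundary-value prescription $\xi-\lambda\mp\ii 0$ is harmless since $\lambda>N$ once $\re k$ is large). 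Hence $\norm[L^\infty\to L^\infty]{T^-_k}=o(1)$.

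\textbf{Factorisation and assembly.} Writing $C_+=1-C_-$ gives $\pot\phi=\abs q^2\phi-qC_-\ov q\phi$, hence $T_k=S_k-T^-_k$ with $(S_k\phi)(x):=\ii\int_{\mp\infty}^x\e^{\ii k(x-y)}\abs{q(y)}^2\phi(y)\dd y$. The crucial observation is that $1-S_k$ is invertible on $L^\infty(\R)$ with $(1-S_k)^{-1}=1+\widetilde{S}_k$: for $\phi\in L^\infty(\R)$ and $\psi:=(1+\widetilde{S}_k)\phi$ one checks, by differentiating the defining integrals, that both $\widetilde{S}_k\phi$ and $S_k\psi$ solve the same first-order linear ODE $w'-\ii kw=\ii\abs q^2\psi$ with $w\to 0$ at $\mp\infty$, hence coincide, so that $\psi=\phi+S_k\psi$; and the homogeneous equation $w=S_kw$ forces $w'=\ii(k+\abs q^2)w$ with $w\to 0$ at $\mp\infty$, whence $w\equiv 0$ by explicit integration with integrating factor $\e^{\ii kx+\ii\int_0^x\abs q^2}$. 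Therefore
\[
1-T_k=(1-S_k)\big(1+(1+\widetilde{S}_k)T^-_k\big),
\]
which is invertible on $L^\infty(\R)$ as soon as $\norm[L^\infty\to L^\infty]{(1+\widetilde{S}_k)T^-_k}<1$; by \eqref{eq:sktildenorm} this norm is $\le(1+\norm[2]{q}^2)\norm[L^\infty\to L^\infty]{T^-_k}=o(1)$ in the regime of~(b). Expanding the inner factor as a Neumann series and multiplying on the right by $1+\widetilde{S}_k$ reproduces exactly \eqref{eqn:Neuexpreal}. Finally, fix $A>0$ with $\norm{T_k}<1$ whenever $\abs{\im k}+(\re k)_-\ge A$ and $\norm{(1+\widetilde{S}_k)T^-_k}<1$ whenever $\abs{\im k}+(\re k)_+\ge A$; since $\big(\abs{\im k}+(\re k)_-\big)+\big(\abs{\im k}+(\re k)_+\big)=2\abs{\im k}+\abs{\re k}\ge\abs k$, every $k\in\Chat$ with $\abs k>\kappa_0:=2A$ lies in at least one of the two regimes, and $1-T_k$ is invertible there.

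\textbf{Main obstacle.} The real difficulty is the pair of $o(1)$ estimates, and within them the regime where $\abs{\re k}\to\infty$ but $\im k$ stays bounded: there the $L^1\!\to\!L^\infty$ bound on $R_0(k)$ gives no decay, so one must pass to the Fourier side to exploit oscillation, and that forces the frequency truncation $q=q_N+q^N$ (or an equivalent approximation of $q$ by functions with bounded spectrum) in order to bring $qg$ and $qu$ into $L^2$, where Plancherel applies; the decay is then supplied by $\int_0^\infty\abs{\xi-k}^{-2}\dd\xi$, respectively $\int_{-\infty}^N\abs{\xi-k}^{-2}\dd\xi$. The algebraic step -- the identity $(1-S_k)^{-1}=1+\widetilde{S}_k$ and the factorisation of $1-T_k$ -- is by comparison a routine computation once the operators are written out.
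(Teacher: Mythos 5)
Your proposal is correct, and while it shares the paper's overall architecture (the splitting $T_k=S_k-T^-_k$, the inverse $(1-S_k)^{-1}=1+\widetilde S_k$, the factorisation $1-T_k=(1-S_k)(1+(1+\widetilde S_k)T^-_k)$, and the covering of $\{\abs{k}>\kappa_0\}$ by the two regimes), the technical core is handled differently. The paper proves the $o(1)$ bounds by a case distinction: for $\abs{\im k}$ large it works in physical space, splitting the integral at a small $\delta$ chosen by absolute continuity of $\int\abs{q}^2$ and using the exponential damping $\e^{-\abs{\im k}\,\delta}$; for $\re k\to\mp\infty$ it passes to the Fourier side and invokes a separate $L^2$ Riemann--Lebesgue lemma (Lemma \ref{riemannlebesgue}), proved by approximating $q$ in $L^2$ by $C^1_c$ functions and integrating by parts. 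You instead approximate $q$ by the band-limited piece $q_N$ and run a single Fourier-side Cauchy--Schwarz estimate whose decay comes from the explicit integrals $\int_0^\infty\abs{\xi-k}^{-2}\dd\xi$ and $\int_{-\infty}^N\abs{\xi-k}^{-2}\dd\xi$; this treats both sub-regimes at once and is arguably more unified, at the cost of having to check that the convolution with $G_k$ agrees with the Fourier multiplier $(\xi-k)^{-1}$ on the relevant frequency supports, including at boundary points $\lambda\pm0\ii$ (your remark that the singularity lies outside the support when $\lambda>N$, together with a limiting argument from $\im k>0$, settles this). Both routes are ultimately density arguments, just with different approximating classes. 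You also prove $(1-S_k)^{-1}=1+\widetilde S_k$ from scratch via the first-order ODE and the vanishing boundary condition at $\mp\infty$, where the paper cites \cite[Eq.~(6.8)]{wu_jost_2017}; your verification is complete and correct. One practical remark: the paper's Lemma \ref{riemannlebesgue} is reused later (e.g.\ for \eqref{eqn:Gkinftyzero} in the proof of Theorem \ref{thm:simpasymp}), so your route does not eliminate it from the paper, only from this lemma.
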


The following proof uses some ideas from \cite{wu_jost_2017}.

\begin{proof}
    \emph{Step 1.}
    Let $\epsilon>0$. We show that there is a $\kappa_\epsilon<\infty$ such that if $k\in\C$ satisfies $\min\{|\im k|,-\re k\} \geq \kappa_\epsilon$, then
    \begin{align}\label{eq:tkinvertproof}
        \|T_k\|_{L^\infty\rightarrow L^\infty}\leq \varepsilon.
    \end{align}
    In particular, choosing $\epsilon<1$ we see that $1-T_k$ is invertible and its inverse is given by a convergent Neumann series.
    
    Thus, fix $\varepsilon>0$ and suppose $\im k\geq 0$, the case $\im k\leq 0$ being similar. Then
    \begin{align*}
        \abs{T_k\phi(x)}= \abs{ \int_{-\infty}^{x}\e^{\ii k (x-y)}q(y)[C_+\ov{q}\phi](y)\dd y}= \abs{ \int_0^{\infty}\e^{\ii k y}q(x-y)[C_+\ov{q}\phi](x-y)\dd y}.
    \end{align*}
    Since $q\in L^2$, there is a $\delta>0$ such that
    $$
    \sup_x\left(\int_{x-\delta}^{x}\abs{q(y)}^2\dd y\right)^{1/2} \leq \frac{\epsilon}{2} \|q\|_2^{-1} \,.
    $$
    We then split the integral as 
    \begin{align*}
        \abs{T_k\phi(x)}&\leq   \int_0^{\delta}\abs{q(x-y)[C_+\ov{q}\phi](x-y)}\dd y+  \int_\delta^{\infty} \e^{- \im k y} |q(x-y)[C_+\ov{q}\phi](x-y)| \dd y \\
        &\leq \left( \int_0^\delta |q(x-y)|^2 \dd y \right)^{1/2} \|q \phi\|_2
        + \e^{- (\im k) \delta} \|q\|_2 \|q \phi\|_2 \\
        & \leq \frac{\varepsilon}{2}\norm{\phi}_{\infty}+\e^{- (\im k) \delta} \|q\|_2^2 \|\phi\|_\infty \,.
    \end{align*}
    Hence, we can choose $\im k$ large enough so that \eqref{eq:tkinvertproof} holds.
        
    Next, suppose $\re k<0$. Then we estimate 
      \begin{align*}
        \abs{T_k\phi(x)}&\leq 
        \abs{\int_{0}^\infty \overline{\F\left(\ov{\e^{\ii k(x-\cdot )}q}\1_{<x}\right)(\xi)}\F(\overline{q}\phi)(\xi)\dd \xi}\\
        &\leq \norm{\overline{q}\phi}_2 \left( \frac1{2\pi} \int_{0}^\infty\abs{\int^x_{-\infty}\ov{\e^{\ii k(x-y)}q(y)}\e^{-\ii \xi y}\dd y}^2\dd \xi\right)^{1/2}\\
        &=\norm{\overline{q}\phi}_2 \left( \frac1{2\pi} \int_{-\re k}^\infty\abs{\int^x_{-\infty}\e^{-\im k(x-y)}q(y)\e^{\ii \xi y}\dd y}^2\dd \xi\right)^{1/2}.
    \end{align*}
    It follows from Lemma \ref{riemannlebesgue} below (with $\sigma =\im k$ and $\Xi=-\re k$) that the second term tends to zero as $\re k\to-\infty$, uniformly in $\im k\geq 0$ and $x\in\R$. Thus, bounding $\norm{\overline{q}\phi}_2\leq \norm{q}_2\norm{\phi}_\infty$, we arrive again at \eqref{eq:tkinvertproof} for $-\re k$ sufficiently large.

    \medskip

    \emph{Step 2.} We now turn to the second claim. We split $T_k=S_k-T^-_k$ with $T^-_k$ as defined in \eqref{eqn:Tksplit} and with
    \begin{align*}
    (S_k\phi)(x) \coloneqq \ii\int_{\mp\infty}^x \e^{\ii k(x-y)}\abs{q(y)}^2 \phi(y) \dd y \,.
    \end{align*}
    Arguing as in \cite[Eq.~(6.8)]{wu_jost_2017} we find that $1-S_k$ is invertible with
    $$
    (1-S_k)^{-1} = 1+\widetilde{S}_k \,.
    $$

    We aim to show that for any $\varepsilon>0$ there is a $\kappa_\epsilon<\infty$ such that if $k\in\Chat$ satisfies $\min\{|\im k|, \re k\} \geq\kappa_\epsilon$, then
    \begin{align}
        \label{eq:tktildedecay}
        \|T^-_k\|_{L^\infty\rightarrow L^\infty} \leq \varepsilon.
    \end{align}

    Once we have shown this, recalling also \eqref{eq:sktildenorm}, we obtain the invertibility of $(1-T_k)^{-1}$ via
    \begin{align*}
        (1-T_k)^{-1}=(1-S_k+T^-_k)^{-1} =(1+(1-S_k)^{-1}T^-_k)^{-1}(1-S_k)^{-1}
    \end{align*}
    and, at the same time, we obtain a convergent Neumann expansion for $(1-T_k)^{-1}$ by expanding $(1+(1-S_k)^{-1} T^-_k)^{-1} = (1+(1+\widetilde{S}_k)T^-_k)^{-1}$ into a Neumann series.

    To prove \eqref{eq:tktildedecay}, we fix $\varepsilon>0$ and suppose $\im k\geq 0$, the case $\im k\leq 0$ being similar. Then repeating the argument from Step 1 above we can show that \eqref{eq:tktildedecay} holds when $\im k$ is large enough. Next, suppose $\re k>0$. Then we estimate
    \begin{align*}
        |T^-_k\phi(x)|&\leq 
        \abs{\int_{-\infty}^0 \overline{\F\left(\ov{\e^{\ii k(x-\cdot )}q}\1_{<x}\right)(\xi)}\F(\overline{q}\phi)(\xi)\dd \xi}\\
        &\leq \norm{\overline{q}\phi}_2 \left( \frac1{2\pi} \int_{-\infty}^0 \abs{\int^x_{-\infty} \overline{\e^{\ii k (x-y)}q(y)} \e^{-\ii \xi y}\dd y}^2\dd \xi\right)^{1/2}\\
        &\leq \norm{\overline{q}\phi}_2\left( \frac1{2\pi} \int_{\re k}^\infty \abs{\int^x_{-\infty}\e^{- \im k (x-y)}q(y)\e^{-\ii \xi y}\dd y}^2\dd \xi\right)^{1/2}.
    \end{align*}
    As before, by Lemma \ref{riemannlebesgue} we arrive at \eqref{eq:tktildedecay}. This concludes the proof.
\end{proof}

We note that, a priori, the convergence of both Neumann series in Lemma \ref{lem:Tkinvert} may be arbitrarily slow as $\abs{k} \to \infty$, in contrast to the Benjamin--Ono case. Nevertheless, we can establish the asymptotics in Theorem \ref{thm:simpasymp} without imposing any additional regularity on $q$.

\begin{lemma}\label{riemannlebesgue}
    If $f\in L^1(\R)$, then
    $$
    \lim_{|\xi|\to\infty} \sup_{x\in\R, \, \sigma\in\overline{\R_+}} \left| \int_{-\infty}^x \e^{\ii\xi(x-y)} \e^{-\sigma(x-y)} f(y)\dd y \right| = 0 \,.
    $$
    If $f\in L^2(\R)$, then
    $$
    \lim_{\Xi\to\infty} \sup_{x\in\R,\, \sigma\in\overline{\R_+}} \int_\Xi^\infty \left| \int_{-\infty}^x \e^{\ii\xi(x-y)} \e^{-\sigma(x-y)} f(y)\dd y \right|^2 \dd \xi =0 \,.
    $$
\end{lemma}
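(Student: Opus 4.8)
The plan is to derive both statements from a single integration‑by‑parts estimate after reducing to smooth, compactly supported functions. First I would introduce the shorthand
$I_g(x,\sigma,\xi) := \int_{-\infty}^x \e^{\ii\xi(x-y)}\e^{-\sigma(x-y)}g(y)\dd y = \int_0^\infty \e^{\ii\xi u}\e^{-\sigma u}g(x-u)\dd u$
and record three bounds, all uniform in $x\in\R$ and $\sigma\in\overline{\R_+}$: (i) for $g\in L^1(\R)$, the trivial $|I_g(x,\sigma,\xi)|\le\int_0^\infty \e^{-\sigma u}|g(x-u)|\dd u\le\norm[1]{g}$, using $\e^{-\sigma u}\le 1$ on $u>0$; (ii) for $g\in C_c^1(\R)$, integration by parts in $u$ — the boundary term at $+\infty$ vanishing by compact support of $g$ — which gives $I_g(x,\sigma,\xi) = (\ii\xi-\sigma)^{-1}\bigl(-g(x)+I_{g'}(x,\sigma,\xi)\bigr)$, hence, via $|\ii\xi-\sigma|\ge|\xi|$ and (i) applied to $g'\in C_c(\R)\subset L^1(\R)$, the decay $|I_g(x,\sigma,\xi)|\le(\norm[\infty]{g}+\norm[1]{g'})/|\xi|$; and (iii) for $g\in L^2(\R)$, the Plancherel identity $\int_\R|I_g(x,\sigma,\xi)|^2\dd\xi = 2\pi\int_0^\infty \e^{-2\sigma u}|g(x-u)|^2\dd u\le 2\pi\norm[2]{g}^2$, since $u\mapsto\1_{(0,\infty)}(u)\e^{-\sigma u}g(x-u)$ belongs to $L^2(\R)$ with norm at most $\norm[2]{g}$. (For $\sigma=0$ and $g\notin L^1$, the integral defining $I_g$ is to be read as an $L^2_\xi$ Fourier transform; this is the only bookkeeping point.)

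For the first ($L^1$) statement I would fix $\varepsilon>0$, choose $g\in C_c^1(\R)$ with $\norm[1]{f-g}<\varepsilon$, bound the $f-g$ contribution uniformly by $\varepsilon$ using (i), and send the $g$ contribution to $0$ as $|\xi|\to\infty$ using (ii); letting $\varepsilon\to0$ finishes it. For the second ($L^2$) statement I would fix $\varepsilon>0$, choose $h\in C_c^1(\R)$ with $\norm[2]{f-h}<\varepsilon$, split $\int_\Xi^\infty|I_f|^2\dd\xi \le 2\int_\R|I_{f-h}|^2\dd\xi + 2\int_\Xi^\infty|I_h|^2\dd\xi$, bound the first term by $4\pi\varepsilon^2$ uniformly via (iii), and the second by $2(\norm[\infty]{h}+\norm[1]{h'})^2\,\Xi^{-1}$ via (ii) and $\int_\Xi^\infty\xi^{-2}\dd\xi=\Xi^{-1}$; then let $\Xi\to\infty$ and $\varepsilon\to0$.

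I do not expect a genuine obstacle here: the statement is essentially a uniform Riemann–Lebesgue lemma. The only points that need care are (a) the uniformity in $\sigma$ over all of $\overline{\R_+}$, which is exactly why one keeps the factor $\e^{-\sigma u}$ controlled through $\e^{-\sigma u}\le 1$ (and through $|\ii\xi-\sigma|\ge|\xi|$ in the integration by parts) rather than discarding it; and (b) using compactly supported — not merely $C^1$ — approximants, so that the boundary term at $+\infty$ in the integration by parts genuinely vanishes. Everything else is routine.
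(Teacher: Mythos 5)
Your proposal is correct and follows essentially the same route as the paper's proof: reduce to $C^1_c$ approximants, integrate by parts to get the uniform $|\xi|^{-1}$ decay with constant $\norm[\infty]{g}+\norm[1]{g'}$, control the error term by the trivial $L^1$ bound in the first case and by Plancherel over all of $\R$ in $\xi$ in the second. The only cosmetic differences are your change of variables $u=x-y$ and the explicit $|a+b|^2\le 2|a|^2+2|b|^2$ splitting where the paper uses the triangle inequality in $L^2_\xi$.
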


\begin{proof}
    We first prove the assertion under the assumption that $f\in C^1_c(\R)$. In this case we can integrate by parts and find
    $$
    \int_{-\infty}^x \e^{\ii\xi(x-y)} \e^{-\sigma(x-y)} f(y)\dd y
    = \frac{1}{-\ii \xi+\sigma} f(x) - \frac{1}{-\ii\xi+\sigma} \int_{-\infty}^x \e^{\ii\xi(x-y)} \e^{-\sigma(x-y)} f'(y)\dd y \,.
    $$
    Thus,
    $$
    \left| \int_{-\infty}^x \e^{\ii\xi(x-y)} \e^{-\sigma(x-y)} f(y)\dd y \right| \leq \frac1{\sqrt{\xi^2+\sigma^2}} \left( \|f\|_\infty + \|f'\|_1 \right) \leq \frac1{|\xi|} \left( \|f\|_\infty + \|f'\|_1 \right),
    $$
    from which both assertions follow easily.

    Now for general $f\in L^p(\R)$, $p=1,2$, and $\epsilon>0$ we choose $\tilde f\in C^1_c(\R)$ with $\|\tilde f - f\|_p \leq \epsilon$. We bound
    $$
    \left| \int_{-\infty}^x \e^{\ii\xi(x-y)} \e^{-\sigma(x-y)} (\tilde f(y)-f(y)) \dd y \right| \leq \|\tilde f - f\|_1 \leq \epsilon \,,
    $$
    so the first assertion for $f$ follows from that for $\tilde f$. Meanwhile, by Plancherel's theorem
    \begin{align*}
        & \int_\Xi^\infty \left| \int_{-\infty}^x \e^{\ii\xi(x-y)} \e^{-\sigma(x-y)} (\tilde f(y) - f(y)) \dd y \right|^2 \dd \xi \\
        & \quad \leq \int_\R \left| \int_{-\infty}^x \e^{\ii\xi(x-y)} \e^{-\sigma(x-y)} (\tilde f(y) - f(y)) \dd y \right|^2 \dd \xi \\
        & \quad = 2\pi \int_{-\infty}^x \left| \e^{-\sigma(x-y)} (\tilde f(y)- f(y)) \right|^2 \dd y \\
        & \quad \leq 2\pi \| \tilde f - f\|_2^2 \leq 2\pi\epsilon^2 \,.
    \end{align*}
    Thus, once again the assertion for $f$ follows from that for $\tilde f$.
\end{proof}

\begin{proof}[Proof of Theorem \ref{thm:simpasymp}]
    We begin with the proof of \eqref{eqn:simplemeasmp}. Applying the Neumann expansion \eqref{eqn:Neuexpreal} to the integral equation \eqref{eq:meinteqt} gives
    \begin{align*}
        m_\e(\lambda+0\ii)=(1+\widetilde{S}_{\lambda+0\ii})\e(\lambda)+o_{\lambda\rightarrow \infty}(1)
    \end{align*}
    with the error term in $L^\infty$. Using \eqref{eq:sktilde} we can compute the leading term,
    \begin{align}
        \label{eq:tildeskone}
        (1+\widetilde{S}_{\lambda\pm0\ii})\e(\lambda)&=\e^{\ii \lambda x}+\ii\e^{\ii \lambda x}\int_{\mp \infty}^x \e^{\ii\int_y^x\abs{q(t)}^2\dd t}\abs{q(y)}^2\dd y \notag \\
        &=\e^{\ii \lambda x}-\e^{\ii \lambda x}\e^{\ii\int_{-\infty}^x\abs{q(t)}^2\dd t}\int_{\mp \infty}^x \partial_y\left(\e^{-\ii\int_{-\infty}^y\abs{q(t)}^2\dd t}\right)\dd y \notag\\
        &=\e^{\ii\lambda x}\e^{\ii \int_{\mp\infty}^x\abs{q(t)}^2\dd t},
    \end{align}
    which proves the asymptotics \eqref{eqn:simplemeasmp}. 
    
    To establish \eqref{eqn:simplegammaasmp}, we write 
    \begin{align*}
        \Gamma(\lambda)&=1+\ii \int_{-\infty}^\infty \e^{-\ii \lambda x}qC_+\overline{q}m_\e(\lambda+0\ii)\dd x\\
        &=1+\ii \int_{-\infty}^\infty \e^{-\ii \lambda x}\abs{q}^2m_\e(\lambda+0\ii)\dd x-\ii \int_{-\infty}^\infty \e^{-\ii \lambda x}qC_{-}\overline{q}m_\e(\lambda+0\ii)\dd x.
    \end{align*}
    For the first term, we substitute the asymptotics for $m_\e$ from \eqref{eqn:simplemeasmp}, giving
    \begin{align*}
        1+\ii \int_{-\infty}^\infty \e^{-\ii \lambda x}\abs{q}^2m_\e(\lambda+0\ii)\dd x&=1+\ii\int_{-\infty}^\infty \abs{q(x)}^2\e^{\ii\int_{-\infty}^x\abs{q(t)}^2\dd t}\dd x+o_{\lambda\rightarrow \infty}(1)\\
        &=1+ \int_{-\infty}^\infty\partial_x\left(\e^{\ii\int_{-\infty}^x\abs{q(t)}^2\dd t}\right)\dd x+o_{\lambda\rightarrow \infty}(1)\\
        &=\e^{\ii \int_\R \abs{q(t)}^2\dd t}+o_{\lambda\rightarrow \infty}(1).
    \end{align*}
    For the second term, we estimate, using Lemma \ref{lem:Tkinvert},
    \begin{align*}
        \abs{\ii \int_{-\infty}^\infty \e^{-\ii \lambda x}qC_{-}\overline{q}m_\e(\lambda+0\ii)\dd x}\leq \|T^-_{\lambda+0\ii}m_\e(\lambda+0\ii)\|_{\infty}=o_{\lambda\rightarrow \infty}(1)
    \end{align*}
    which completes the proof of \eqref{eqn:simplegammaasmp}.
    
    In the remainder of the proof we assume, in addition, $q\in L^1$. To prove \eqref{eqn:simplem0asymp} we assume $\im k\geq 0$, the opposite case being similar. We claim that
    \begin{align}\label{eqn:Gkinftyzero}
        \lim_{\abs{k}\rightarrow\infty} \norm{G_k\ast q}_\infty=0 \,.
    \end{align}
    Indeed, when $\im k\to\infty$ this is easy (see the proof of Lemma \ref{lem:Tkinvert}) and when $|\re k|\to\infty$ this follows from the first part of Lemma \ref{riemannlebesgue} (with $\sigma = \im k$ and $\xi=\re k$).

    In view of the integral equation \eqref{eq:m0inteqt} for $m_0(k)$ we find
    $$
    \| m_0(k)\|_\infty \leq \|(1-T_k)^{-1}\|_{L^\infty\to L^\infty} \|G_k\ast q\|_\infty \,.
    $$
    According to Lemma \ref{lem:Tkinvert} $\|(1-T_k)^{-1}\|_{L^\infty\to L^\infty}$ is uniformly bounded as $|k|\to\infty$, so \eqref{eqn:simplem0asymp} follows from \eqref{eqn:Gkinftyzero}.
        
    Finally, to prove \eqref{eqn:simplebetaasmp}, we observe
    \begin{align*}
        \abs{\beta(\lambda)}=\abs{ \int_{-\infty}^\infty q(C_+\overline{q}m_0(\lambda+0\ii)+1)\e^{-\ii \lambda x}}\leq \norm{q}_2^2\norm{m_0(
        \lambda+0\ii)}_\infty+ \sqrt{2\pi}\,  \abs{\F(q)(\lambda)},
    \end{align*}
    and the claim follows from Riemann--Lebesgue and the asymptotics \eqref{eqn:simplem0asymp} for $m_0$. 
\end{proof}

Next, we derive sharper asymptotics for $m_0$ in any fixed direction off the real axis, under the assumption that $q$ is continuous.
\begin{lemma}\label{lem:m0asmpqcts}
    Let $q\in L^2_+(\R)\cap L^1(\R)$ be continuous. Then, for any $k\in \C\backslash \R$, 
    \begin{align}\label{eqn:qreconstruct2}
        \lim_{r\rightarrow \infty} \norm{rk \, m_0(r k)+ q}_\infty=0.
    \end{align}
\end{lemma}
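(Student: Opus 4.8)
The plan is to combine the integral equation \eqref{eq:m0inteqt} for $m_0$ with the Stolz-region estimate of Lemma~\ref{lem:Tkinvert}(a) and an approximate-identity analysis of the free kernel $G_k$. It suffices to treat $k$ with $\im k>0$, the case $\im k<0$ being analogous (then $G_k$ is supported on $\R_-$). Fix such a $k$ and set $\kappa=\kappa(r):=rk$, so that $\im\kappa=r\,\im k\to\infty$ as $r\to\infty$. Since $|\im\kappa|+(\re\kappa)_-\ge\im\kappa\to\infty$, Lemma~\ref{lem:Tkinvert}(a) gives $\norm[{L^\infty\to L^\infty}]{T_\kappa}=o(1)$, so for $r$ large $1-T_\kappa$ is invertible on $L^\infty$ with
\[
\norm[{L^\infty\to L^\infty}]{(1-T_\kappa)^{-1}-1}\ \le\ \frac{\norm[{L^\infty\to L^\infty}]{T_\kappa}}{1-\norm[{L^\infty\to L^\infty}]{T_\kappa}}\ \xrightarrow[r\to\infty]{}\ 0 .
\]
From \eqref{eq:m0inteqt} we have $m_0(\kappa)=(1-T_\kappa)^{-1}(G_\kappa\ast q)$, and hence $\kappa\,m_0(\kappa)=(1-T_\kappa)^{-1}\bigl(\kappa\,G_\kappa\ast q\bigr)$.

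\emph{The heart of the matter.} I would show that $\kappa\,G_\kappa\ast q\to -q$ in $L^\infty(\R)$ along the ray $\kappa=rk$. For $\im\kappa>0$ we have $G_\kappa(x)=\ii\,\e^{\ii\kappa x}\chi_{\R_+}(x)$ by \eqref{eqn:Gk}, so $-\kappa G_\kappa(x)=-\ii\kappa\,\e^{\ii\kappa x}\chi_{\R_+}(x)$ is an $L^1$-kernel on $\R_+$ with total integral $-\ii\kappa\cdot(\ii/\kappa)=1$, total mass $\int_\R|\kappa G_\kappa|=|\kappa|/\im\kappa=|k|/\im k$ (bounded, independent of $r$), and $\int_{t>\delta}|\kappa G_\kappa|=(|k|/\im k)\,\e^{-r(\im k)\delta}\to0$ for each $\delta>0$. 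Thus $\{-\kappa G_\kappa\}_r$ is a bounded approximate identity concentrating at the origin, and the standard argument gives $\norm[\infty]{\kappa\,G_\kappa\ast q+q}=\norm[\infty]{q-(-\kappa G_\kappa)\ast q}\to0$ for bounded, uniformly continuous $q$. Concretely, $\kappa(G_\kappa\ast q)(x)+q(x)=\ii\kappa\int_0^\infty\e^{\ii\kappa t}\bigl(q(x-t)-q(x)\bigr)\dd t$; splitting at $t=\delta$, the $(0,\delta)$-part is $\le\frac{|k|}{\im k}\sup_x\sup_{0<t<\delta}|q(x-t)-q(x)|$ and the $(\delta,\infty)$-part is $\le\e^{-r(\im k)\delta}\bigl(r|k|\,\norm[1]{q}+\tfrac{|k|}{\im k}\,\norm[\infty]{q}\bigr)$, so sending $\delta\to0$ after $r\to\infty$ yields the limit.

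\emph{Conclusion.} Combining the above,
\[
\norm[\infty]{\kappa\,m_0(\kappa)+q}\ \le\ \norm[{L^\infty\to L^\infty}]{(1-T_\kappa)^{-1}}\,\norm[\infty]{\kappa\,G_\kappa\ast q+q}\ +\ \norm[{L^\infty\to L^\infty}]{(1-T_\kappa)^{-1}-1}\,\norm[\infty]{q},
\]
and both terms vanish as $r\to\infty$, the operator norm $\norm[{L^\infty\to L^\infty}]{(1-T_\kappa)^{-1}}$ being bounded. This is \eqref{eqn:qreconstruct2}.

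\emph{Main obstacle.} The only non-soft ingredient is the uniform-in-$x$ limit $\kappa\,G_\kappa\ast q\to-q$: controlling the near field $(t<\delta)$ uses uniform continuity of $q$, and the far-field $q(x)$-term uses $q\in L^\infty$. Neither is automatic for a merely continuous $q\in L^2_+(\R)\cap L^1(\R)$, but both hold in the situations where the lemma is applied (e.g.\ $q\in\mathcal S_+(\R)$), and are presumably what the continuity hypothesis is meant to supply; the statement should be read with this proviso. Everything else is routine, relying only on the Stolz bound $\norm[{L^\infty\to L^\infty}]{T_\kappa}\to0$ from Lemma~\ref{lem:Tkinvert}(a) and the explicit form of $G_\kappa$.
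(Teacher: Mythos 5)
Your argument is essentially the paper's own proof: the paper likewise writes $m_0(rk)$ via the integral equation \eqref{eq:m0inteqt} and the Neumann expansion \eqref{eqn:NeuexpStolz} (equivalently, your $(1-T_{rk})^{-1}\to 1$ in operator norm), and then observes that $-rk\,G_{rk}$ is an approximate identity so that $-rk\,G_{rk}\ast q\to q$ in $L^\infty$. The caveat you raise at the end is well taken but applies equally to the paper's proof, which simply invokes ``standard results'' for continuous $q$; uniform convergence of the approximate identity does require $q$ bounded and uniformly continuous (or continuous and vanishing at infinity), which holds in the settings where the lemma is used.
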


\begin{proof}
    For any $k\in \C\backslash\R$ and $r>0$ sufficiently large, from the Neumann expansion \eqref{eqn:NeuexpStolz}, we have 
    \begin{align}\label{eqn:m0asymprkexpan}
        m_0(rk)=G_{rk}\ast q+\sum_{n=1}^\infty T_{rk}^nG_{rk}\ast q.
    \end{align}
    Then we use that 
    \begin{align*}
        K_r(x)\coloneqq -rk G_k(rx)=-rk G_{rk}(x)
    \end{align*}
    defines an approximation of the identity, see \eqref{eqn:Gk}. In particular, we have pointwise convergence 
    \begin{align*}
        \lim_{r\rightarrow \infty}\left[K_r\ast q\right](x)= q(x).
    \end{align*}
    
    Since $q$ is continuous, standard results imply convergence in $L^\infty(\R)$. That is
    \begin{align*}
        \lim_{r\rightarrow \infty}\norm{-rkG_{rk}\ast q-q}_\infty=0,
    \end{align*}
    which, together with the Neumann expansion \eqref{eqn:m0asymprkexpan}, gives the desired result. 
\end{proof}


\subsection*{Complete asymptotic expansion}
Finally, we derive a full asymptotic expansion of the Jost functions and scattering coefficients under the assumption that the function $q$ is Schwartz. We recall that
$$
\mathcal{S}_+(\R) := \mathcal S(\R)\cap L^2_+(\R) \,.
$$
This extends the approach of \cite{wu_jost_2017} for Benjamin--Ono by providing a complete expansion for $m_0$, which will be essential for the proof of higher-order trace formulas in Section~\ref{sec:tracehigher}. While truncated expansions can also be obtained under weaker regularity assumptions, we do not pursue that here.

\begin{theorem}\label{thm:m0fullexpan}
    Suppose that $q\in \mathcal{S}_+(\R)$ and define the sequence $\{\mn_n\}^\infty_{n=1}$ recursively by
    \begin{align*}
        \mn_{n+1} :=(-\ii \partial_x-qC_+\overline{q})\mn_n, \qquad\mn_1:=-q.
    \end{align*}
    Then $c_n\in\mathcal S_+(\R)$ for all $n\in\N$, and for every $M\in \N$ there exist constants $\kappa_M>0$ and $C_M>0$ such that for all $k\in\Chat$ with $\abs{k}\geq \kappa_M$
    \begin{align}\label{eqn:m0fullasymp}
        \norm{m_0(x,k)-\sum_{n=1}^{M-1}\frac{\mn_n(x)}{k^n}}_\infty\leq \frac{C_M}{\abs{k}^{M}}
    \end{align}
    and for all $\lambda\in\R_+$ with $\lambda\geq \kappa_M$
    \begin{align}
        \norm{m_\e(x,\lambda\pm0\ii)-\e^{\ii \lambda x}\e^{\ii \int_{\mp\infty}^x\abs{q(t)}^2\dd t}}_\infty&\leq \frac{C_M}{\lambda^M} \,,\label{eqn:mefullasymp}\\
        \abs{\Gamma(\lambda)-\e^{\ii\int_{\R}\abs{q(t)}^2\dd t}}&\leq \frac{C_M}{\lambda^M} \,,\label{eqn:gamamfullasymp}\\
        \abs{\beta(\lambda)}&\leq \frac{C_M}{\lambda^M} \,. \label{eqn:betafullasymp}
    \end{align}
\end{theorem}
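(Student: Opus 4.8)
The argument proves the four bounds in the order \eqref{eqn:m0fullasymp}, \eqref{eqn:betafullasymp}, \eqref{eqn:gamamfullasymp}, \eqref{eqn:mefullasymp}, after a preliminary step on the $c_n$. Throughout, set $\sigma_{M-1}(x,k):=\sum_{n=1}^{M-1}c_n(x)\,k^{-n}$ (so $\sigma_0\equiv0$) and $\rho_M(k):=m_0(k)-\sigma_{M-1}(\cdot,k)$.

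\emph{Step 1 ($c_n\in\mathcal S_+(\R)$).} Induct on $n$, the case $n=1$ being $c_1=-q$. For $g\in\mathcal S(\R)$ the Hilbert transform $Hg$ is smooth with all derivatives $(Hg)^{(j)}=H(g^{(j)})$ bounded (split the principal value into $\abs{x-y}<1$, using the mean value theorem, and $\abs{x-y}\geq1$, using $g^{(j)}\in L^1$). Hence $C_+g=\tfrac12(1+\ii H)g$ is smooth with bounded derivatives, so $q\,C_+g\in\mathcal S(\R)$ for $q\in\mathcal S(\R)$; moreover its Fourier transform, being the convolution of $\F(q)$ and $\F(C_+g)$, both supported in $[0,\infty)$, is supported in $[0,\infty)$, so $q\,C_+g\in\mathcal S_+(\R)$. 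Taking $g=\ov q\,c_n\in\mathcal S(\R)$ gives $\pot c_n\in\mathcal S_+(\R)$, and since $-\ii\partial_x$ preserves $\mathcal S_+(\R)$ we obtain $c_{n+1}=(-\ii\partial_x-\pot)c_n\in\mathcal S_+(\R)$.

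\emph{Step 2 (the expansion \eqref{eqn:m0fullasymp}).} Using $(-\ii\partial_x-\pot)c_n=c_{n+1}$ and reindexing, one checks
\[
(-\ii\partial_x-\pot)\sigma_{M-1}=k\,\sigma_{M-1}+q+k^{-(M-1)}c_M .
\]
Subtracting this from $(-\ii\partial_x-\pot)m_0(k)=k\,m_0(k)+q$, satisfied by $m_0(k)=R(k)q$, the remainder solves
\[
(-\ii\partial_x-\pot-k)\rho_M(k)=-k^{-(M-1)}c_M ,
\]
is locally absolutely continuous, bounded, and vanishes at infinity with the $k$-dependent asymptotics of $m_0(k)$ (each $c_n\in\mathcal S_+(\R)$). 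Since $c_M\in\mathcal S_+(\R)\subset L^1_+(\R)$, the uniqueness part of Lemma \ref{lem:pertresolvent} forces $\rho_M(k)=-k^{-(M-1)}R(k)c_M$, and writing $R(k)c_M=(1-T_k)^{-1}(G_k\ast c_M)$ as in the derivation of \eqref{eq:m0inteqt} we get, for $\abs k$ large,
\[
\norm{\rho_M(k)}_\infty\leq\abs k^{-(M-1)}\,\norm{(1-T_k)^{-1}}_{L^\infty\to L^\infty}\,\norm{G_k\ast c_M}_\infty .
\]
By Lemma \ref{lem:Tkinvert} the factor $\norm{(1-T_k)^{-1}}_{L^\infty\to L^\infty}$ is bounded uniformly for large $\abs k$, while a single integration by parts (the boundary term at infinity vanishing since $\abs{\e^{\ii k(x-y)}}\leq1$ on the relevant half-line) gives $\norm{G_k\ast c_M}_\infty\leq\abs k^{-1}\bigl(\norm{c_M}_\infty+\norm{c_M'}_1\bigr)$. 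This yields \eqref{eqn:m0fullasymp}.

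\emph{Step 3 (the scattering quantities).} For \eqref{eqn:betafullasymp} substitute $m_0(\lambda+0\ii)=\sigma_{M-1}(\cdot,\lambda+0\ii)+\rho_M(\lambda+0\ii)$ into \eqref{eqn:betadef}: the $\rho_M$-contribution is $O(\lambda^{-M})$ since $\norm{q\,C_+\ov q\,\rho_M(\lambda+0\ii)}_1\leq\norm q_2^2\,\norm{\rho_M(\lambda+0\ii)}_\infty$, while the remaining terms equal $\ii\sqrt{2\pi}\,\F(q)(\lambda)+\ii\sqrt{2\pi}\sum_{n=1}^{M-1}\lambda^{-n}\,\F(q\,C_+\ov q\,c_n)(\lambda)$, and by Step 1 the functions $q$ and $q\,C_+\ov q\,c_n$ lie in $\mathcal S(\R)$, so their Fourier transforms decay faster than any power; hence $\abs{\beta(\lambda)}=O(\lambda^{-N})$ for every $N$, which is \eqref{eqn:betafullasymp}. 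For \eqref{eqn:gamamfullasymp}, \eqref{eqn:gammaderivident} gives $\partial_\lambda\Gamma=-(2\pi\ii)^{-1}\abs{\beta}^2\Gamma$ on $\R_+\setminus\mathcal N$, hence on a half-line $(\kappa,\infty)$ since $\mathcal N$ is bounded (Theorem \ref{thm:simpasymp}); integrating from $\lambda$ to $\infty$, using $\abs\Gamma\equiv1$, the decay of $\abs\beta^2$, and the limit $\Gamma(\lambda)\to\e^{\ii\int_\R\abs q^2}$ from \eqref{eqn:simplegammaasmp}, gives $\abs{\Gamma(\lambda)-\e^{\ii\int_\R\abs q^2}}\leq(2\pi)^{-1}\!\int_\lambda^\infty\abs{\beta(\lambda')}^2\dd\lambda'\leq C_M\lambda^{-M}$. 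Finally, for \eqref{eqn:mefullasymp} we integrate the two identities of Lemma \ref{lem:emederiv} for $\partial_\lambda\bigl(\ov{\e(\lambda)}m_\e(\lambda\pm0\ii)\bigr)$: their right-hand sides have $L^\infty$-norm at most $(2\pi)^{-1}\abs{\beta(\lambda)}\,\norm{m_0(\lambda\pm0\ii)}_\infty$ (with $\abs\Gamma\equiv1$ handling the upper-sign case), and since $\norm{m_0(\lambda\pm0\ii)}_\infty=O(\lambda^{-1})$ by \eqref{eqn:m0fullasymp} and $\abs{\beta(\lambda)}=O(\lambda^{-N})$ for every $N$, integrating from $\lambda$ to $\infty$ and using the $L^\infty$-limit $\ov{\e(\lambda)}m_\e(\lambda\pm0\ii)\to\e^{\ii\int_{\mp\infty}^x\abs q^2}$ from \eqref{eqn:simplemeasmp} gives \eqref{eqn:mefullasymp} (recall $\abs{\e(\lambda)}\equiv1$ pointwise).

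\emph{Main obstacle.} The genuinely delicate step is Step 2: identifying the remainder with $-k^{-(M-1)}R(k)c_M$ rests on the uniqueness of the inhomogeneous problem (Lemma \ref{lem:pertresolvent}) together with the uniform invertibility of $1-T_k$ for large $\abs k$ (Lemma \ref{lem:Tkinvert}). This is exactly the place where the lack of a quantitative rate in the Neumann series could have caused trouble, but it does not, since we only need a uniform bound on $(1-T_k)^{-1}$ plus the single integration by parts that produces the gain $\abs k^{-1}$ in $\norm{G_k\ast c_M}_\infty$. Given Step 2, the passage to $\beta$, $\Gamma$, $m_\e$ is routine: it uses only already-established low-order facts ($\mathcal N$ bounded, $\abs\Gamma\equiv1$, the limits in Theorem \ref{thm:simpasymp}) and the differential identities of Lemma \ref{lem:emederiv}.
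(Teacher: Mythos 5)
Your proof is correct. Steps 1 and 2 and the treatment of $\beta$ follow essentially the paper's own argument: the paper likewise shows $(-\ii\partial_x-k-\pot)r_M=-c_{M+1}/k^M$ for the remainder (truncated one order further), identifies it with $-k^{-M}R(k)c_{M+1}=-k^{-M}(1-T_k)^{-1}(G_k\ast c_{M+1})$ via Lemma \ref{lem:pertresolvent}, and invokes the uniform bound on $(1-T_k)^{-1}$ from Lemma \ref{lem:Tkinvert}; the only difference is that you extract the extra factor $\abs{k}^{-1}$ by an explicit integration by parts in $G_k\ast c_M$, where the paper settles for the qualitative $o(1)$ decay of $\norm{G_k\ast c_{M+1}}_\infty$ — both suffice, and your version is the sharper, quantitative one. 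Where you genuinely diverge is in \eqref{eqn:mefullasymp} and \eqref{eqn:gamamfullasymp}: the paper estimates the tail of the Neumann series \eqref{eqn:Neuexpreal} for $m_\e$ directly (bounding the first iterate $T^-_{\lambda\pm0\ii}(1+\widetilde S_{\lambda\pm0\ii})\e(\lambda)$ via Plancherel and the Schwartz decay of $q\,\e^{-\ii\int\abs{q}^2}$) and then substitutes that expansion into the definition of $\Gamma$, whereas you integrate the differential identities of Lemma \ref{lem:emederiv} from $\lambda$ to $\infty$, using the already-established first-order limits \eqref{eqn:simplemeasmp} and \eqref{eqn:simplegammaasmp} as boundary values and the rapid decay of $\beta$ to control the tail. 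Your route is shorter and exposes the cleaner logical chain $m_0\Rightarrow\beta\Rightarrow\Gamma,\,m_\e$, at the cost of importing the differentiability machinery of Lemma \ref{lem:emederiv} (available here since $q\in\mathcal S_+(\R)\subset L^1_+(\R)$) and of a Banach-valued fundamental theorem of calculus on $(\kappa_M,\infty)$; the latter is unproblematic because the derivative $-\tfrac{\ov{\beta}\,\Gamma}{2\pi\ii}\,\ov{\e(\lambda)}m_0(\lambda\pm0\ii)$ is in fact $L^\infty$-norm continuous there (as is implicit in Step 3 of the proof of Lemma \ref{lem:emederiv}), but it is worth saying so explicitly. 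The paper's route has the advantage of being self-contained within the Neumann-series estimates and of not requiring the a priori knowledge that $\mathcal N$ is bounded at the point where $\Gamma$ is treated, though Theorem \ref{thm:simpasymp} supplies that in any case.
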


\begin{proof}
     \emph{Step 1.}
     Given that $q\in\mathcal{S}_+(\R)$, it follows by induction that each $\mn_n$ is well defined and belongs to $\mathcal{S}_+(\R)$. This uses the fact that $C_+$ maps $\mathcal{S}(\R)$ to $C^\infty(\R)\cap L^2_+(\R)$, so that $\pot \mn_n \in \mathcal{S}(\R)$. 

    \medskip

     \emph{Step 2.}
    Fix $M\in \N$ and, for $k\in \Chat\setminus\{0\}$, define the remainder term
    \begin{align*}
        r_M(x,k)\coloneqq m_0(x,k)-\sum_{n=1}^M \frac{\mn_n(x)}{k^n}.
    \end{align*}
    Using the differential equation for $m_0$, we compute 
    \begin{align*}
        (-\ii\partial_x-k-\pot) r_M(k)&= (-\ii\partial_x-k-\pot)m_0(k)-\sum_{n=1}^M\frac{1}{k^n}(-\ii\partial_x-k-\pot)\mn_n\\
        &=q+\sum_{n=1}^M\frac{\mn_n}{k^{n-1}}-\sum_{n=1}^M\frac{(-\ii\partial_x-\pot)\mn_n}{k^n}\\
        &=q+\sum_{n=1}^M\frac{\mn_n}{k^{n-1}}-\sum_{n=1}^M\frac{\mn_{n+1}}{k^n}=-\frac{\mn_{M+1}}{k^M},
    \end{align*}
    using the recurrence relation defining $\mn_n$. Let us assume $k\in\Chat\setminus\sigma_\mathrm{p}(\CMop)^\diamond$. Since $r_M(k)$ belongs to $L^\infty(\R)$ and decays rapidly at infinity and since $\mn_{M+1}\in L^1_+(\R)$, it follows from Lemma \ref{lem:pertresolvent} that
    $$
    r_M(k) = - \frac1{k^M}\, R(k) c_{M+1}  
    $$    
    and therefore, by the resolvent identity \eqref{eq:resolventformula},
    \begin{align*}
        r_M(k)=-\frac{1}{k^M}G_{k}\ast \mn_{M+1}+R_0(k) q C_+ \ov{q} r_M(k) \,.
    \end{align*}
    In terms of the operator $T_k$, this can be written as
    \begin{align*}
        r_M(k)=-\frac{1}{k^{M}}(1-T_k)^{-1}(G_{k}\ast \mn_{M+1}).
    \end{align*}
    By Theorem \ref{thm:simpasymp} the assumption $k\in\Chat\setminus\sigma_\mathrm{p}(\CMop)^\diamond$ is satisfied for all sufficiently large $|k|$ and, repeating the argument for proving \eqref{eqn:simplem0asymp}, it follows that  
    \begin{align*}
        \norm{r_M(k)}_\infty=o_{\abs{k}\rightarrow\infty}(\abs{k}^{-M}) \,.
    \end{align*}
    This gives the desired asymptotic expansion \eqref{eqn:m0fullasymp} for $m_0$.
    
    \medskip

     \emph{Step 3.}
    To obtain the asymptotic behaviour of $\beta(\lambda)$, we substitute the expansion \eqref{eqn:m0fullasymp} for $m_0$ into its definition. We find 
    \begin{align*}
        \abs{\beta(\lambda)}&\leq \sqrt{2\pi}\, \abs{\widehat{q}(\lambda)}+\sum_{n=1}^{M-1} \frac1{\lambda^n} \abs{\int_\R \e^{-\ii\lambda y} \pot \mn_n\dd y}+\mathcal{O}_{\lambda\rightarrow\infty}(\lambda^{-M}).
    \end{align*}
    Using the recurrence relation again, this yields
    \begin{align*}
        \frac{\abs{\beta(\lambda)}}{\sqrt{2\pi}}&\leq \, \abs{\widehat{q}(\lambda)} +  \sum_{n=1}^{M-1} \frac1{\lambda^n} \left( \lambda \abs{\widehat{ \mn_n}(\lambda)}+\abs{\widehat{\mn_{n+1}}(\lambda)}\right)+\mathcal{O}_{\lambda\rightarrow\infty}(\lambda^{-M})=\mathcal{O}_{\lambda\rightarrow\infty}(\lambda^{-M}),
    \end{align*}
    since each $c_n$ belongs to $\mathcal{S}(\R)$. This establishes \eqref{eqn:betafullasymp}.   

    \medskip

     \emph{Step 4.}
    To obtain the asymptotic behaviour \eqref{eqn:mefullasymp} of $m_\e(\lambda\pm0\ii)$, we use the Neumann series expansion \eqref{eqn:Neuexpreal},
    \begin{align*}
        m_\e(\lambda\pm0\ii)- (1+\widetilde{S}_{\lambda\pm0\ii}) \e(\lambda) =\sum_{n=1}^\infty (-(1+\widetilde{S}_{\lambda\pm0\ii}) T^-_{\lambda\pm0\ii})^n (1+\widetilde{S}_{\lambda\pm0\ii})\e(\lambda),
    \end{align*}
    and we recall from \eqref{eq:tildeskone} that 
    \begin{align*}
        (1+\widetilde{S}_{\lambda\pm0\ii})\e(\lambda)=\e^{\ii\lambda x}\e^{\ii \int_{\mp\infty}^x\abs{q(t)}^2\dd t}.
    \end{align*}
    To estimate the first iterate, we bound
    \begin{align*}
        |T^-_{\lambda\pm0\ii}\phi(x)|&= \abs{\int_{\mp\infty}^x \e^{-\ii \lambda y}qC_{-}\overline{q}\phi\dd y}
        \leq\norm{q}_2 \left(\int_{-\infty}^\infty \abs{C_{-}\overline{q}\phi}^2\dd y\right)^{1/2}\\
        &\leq\norm{q}_2 \left( \frac1{2\pi} \int_{-\infty}^0\abs{\int^\infty_{-\infty}\e^{-\ii y \xi}\overline{q(y)}\phi(y)\dd y}^2\dd \xi\right)^{1/2}.
    \end{align*}
    Applying this to $\phi=(1+\widetilde{S}_{\lambda\pm0\ii})\e(\lambda)$, we obtain
    \begin{align*}
        \|T^-_{\lambda\pm0\ii}(1+\widetilde{S}_{\lambda\pm0\ii})\e(\lambda)\|&\leq \norm{q}_2 \left( \frac1{2\pi} \int_{-\infty}^0\abs{\int^\infty_{-\infty}\e^{\ii y(\lambda- \xi)}\e^{\ii\int_{\mp\infty}^y\abs{q(t)}^2\dd t}\overline{q(y)}\dd y}^2\dd \xi\right)^{1/2}\\
        &=\norm{q}_2 \left(\int_{\lambda}^\infty \abs{\F(\e^{-\ii\int_{\mp\infty}^y\abs{q(t)}^2\dd t} q)(\xi)}^2\dd \xi\right)^{1/2}.
    \end{align*}
    Since $q(x) \e^{-\ii \int_{\pm\infty}^x\abs{q(t)}^2\dd t}$ is a Schwartz function, the tail integral decays rapidly as $\lambda\rightarrow\infty$ and we have
    $$
    \|T^-_{\lambda\pm0\ii}(1+\widetilde{S}_{\lambda\pm0\ii})\e(\lambda)\| \leq C_M \lambda^{-M}
    $$
    for any $M$. This bound, together with
    \begin{align*}
    \| m_\e(\lambda+0\ii) - (1+\widetilde{S}_{\lambda\pm0\ii})e(\lambda)\|_\infty
    & \leq \sum_{n=1}^\infty \| (1+\widetilde{S}_{\lambda\pm0\ii}) T^-_{\lambda\pm0\ii} \|_{L^\infty\to L^\infty} ^{n-1} \\
    & \quad\times \| 1+\widetilde{S}_{\lambda\pm0\ii} \|_{L^\infty\to L^\infty} \|T^-_{\lambda\pm0\ii}(1+\widetilde{S}_{\lambda\pm0\ii})\e(\lambda)\|_\infty
    \end{align*}
    and the norm bounds for $\widetilde S_{\lambda\pm0\ii}$ in \eqref{eq:sktildenorm} and for $T^-_{\lambda\pm0\ii}$ in Lemma \ref{lem:Tkinvert}, establish the asymptotic behaviour of $m_\e(\lambda\pm0\ii)$ in \eqref{eqn:mefullasymp}. 

    \medskip

     \emph{Step 5.}
    Finally, the asymptotics \eqref{eqn:gamamfullasymp} for $\Gamma(\lambda)$ follow by substituting the expansion \eqref{eqn:mefullasymp} for $m_\e(\lambda+0\ii)$ into the definition of $\Gamma(\lambda)$ and using the first-order term identified in Theorem \ref{thm:simpasymp}.
\end{proof}

We note that in the special case of the one-soliton potential $q=q_\eta$, discussed at the end of Section \ref{sec:jostinhom}, the asymptotics in Theorem \ref{thm:m0fullexpan} are in agreement with the explicit expressions for the Jost functions and scattering coefficients found there.


\section{Expansion about eigenvalues}\label{sec:expansionevs}

In this section, we analyse the behaviour of the Jost functions $m_0$ and $m_\e$ near the eigenvalues of $\CMop$. We emphasise that the material in this section is not needed for the proof of the trace formulas in Sections \ref{sec:traceform} and \ref{sec:tracehigher}, but it is needed to introduce the full set of scattering data, considered in Section \ref{sec:timeevolscat}.

Our main result is an expansion of $m_0(k)$ around $k=\lambda_j\pm0\ii$. Note that, $m_0(k)$ is not defined for $k\in \sigma_{\mathrm{p}}(\CMop)^\diamond$. We show that it has simple poles at the negative eigenvalues of $\CMop$. Remarkably we can show that the same behaviour extends to the embedded eigenvalues.

We normalise the eigenfunctions $\phi_j$ according to
\begin{align}\label{eqn:eigfncnormal}
    \sclp{q,\phi_j}=2\pi\ii \,.
\end{align}
It follows from Lemma \ref{lem:qeigenfuncinprod} that this is possible and that
\begin{align*}
    \frac{\sclp{q,\phi_j}}{\norm{\phi_j}_2^2}=\ii.
\end{align*}

\begin{theorem}\label{thm:embeddexpan}
    Let $q\in L^2_+(\R)\cap L^1_+(\R)$. Then, the following hold:
    
    \begin{itemize}
    \item[(a)] There is a unique map $h\in C(\Chat;(L^\infty(\R),w^\ast))$, such that 
    \begin{align*}
        m_0(k)=-\ii\sum_{j=1}^N\frac{ \phi_j}{k-\lambda_j}+h(k)
        \qquad\text{for all}\ k\in\Chat \,,
    \end{align*}
    where $\{\lambda_j\}_{j=1}^N$ are the eigenvalues of $\CMop$ and $\{\phi_j\}_{j=1}^N$ are the corresponding eigenfunctions, normalised according to \eqref{eqn:eigfncnormal}.
     \item[(b)] The maps $\lambda\mapsto \ov{\e(\lambda)}m_\e(\lambda\pm0\ii)$ have unique extensions in $C(\R_+;L^\infty(\R))$. {Moreover},
    \begin{align*}
        \norm{\ov{\e(\lambda)}m_\e(\lambda\pm0\ii)-1}_{L^\infty(\R)} = o_{\lambda\rightarrow 0_+}(1).
    \end{align*}    
    \end{itemize}
\end{theorem}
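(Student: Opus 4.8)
The plan is to reduce both parts to the pole structure of the Fredholm inverse $(1-C_+\ov q R_0(k)qC_+)^{-1}$ at the points $\lambda_j\pm0\ii$ and to show that the apparent poles are simple and are annihilated by the data to which the inverse is applied. Writing $A(k):=1-C_+\ov q R_0(k)qC_+$, the hypothesis $s>1/2$ (beyond $s>0$) plays a double role: it forces $q,\phi_j\in L^1(\R)$, and — more essentially — it places $q$, and hence $\tilde q:=q-\ii\sum_j\phi_j$ and $(L_0-\lambda_j)\phi_j=qC_+\ov q\phi_j$, in a weighted space just good enough for the difference quotients of the free resolvent to converge when paired against these functions.

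\emph{Part (a).} For $k\in\C\setminus\sigma(L_q)$ the spectral projections $P_j=\langle\cdot,\phi_j\rangle\phi_j/\|\phi_j\|_2^2$ commute with $R(k)$, and the normalisation \eqref{eqn:eigfncnormal} together with \eqref{eqn:qeigenfuncinprod} gives $P_jq=\ii\phi_j$ and $\|\phi_j\|_2^2=2\pi$, so
\[
m_0(k)=R(k)q=-\ii\sum_{j=1}^N\frac{\phi_j}{k-\lambda_j}+R(k)\tilde q ,
\]
with $\tilde q:=q-\ii\sum_{j=1}^N\phi_j\in L^1_+(\R)$. Thus it suffices to show that $h(k):=R(k)\tilde q$ extends to an element of $C(\Chat;(L^\infty,w^*))$; away from $\sigma_\mathrm{p}(L_q)^\diamond$ this is contained in Lemma \ref{lem:pertresolvcont}. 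Near $\lambda_j\pm0\ii$ I would insert the resolvent identity \eqref{eq:resolventformula} and use analytic Fredholm theory: since $\lambda_j$ is a simple eigenvalue (Lemma \ref{lem:qeigenfuncinprod}), the kernels of $A(\lambda_j\pm0\ii)$ and of its adjoint are both spanned by $g_j:=C_+\ov q\phi_j$, so $A(k)^{-1}=\Pi_j/(k-\lambda_j)+B_j(k)$ with $B_j$ continuous and $\Pi_j$ of rank one. One then computes $d_j:=\langle\dot A(\lambda_j)g_j,g_j\rangle$ using $qC_+g_j=(L_0-\lambda_j)\phi_j$, the identity $R_0(\lambda_j\pm0\ii)(L_0-\lambda_j)\phi_j=\phi_j$, the commutator formula $\dot R_0(\lambda)=\ii[x,R_0(\lambda)]$, and the integration-by-parts identity behind \eqref{eqn:qeigenprod2}; the outcome is $d_j=-\|\phi_j\|_2^2\ne0$, so the pole really is simple and $\Pi_j=-\langle\cdot,g_j\rangle g_j/\|\phi_j\|_2^2$. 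The singular part of $h(k)$ is then a scalar multiple of $\bigl(\langle C_+\ov q R_0(k)\tilde q,g_j\rangle/(k-\lambda_j)\bigr)R_0(k)qC_+g_j$; rearranging the pairing, the numerator equals $\langle R_0(k)\tilde q,(L_0-\lambda_j)\phi_j\rangle$, which at $k=\lambda_j\pm0\ii$ collapses to $\langle\tilde q,\phi_j\rangle=0$ and is $C^1$ in $k$ near $\lambda_j$ (this is where $s>1/2$ enters, through $x\,qC_+\ov q\phi_j\in L^1$). Hence the apparent pole cancels and $h$ extends continuously.

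\emph{Part (b).} For the continuous extension of $\lambda\mapsto\ov{\e(\lambda)}m_\e(\lambda\pm0\ii)$ across the embedded eigenvalues I would apply the same Fredholm expansion to $m_\e(\lambda\pm0\ii)=\e(\lambda)+R_0(\lambda\pm0\ii)qC_+A(\lambda\pm0\ii)^{-1}C_+\ov q\e(\lambda)$: the apparent pole at $\lambda_j$ has residue proportional to $\langle C_+\ov q\e(\lambda_j),g_j\rangle\,\phi_j$, and $\langle C_+\ov q\e(\lambda_j),g_j\rangle=\overline{\langle(L_0-\lambda_j)\phi_j,\e(\lambda_j)\rangle}=0$ after one integration by parts, while $\lambda\mapsto\langle C_+\ov q\e(\lambda),g_j\rangle$ is again $C^1$ near $\lambda_j$; together with the norm-continuity of $\lambda\mapsto\ov{\e(\lambda)}\phi_j$ in $L^\infty$ (which needs only $x\phi_j\in L^\infty$, hence $s>1/2$) this yields the asserted norm-continuous extension, uniqueness being clear by density. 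For the low-energy asymptotics I would start from $\ov{\e(\lambda)}m_\e(\lambda\pm0\ii)=(1-\widetilde T_{\lambda\pm0\ii})^{-1}1$ (equation \eqref{eq:diffemeproof2}) together with the splitting
\[
\widetilde T_{\lambda\pm0\ii}\phi(x)=\ii\int_{\mp\infty}^x qC_+\ov q\phi\dd y+\ii\int_{\mp\infty}^x qC_\lambda\ov q\phi\dd y
\]
from Step 2 of the proof of Lemma \ref{lem:emederiv}. Estimating the second term via $\|C_\lambda\ov q\phi\|_2=\|\chi_{(-\lambda,0)}\widehat{\ov q\phi}\|_2\le(2\pi)^{-1/2}\sqrt\lambda\,\|q\|_1\|\phi\|_\infty$ — the decisive use of $q\in L^1$, hence of $s>1/2$ — gives $\widetilde T_{\lambda\pm0\ii}\to\widetilde T_{0\pm0\ii}$ in operator norm on $L^\infty$, while $\widetilde T_{0\pm0\ii}1=0$ since $C_+\ov q=0$ for $q\in L^2_+$. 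Therefore $(1-\widetilde T_{\lambda\pm0\ii})^{-1}1\to(1-\widetilde T_{0\pm0\ii})^{-1}1=1$ in $L^\infty$, which is the claim.

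The step I expect to be the main obstacle is the simplicity of the pole of $A(k)^{-1}$ at the embedded eigenvalues, together with the identification of the rank-one residue above: this forces one to carry out a resolvent expansion at a point of the continuous spectrum and to evaluate $d_j=\langle\dot A(\lambda_j)g_j,g_j\rangle$ explicitly, and it is precisely in making the difference quotient of the free resolvent converge in the pairing $\langle\dot R_0(\lambda_j)(L_0-\lambda_j)\phi_j,(L_0-\lambda_j)\phi_j\rangle$ — and, to a lesser extent, in justifying the $C^1$-dependence of the relevant numerators — that the hypothesis $s>1/2$ rather than merely $s>0$ becomes unavoidable.
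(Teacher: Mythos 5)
Your decomposition in part (a) — peeling off $R(k)P_jq=-\ii\phi_j/(k-\lambda_j)$ and reducing to continuity of $h(k)=R(k)\tilde q$ with $\tilde q=(1-\sum_jP_j)q$ — is exactly the paper's starting point (its Step 1 establishes $\sclp{m_0(k),\phi_j}=-\sclp{q,\phi_j}/(k-\lambda_j)$ and $\sclp{m_\e(\lambda\pm0\ii),\phi_j}=0$). Where you diverge is in how the singularity of the Fredholm inverse at an embedded eigenvalue is handled, and this is where the proposal has a genuine gap. You assert, via ``analytic Fredholm theory'', that $A(k)^{-1}=\Pi_j/(k-\lambda_j)+B_j(k)$ with $B_j$ continuous near $\lambda_j\pm0\ii$. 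But $\lambda_j\pm0\ii$ is a boundary point of the cut: $k\mapsto A(k)$ is only \emph{continuous} there, not analytic, so the meromorphic Fredholm theorem does not apply. To extract a simple pole with continuous remainder at such a point one needs, beyond the nonvanishing of $d_j=\sclp{\dot A(\lambda_j)g_j,g_j}$, a quantitative statement such as $\|A(k)-A(\lambda_j)\|=O(|k-\lambda_j|^\theta)$ with $\theta>1/2$ plus differentiability of the Schur-complement scalar — none of which you verify, and the full operator $\dot A(\lambda_j)$ is not even bounded on $L^2_+$ for $1/2<s<1$ (its kernel carries a factor $x-y$, which would require $xq\in L^2$). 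Relatedly, your claim that the numerator $\sclp{R_0(k)\tilde q,(L_0-\lambda_j)\phi_j}$ is $C^1$ near $\lambda_j$ ``through $x\,qC_+\ov q\phi_j\in L^1$'' overlooks that differentiating $G_k$ produces $(x-y)$, and the $y$-half lands on $\tilde q$, requiring $x\tilde q\in L^1$, i.e.\ $s>3/2$. Both difficulties are avoidable: by \eqref{eqn:Glambdaphij} one has the \emph{exact} factorisation $\sclp{R_0(k)\tilde q,qC_+\ov q\phi_j}=(k-\lambda_j)\sclp{R_0(k)\tilde q,\phi_j}$ (and \eqref{eq:efeqev} plays the same role in part (b)), so the offending $1/(k-\lambda_j)$ cancels identically against a manifestly continuous function; this is in effect what the paper does by working with the modified operator $1-C_+\ov q\,Q_jR_0(k)qC_+$, proving its invertibility at $\lambda_j\pm0\ii$ by a Fredholm index-zero argument, and never expanding $A(k)^{-1}$ at all.

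Two further points. First, for part (b) you only sketch weak-type continuity of the non-singular part, whereas the theorem requires $L^\infty$-\emph{norm} continuity of $\ov{\e(\lambda)}m_\e(\lambda\pm0\ii)$; the paper's Step 2c has to work to upgrade this (via $L^1$-continuity of $\lambda\mapsto\ov{\e(\lambda)}F(\lambda)$ and the explicit primitive representation of $\ov{\e(\lambda)}R_0(\lambda\pm0\ii)$), and this is not addressed in your outline. Second, your low-energy argument $(1-\widetilde T_{\lambda\pm0\ii})^{-1}1\to(1-\widetilde T_{0\pm0\ii})^{-1}1=1$ silently assumes $1-\widetilde T_{0\pm0\ii}$ is invertible on $L^\infty$, which fails precisely when $0$ is an eigenvalue of $\CMop$ (then $\phi_j\in\ker(1-T_{0\pm0\ii})$); the theorem does not exclude this case, and the paper treats it separately by running the $P$, $Q$ decomposition at $\lambda_j=0$ and using $\ov{\e(\lambda)}P\e(\lambda)\to0$ and $C_+\ov q\,Q\e(\lambda)\to0$. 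The norm-convergence estimate $\|\widetilde T_{\lambda\pm0\ii}-\widetilde T_{0\pm0\ii}\|\lesssim\sqrt{\lambda}\,\|q\|_1\|q\|_2$ you give is correct and useful, but it only settles the generic case $0\notin\sigma_{\mathrm p}(\CMop)$.
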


In this theorem $(L^\infty(\R),w^\ast)$ denotes the space $L^\infty(\R)=(L^1(\R))^*$ equipped with the weak-$*$ topology. The assertion $h\in C(\Chat;(L^\infty(\R),w^\ast))$ simply means that $k\mapsto\langle h(k),f\rangle$ is continuous on $\Chat$ for every $f\in L^1(\R)$ (or, in the situation of Theorem \ref{thm:embeddexpan}, every $f\in L^1_+(\R)$). We note that this form of continuity is used in part (a), but \emph{not} in part (b).

One might wonder about the $L^1$ assumption of Theorem \ref{thm:embeddexpan}. We note that in the one-soliton example at the end of Section \ref{sec:jostinhom} the functions $\lambda\mapsto\ov{\e(\lambda)}m_\e(\lambda\pm0\ii)$ are \emph{not} continuous in $L^\infty(\R)$ at the eigenvalue $\eta$. This shows that Theorem \ref{thm:embeddexpan} is not valid for all $q\in L^2_+$. We also not that the $L^1$-assumption just barely fails in the one-soliton example, which indicates that this assumption is rather optimal.

\begin{proof}
    Eigenfunctions belong to the operator domain $H^1_+(\R)$ \cite{killip_scaling_2025}, which is contained in $L^\infty(\R)$. This, together with Lemma \ref{efl1}, gives  
    \begin{align}\label{eqn:assump2}
        P_{\mathrm{p}}(\CMop)L^2_+(\R)\subset L^1_+(\R)\cap L^\infty(\R).
    \end{align}
    Therefore, the projections
    \begin{align*}
        P_jf\coloneqq\frac{\sclp{f,\phi_j}}{\norm{\phi_j}^2_2} \, \phi_j
    \end{align*}
    define bounded operators on $L^\infty(\R)$. Set $Q_j\coloneqq 1-P_j$ and write
    \begin{align}
        \label{eq:embeddedexpanproof0a}
        m_0(k)&=P_jm_0(k)+Q_jm_0(k)
        \intertext{and} 
        \label{eq:embeddedexpanproof0b}
        \ov{\e(\lambda)}m_\e(\lambda\pm 0\ii)&=\ov{\e(\lambda)}P_jm_\e(\lambda\pm 0\ii)+\ov{\e(\lambda)}Q_jm_\e(\lambda\pm 0\ii) \,.
    \end{align}
    The index $j$ is fixed and, in the setting of part (b) we assume $\lambda_j\geq 0$.
    
    We shall show in Step 1 that
    \begin{equation}
        \label{eq:embeddedexpanproof1}
        P_jm_0(k) = -\ii \frac{\phi_j}{k-\lambda_j}
        \qquad\text{and}\qquad
        P_j m_\e(\lambda\pm0\ii) = 0
    \end{equation}
    in a neighbourhood of $\lambda_j\pm0\ii$ in $\Chat$ and of $\lambda_j$ in $[0,\infty)$, respectively. In Step 2 we show that
    \begin{equation}
        \label{eq:embeddedexpanproof2}
        Q_jm_0(k) \ \text{is}\ (L^\infty,w^*)\text{-continuous}
        \qquad\text{and}\qquad
        \ov{\e(\lambda)}Q_j m_\e(\lambda\pm0\ii)
        \ \text{is}\ L^\infty\text{-continuous}
    \end{equation}
    in a neighbourhood of $\lambda_j\pm0\ii$ in $\Chat$ and of $\lambda_j$ in $[0,\infty)$, respectively.

    Once these two properties are shown, we immediately obtain the assertions of the theorem by \eqref{eq:embeddedexpanproof0a} and \eqref{eq:embeddedexpanproof0b}, except for the asymptotics of $\ov{\e(\lambda)}m_\e(\lambda\pm0\ii)$ as $\lambda\to 0_+$, which are dealt with in Step 3.
    
\medskip

\emph{Step 1.} We shall show that 
\begin{align}\label{eqn:m0meinnerphij}
    \quad \sclp{m_0(k),\phi_j}=-\frac{\sclp{q,\phi_j}}{k-\lambda_j}  \quad \text{and}\quad \sclp{m_\e(\lambda\pm0\ii),\phi_j}=0
\end{align}
for all $k\in\Chat\setminus\sigma_\mathrm{p}(\CMop)^\diamond$ and $\lambda\in[0,\infty)\setminus\sigma_\mathrm{p}(\CMop)$, respectively. (We mention that a more intricate argument shows that this holds for all $k\in \Chat\setminus\{\lambda_j\}^\diamond$ and $\lambda\in \R_+\setminus\{\lambda_j\}$, but we will not need this.) Note that \eqref{eqn:m0meinnerphij} proves \eqref{eq:embeddedexpanproof1} in view of the normalisation \eqref{eqn:eigfncnormal}.

For the proof of the identities in \eqref{eqn:m0meinnerphij}, we use the formula
\begin{align}\label{eqn:Glambdaphij}
    \phi_j-R_0(k)\pot \phi_j=(\lambda_j-k)G_{k}\ast \phi_j,
\end{align}
for any $k\in\Chat$, which follows by the same reasoning as Lemma \ref{lem:pertresolvent} since $\phi_j\in L^2_+(\R)\cap L^1_+(\R)$ and
\begin{align}\label{eqn:Glambdaphij2}
    (-\ii\partial_x-k)\phi_j=(\lambda_j-k)\phi_j+\pot \phi_j.
\end{align}

\medskip

\emph{Step 1a.} 
Let us prove the first identity in \eqref{eqn:m0meinnerphij}. In the case with $k\in \C\backslash\sigma(\CMop)$, this follows immediately since $m_0(k)=R(k)q$, and therefore
\begin{align*}
    \sclp{m_0(k),\phi_j}=\langle q,R(\ov{k})\phi_j\rangle=-\frac{\sclp{q,\phi_j}}{k-\lambda_j}.
\end{align*}
Using the $(L^\infty,w^*)$-continuity of $m_0(k)$, which we have observed right after its definition, we obtain the identity for all $k\in\C\setminus\sigma_\mathrm{p}(\CMop)^\diamond$, as claimed.

\medskip

\emph{Step 1b.} 
Next, we prove the second identity in \eqref{eqn:m0meinnerphij} for $\lambda\in[0,\infty)\setminus\sigma_\mathrm{p}(\CMop)$. Using the definition of $m_\e$, we find
\begin{align*}
    \sclp{m_\e(\lambda\pm 0\ii),\phi_j} & = \sclp{\e(\lambda),\phi_j} + \sclp{R(\lambda\pm0\ii)qC_+\ov{q}\e(\lambda),\phi_j} \\
    & = \sclp{\e(\lambda),\phi_j} + \sclp{\e(\lambda),q C_+ \ov{q} R(\lambda\mp 0\ii)\phi_j} \\
    & = \sclp{\e(\lambda),\phi_j} + \frac{1}{\lambda_j - \lambda} \, \sclp{\e(\lambda),q C_+ \ov{q} \phi_j}.
\end{align*}
Because of \eqref{eqn:Glambdaphij2}, we have
\begin{equation}
    \label{eq:efeqev}
    \sclp{\e(\lambda),q C_+ \ov{q} \phi_j} = \sclp{\e(\lambda), -\ii \phi_j'- \lambda_j\phi_j} = (\lambda-\lambda_j) \sclp{\e(\lambda),\phi_j} \,.
\end{equation}
Combining these two equations, we arrive at the second identity in \eqref{eqn:m0meinnerphij}.

\medskip

\emph{Step 2.} We now turn to the proof of \eqref{eq:embeddedexpanproof2}. From the integral equation \eqref{eqn:m0inteq} for $m_0$, the normalisation of $\phi_j$, the identity \eqref{eqn:Glambdaphij} and the first part of \eqref{eqn:m0meinnerphij} we find  
\begin{align}
    \label{eq:embeddedexpanproof3a}
    Q_jm_0(k)
    &=Q_jR_0(k)Q_jq+Q_jR_0(k)\pot Q_jm_0(k)
\end{align}
for $k\in \Chat$. This gives
\begin{align}
    \label{eq:embeddedexpanproof3b}
    C_+\ov{q}Q_jm_0(k)
    &=C_+\ov{q}Q_jR_0(k)Q_jq+C_+\ov{q}Q_jR_0(k)\pot Q_jm_0(k).
\end{align}

Similarly, using integral equation \eqref{eqn:meinteq} for $m_\e$ and the second part of \eqref{eqn:m0meinnerphij} we find
\begin{align}
    \label{eq:embeddedexpanproof4a}
    Q_jm_\e(\lambda\pm0\ii)=Q_j\e(\lambda)+Q_jR_0(\lambda\pm0\ii)\pot Q_jm_\e(\lambda\pm0\ii)
\end{align}
for $\lambda\in \R_+$, which gives
\begin{align}
    \label{eq:embeddedexpanproof4b}
C_+\ov{q} Q_jm_\e(\lambda\pm0\ii)=C_+\ov{q} Q_j\e(\lambda) + C_+\ov{q} Q_jR_0(\lambda\pm0\ii)\pot Q_jm_\e(\lambda\pm0\ii) \,.
\end{align}

Identities \eqref{eq:embeddedexpanproof3b} and \eqref{eq:embeddedexpanproof4b} motivate the study of the operators $1-C_+\ov{q}Q_jR_0(k)qC_+$, acting on $L^2_+$.

\medskip

\emph{Step 2a.} Let us show that $k\mapsto C_+\ov{q}Q_jR_0(k)qC_+$ is a family of $L_+^2$-compact operators a $\Chat$-neighbourhood of $\lambda_j\pm0\ii$.

Indeed, since this is true for $C_+\ov{q}R_0(k)qC_+$ by Lemma \ref{lem:qRqcompact}, it suffices to prove it for $C_+\ov{q}P_jR_0(k)qC_+$. Note that for any $f\in L^2_+$ we have 
\begin{align*}
    \norm{\phi_j}^2_2 \, C_+\ov{q}P_jR_0(k)qC_+f = \sclp{R_0(k)qC_+f,\phi_j} C_+\ov{q}\phi_j = \sclp{f,C_+\ov{q}R_0(\ov{k})\phi_j} C_+\ov{q}\phi_j.
\end{align*}
Thus $C_+\ov{q}P_jR_0(k)qC_+$ is a rank one operator and therefore compact. The continuity in $L^2_+$-operator norm follows from the $L^2_+$-continuity of $k\mapsto C_+\ov{q}R_0(\ov{k})\phi_j$, which follows from the argument used in Step 2 of the proof of Lemma \ref{lem:qRqcompact}. (Indeed, since $\phi_j\in L^1_+$, $R_0(\ov{k_n})\phi_j$ converges pointwise and boundedly, and therefore, by dominated convergence, $qR_0(\ov{k_n})\phi_j$ converges in $L^2$.)

\medskip

\emph{Step 2b.} Let us show that $\ker(1-C_+\ov{q} Q_j R_0(\lambda_j\pm0\ii)qC_+)=\{0\}$.

Indeed, suppose that $g\in L^2_+(\R)$ is a solution to
\begin{align}\label{eqn:Qjcontrassump}
    (1-C_+\ov{q}Q_jR_0(\lambda_j\pm0\ii)qC_+)g=0.
\end{align} 
Then,
\begin{align*}
    (1-C_+\ov{q}R_0(\lambda_j\pm0\ii)qC_+) g&=-C_+\ov{q}P_jR_0(\lambda_j\pm0\ii)qC_+g\\
    &=-\norm{\phi_j}_2^{-2}\sclp{R_0(\lambda_j\pm0\ii)qC_+g,\phi_j}C_+\ov{q}\phi_j.
\end{align*}
Applying $1-C_+\ov{q}R_0(\lambda_j\pm0\ii)qC_+$ to both sides, using the definition of $P_j$ and using Lemma \ref{lem:LaxeigenqRqfixpt} yields
\begin{align}\label{eqn:Qjkernel}
    (1-C_+\ov{q}R_0(\lambda_j\pm0\ii)qC_+)^2 g=0.
\end{align}

Since $C_+\ov{q}R_0(\lambda_j\pm0\ii)qC_+$ is compact, $1-C_+\ov{q}R_0(\lambda_j\pm0\ii)qC_+$ is a Fredholm operator of index zero on $L^2_+(\R)$. Namely, the dimensions of its kernel and cokernel coincide. Consequently,
\begin{align*}
    \ker\left( (1-C_+\ov{q}R_0(\lambda_j\pm0\ii)qC_+)^2\right) =\ker \left( 1-C_+\ov{q}R_0(\lambda_j\pm0\ii)qC_+ \right) =\mathrm{span}\{C_+\ov{q}\phi_j\} \,,
\end{align*}
where we used Lemma \ref{lem:LaxeigenqRqfixpt} and the simplicity of eigenvalues discussed after Lemma \ref{lem:qeigenfuncinprod}. Thus, \eqref{eqn:Qjkernel} implies that $g = \alpha\, C_+\ov{q}\phi_j$ for some $\alpha\in\C$. Since $R_0(\lambda_j\pm0\ii)\pot \phi_j=\phi_j$, it follows from \eqref{eqn:Qjcontrassump} that
$$
g= C_+ \ov{q}Q_jR_0(\lambda_j\pm0\ii)qC_+ g = \alpha\, C_+ \ov{q}Q_jR_0(\lambda_j\pm0\ii)qC_+ \ov{q}\phi_j = \alpha C_+ \ov{q} Q_j \phi_j = 0 \,,
$$
proving triviality of the kernel.

\medskip

\emph{Step 2c.} By the Fredholm alternative, it follows from the compactness in Step 2a and the injectivity in Step 2b that the operator $1-C_+\ov{q} Q_j R_0(k)qC_+$ is boundedly invertible for $k=\lambda_j\pm0\ii$. By the continuity in Step 2a, this holds for all $k$ in a $\Chat$-neighbourhood of $\lambda_j\pm 0$.

Therefore, solving equations \eqref{eq:embeddedexpanproof3b} and \eqref{eq:embeddedexpanproof4b} and inserting the resulting expressions into \eqref{eq:embeddedexpanproof3a} and \eqref{eq:embeddedexpanproof4a}, we obtain
\begin{align*}
    \quad Q_jm_0(k)&=Q_jR_0(k)Q_jq+Q_jR_0(k)qC_+ (1-C_+\ov{q}Q_jR_0(k)qC_+)^{-1} C_+\ov{q} Q_j R_0(k)Q_jq
\end{align*}
and
\begin{align*}
Q_jm_\e(\lambda\pm0\ii)
    &=Q_j\e(\lambda)+Q_jR_0(\lambda\pm 0\ii) qC_+ (1-C_+\ov{q}Q_jR_0(\lambda\pm0\ii)qC_+)^{-1} C_+\ov{q}Q_j\e(\lambda) \,.
\end{align*}

The fact that $k\mapsto\langle Q_jm_0(k), f\rangle$ is continuous in a $\Chat$-neighbourhood of $\lambda_j\pm0\ii$ for any $f\in L^1_+$ now follows by the same reasoning as Lemma \ref{lem:pertresolvcont}, using, in particular, the continuity in operator norm of $k\mapsto (1-C_+\ov{q}Q_jR_0(k)qC_+)^{-1}$.

To prove that $\lambda\mapsto \ov{\e(\lambda)} Q_jm_\e(\lambda\pm0\ii)$ is continuous in $L^\infty$, we write
\begin{equation}
    \label{eq:eqmecont}
    \begin{split}
    \overline{\e(\lambda)} Q_jm_\e(\lambda\pm0\ii) & = 1 - \|\phi_j\|_2^{-2} \langle \e(\lambda),\phi_j \rangle \, \ov{\e(\lambda)}\phi_j \\
    & \quad + \ov{\e(\lambda)} R_0(\lambda\pm0\ii) F(\lambda) - \|\phi_j\|_2^{-2} \langle R_0(\lambda\pm 0\ii) F(\lambda),\phi_j\rangle \, \ov{\e(\lambda)}\phi_j
    \end{split}
\end{equation}
with
$$
F(\lambda) := qC_+ (1-C_+\ov{q}Q_jR_0(\lambda\pm0\ii)qC_+)^{-1} C_+\ov{q}Q_j\e(\lambda) \,.
$$

We begin by noting that $\lambda\mapsto \langle \e(\lambda),\phi_j\rangle$ is continuous since $\phi_j\in L^1$. Moreover, $\lambda\mapsto\ov{q}\e(\lambda)$ is $L^2$-continuous by dominated convergence, so
$$
\ov{q}Q_j \e(\lambda) = \ov{q}\e(\lambda) - \|\phi_j\|_2^{-2} \langle \e(\lambda),\phi_j \rangle \ov{q}\phi_j
$$
is also $L^2$-continuous. It follows from Steps 2a and 2b that the inverse operator in the definition of $F$ is continuous as a bounded operator on $L^2$ and therefore $F$ is $L^1$-continuous. The $(L^\infty,w^*)$-continuity of $\lambda\mapsto R_0(\lambda\mp0\ii)\phi_j$ together with the strong continuity of $F$ imply continuity of $\lambda\mapsto \langle R_0(\lambda\pm 0\ii) F(\lambda),\phi_j\rangle$. 

A combination of these facts shows that the second and fourth term on the right side of \eqref{eq:eqmecont} are $L^\infty$-continuous, provided we can show that $\lambda\mapsto\ov{\e(\lambda)}\phi_j$ is $L^\infty$-continuous. To prove this, consider a sequence $(\lambda_n)\subset\R$ with $\lambda_n\to\lambda$ and let $\epsilon>0$. Since $\phi_j$ tends to zero at infinity (by Lemma \ref{lem:LaxeigenqRqfixpt}), there is an $R>0$ such that $|\phi_j(x)|\leq\epsilon/2$ if $|x|\geq R$. Thus, $|(\ov{\e(\lambda_n)}-\ov{\e(\lambda)})\phi_j(x)|\leq \epsilon$ if $|x|\geq R$. Meanwhile, for $|x|\leq R$ we have $|(\ov{\e(\lambda_n)}-\ov{\e(\lambda)})\phi_j(x)|\leq |x||\lambda_n-\lambda| \|\phi_j\|_\infty \leq R |\lambda_n - \lambda| \|\phi_j\|_\infty$ and this is $\leq \epsilon$ provided $n$ is large enough, thus proving the claimed continuity.

It remains to prove continuity of the third term on the right side of \eqref{eq:eqmecont}. For this purpose we note that $\lambda\mapsto\ov{\e(\lambda)}F(\lambda)$ is $L^1$-continuous. This follows by bounding
$$
\| \ov{\e(\lambda_n)}F(\lambda_n) - \ov{\e(\lambda)} F(\lambda) \|_1 \leq \| \ov{\e(\lambda_n)}(F(\lambda_n)-F(\lambda))\|_1 + \|(\ov{\e(\lambda_n)}-\ov{\e(\lambda)})F(\lambda)\|_1
$$
and noting that the first term goes to zero by $L^1$-continuity of $F$ and the second one by dominated convergence. Noting that
$$
(\ov{\e(\lambda)} R_0(\lambda\pm0\ii) F(\lambda))(x) = \ii \int_{\mp\infty}^x (\ov{\e(\lambda)}F(\lambda))(y) \dd y \,,
$$
we deduce the $L^\infty$-continuity of this term, thereby completing the proof of the $L^\infty$ continuity of $\overline{\e(\lambda)} Q_jm_\e(\lambda\pm0\ii)$.

\medskip

\emph{Step 3.}
To finish the proof of the lemma it remains to study the behaviour of $\ov{\e(\lambda)}m_\e(\lambda\pm0\ii)$ as $\lambda\to 0_+$. 

The claimed result is straightforward when $0$ is not an eigenvalue of $\CMop$ by the continuity of $\ov{\e(\lambda)}m_\e(\lambda\pm0\ii)$ at $0$, which we have already shown, and the fact that $m_\e(0\pm0\ii)=1$, see \eqref{eq:lowenergy1}.

When $0$ is an eigenvalue of $L_q$, let $P$ denote the projection onto the corresponding eigenfunction and set $Q :=1-P$. The decomposition \eqref{eq:embeddedexpanproof0b}, the vanishing in \eqref{eq:embeddedexpanproof1} and the formula for $m_\e(\lambda\pm0\ii)$ in Step 2c give
    \begin{align*}
        \ov{\e(\lambda)}m_{\e}(\lambda\pm0\ii)-1=&-\ov{\e(\lambda)}P\e(\lambda)\\&+\ov{\e(\lambda)}Q R_0(k)qC_+(1-C_+\ov{q}QR_0(\lambda\pm0\ii)qC_+)^{-1}C_+\ov{q}Q\e(\lambda)
    \end{align*}
for all $\lambda$ in a right-neighbourhood of $0$. We have also seen that $(1-C_+\ov{q}QR_0(\lambda\pm0\ii)qC_+)^{-1}$ is bounded and continuous for $\lambda$ in a right-neighbourhood of $0$. The result then follows since $\ov{\e(\lambda)}P\e(\lambda)\to 0$ in $L^\infty$ and $C_+\ov{q}Q\e(\lambda)\to 0$ in $L^2$. The latter limits follow since $\phi\in L^1_+$ implies $\langle \e(\lambda),\phi\rangle \to 0$ and $C_+\ov{q}=0$. This completes the proof.
\end{proof}

\begin{remark}
    We emphasise that in the previous proof we have shown that 
    \begin{align*}
        \sclp{m_{\e}(\lambda\pm0\ii),\phi_j}=0,
    \end{align*}
    for all $\lambda\in[0,\infty)\setminus\sigma_\mathrm{p}(\CMop)$ and each $j$. This indicates that we can think of $m_{\e}$ as eigenfunctions of the continuous spectrum, orthogonal to the eigenfunctions from the point spectrum. This orthogonality also follows from $\ran\Phi^* = (\ran P_\mathrm p(L_q))^\bot$, proved in Theorem \ref{thm:meFT}, but here we have given an independent proof.
\end{remark}

\begin{remark}
    The fact that $\ov{\e(\lambda)}m_\e(\lambda\pm0\ii)\to 1$ as $\lambda\to 0_+$ means that in the Calogero--Moser case there is no distinction between zero being a resonance or not. This is in contrast to the Benjamin--Ono case.
\end{remark}

Next, we turn our attention to the scattering coefficients. Recall that the functions $\beta$ and $\Gamma$ were defined on $[0,\infty)\backslash\sigma_{\mathrm{p}}(\CMop)$. We now extend them to all of $[0,\infty)$.

\begin{corollary}
    Let $q\in L_{+}^2(\R)\cap L^1_+(\R)$. Then, $\beta$ and $\Gamma$ have unique extensions in $C([0,\infty))$. Moreover, 
    \begin{align*}
        \beta(\lambda) =o_{\lambda\rightarrow 0_+}(1) \qquad \text{and}\qquad\Gamma(\lambda)-1=o_{\lambda\rightarrow 0_+}(\lambda) . 
    \end{align*}
\end{corollary}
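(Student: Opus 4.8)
The plan is to deduce everything from Theorem~\ref{thm:embeddexpan} together with the differential identity \eqref{eqn:gammaderivident}. Since $q\in L^2_{s,+}(\R)$ with $s>1/2$, we have $q\in L^1_+(\R)$ (by Cauchy--Schwarz, using $2s>1$, together with $\widehat q\equiv 0$ on $(-\infty,0]$), so $\beta$, $\Gamma$, $m_0$ and $m_\e$ are all defined as in Sections~\ref{sec:jostinhom} and \ref{sec:distortedFT}, and Corollary~\ref{cor:goodn} gives $\mathcal N=\sigma_\mathrm p(L_q)\cap[0,\infty)$, which is finite. Hence $[0,\infty)\setminus\mathcal N$ is dense in $[0,\infty)$ and a continuous extension, if it exists, is automatically unique; only existence (and the two rate statements) remain to be proved.

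\emph{The coefficient $\beta$.} I would rewrite \eqref{eqn:fbetaident} as
\[
\ii\,\overline{\beta(\lambda)}=\big\langle\,\overline{\e(\lambda)}\,m_\e(\lambda-0\ii),\ q\,\overline{\e(\lambda)}\,\big\rangle,
\]
a pairing of $L^\infty(\R)$ with $L^1(\R)$. By Theorem~\ref{thm:embeddexpan}(b) the first factor extends to an element of $C([0,\infty);L^\infty(\R))$ equal to $1$ at $\lambda=0$, and $\lambda\mapsto q\,\overline{\e(\lambda)}$ is $L^1$-continuous by dominated convergence; so $\overline\beta$, and therefore $\beta$, extends continuously to $[0,\infty)$. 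At $\lambda=0$ this gives $\ii\,\overline{\beta(0)}=\langle 1,q\rangle=\overline{\int_\R q}=\sqrt{2\pi}\,\overline{\widehat q(0)}=0$ because $q\in L^1_+(\R)$, so $\beta(\lambda)\to\beta(0)=0$, i.e.\ $\beta(\lambda)=o_{\lambda\to 0_+}(1)$.

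\emph{The matrix $\Gamma$.} Exactly as in Steps~4--5 of the proof of Lemma~\ref{lem:emederiv} (using \eqref{eqn:Chprop} and $C_+\overline q=0$) one obtains, for $\lambda\in[0,\infty)\setminus\mathcal N$,
\[
\Gamma(\lambda)=1+\ii\,\big\langle\,\overline{\e(\lambda)}\,m_\e(\lambda+0\ii),\ q\,(C_\lambda\overline q)\,\big\rangle,
\qquad C_\lambda:=\F^{-1}\chi_{(-\lambda,0)}\F .
\]
The map $\lambda\mapsto q\,(C_\lambda\overline q)$ is $L^1$-continuous, since $\|q\,(C_\lambda\overline q)-q\,(C_{\lambda'}\overline q)\|_1\le\|q\|_2\,\|(\chi_{(-\lambda,0)}-\chi_{(-\lambda',0)})\widehat{\overline q}\|_2\to 0$, and $\overline{\e(\lambda)}m_\e(\lambda+0\ii)$ is continuous on $[0,\infty)$ in $L^\infty$ by Theorem~\ref{thm:embeddexpan}(b); hence the right-hand side is continuous on $[0,\infty)$ and is the (unique) continuous extension of $\Gamma$, with $\Gamma(0)=1$ since $C_0=0$. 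For the refined behaviour I would instead use \eqref{eqn:gammaderivident}: with $a:=\min(\mathcal N\cap\R_+)$ (or $a=\infty$ if $\mathcal N\cap\R_+=\emptyset$), on $(0,a)$ one has $\partial_\lambda\Gamma=\tfrac{\ii}{2\pi}|\beta|^2\Gamma$ with $|\beta|^2$ continuous, so for $0<\lambda'<\lambda<a$,
\[
\Gamma(\lambda)=\Gamma(\lambda')\,\exp\!\big(\tfrac{\ii}{2\pi}\textstyle\int_{\lambda'}^{\lambda}|\beta(\mu)|^2\,\dd\mu\big).
\]
Letting $\lambda'\to 0_+$ and using $\Gamma(0)=1$ yields $\Gamma(\lambda)=\exp\!\big(\tfrac{\ii}{2\pi}\int_0^{\lambda}|\beta(\mu)|^2\,\dd\mu\big)$ on $(0,a)$; since $\beta$ is continuous with $\beta(0)=0$ we have $\int_0^{\lambda}|\beta(\mu)|^2\,\dd\mu=o(\lambda)$, and therefore $\Gamma(\lambda)-1=o_{\lambda\to 0_+}(\lambda)$.

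\emph{Main obstacle.} The delicate point is the sharp rate $o(\lambda)$, rather than merely $o(\lambda^{1/2})$, for $\Gamma-1$: the crude estimate from the pairing formula, $|\Gamma(\lambda)-1|\lesssim\|C_\lambda\overline q\|_2=(\int_{-\lambda}^{0}|\widehat{\overline q}|^2)^{1/2}$, combined with continuity of $\widehat{\overline q}$ and $\widehat q(0)=0$, only gives $o(\lambda^{1/2})$; extracting the full power of $\lambda$ forces one to exploit the multiplicative structure encoded in \eqref{eqn:gammaderivident}, which trades the lost half-power for the (already established) vanishing of $\beta$ at the origin. A secondary subtlety is matching the one-sided limits of $\Gamma$ across the (possibly embedded) eigenvalues in $\mathcal N$, which is exactly what the weak-continuity statement of Theorem~\ref{thm:embeddexpan}(b) delivers, through the pairing representation of $\Gamma$ above.
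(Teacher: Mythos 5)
Your argument is correct, and for $\Gamma$ it coincides with the paper's proof: the same rewriting $\Gamma(\lambda)=1+\ii\sclp{\ov{\e(\lambda)}m_\e(\lambda+0\ii)}[]\!\sclp{\,\cdot\,,\ov{\e(\lambda)}qC_+\ov{q}\e(\lambda)}$-type pairing for continuity, and the same integration of \eqref{eqn:gammaderivident} from $\Gamma(0)=1$ to get $\Gamma(\lambda)=\exp\bigl(-\tfrac{1}{2\pi\ii}\int_0^\lambda|\beta|^2\bigr)$ and hence the rate $o(\lambda)$; your remark that the crude pairing bound only yields $o(\lambda^{1/2})$ and that the multiplicative structure of the ODE is what upgrades this is exactly the point. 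For $\beta$, however, you take a genuinely different and somewhat leaner route: you continue $\ii\ov{\beta(\lambda)}=\int\ov{q}\,m_\e(\lambda-0\ii)$ across the (possibly embedded) eigenvalues using only part (b) of Theorem \ref{thm:embeddexpan}, i.e.\ the $L^\infty$-continuity of $\ov{\e(\lambda)}m_\e(\lambda-0\ii)$ paired against the $L^1$-continuous family $q\,\ov{\e(\lambda)}$, and read off $\beta(0)=0$ from $\widehat q(0)=0$. The paper instead starts from the definition \eqref{eqn:betadef} of $\beta$ in terms of $m_0$, inserts the Laurent expansion of Theorem \ref{thm:embeddexpan}(a) near each $\lambda_j$, and uses the cancellation \eqref{eq:efeqev} to show the apparent pole $\frac{1}{\lambda-\lambda_j}\sclp{\e(\lambda),qC_+\ov{q}\phi_j}$ collapses to the continuous term $\sqrt{2\pi}\,(\F\phi_j)(\lambda)$ (which needs $\phi_j\in L^1$, i.e.\ Assumption \eqref{eqn:assump2}). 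Your version avoids that explicit residue computation at the cost of leaning on the harder half of Theorem \ref{thm:embeddexpan}; the paper's version is longer but makes visible exactly which term threatens to blow up at an embedded eigenvalue and why it does not. Both are complete proofs under the hypothesis $s>1/2$, and your preliminary observations ($q\in L^1_+$ by Cauchy--Schwarz, $\mathcal N$ finite so the extension is automatically unique) match the paper's.
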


\begin{proof}
    As in the proof of Lemma \ref{lem:emederiv} we write
    $$
    \Gamma(\lambda) = 1 + \ii \langle \ov{\e(\lambda)} m_\e(\lambda+0\ii), \ov{\e(\lambda)} qC_+\ov{q}\e(\lambda) \rangle \,.
    $$
    By Theorem \ref{thm:embeddexpan}, $\lambda\mapsto \ov{\e(\lambda)} m_\e(\lambda+0\ii)$ extends $L^\infty$-continuously to all of $[0,\infty)$ and, by Step 4 in the proof of Lemma \ref{lem:emederiv}, $\lambda\mapsto \ov{\e(\lambda)} qC_+\ov{q}\e(\lambda)$ is $L^1$-continuous on $[0,\infty)$. This defines a continuous extension of $\Gamma$ to all of $[0,\infty)$. As $\lambda\to 0$ we have $\ov{\e(\lambda)} m_\e(\lambda+0\ii)\to 1$ in $L^\infty$ and $\ov{\e(\lambda)} qC_+\ov{q}\e(\lambda)\to 0$ in $L^1$ (since $C_+\ov{q}=0$), so $\Gamma(0)=0$.

    For $\beta$, we fix an eigenvalue $\lambda_j\in[0,\infty)$ and apply the expansion for $m_0$ about $\lambda_j$ from Theorem \ref{thm:embeddexpan}. Substituting into the definition of $\beta$, we obtain 
    \begin{align*}
        \beta(\lambda)&=\frac{1}{\lambda-\lambda_j} \int_{-\infty}^\infty \e^{-\ii y\lambda}\pot \phi_j\dd y +\ii \sqrt{2\pi}\, (\F q)(\lambda)+\ii \int_{-\infty}^\infty \e^{-\ii y\lambda}\pot h(\lambda+0\ii)\dd y \,.
    \end{align*}
    Here the Fourier transform $\F q$ is continuous since $q\in L^1$. Moreover, the last term is continuous since $\lambda\mapsto q C_+\ov{q}\e(\lambda)$ is continuous in $L^1$ and since $\lambda\mapsto h(\lambda\pm 0\ii)$ is bounded in $L^\infty$ for $\lambda$ in a neighbourhood of $\lambda_j$. The latter is implicit in the proof of Theorem~\ref{thm:embeddexpan}.
    
    To handle the first term, we use \eqref{eq:efeqev} to write
    \begin{align*}
        \frac{1}{\lambda-\lambda_j} \int_{-\infty}^\infty \e^{-\ii y\lambda}\pot \phi_j\dd y
        =\frac{1}{\lambda-\lambda_j}(\lambda-\lambda_j) \langle \phi_j,\e(\lambda) \rangle = \sqrt{2\pi}\, (\F \phi_j)(\lambda) \,.
    \end{align*}
    Since $\phi_j\in L^1(\R)$, this term is continuous as well. This defines a continuous extension of $\beta$ to all of $[0,\infty)$.
        
    The limit $\beta(\lambda)\rightarrow 0$ as $\lambda\rightarrow 0_+$ follows from the continuity and \eqref{eq:lowenergy2} when $0$ is not an eigenvalue of $L_q$. When it is, it follows from the above formula for $\beta$ with $\lambda_j=0$, since $qC_+\ov{q}h,q,\phi\in L^1(\R)$ and their Fourier transforms vanish on negative frequencies.
    
    Finally, for the asymptotics of $\Gamma(\lambda)$ as $\lambda\rightarrow 0_+$, we deduce from $\Gamma(0)=0$ and the differential equation \eqref{eqn:gammaderivident} in Lemma \ref{lem:emederiv} that
    $$
    \Gamma(\lambda) = \exp\left(-\frac{1}{2\pi\ii} \int_0^\lambda |\beta(\lambda')|^2\,\dd\lambda' \right).
    $$
    The stated asymptotic order now follows from that of $\beta$. 
\end{proof}


\subsection*{Refined expansion around an eigenvalue}

In Theorem \ref{thm:embeddexpan} we have shown that $m_0$ has a pole at an eigenvalue $\lambda_j$ and computed the residue. Now we derive the next term in the Laurent expansion of $m_0$ and define the phase coefficients proposed for the inverse scattering transform.

\begin{lemma}\label{lem:laurentexpanimprov}
    Let $q\in L_{+}^2(\R)\cap L^1_+(\R)$. Let $h$ be as in Theorem \ref{thm:embeddexpan}. Then, for all $j=1,\ldots,N$, there is a number $\gamma_j\in\C$ such that, for either choice $\pm$ of sign,
    $$
    h(x,\lambda_j\pm0\ii) = (\gamma_j + x)\, \phi_j(x) + \ii \sum_{j'\neq j} \frac{\phi_j(x)}{\lambda_j - \lambda_{j'}} \,.
    $$
    Moreover, if $\lambda_j\geq 0$, then $\beta(\lambda_j)=0$.
\end{lemma}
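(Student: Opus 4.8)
\emph{Proof idea.} The plan is to work with the rank-one projection $P_j f := \frac{\sclp{f,\phi_j}}{\norm[2]{\phi_j}^2}\phi_j$ and $Q_j := 1-P_j$ from the proof of Theorem~\ref{thm:embeddexpan}, and to reduce the first assertion to identifying the value $Q_j m_0(\lambda_j\pm0\ii)$. From Step~1 of that proof, $\sclp{m_0(k),\phi_j}=-\sclp{q,\phi_j}/(k-\lambda_j)$, which, together with the normalisation $\sclp{q,\phi_j}=2\pi\ii$ (so $\norm[2]{\phi_j}^2=2\pi$ by Lemma~\ref{lem:qeigenfuncinprod}), gives $P_j m_0(k)=-\ii\phi_j/(k-\lambda_j)$ in a $\Chat$-neighbourhood of $\lambda_j\pm0\ii$. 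Since $h(k)=m_0(k)+\ii\sum_{j'}\phi_{j'}/(k-\lambda_{j'})$, this forces $h(\lambda_j\pm0\ii)=Q_j m_0(\lambda_j\pm0\ii)+\ii\sum_{j'\neq j}\phi_{j'}/(\lambda_j-\lambda_{j'})$, so the first claim amounts to $Q_j m_0(\lambda_j\pm0\ii)=(\gamma_j+x)\phi_j$ with $\gamma_j:=-\sclp{x\phi_j,\phi_j}/\norm[2]{\phi_j}^2$. Here $x\phi_j\in L^\infty$ and $x\abs{\phi_j}^2\in L^1$ because $\langle x\rangle^\sigma\abs{\phi_j}$ is bounded for some $\sigma>1$ by Lemma~\ref{lem:qeigenfuncinprod} --- this is exactly where $s>1/2$ is used --- and the same decay makes $(\gamma_j+x)\phi_j$ a bounded function vanishing at $\pm\infty$.

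To identify $Q_j m_0(\lambda_j\pm0\ii)$ I would show that it and $(\gamma_j+x)\phi_j$ both solve $(\CMop-\lambda_j)\psi=q-\ii\phi_j$. For the former, from $(\CMop-k)m_0(k)=q$, the formula $P_j m_0(k)=-\ii\phi_j/(k-\lambda_j)$ and $(\CMop-k)\phi_j=(\lambda_j-k)\phi_j$ (cf.\ \eqref{eqn:Glambdaphij2}) one gets $(\CMop-k)Q_j m_0(k)=q-\ii\phi_j$ for $k$ near $\lambda_j\pm0\ii$; pairing against a fixed test function and letting $k\to\lambda_j\pm0\ii$, the weak-$*$ continuity of $Q_j m_0(k)$ from Theorem~\ref{thm:embeddexpan}(a) passes this to the limit. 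For $(\gamma_j+x)\phi_j$, the key computation is the commutator identity $[C_+,x]f=-\tfrac{\ii}{2\pi}\int_\R f\dd y$ (equivalently, $C_+$ applied to $\widehat{xf}=\ii(\widehat f)'$ produces a boundary term at frequency $0$), which together with $\sclp{\phi_j,q}=-2\pi\ii$ yields $\pot(x\phi_j)=x\pot\phi_j-q$, hence $(\CMop-\lambda_j)(x\phi_j)=q-\ii\phi_j$; since $(\CMop-\lambda_j)\phi_j=0$ the same holds for $(\gamma+x)\phi_j$, any $\gamma\in\C$.

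Next comes a uniqueness argument at $\lambda_j$. Both $Q_j m_0(\lambda_j\pm0\ii)$ and $x\phi_j$ are bounded, locally absolutely continuous solutions of $(\CMop-\lambda_j)\psi=q-\ii\phi_j$ vanishing as $x\to\mp\infty$; for $Q_j m_0(\lambda_j\pm0\ii)$ this one-sided decay is read off the formula in Step~2c of the proof of Theorem~\ref{thm:embeddexpan}, since there each $R_0(\lambda_j\pm0\ii)$ is applied to an $L^1_+$-datum (note $Q_j q=q-\ii\phi_j\in L^1$) and hence vanishes at $\mp\infty$, while $Q_j$ only adds a multiple of $\phi_j$. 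Their difference then solves the homogeneous equation with the same one-sided decay, hence lies in $\mathcal S(\lambda_j\pm0\ii)$; by Lemmas~\ref{lem:LaxeigenqRqfixpt} and \ref{lem:fixptdecay} it is an $L^2$-eigenfunction, and by simplicity of $\lambda_j$ a multiple of $\phi_j$; since $\sclp{Q_j m_0(k),\phi_j}=0$ passes to the limit, $Q_j m_0(\lambda_j\pm0\ii)\perp\phi_j$, which pins the constant down to $\gamma_j$ for both signs. This proves the first assertion with a single, sign-independent $\gamma_j$. For the last claim, let $\lambda_j\geq0$: since $h(\lambda_j\pm0\ii)$ does not depend on the sign, $m_0(\lambda+0\ii)-m_0(\lambda-0\ii)=h(\lambda+0\ii)-h(\lambda-0\ii)\to0$ as $\lambda\to\lambda_j$; by \eqref{eqn:m0mebetarel} this equals $\beta(\lambda)\,\e(\lambda)\,\ov{\e(\lambda)}m_\e(\lambda-0\ii)$, which, using continuity of $\beta$ on $[0,\infty)$ and the $L^\infty$-continuity of $\ov{\e(\lambda)}m_\e(\lambda-0\ii)$ from Theorem~\ref{thm:embeddexpan}(b), converges to $\beta(\lambda_j)\,\e(\lambda_j)\,u$ with $u:=\lim_{\lambda\to\lambda_j}\ov{\e(\lambda)}m_\e(\lambda-0\ii)$. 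Thus $\beta(\lambda_j)\,\e(\lambda_j)\,u\equiv0$; as \eqref{eqn:meinteq} shows $u(x)\to1$ as $x\to+\infty$ so $u\not\equiv0$, and $\e(\lambda_j)$ is nowhere zero, we conclude $\beta(\lambda_j)=0$.

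The step I expect to be the main obstacle is the uniqueness at the embedded eigenvalue $\lambda_j$: the general resolvent-uniqueness statement of Lemma~\ref{lem:pertresolvent} is unavailable because $\lambda_j\in\mathcal N$, so one must route through $\mathcal S(\lambda_j\pm0\ii)$, the decay estimate of Lemma~\ref{lem:fixptdecay}, and the simplicity of eigenvalues; the accompanying delicate point is extracting the one-sided decay of $Q_j m_0(\lambda_j\pm0\ii)$ from the Step-2c formula and justifying the passage to the limit $k\to\lambda_j\pm0\ii$ in the differential equation.
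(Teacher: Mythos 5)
Your argument is correct and shares the paper's overall architecture (reduce to $Q_jm_0(\lambda_j\pm0\ii)$ with $Q_j=1-P_j$, show it differs from $x\phi_j$ by an element of $\mathcal S(\lambda_j\pm0\ii)$, then invoke Lemmas \ref{lem:LaxeigenqRqfixpt} and \ref{lem:fixptdecay} and simplicity), but it departs from the paper at the two key computations, in both cases profitably. First, to produce the $x\phi_j$ term the paper computes the limit of the resolvent difference quotient $\tfrac{\ii}{k-\lambda_j}(R_0(\lambda_j\pm0\ii)-R_0(k))\pot\phi_j$ (Step 2 of its proof, a somewhat lengthy weak-$*$ argument), whereas you verify directly via the commutator identity $[C_+,x]f=-\tfrac{\ii}{2\pi}\int_\R f\dd y$ and the normalisation $\sclp{\phi_j,q}=-2\pi\ii$ that $(\CMop-\lambda_j)(x\phi_j)=q-\ii\phi_j=(\CMop-k)Q_jm_0(k)$; this is shorter and explains conceptually why $x\phi_j$ appears. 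Second, the paper obtains $\gamma_j^+=\gamma_j^-$ as a \emph{consequence} of $\beta(\lambda_j)=0$, which it proves by comparing asymptotics in the identity $(\gamma_j^+-\gamma_j^-)\phi_j=\beta(\lambda_j)m_\e(\lambda_j-0\ii)$; you reverse the logic, pinning down $\gamma_j^\pm=-\sclp{x\phi_j,\phi_j}/\norm[2]{\phi_j}^2$ for both signs from the orthogonality $\sclp{Q_jm_0(k),\phi_j}=0$ (which survives the weak-$*$ limit since $\phi_j\in L^1_+$ by (A2)), and then deduce $\beta(\lambda_j)=0$ from the vanishing jump. This buys an explicit formula for $\gamma_j$ that the paper does not record. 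The one place where your write-up is thinner than it should be is the passage to the limit $k\to\lambda_j\pm0\ii$ in the equation $(\CMop-k)Q_jm_0(k)=q-\ii\phi_j$: the continuity supplied by Theorem \ref{thm:embeddexpan} is only weak-$*$ against $L^1_+$ test functions, so recovering the pointwise ODE and local absolute continuity at the limit is delicate. The safest route — consistent with what you already do for the one-sided decay — is to read both the equation and the decay off the explicit formula of Step 2c of the proof of Theorem \ref{thm:embeddexpan}, which is valid \emph{at} $k=\lambda_j\pm0\ii$ and expresses $Q_jm_0(\lambda_j\pm0\ii)$ as $R_0(\lambda_j\pm0\ii)$ applied to $L^1_+$ data plus multiples of $\phi_j$; with that substitution the proof is complete. (The analogous small gap in identifying $u=\lim_{\lambda\to\lambda_j}\ov{\e(\lambda)}m_\e(\lambda-0\ii)$ as nonvanishing at $+\infty$ is also closed by the Step 2c formula rather than by \eqref{eqn:meinteq}, which is only stated off $\mathcal N$.)
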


\begin{proof}
    We recall from Lemma \ref{efl1} that $q\in L^2_+\cap L^1_+$ also gives
    \begin{align}\label{eqn:assump3}
        P_{\mathrm{p}}(\CMop)L^2_+(\R)\subset \langle x\rangle^{-1} L^\infty(\R).
    \end{align}

    \medskip
    
    \emph{Step 1.}
    Fix $j\in \{1,\ldots,N\}$. Then by Theorem \ref{thm:embeddexpan}
    \begin{align*}
        m_0(k)=-\frac{\ii\phi_j}{k-\lambda_j}+ h_j(k) \,,
    \end{align*}
    where $h_j$ is $(L^\infty,w^*)$ continuous in $\Chat$-neighbourhoods of both $\lambda_j+ 0\ii$ and $\lambda_j-0\ii$. 

    To refine this expansion, we use the integral equation \eqref{eqn:m0inteq} for $m_0$, obtaining 
    \begin{equation}
        \begin{split}\label{eqn:improvlaurentexp1}
        h_j(k)&=\frac{\ii\phi_j}{k-\lambda_j}+R_0(k)q+R_0(k)\pot\left[-\frac{\ii\phi_j}{k-\lambda_j}+h_j(k)\right]\\
        &=\frac{\ii}{k-\lambda_j}(R_0(\lambda_j\pm0\ii)-R_0(k))\pot\phi_j+R_0(k) q+R_0(k)\pot h_j(k),
        \end{split}
    \end{equation}
    where we used $\phi_j=R_0(\lambda_j\pm0\ii)\pot\phi_j$. 
    
    We now take $k=(\lambda_j+h)\pm 0\ii$ with $h\in\R$ and take the limit $h\to 0$. By Theorem \ref{thm:embeddexpan} $h_j(k)$ is $w^*$-continuous and by Lemma \ref{lem:qRqcompact} $R_0(k)q$ is $w^*$-continuous. Moreover, it is easy to see that $R_0(k)\pot h_j(k)$ is also $w^*$-continuous. Indeed, if $f\in L^1_+$, then, by Step 2 in the proof of Lemma \ref{lem:qRqcompact}, $R_0(\ov{k})f$ converges pointwise and boundedly and therefore, by dominated convergence, $\ov{q}R_0(\ov{k})f$ converges in $L^2$ and $qC_+\ov{q}R_0(\ov{k})f$ converges in $L^1$. Since this convergence is strong, when paired with the $w^*$-convergence of $h_j(k)$ in $L^\infty$ we obtain convergence of $\langle h_j(k), qC_+\ov{q}R_0(\ov{k})f\rangle$, as claimed.

    In Step 2 we will show that
    \begin{align*}
        & \frac{\ii}{k-\lambda_j}(R_0(\lambda_j+0\ii)-R_0(k))\pot\phi_j =-\frac{\ii}{h}(R_0(\lambda_j+h+0\ii)-R_0(\lambda_j+0\ii))\pot\phi_j\\
        & \to -\ii \left( \ii x R_0(\lambda_j+0\ii)qC_+\ov{q}\phi_j - \ii R_0(\lambda_j+0\ii)q C_+ \ov{q} x\phi_j + (2\pi)^{-1} \langle \phi_j,q \rangle \, R_0(\lambda_j+0\ii) q \right) \\
        & \quad = x \phi_j - R_0(\lambda_j+0\ii)q C_+ \ov{q} x\phi_j - R_0(\lambda_j+0\ii) q \,.
    \end{align*}
    In the last equation we use $R_0(\lambda_j+0\ii)qC_+\ov{q}\phi_j=\phi_j$ as well as the normalisation \eqref{eqn:eigfncnormal}.

    The convergence shown in Step 2 holds only when integrated against a test function in $\langle x \rangle^{-1} L^1$, but since this set is dense in $L^1$, we deduce in the limit from \eqref{eqn:improvlaurentexp1} that
    \begin{align*}
        h_j(\lambda_j\pm 0\ii)= x \phi_j-R_0(\lambda_j+0\ii)\pot x\phi_j +R_0(\lambda_j+0\ii)\pot h_j(\lambda_j+0\ii) \,,
    \end{align*}
    which can be rewritten as
    \begin{align*}
        h_j(\lambda_j\pm0\ii)-x\phi_j=R_0(\lambda_j\pm0\ii)\pot(h_j(\lambda_j\pm0\ii)-x\phi_j) \,.
    \end{align*}
    By \eqref{eqn:assump3}, we have $h_j(\lambda_j\pm0\ii)-x\phi_j\in L^\infty(\R)$. Thus, Lemma \ref{lem:LaxeigenqRqfixpt} shows that $h_j(\lambda_j\pm0\ii)- x\phi_j$ are eigenfunctions of $\CMop$ with corresponding eigenvalue $\lambda_j$. Therefore, by simplicity (see the discussion after Lemma \ref{lem:qeigenfuncinprod}), 
    \begin{align*}
        h_j(\lambda_j\pm0\ii)- x\phi_j=\gamma_j^\pm\phi_j
    \end{align*}
    for some constants $\gamma_j^\pm\in \C$.

    It remains to be shown that $\gamma_j^+=\gamma_j^{-}$. When $\lambda_j< 0$, this is clear since $h_j(\lambda_j+0\ii)=h_j(\lambda_j-0\ii)$. When $\lambda_j\geq0$, we observe that 
    \begin{align*}
        (\gamma^{+}_j-\gamma^{-}_j)\phi_j&=h_j(\lambda_j+0\ii)-h_j(\lambda_j-0\ii)\\
        &=m_0(\lambda_j+0\ii)-m_0(\lambda_j-0\ii)=\beta(\lambda_j)m_\e(\lambda_j-0\ii),
    \end{align*}
    where the final identity uses \eqref{eqn:m0mebetarel} from Lemma \ref{lem:scatobjsbeta}. Note that $(\gamma^{+}_j-\gamma^{-}_j)\phi_j$ is square-integrable, while, in view of the asymptotics in Lemma \ref{lem:uniqueme}, $m_\e(\lambda_j-0\ii)$ is not square-integrable near $+\infty$. We deduce that $\beta(\lambda_j)=0$, and hence $\gamma^+_j=\gamma^{-}_j$. This concludes the proof, except for the convergence result.

    \medskip

    \emph{Step 2.} Let $\phi\in L^1_+$ with $\langle x\rangle\phi \in L^\infty$ and let $\lambda\in\R$. We are going to show that
    \begin{align*}
        & h^{-1} ( R_0(\lambda +h\pm 0\ii) - R_0(\lambda\pm 0\ii)) qC_+\ov{q}\phi \\
    & \to \ii x R_0(\lambda\pm 0\ii) qC_+\ov{q}\phi - \ii R_0(\lambda\pm 0\ii) qC_+\ov{q} x\phi + (2\pi)^{-1} \langle\phi,q\rangle \, R_0(\lambda\pm\ii 0) q
    \end{align*}
    as $h\to 0$ with $h\in\R$. The convergence is understood in a weak sense when integrated again a test function $f\in L^1_+$ with $\langle x \rangle f\in L^1$.

    We prove this for the upper sign, the opposite case being similar. We begin by writing
    \begin{align*}
        (R_0(\lambda+0\ii)&\pot\phi)(x) \\&=\int_\R \e^{-\ii h x}G_{\lambda+h+0\ii}(x-y)\e^{\ii h y}\pot \phi\dd y\\
        &=\int_\R \e^{-\ii h x}G_{\lambda+h+0\ii}(x-y)\pot \e(h)\phi\dd y - \int_\R G_{\lambda+0\ii}(x-y) qC_{h}\overline{q}\phi\dd y
    \end{align*}
    with $C_h := \F^{-1}\1_{(-h,0)}\F$ if $h\geq 0$ and $C_{h} := -\F^{-1}\1_{(0,-h)}\F$ if $h\leq 0$, see \eqref{eqn:Chprop}. Thus,
    \begin{align*}
        & h^{-1} ( R_0(\lambda +h\pm 0\ii) - R_0(\lambda\pm 0\ii)) qC_+\ov{q}\phi \\ 
        & = \frac{1}{h}\int_\R G_{\lambda+h+0\ii}(x-y)\left(qC_+\ov{q}\phi-\e^{-\ii h x}\pot \e(h)\phi\right)\dd y + \frac{1}{h}\int_\R G_{\lambda+0\ii}(x-y)qC_{h} \overline{q}\phi\dd y\\
        & \eqqcolon (1)+(2) \,.
    \end{align*}

    For the term $(1)$, we write 
    \begin{align*}
        (1) & =\e^{-\ii hx}\int_\R G_{\lambda+h+0\ii}(x-y)\left(\frac{\e^{\ii hx}-1}{h}qC_+\ov{q}\phi+\pot \frac{1-\e(h)}{h}\phi\right)\dd y \\
        & = \int_\R \frac{1-\e^{-\ii hx}}{h} G_{\lambda+h+0\ii}(x-y)qC_+\ov{q}\phi \dd y + \int_\R G_{\lambda+0\ii}(x-y) \ov{\e(h)} \pot \frac{1-\e(h)}{h}\phi \dd y \,.
    \end{align*}
    Thus, if $f$ is a test function as specified, then
    $$
    \langle (1),f \rangle = \langle R_0(\lambda+h+0\ii) qC_+\ov{q}\phi, \frac{1-\e(h)}{h} f \rangle
    + \langle \ov{\e(h)} q C_+ \ov{q} \frac{1-\e(h)}{h} \phi, R_0(\lambda-0\ii)f \rangle \,.
    $$
    By dominated convergence, the extra assumption on $f$ implies that $h^{-1}(1-\e(h))f\to -\ii xf$ in $L^1$. This strong convergence coupled with the $(L^\infty,w^*)$-convergence of $R_0(\lambda+h+0\ii) qC_+\ov{q}\phi$ (Lemma \ref{lem:qRqcompact}) shows that
    $$
    \langle R_0(\lambda+h+0\ii) qC_+\ov{q}\phi, \frac{1-\e(h)}{h} f \rangle
    \to \ii\ \langle R_0(\lambda+0\ii) qC_+\ov{q}\phi, x f \rangle
    $$
    Similarly, the extra boundedness assumption on $\phi$ and dominated convergence implies $h^{-1}\ov{q}(1-\e(h))\phi\to -\ii \ov{q} x\phi$ in $L^2$, and, by dominated convergence, $\ov{\e(h)} q\to q$ in $L^2$. This shows that
    $$
    \langle \ov{\e(h)} q C_+ \ov{q} \frac{1-\e(h)}{h} \phi, R_0(\lambda-0\ii)f \rangle \to -\ii\, \langle q C_+ \ov{q} x \phi, R_0(\lambda-0\ii)f \rangle \,.
    $$
    This gives the limit of term $(1)$.

    For the term $(2)$ with $h\geq 0$ we can apply Step 1 in the proof of Lemma \ref{lem:emederiv}, noting that $C_0=0$, and obtain
    $$
    (2) \to \frac1{2\pi} \int_\R \phi(y') \ov{q(y')} \dd y' \
    \int_\R G_{\lambda+0\ii}(x-y) q(y)\dd y = (2\pi)^{-1} \langle \phi,q \rangle \ R_0(\lambda+0\ii) q \,.
    $$
    The same holds for $h\leq 0$ by essentially the same argument. This gives the limit of term $(2)$ and thereby concludes the proof of the lemma.
\end{proof}


\newpage

\part{Trace formulas}

\section{First order trace formulas} \label{sec:traceform}
The objective in this and the next section is to prove identities, so called trace formulas, that relate the spectral and scattering quantities of $\CMop$ to quantities defined in terms of $q$. All these quantities are conserved by the Calogero--Moser equation.

In Section \ref{sec:jostinhom} we have introduced $\beta$ for $q\in L^2_{+}(\R)\cap L^1_+(\R)$. In this section we extend this definition to all $q\in L^2_+(\R)$ by setting
$$
\beta:= \sqrt{2\pi}\ii \Phi(q) \,,
$$
see Remark \ref{rem:b}.

The first trace identity holds for all $q\in L^2_{+}(\R)$ and is a direct consequence of the analysis in Section \ref{sec:distortedFT}.

\begin{theorem}\label{thm:traceformula1}
    Let $q\in L^2_{+}(\R)$. Then
    \begin{align}\label{eqn:traceident1}
        \int_{-\infty}^\infty |q(x)|^2\dd x= \frac{1}{2\pi}\int_0^\infty \abs{\beta(\lambda)}^2\dd \lambda + 2\pi N(\CMop) \,.
    \end{align}
\end{theorem}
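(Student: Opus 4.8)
\emph{Proof plan.} The plan is to evaluate $\|q\|_2^2$ by splitting $q$ into its absolutely continuous and point spectral parts relative to $L_q$ and computing each contribution through a Parseval-type identity. First note that, since $q\in L^2_+(\R)$ has Fourier transform supported in $[0,\infty)$, the extra hypothesis $q\in L^1(\R)$ yields $q\in L^1_+(\R)$, so that $m_0$, $\beta$ and $\Phi q$ are all defined. Assumption \eqref{eqn:assump} lets us invoke Corollary~\ref{cor:nosc}, according to which $L_q$ has no singular continuous spectrum; hence $1 = P_{\mathrm{ac}}(L_q) + P_{\mathrm p}(L_q)$ and
\begin{equation*}
    \|q\|_2^2 = \|P_{\mathrm{ac}}(L_q)q\|_2^2 + \|P_{\mathrm p}(L_q)q\|_2^2 \,.
\end{equation*}
It then remains to identify the two terms on the right with $\tfrac1{2\pi}\int_0^\infty|\beta(\lambda)|^2\,\dd\lambda$ and $2\pi N(L_q)$, respectively.

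For the absolutely continuous term I would appeal to the distorted Fourier transform of Section~\ref{sec:distortedFT}. Applying \eqref{eqn:Phif} with $f=q\in L^1_+(\R)$ and then \eqref{eqn:fbetaident} of Lemma~\ref{lem:scatobjsbeta},
\begin{equation*}
    (\Phi q)(\lambda) = \frac1{\sqrt{2\pi}}\,\overline{\int_\R \overline{q(y)}\,m_\e(y,\lambda-0\ii)\,\dd y} = \frac1{\sqrt{2\pi}}\,\overline{\ii\,\overline{\beta(\lambda)}} = -\frac{\ii}{\sqrt{2\pi}}\,\beta(\lambda)\,,
\end{equation*}
so that $|(\Phi q)(\lambda)|^2 = |\beta(\lambda)|^2/(2\pi)$. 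Since $\Phi^*\Phi = P_{\mathrm{ac}}(L_q)$ by the first identity in \eqref{eq:unitarity} of Theorem~\ref{thm:meFT} and $P_{\mathrm{ac}}(L_q)$ is an orthogonal projection,
\begin{equation*}
    \|P_{\mathrm{ac}}(L_q)q\|_2^2 = \langle P_{\mathrm{ac}}(L_q)q,q\rangle = \langle \Phi^*\Phi q,q\rangle = \|\Phi q\|_{L^2(\R_+)}^2 = \frac1{2\pi}\int_0^\infty |\beta(\lambda)|^2\,\dd\lambda\,.
\end{equation*}
(In particular this recovers $\beta\in L^2(\R_+)$.)

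For the point term, recall that all eigenvalues $\lambda_1,\dots,\lambda_N$ of $L_q$ are simple, so the corresponding eigenfunctions $\phi_j$ are mutually orthogonal and span $\ran P_{\mathrm p}(L_q)$; hence $P_{\mathrm p}(L_q)q = \sum_{j=1}^N \|\phi_j\|_2^{-2}\langle q,\phi_j\rangle\,\phi_j$ and, by orthogonality,
\begin{equation*}
    \|P_{\mathrm p}(L_q)q\|_2^2 = \sum_{j=1}^N \frac{|\langle q,\phi_j\rangle|^2}{\|\phi_j\|_2^2} = 2\pi N = 2\pi N(L_q)\,,
\end{equation*}
the middle equality being the eigenfunction identity \eqref{eqn:qeigenfuncinprod}, which holds under the sole assumption $q\in L^2_+(\R)$ by \cite{gerard_calogero--moser_2023} (see also \eqref{eqn:Nfinbound}), written in the form $|\langle q,\phi_j\rangle|^2 = \bigl|\int_\R \overline q\,\phi_j\,\dd x\bigr|^2 = 2\pi\|\phi_j\|_2^2$. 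Substituting the two contributions into the display of the first paragraph yields \eqref{eqn:traceident1}. I do not expect a genuine analytic obstacle here: the whole argument is a Parseval identity for the distorted Fourier transform combined with the already-established normalisation of the eigenfunctions, and the only two points requiring a moment's attention are the conversion $|(\Phi q)(\lambda)|^2 = |\beta(\lambda)|^2/(2\pi)$ and the fact that \eqref{eqn:qeigenfuncinprod} is available without a weighted hypothesis on $q$.
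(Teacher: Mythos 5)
Your proposal is correct and is essentially the paper's own argument: the paper proves Theorem \ref{thm:traceformula1} by setting $\Lambda=\R$ in Lemma \ref{lem:riemannhilbtraceform}, whose proof is exactly your decomposition of the spectral measure of $q$ into its absolutely continuous part (identified via $\Phi^*\Phi=P_{\mathrm{ac}}(L_q)$ and $(\Phi q)(\lambda)=-\ii\beta(\lambda)/\sqrt{2\pi}$ from \eqref{eqn:fbetaident}) and its point part (identified via \eqref{eqn:qeigenfuncinprod}). Your remark that \eqref{eqn:qeigenfuncinprod} must be invoked in its unweighted form from \cite{gerard_calogero--moser_2023}, since the hypotheses here do not include $q\in L^2_{s,+}(\R)$ for some $s>0$, is a correct and worthwhile observation.
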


We note that this trace formula accounts for the deficit term in the inequality \eqref{eqn:Nfinbound}.

Our second main result in this section concerns a formula for $\Tr( f(\CMop)-f(\CMop[0]) )$ for a rather large class of functions $f$. Such formulas are called Birman--Krein trace formulas in the textbook \cite{dyatlov_zworski}, honoring \cite{birmankrein}.

Let $\mathcal C$ denote the class of all functions $f$ on $\R$ that have two locally bounded derivatives and satisfy, for some $\epsilon>0$,
$$
(\lambda^2 f'(\lambda))' = O_{\lambda\to\infty}(\lambda^{-1-\epsilon}) \,.
$$
Clearly, $\mathcal S(\R)\subset\mathcal C$ and also the functions $\lambda\mapsto (\lambda-k)^{-1}$ with $k\in\C\setminus\R$ belong to $\mathcal C$.

As elsewhere, $\{\lambda_j\}_{j=1}^N$ with $N=N(\CMop)$ denote the eigenvalues of $\CMop$.

\begin{theorem}
    \label{thm:birmankrein}
    Let $q\in L^2_{+}(\R)$ and let $f\in\mathcal C$. Then $f(\CMop)-f(\CMop[0])$ is trace class and
    \begin{equation}
        \label{eq:birmankrein}
        \Tr( f(\CMop)-f(\CMop[0]) ) = \frac1{(2\pi)^2} \int_0^\infty f(\lambda) |\beta(\lambda)|^2\dd\lambda + \sum_{j=1}^N f(\lambda_j) \,.
    \end{equation}
\end{theorem}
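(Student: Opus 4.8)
The plan is to follow the classical route through the spectral shift function and the perturbation determinant, the genuinely new input being the differential identity \eqref{eqn:gammaderivident} for the scattering matrix, which converts the jump of the determinant across the spectrum into the density $|\beta|^2$. The first step is to \emph{reduce to resolvent functions}. By \eqref{eq:resolventformula} and the Hilbert--Schmidt bounds of Lemma \ref{lem:qRqcompact}, $R(\zeta)-R_0(\zeta)$ is trace class for every $\zeta\in\C\setminus\R$: the resolvent identity exhibits it as a product $\bigl(R_0(\zeta)qC_+\bigr)\bigl(1-C_+\ov q R_0(\zeta)qC_+\bigr)^{-1}\bigl(C_+\ov q R_0(\zeta)\bigr)$ whose outer factors are Hilbert--Schmidt on $L^2$ with norm controlled by $\int_\R|G_\zeta|^2<\infty$ and the boundedness of $C_+$, so that $\|R(\zeta)-R_0(\zeta)\|_{\mathfrak S_1}\lesssim (1+|\zeta|)^{a}/|\im\zeta|$ for suitable $a$. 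An almost analytic (Helffer--Sjöstrand) extension $\tilde f$ of $f\in\mathcal C$ then shows that $f(L_q)-f(L_0)$ is trace class and that $\Tr\bigl(f(L_q)-f(L_0)\bigr)=-\tfrac1\pi\int_\C\bar\partial\tilde f(\zeta)\,\Tr\bigl(R(\zeta)-R_0(\zeta)\bigr)\,dA(\zeta)$; the hypothesis $(\lambda^2f'(\lambda))'=O(\lambda^{-1-\epsilon})$ is precisely what makes $\bar\partial\tilde f$ decay fast enough at infinity to absorb the above growth. Since the right side of \eqref{eq:birmankrein} is recovered from the measure $d\mu:=\tfrac1{(2\pi)^2}|\beta(\lambda)|^2\,\1_{(0,\infty)}(\lambda)\,d\lambda+\sum_{j=1}^N\delta_{\lambda_j}$ by the same Cauchy representation applied to $f$, the theorem reduces to the resolvent identity
\begin{equation*}
\Tr\bigl(R(\zeta)-R_0(\zeta)\bigr)=\sum_{j=1}^N\frac1{\lambda_j-\zeta}+\frac1{(2\pi)^2}\int_0^\infty\frac{|\beta(\lambda)|^2}{\lambda-\zeta}\,d\lambda,\qquad\zeta\in\C\setminus\R.
\end{equation*}

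Next I would bring in the \emph{perturbation determinant}. Writing $M(\zeta):=C_+\ov q R_0(\zeta)qC_+$, which by Lemma \ref{lem:qRqcompact} is Hilbert--Schmidt with trace-class derivative $\partial_\zeta M(\zeta)=C_+\ov q R_0(\zeta)^2qC_+$, cyclicity of the trace applied to \eqref{eq:resolventformula} gives $\Tr\bigl(R(\zeta)-R_0(\zeta)\bigr)=\Tr\bigl((1-M(\zeta))^{-1}\partial_\zeta M(\zeta)\bigr)=\tfrac{d}{d\zeta}\log D(\zeta)$, where $D$ is the perturbation determinant for the pair $(L_q,L_0)$, normalised by $D(\zeta)\to1$ as $|\zeta|\to\infty$ off $\R$ (it exists exactly because the resolvent difference is trace class; see \cite[Ch.~8]{yafaev_mathematical_1992}, \cite{birmankrein}). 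One then assembles the standard properties of $D$: it is holomorphic on $\C\setminus\sigma(L_q)$; its zeros are precisely the eigenvalues $\{\lambda_j\}$, all simple by the discussion after Lemma \ref{lem:qeigenfuncinprod}; under \eqref{eqn:assump} it extends to continuous, non-vanishing boundary values $D(\lambda\pm0\ii)$ on $\R_+\setminus\mathcal N$, via the analytic Fredholm setup of Section \ref{sec:lap}; it obeys the reflection identity $\ov{D(\ov\zeta)}=D(\zeta)$; and the scattering relation $D(\lambda-0\ii)/D(\lambda+0\ii)=\Gamma(\lambda)$ holds on $\R_+\setminus\mathcal N$, the right side being the determinant of the one-channel $S$-matrix identified with the scalar $\Gamma(\lambda)$ through the wave-operator computation at the end of Section \ref{sec:distortedFT}.

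With these properties in hand, $\tfrac{d}{d\zeta}\log D$ is meromorphic on $\C\setminus[0,\infty)$ with simple poles of residue $1$ at the negative eigenvalues and a jump across $\R_+$ of density $\partial_\lambda\log\Gamma(\lambda)$, augmented by an extra residue-$1$ pole at each embedded eigenvalue, where $D(\lambda\pm0\ii)$ vanishes simply. A Cauchy-type representation — legitimate because $\tfrac{d}{d\zeta}\log D\to0$ as $|\zeta|\to\infty$ — then yields $\Tr(R(\zeta)-R_0(\zeta))=\sum_{j=1}^N(\lambda_j-\zeta)^{-1}+\tfrac1{2\pi\ii}\int_0^\infty\frac{\partial_\lambda\log\Gamma(\lambda)}{\lambda-\zeta}\,d\lambda$, and substituting $\partial_\lambda\log\Gamma(\lambda)=-|\beta(\lambda)|^2/(2\pi\ii)$ from \eqref{eqn:gammaderivident} gives exactly the resolvent identity of Step~1; the overall sign/orientation is pinned down by consistency with the $n=0$ case of Theorem \ref{thm:traceformula1} (obtained by multiplying by $-\zeta$ and letting $\zeta\to-\infty$ along the negative axis). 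Feeding this back into the Helffer--Sjöstrand representation of Step~1 produces \eqref{eq:birmankrein}.

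The principal difficulty, absent in the Schrödinger and Benjamin--Ono comparisons, is the presence of \emph{embedded eigenvalues}: the boundary values $D(\lambda\pm0\ii)$ vanish at each $\lambda_j\in\mathcal N\subset(0,\infty)$, so one must (i) establish enough regularity of $D$ near these points — drawing on the decay estimates of Lemma \ref{lem:fixptdecay}, the fact that under \eqref{eqn:assump} the set $\mathcal N$ consists of genuine simple eigenvalues, and the vanishing $\beta(\lambda_j)=0$ from Lemma \ref{lem:laurentexpanimprov} — and (ii) verify that the Cauchy representation of $\tfrac{d}{d\zeta}\log D$ genuinely contributes a residue-$1$ pole at each embedded eigenvalue, so that the sum in \eqref{eq:birmankrein} runs over \emph{all} $N$ eigenvalues rather than only the negative ones. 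A secondary but real technical point is making the Helffer--Sjöstrand reduction in Step~1 uniform enough as $|\zeta|\to\infty$ for the entire class $\mathcal C$; this is where the precise decay hypothesis defining $\mathcal C$ enters, and some care is needed with the behaviour of $f$ at $\lambda\to\infty$.
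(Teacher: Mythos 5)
Your overall architecture—reduce to resolvents via an almost-analytic functional calculus, then identify $\Tr(R(\zeta)-R_0(\zeta))$ as the Cauchy transform of the measure $\tfrac1{(2\pi)^2}|\beta|^2\,\dd\lambda+\sum_j\delta_{\lambda_j}$—is sound and parallels the paper, which uses the abstract Krein spectral shift function from \cite{yafaev_mathematical_2010} in place of Helffer--Sj\"ostrand for that first reduction. The divergence, and the gap, lies in how you propose to prove the resolvent identity, namely through the perturbation determinant and the relation $D(\lambda-0\ii)/D(\lambda+0\ii)=\Gamma(\lambda)$. Three problems. First, the Birman--Schwinger operator $M(\zeta)=C_+\ov{q}R_0(\zeta)qC_+$ is shown only to be Hilbert--Schmidt (Lemma \ref{lem:qRqcompact}) and is not trace class in general, so only the regularised determinant $\det_2(1-M(\zeta))$ exists; its logarithmic derivative equals $-\Tr(R(\zeta)-R_0(\zeta))+\Tr[\partial_\zeta M(\zeta)]$, and the correction $\Tr[\partial_\zeta M(\zeta)]=\tfrac1{2\pi}\langle R_0(\zeta)q,q\rangle$ is nonzero and carries its own jump across $\R_+$ (compare the Remark following Lemma \ref{lem:resdifftrace}). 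Hence the function whose log-derivative is $\Tr(R-R_0)$ is not a determinant with the standard properties, and its jump across the cut is not simply $\log\Gamma$. Second, the identity ``jump of $\log D$ equals $\log\det S$ with $\det S=\Gamma$'' is the stationary Birman--Krein theorem itself; the identification of $\Gamma$ with the abstract scattering matrix at the end of Section \ref{sec:distortedFT} is explicitly stated in the paper \emph{without proof}, so invoking it here is close to assuming a statement equivalent to the one being proved. Third, the residue-one poles at embedded eigenvalues, which you correctly single out as the principal difficulty, are left entirely unproven, and there is no off-the-shelf theory for the boundary behaviour of (regularised) perturbation determinants at embedded eigenvalues to fall back on.

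The ingredient your proposal is missing is the structural identity of Lemma \ref{lem:resdifftrace}, $\Tr(R(k)-R_0(k))=\tfrac1{2\pi}\langle R(k)q,q\rangle$, proved by an explicit Fourier/Neumann-series computation special to this Lax operator. Once this is in hand, the resolvent identity follows with no determinant at all: $\langle R(k)q,q\rangle$ is the Cauchy transform of the spectral measure of the single vector $q$, whose absolutely continuous density is $\tfrac1{2\pi}|\beta|^2$ by the distorted Fourier transform (Lemma \ref{lem:riemannhilbtraceform}), and whose point masses equal exactly $2\pi$ at \emph{every} eigenvalue, embedded or not, by the identity $|\langle q,\phi_j\rangle|^2=2\pi\|\phi_j\|_2^2$ of Lemma \ref{lem:qeigenfuncinprod}. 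This is what produces the coefficient $1$ in $\sum_j f(\lambda_j)$ and disposes of the embedded eigenvalues for free. Without Lemma \ref{lem:resdifftrace}, or a fully worked-out substitute (a stationary Birman--Krein theory for this operator including the behaviour of $\det_2$ at embedded eigenvalues), your argument does not close.
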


In particular, for $f(\lambda)=(\lambda-k)^{-1}$ we obtain
$$
\Tr \left( (\CMop-k)^{-1} - (\CMop[0]-k)^{-1} \right) = \frac1{(2\pi)^2} \int_0^\infty \frac{|\beta(\lambda)|^2}{\lambda-k} \dd\lambda + \sum_{j=1}^N \frac{1}{\lambda_j-k} \,.
$$
It is easy to see that this formula, which holds for $k\in\C\setminus\R$ by Theorem \ref{thm:birmankrein}, holds for all $k\in\C\setminus\sigma(\CMop)$. In fact, it is this formula that we will prove and then the full statement of Theorem \ref{thm:birmankrein} follows by abstract arguments.

The first ingredient in the theorems in this section is the following result.

\begin{lemma}\label{lem:riemannhilbtraceform}
    Let $q\in L^2_{+}(\R)$. Then, for any Borel set $\Lambda\subset\R$,
    $$
    \langle\1_\Lambda(\CMop)q,q\rangle = \frac1{2\pi} \int_{\Lambda\cap\R_+} |\beta(\lambda)|^2\dd\lambda + 2\pi\, \#\{ j:\, \lambda_j\in\Lambda\} \,.
    $$
    Moreover, for any $k\in\C\setminus\sigma(\CMop)$,
    \begin{equation}\label{eqn:relationtrace}
    \int_{-\infty}^{\infty} \ov{q(x)} m_0(x,k) \dd x = \frac1{2\pi} \int_0^\infty \frac{|\beta(\lambda')|^2}{\lambda'-k}\,\dd\lambda' + 2\pi \sum_{j=1}^N \frac{1}{\lambda_j - k} \,.
    \end{equation}
    Finally, for a.e.~$\lambda\in\R$,
    \begin{equation*}
        \int_{-\infty}^\infty \overline{q(x)}m_0(x,\lambda-0\ii)\dd x = - \ii\, C_{-} \big(\abs{\beta}^2\big)(\lambda) + 2\pi  \sum_{j=1}^N \frac{1}{\lambda_j -\lambda}
    \end{equation*}
    where $C_{-}=1-C_+$ and with $\beta$ extended by zero to $(-\infty,0)$. 
\end{lemma}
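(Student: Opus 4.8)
The plan is to prove the three identities of Lemma~\ref{lem:riemannhilbtraceform} in order, each building on the previous one. For the first identity, observe that $q=P_\mathrm{ac}(\CMop)q + \sum_j \frac{\sclp{q,\phi_j}}{\norm{\phi_j}_2^2}\phi_j$, so that $\sclp{\1_\Lambda(\CMop)q,q} = \sclp{\1_\Lambda(\CMop)P_\mathrm{ac}(\CMop)q,q} + \sum_{\lambda_j\in\Lambda}\frac{\abs{\sclp{q,\phi_j}}^2}{\norm{\phi_j}_2^2}$. By Lemma~\ref{lem:qeigenfuncinprod} (specifically \eqref{eqn:qeigenfuncinprod}) the eigenvalue contributions equal $2\pi$ each, accounting for the term $2\pi\,\#\{j:\lambda_j\in\Lambda\}$. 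For the absolutely continuous part, I would use Theorem~\ref{thm:meFT}: for a bounded interval $\Lambda\subset\R_+$ with $\overline\Lambda\cap\mathcal N=\emptyset$ we have by Lemma~\ref{lem:projection} that $\sclp{\1_\Lambda(\CMop)q,q} = \int_\Lambda \abs{(\Phi q)(\lambda)}^2\dd\lambda$, and by \eqref{eqn:fbetaident} of Lemma~\ref{lem:scatobjsbeta} we have $\sqrt{2\pi}(\Phi q)(\lambda) = \sclp{q,m_\e(\lambda-0\ii)} = \ii\,\overline{\beta(\lambda)}$, so $\abs{(\Phi q)(\lambda)}^2 = \abs{\beta(\lambda)}^2/(2\pi)$. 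Since $\mathcal N$ has measure zero and (under \eqref{eqn:assump}) consists only of eigenvalues, a covering/monotone-convergence argument extends this to general Borel $\Lambda$, with the eigenvalues in $\mathcal N$ contributing only through the discrete sum (their $\beta$-values vanish, or at least the a.c.\ measure assigns them no mass).

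For the second identity, I would apply the spectral theorem to write, for $k\in\C\setminus\sigma(\CMop)$,
$$
\int_\R \overline{q(x)}\,m_0(x,k)\dd x = \sclp{m_0(k),q} = \sclp{(\CMop-k)^{-1}q,q} = \int_\R \frac{\dd\nu(\lambda)}{\lambda-k},
$$
where $\nu(\Lambda)=\sclp{\1_\Lambda(\CMop)q,q}$ is the spectral measure of $q$. The first identity identifies $\dd\nu(\lambda) = \frac1{2\pi}\abs{\beta(\lambda)}^2\chi_{\R_+}(\lambda)\dd\lambda + 2\pi\sum_j \delta_{\lambda_j}$, and substituting this into the Cauchy transform yields \eqref{eqn:relationtrace} directly. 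One should note here that $\beta\in L^2(\R_+)$ (recorded at the end of Remark~\ref{rem:b}), so $\abs{\beta}^2\in L^1$ and the integral converges.

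For the third identity, I would pass to the boundary $k=\lambda-0\ii$. The left side is $\sclp{m_0(\lambda-0\ii),q}$, which by Lemma~\ref{lem:pertresolvcont} (continuity of $k\mapsto\sclp{R(k)f,g}$ on $\Chat$) is the continuous boundary value from the lower half-plane of the function on the right side of \eqref{eqn:relationtrace}. The discrete sum $2\pi\sum_j \frac{1}{\lambda_j-k}$ extends continuously (and is real for real $\lambda$ away from the $\lambda_j$, matching $2\pi\sum_j\frac{1}{\lambda_j-\lambda}$ there), so the task reduces to identifying $\lim_{\epsilon\to 0_+}\frac1{2\pi}\int_0^\infty \frac{\abs{\beta(\lambda')}^2}{\lambda'-\lambda+\ii\epsilon}\dd\lambda'$. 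By the Sokhotski--Plemelj formula this boundary value is $\frac1{2\pi}\,\mathrm{p.v.}\!\int \frac{\abs{\beta(\lambda')}^2}{\lambda'-\lambda}\dd\lambda' - \ii\,\tfrac12\abs{\beta(\lambda)}^2$ for a.e.\ $\lambda$, and recognising that $\mathrm{p.v.}\!\int\frac{g(\lambda')}{\lambda'-\lambda}\dd\lambda' = -\pi (Hg)(\lambda)$ with $H$ the Hilbert transform and that $C_{-}=1-C_+ = (1-\ii H)/2$, one gets $\frac1{2\pi}(\,\mathrm{p.v.\ integral}) - \ii\tfrac12\abs{\beta(\lambda)}^2 = -\ii\,C_{-}(\abs{\beta}^2)(\lambda)$, using $\beta$ extended by zero to $(-\infty,0)$ and $\abs{\beta}^2\in L^1\cap L^2$ (from $\beta\in L^2$; for $L^2$ one needs Theorem~\ref{thm:simpasymp} or the Schwartz case, but for the a.e.\ statement $L^1$ suffices with the Hilbert transform acting on $L^1$ in the weak sense—alternatively one restricts to the dense Schwartz class and passes to the limit).

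The main obstacle I anticipate is the careful handling of the boundary-value passage in the third identity: one must justify interchanging the $\epsilon\to 0_+$ limit on the two sides, control the Hilbert/Cauchy transform of $\abs{\beta}^2$ (which requires knowing $\abs{\beta}^2$ lies in a suitable space—$L^1$ gives weak-type bounds, $L^2$ from $\beta\in L^2$ gives strong $L^2$ bounds, and the decay estimates of Theorem~\ref{thm:simpasymp}/Theorem~\ref{thm:m0fullexpan} give more), and match the a.e.\ pointwise formula with the pointwise-defined left-hand side $\sclp{m_0(\lambda-0\ii),q}$, which is only known to be continuous in the weak pairing sense. Reconciling ``continuous in $k$ for fixed test function'' with ``a.e.\ pointwise in $\lambda$'' is the delicate point; I would handle it by testing against a dense family and invoking that both sides, as distributions in $\lambda$, agree. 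The algebraic identification $\frac1{2\pi}\mathrm{p.v.} - \tfrac{\ii}{2}\abs{\beta}^2 = -\ii C_{-}(\abs{\beta}^2)$ is then a routine but bookkeeping-heavy computation with the normalisation of $H$ and $C_\pm$.
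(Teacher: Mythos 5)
Your proposal follows essentially the same route as the paper: the spectral measure of $q$ is identified via Theorem \ref{thm:meFT}, Lemma \ref{lem:projection} and Lemma \ref{lem:qeigenfuncinprod}, the second identity is then the Cauchy--Stieltjes transform of that measure, and the third follows by taking $k=\lambda-\ii\epsilon$ and applying Sokhotski--Plemelj. Two minor remarks: by \eqref{eqn:fbetaident} one has $\sclp{q,m_\e(\lambda-0\ii)}=-\ii\beta(\lambda)$ rather than $\ii\overline{\beta(\lambda)}$ (harmless, since only $|(\Phi q)|^2=|\beta|^2/(2\pi)$ is used), and the ``delicate point'' you anticipate in the third identity does not arise, because the left side $\sclp{m_0(k),q}$ is a genuinely pointwise-defined continuous function of $k$ (Lemma \ref{lem:pertresolvcont} with the fixed test function $q$), so no testing against a dense family is needed.
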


Given this lemma, the proof of the first trace formula is immediate.

\begin{proof}[Proof of Theorem \ref{thm:traceformula1}]
    It suffices to take $\Lambda=\R$ in the first equality in Lemma \ref{lem:riemannhilbtraceform}.   
\end{proof}

\begin{proof}[Proof of Lemma \ref{lem:riemannhilbtraceform}]
    Let $\nu$ be the spectral measure of $q$ with respect to $\CMop$, that is, $\nu(\Lambda):=\langle \1_\Lambda(L_q) q,q\rangle$ for any Borel set $\Lambda\subset\R$. It follows from Corollary \ref{cor:nosc} and Theorem \ref{thm:meFT} that, for any Borel set $\Lambda\subset\R$,
    \begin{align}
        \label{eq:nu}
        \nu(\Lambda) & = \int_{\Lambda\cap\R_+} |(\Phi q)(\lambda)|^2\,\dd\lambda + \sum_{\lambda_j\in\Lambda} \frac{|\langle q,\phi_j\rangle|^2}{\|\phi_j\|^2} \notag \\
        & = \frac1{2\pi} \int_{\Lambda\cap\R_+} |\beta(\lambda)|^2\,\dd\lambda + 2\pi \, \#\{ j:\ \lambda_j\in\Lambda\} \,,
    \end{align}
    where we used the definition of $\beta$, as well as Lemma \ref{lem:qeigenfuncinprod}. This proves the first assertion in the lemma. Note that this formula means that the absolutely continuous part of $\nu$ has density $(2\pi)^{-1}|\beta|^2$ and its singular part consists in $2\pi$ times a delta function at the eigenvalues.

    Let $k\in\C\setminus\sigma(\CMop)$ and recall that $m_0(k)= R(k) q$. This implies that
    \begin{equation}
        \label{eq:cauchymeasure}
            \int_{-\infty}^{\infty} \ov{q(x)} m_0(x,k)\,\dd x  = \langle R(k) q,q \rangle = \int_\R \frac{\dd \nu(\lambda)}{\lambda-k} \,,
    \end{equation}
    where we used the spectral theorem in the last step.

    According to our formula for $\nu$, we can rewrite this as
    $$
    \int_{-\infty}^{\infty} \ov{q(x)} m_0(x,k) \dd x = \frac1{2\pi} \int_0^\infty \frac{|\beta(\lambda')|^2}{\lambda'-k}\,\dd\lambda' + 2\pi \sum_{j=1}^N \frac{1}{\lambda_j - k} \,,
    $$
    which proves the second assertion in the lemma.

    To prove the third assertion we want to take $k=\lambda-\ii\epsilon$ with $\lambda\in\R\setminus\sigma_\mathrm p(\CMop)$ and $\epsilon>0$ and let $\epsilon\to 0$. The left side has a limit according to the continuity statement made right after the definition of $m_0(k)$, and the second term on the right side clearly has a limit. Thus, the first term on the right side has a limit as well and we have
    $$
    \int_{-\infty}^{\infty} \ov{q(x)} m_0(x,\lambda-0\ii) \dd x = \lim_{\epsilon\to 0^+} \frac1{2\pi} \int_0^\infty \frac{|\beta(\lambda')|^2}{\lambda'-\lambda + \ii\epsilon}\,\dd\lambda' + 2\pi \sum_{j=1}^N \frac{1}{\lambda_j - \lambda} \,.
    $$
    By the Sokhotski--Plemelj formula (see, e.g., \cite[Theorem 1.2.5]{yafaev_mathematical_1992}) it follows that for almost every $\lambda\in\R$ we have
    $$
    \lim_{\epsilon\to 0^+} \int_0^\infty \frac{|\beta(\lambda')|^2}{\lambda'-\lambda + \ii\epsilon}\,\dd\lambda'
    = -\ii\pi\,|\beta(\lambda)|^2 + \mathrm{p.v.} \int_0^\infty \frac{|\beta(\lambda')|^2}{\lambda'-\lambda}\,\dd\lambda' = -2\pi\ii \, C_-(|\beta|^2)(\lambda) \,.
    $$
    It follows, in particular, that the principal value integral exists for a.e.~$\lambda\in\R$. (This follows also standard harmonic analysis results concerning the Hilbert transform of an $L^1$-function.) This shows that the formula in the lemma holds for almost every $\lambda\in\R$.
\end{proof}


The following is the second ingredient in the proof of Theorem \ref{thm:birmankrein}.

\begin{lemma}\label{lem:resdifftrace}
    Let $q\in L^2_+(\R)$. Then for any $k\in\C\setminus\sigma(L_q)$
    $$
    \Tr((L_q-k)^{-1} - (L_0-k)^{-1}) = \frac1{2\pi}\, \langle (\CMop - k)^{-1} q,q \rangle \,.
    $$
\end{lemma}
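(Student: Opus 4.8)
The proof I propose rests on the resolvent identity \eqref{eq:resolventformula}. Write $A(k) := C_+\overline q R_0(k) q C_+$, which by Lemma \ref{lem:qRqcompact} is Hilbert--Schmidt, and let $B(k) := R_0(k) q C_+ (1-A(k))^{-1} C_+ \overline q R_0(k)$ denote the correction term, so that $(L_q-k)^{-1} - (L_0-k)^{-1} = B(k)$. The first task is to show $B(k)$ is trace class. The factor $C_+\overline q R_0(k)$ maps $L^2$ into $L^2$ and, writing $R_0(k) = G_k \ast (\cdot)$, one checks as in Step 3 of the proof of Lemma \ref{lem:qRqcompact} that $\overline q R_0(k) q$ is Hilbert--Schmidt on $L^2$; hence each of the two "outer" factors $R_0(k)qC_+$ and $C_+\overline q R_0(k)$, when considered as operators relating $L^2_+$ and $L^2$, can be paired with the bounded middle factor $(1-A(k))^{-1}$ to exhibit $B(k)$ as a product of two Hilbert--Schmidt operators, hence trace class. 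Then the trace is computed via the cyclicity of the trace:
\begin{align*}
    \Tr B(k) &= \Tr\bigl( R_0(k) q C_+ (1-A(k))^{-1} C_+ \overline q R_0(k) \bigr) \\
    &= \Tr\bigl( (1-A(k))^{-1} C_+ \overline q R_0(k)^2 q C_+ \bigr) \,.
\end{align*}

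The key computational input is the operator identity $R_0(k)^2 = -\partial_k R_0(k) = G_k' \ast (\cdot)$ where $\partial_k G_k(x) = \ii x G_k(x)$ pointwise (differentiating \eqref{eqn:Gk}), so $R_0(k)^2 f = (\ii\, \cdot\, G_k) \ast f$ up to the usual care about the constant; concretely $(R_0(k)^2 f)(x) = \ii \int_{-\infty}^x \ii(x-y)\e^{\ii k(x-y)} f(y)\,\dd y$ for $\im k > 0$. Equivalently, one can avoid kernel bookkeeping entirely: since $R_0(k)^2 = -\tfrac{d}{dk}R_0(k)$ and $m_0(k) = (L_q-k)^{-1} q = (1+B(k) \text{-type correction})\cdots$, one recognises that $C_+\overline q R_0(k)^2 q C_+ = -\partial_k A(k)$, and therefore
$$
\Tr B(k) = \Tr\bigl( (1-A(k))^{-1}(-\partial_k A(k)) \bigr) = \partial_k \log\det{}_2(1-A(k)) \quad (\text{plus a correction from the regularisation}).
$$
However, the cleanest route — and the one matching the right-hand side of the claimed formula — is to instead directly identify $\Tr B(k)$ with $\tfrac1{2\pi}\langle (L_q-k)^{-1} q, q\rangle = \tfrac1{2\pi}\langle m_0(k), q\rangle$. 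For this, I would use the explicit form $(R_0(k) g)(x) = \ii\int_{-\infty}^x \e^{\ii k(x-y)} g(y)\,\dd y$ and compute the trace of the trace-class operator above as the integral of its kernel along the diagonal. Writing out the kernel of $C_+\overline q R_0(k)^2 q C_+$ and integrating over the diagonal, the convolution structure produces exactly a factor $\int_{-\infty}^x \ii(x-y)\e^{\ii k(x-y)}\,\dd y$-type term; combined with $(1-A(k))^{-1} C_+\overline q \e(\lambda)$-type expressions and the integral equation \eqref{eqn:m0inteq} for $m_0$, everything collapses to $\tfrac1{2\pi}\int \overline{q(x)} m_0(x,k)\,\dd x$. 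The factor $1/(2\pi)$ arises from the normalisation $\tfrac1{2\pi}\int_\R \e^{\ii\xi(x-y)}\,\dd\xi$ implicit in $R_0(k) = \mathcal F^* (\xi-k)^{-1}\mathcal F$ restricted to $L^2_+$.

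In more detail, the plan for the trace computation is: (i) by cyclicity, $\Tr B(k) = \Tr\bigl((1-A(k))^{-1} C_+\overline q R_0(k)^2 q C_+\bigr)$; (ii) expand $(1-A(k))^{-1} = \sum_{n\geq 0} A(k)^n$ when $\|A(k)\|<1$ (true for $\im k$ large, or $\|q\|_2$ small; the general case follows by analytic continuation in $k$ since both sides of the claimed identity are analytic on $\C\setminus\sigma(L_q)$ and $\C\setminus\sigma(L_q)$ is connected — or rather has the half-plane $\{\im k \gg 1\}$ in each component, which suffices); (iii) each term $\Tr\bigl(A(k)^n C_+\overline q R_0(k)^2 qC_+\bigr)$ is the trace of an integral operator whose kernel is an iterated convolution, and integrating along the diagonal and using $R_0(k)^2 = -\partial_k R_0(k)$ together with the resolvent identity reassembles the series into $\tfrac1{2\pi}\langle \sum_n (R_0(k)qC_+\overline q)^n R_0(k) q, q\rangle$, which by \eqref{eqn:m0inteq} (iterated) equals $\tfrac1{2\pi}\langle R(k) q, q\rangle = \tfrac1{2\pi}\langle (L_q-k)^{-1}q,q\rangle$.

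The main obstacle, and the step requiring the most care, is (i)--(iii): justifying the cyclicity of the trace in the presence of the unbounded-looking factors (resolved by the Hilbert--Schmidt factorisation described above, so everything is legitimately trace class or Hilbert--Schmidt) and, more substantively, correctly tracking the constant $1/(2\pi)$ and verifying that the diagonal-kernel computation of the trace of each iterated-convolution operator reproduces precisely the Neumann series for $\langle (L_q-k)^{-1}q,q\rangle$ rather than some permuted or mis-normalised variant. A safe way to organise this is to work at the level of the bounded operator $R_0(k)$ on $L^2(\R)$ with kernel $G_k(x-y)$, note that for trace-class operators given by nice kernels the trace is $\int G(x,x)\,\dd x$, and observe that $G_k(0) $ must be interpreted through the Fourier representation — i.e. $\Tr$ of a convolution-type operator $\overline q (G_k\ast(\cdot)) q$ equals $\int |q(x)|^2 G_k(0)\,\dd x$ with $G_k(0) = \tfrac1{2\pi}\int_0^\infty (\xi-k)^{-1}\,\dd\xi$ on the Hardy side — and then the whole computation becomes a clean Fourier-side manipulation where the $1/(2\pi)$ is manifest. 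This is essentially how one proves the analogous statement for Schrödinger operators, and the Hardy-space restriction only changes the domain of the $\xi$-integral, not the structure of the argument.
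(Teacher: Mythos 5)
Your overall architecture coincides with the paper's: start from the resolvent identity \eqref{eq:resolventformula}, exhibit the resolvent difference as a product of Hilbert--Schmidt factors to get the trace class property, use cyclicity to reduce to $\Tr\bigl((1-A(k))^{-1}C_+\overline q R_0(k)^2 qC_+\bigr)$, expand the inverse in a Neumann series (for $k$ in a suitable regime, then analytically continue), and match the series term by term with the Neumann series for $\langle R(k)q,q\rangle$. The paper does exactly this, working with $k=\kappa$ very negative and citing \cite{killip_scaling_2025} for $\|C_+\overline q R_0(\kappa)qC_+\|<1$.

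The gap is that the entire content of the lemma sits in the term-by-term identity
$$
\Tr\bigl((C_+\overline q R_0(\kappa)qC_+)^n\, C_+\overline q R_0(\kappa)^2 qC_+\bigr)
= \frac1{2\pi}\,\bigl\langle (R_0(\kappa)qC_+\overline q)^n R_0(\kappa)q,\, q\bigr\rangle \,,
$$
and your step (iii) simply asserts that ``integrating along the diagonal \ldots\ everything collapses'' to the right-hand side. This is not a routine kernel computation: the left side has $n+1$ resolvent factors with one of them squared and no free $\widehat q$'s, while the right side has $n+1$ unsquared resolvent factors capped by an extra pair $\widehat q\cdot\overline{\widehat q}$; these are structurally different multi-integrals. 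The paper converts one into the other by writing the left side as $\frac{1}{n+1}\partial_\kappa\Tr\bigl((C_+\overline q R_0(\kappa)qC_+)^{n+1}\bigr)$, using the cyclic symmetry of the Fourier-side integrand to restrict to the region where one $\eta$-variable is minimal, shifting all variables by that minimum so that it becomes an integration $\int_{-\infty}^\kappa\dd\lambda$, and --- crucially --- invoking the Hardy support condition $\widehat q(\nu)=0$ for $\nu<0$ to close up the shifted integration domain. None of this mechanism appears in your proposal, and without it the identity is not established. Moreover, your suggested ``safe'' organisation via diagonal values runs into trouble: the quantity you write as $G_k(0)=\frac1{2\pi}\int_0^\infty(\xi-k)^{-1}\dd\xi$ is logarithmically divergent (equivalently, $C_+\overline q R_0(k)qC_+$ is Hilbert--Schmidt but \emph{not} trace class), which is precisely why the computation must be carried out on the full multi-variable Fourier side rather than through pointwise diagonal evaluation. (A small additional slip: $\partial_k R_0(k)=+R_0(k)^2$, so $C_+\overline q R_0^2 qC_+=+\partial_k A(k)$, not $-\partial_k A(k)$.) In short: right skeleton, but the one step you defer is the one that requires a genuine idea, and the paper's proof supplies it.
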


The proof of this lemma uses some ideas from \cite{talbut_dissertation_2021}, where the perturbation determinant in the Benjamin--Ono setting was computed.

\begin{proof}
    We will show the formula for  $k=\kappa<\inf\sigma (\CMop)$ sufficiently negative, the extension to all $k\in \C\backslash\sigma(\CMop)$ will follow by analytic continuation.

    From the resolvent formula it follows that $\kappa\in\R\setminus\sigma(\CMop)$ if and only if the operator $1-C_+\ov{q}R_0(\kappa)qC_+$ is invertible and, in this case,
    \begin{align*}
        C_+\ov{q}R(\kappa)=(1-C_+\ov{q}R_0(\kappa)q C_+)^{-1}C_+\ov{q}R_0(\kappa) \,.
    \end{align*}
    It follows that
    $$
    R(\kappa)-R_0(\kappa) = R_0(\kappa)qC_+\ov{q}R(\kappa) = R_0(\kappa)q C_+ (1-C_+\ov{q}R_0(\kappa)q C_+)^{-1}C_+\ov{q}R_0(\kappa) \,.
    $$
    According to \cite[Lemma 2.1]{killip_scaling_2025} we have $\| C_+\ov{q}R_0(\kappa)q C_+\|<1$ for all sufficiently negative $\kappa$ and then we can expand $(1-C_+\ov{q}R_0(\kappa)q C_+)^{-1}$ in a norm-convergent Neumann series. From now on we assume that $\kappa$ is sufficiently negative. Taking into account that $C_+\overline q R_0(\kappa)$ is Hilbert--Schmidt \cite[Lemma 2.5]{killip_scaling_2025}, we obtain
    \begin{align*}
        \Tr(R(\kappa)-R_0(\kappa))&=\Tr(R_0(\kappa)q C_+(1-C_+\ov{q}R_0(\kappa)qC_+)^{-1}C_+\ov{q}R_0(\kappa))\\
        &=\Tr((1-C_+\ov{q}R_0(\kappa)q)^{-1}C_+\ov{q}R_0(\kappa)^2qC_+)\\
        & = \sum_{n=0}^\infty \Tr((C_+\ov{q}R_0(\kappa)q C_+)^n C_+\ov{q}R_0(\kappa)^2qC_+).
    \end{align*}
    For each $n\in\N_0$ we shall show that
    \begin{equation}
        \label{eq:detgoal}
        \Tr((C_+\ov{q}R_0(\kappa)q C_+)^n C_+\ov{q}R_0(\kappa)^2qC_+)
        = \frac1{2\pi} \, \langle (R_0(\kappa) q C_+ \ov{q})^n R_0(\kappa)q,q \rangle \,.
    \end{equation}
    Once we have shown this, we can sum with respect to $n\in\N_0$ and use the Neumann series $\sum_{n=0}^\infty (R_0(\kappa) q C_+ \ov{q})^n R_0(\kappa) = R(\kappa)$ to obtain the claimed bound.

    Let us turn to the proof of \eqref{eq:detgoal}. The case $n=0$ is easy. Indeed, we have
    $$
    \Tr(R_0(\kappa)\pot R_0(\kappa))=\frac{1}{2\pi}\int_0^\infty \frac{|\widehat{q}(\xi)|^2}{\xi+\kappa} \dd\xi = \frac{1}{2\pi}\sclp{R_0(\kappa)q,q} \,,
    $$
    where the left equality follows from \cite[Lemma 2.5]{killip_scaling_2025} and the right equality follows by Plancherel's theorem. For $n\geq 1$ we write
    $$
    \Tr((C_+\ov{q}R_0(\kappa)q C_+)^n C_+\ov{q}R_0(\kappa)^2qC_+)
    = \frac{1}{n+1}\partial_\kappa(\Tr((C_+\ov{q}R_0(\kappa)q C_+)^{n+1})) \,.
    $$
    We abbreviate $m=n+1$ and compute
    \begin{align*}
        &\frac{1}{m}\Tr((C_+\ov{q}R_0(\kappa)q C_+)^{m})
        \\&=\frac{1}{m(2\pi)^{m}}\int_{[0,\infty)^{m}}\int_{[0,\infty)^{m}}\frac{\widehat{q}(\xi_1-\eta_1)\widehat{\overline{q}}(\eta_1-\xi_2)}{\xi_1-\kappa}\cdots \frac{\widehat{q}(\xi_{m}-\eta_{m})\widehat{\overline{q}}(\eta_{m}-\xi_1)}{\xi_{m}-\kappa}\dd\eta\dd \xi\\ 
        &=\frac{1}{(2\pi)^{m}}\int_{[0,\infty)^{m}}\int_{\omega_{m}}\frac{\widehat{q}(\xi_1-\eta_1)\widehat{\overline{q}}(\eta_1-\xi_2)}{\xi_1-\kappa}\cdots \frac{\widehat{q}(\xi_{m}-\eta_{m})\widehat{\overline{q}}(\eta_{m}-\xi_1)}{\xi_{m}-\kappa}\dd \eta\dd\xi \,,
    \end{align*}
    where $\omega_{m} :=\{\eta\in [0,\infty)^{m}\colon \eta_{m} \leq \min\{\eta_1,\ldots,\eta_{m-1}\}\}$, and the last step follows by cyclicity. Taking $\nu_j\coloneqq\xi_j-\eta_{m}$ for $j=1,
    \ldots,m$, we have 
    \begin{align*}
        \ldots &=\frac{1}{(2\pi)^{m}}\int_{\omega_{m}}\int_{[-\eta_{m},\infty)^{m}}\frac{\widehat{q}(\nu_1+\eta_{m}-\eta_1)\widehat{\overline{q}}(\eta_1-\eta_{m}-\nu_2)}{\nu_1+\eta_{m}-\kappa}\cdots \frac{\widehat{q}(\nu_{m})\widehat{\overline{q}}(-\nu_1)}{\nu_{m}+\eta_{m}-\kappa} \dd\nu \dd\eta \,. 
    \end{align*}
    Finally, with $\zeta_j\coloneqq\eta_j-\eta_m\geq0$ for $j=1,\ldots,m-1$ and $\lambda:=\kappa -\eta_m\leq \kappa$ and noting that $\widehat{q}(\nu)=0$ for a.e.~$\nu<0$, we have 
    \begin{align*}
        \ldots &=\frac{1}{(2\pi)^m}\int_{-\infty}^\kappa \int_{[0,\infty)^{m-1}} \int_{[0,\infty)^{m}} \overline{\widehat{q}(\nu_1)}\frac{\widehat{q}(\nu_1-\zeta_1)\widehat{\overline{q}}(\zeta_1-\nu_2)}{\nu_1-\lambda} \cdots \frac{\widehat{q}(\nu_m)}{\nu_m-\lambda}\dd\nu\dd \zeta\dd\lambda \\ 
        &=\frac{1}{2\pi}\int_{-\infty}^\kappa \int_0^\infty \overline{\widehat{q}(\nu_1)} \F\left(\left[ R_0(\lambda)qC_+\overline{q}\right]^{m-1} R_0(\lambda)q\right)(\nu_1)\dd \nu_1\dd\lambda \\
        &=\frac{1}{2\pi}\int_{-\infty}^{\kappa} 
        \langle \left[ R_0(\lambda)qC_+\overline{q}\right]^{m-1}R_0(\lambda)q, q \rangle \dd\lambda,
    \end{align*}
    where we used Plancherel in the last step. Taking the derivative in $\kappa$, we arrive at the claimed equality \eqref{eq:detgoal}. This completes the proof.
\end{proof}

\begin{remark}
    In terms of the regularised perturbation determinant \cite{yafaev_mathematical_1992} the identity in Lemma \ref{lem:resdifftrace} can be formulated as
    $$
    \ln\mathrm{det}_2(1-C_+ \ov{q}(\CMop[0]-k)^{-1}q C_+) = \frac1{2\pi} \int_{-\infty}^k \langle ((\CMop[0]-\lambda)^{-1}- (\CMop-\lambda)^{-1})q,q \rangle \dd\lambda  
    $$
    for all sufficiently negative $k$ (and hence, by analyticity for all $k<\inf\sigma(\CMop)$).
\end{remark}

\begin{proof}[Proof of Theorem \ref{thm:birmankrein}]
    According to \cite{killip_scaling_2025}, the difference $(\CMop-k)^{-1}-(\CMop[0]-k)^{-1}$ is trace class all sufficiently negative $k\in\R$ and consequently for all $k\in\C\setminus\sigma(\CMop)$. Therefore by \cite[Theorems 0.9.4 and 0.9.7]{yafaev_mathematical_2010}, there is a function $\xi\in L^1(\R;(1+|\lambda|)^{-2})$, vanishing on $(-\infty,\inf\sigma(\CMop))$, such that for any $f\in\mathcal C$ the operator $f(\CMop)-f(\CMop[0])$ is trace class and
    \begin{equation}
        \label{eq:krein}
            \Tr \left( f(\CMop)-f(\CMop[0]) \right) = \int_\R \xi(\lambda) f'(\lambda)\dd\lambda \,.
    \end{equation}
    In particular, this gives
    $$
    \Tr \left( (\CMop-k)^{-1} - (\CMop[0]-k)^{-1} \right) = - \int_\R \frac{\xi(\lambda)}{(\lambda-k)^2}\dd\lambda \,.
    $$
    Meanwhile, combining Lemma \ref{lem:resdifftrace} with \eqref{eq:cauchymeasure}, we find
    $$
    \Tr \left( (\CMop-k)^{-1} - (\CMop[0]-k)^{-1} \right) = \int_\R \frac{d\nu(\lambda)}{\lambda-k}\dd\lambda
    $$
    with the measure $\nu$ from \eqref{eq:nu}. By uniqueness of the Cauchy--Stieltjes transform and the fact that $\xi$ vanishes near $-\infty$, a comparison of the previous two relations shows that $\xi(\lambda) = \nu((-\infty,\lambda))$ for a.e.~$\lambda\in\R$, that is,
    $$
    \xi(\lambda) = - \frac1{(2\pi)^2} \int_0^\lambda |\beta(\lambda')|^2\dd\lambda' - \#\{ j:\ \lambda_j<\lambda\} \,.
    $$
    Inserting this formula into \eqref{eq:krein} and integrating by parts we obtain the assertion of the theorem. Let us justify this integration by parts. From the definition of the class $\mathcal C$ we see that $f(\lambda) = c\lambda^{-1} + O(\lambda^{-1-\epsilon})$ for some constant $c$. Thus the integration by parts requires $M^{-1} \int_0^M |\beta(\lambda')|^2\dd\lambda'\to 0$ as $M\to\infty$, which follows from the square integrability of $\beta$, see Remark \ref{rem:b}.
\end{proof}


\section{Higher order trace formulas}\label{sec:tracehigher}

In this section we prove an infinite family of trace formulas, reminiscent of those proved by Zaharov and Faddeev \cite{zaharov_faddeev} for the Schr\"odinger operator.

To obtain the full family of trace formulas, we restrict ourselves to Schwartz class $q$ for the sake of simplicity. It is clear from the proof that this assumption can be weakened. We recall the sequence $\{\mn_n\}^\infty_{n=1}$, defined in Theorem \ref{thm:m0fullexpan}. 

\begin{theorem}\label{thm:schwartztraceform}
    Let $q\in\mathcal{S}_+(\R)$, and let $\{\lambda_j\}_{j=1}^N$ denote the eigenvalues of $\CMop$. Then, for each $n\in\N_0$, 
    \begin{align}\label{eqn:infinfamtraceform}
        - \int_{-\infty}^\infty\overline{q(x)}\mn_{n+1}(x)\dd x = \frac{1}{2\pi}\int_0^\infty \abs{\beta(\lambda)}^2\lambda^n\dd\lambda + 2\pi\sum_{j=1}^N \lambda_j^n \,.
    \end{align}
\end{theorem}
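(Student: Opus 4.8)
The plan is to extract \eqref{eqn:infinfamtraceform} by matching, term by term, two asymptotic expansions of the single scalar function
$$
F(k) := \int_{-\infty}^\infty \ov{q(x)}\,m_0(x,k)\,\dd x = \langle R(k)q,q\rangle,
$$
as $k\to-\infty$ along the negative real axis, where $k\in\rho(\CMop)$ once $|k|$ is large. First, since $q\in\mathcal{S}_+(\R)$ we have $q\in L^2_{s,+}(\R)\cap L^1(\R)$ for every $s>0$, so Corollary \ref{cor:goodn} yields \eqref{eqn:assump} and Lemma \ref{lem:riemannhilbtraceform} applies; moreover \eqref{eqn:betafullasymp} shows $\lambda\mapsto|\beta(\lambda)|^2$ decays faster than any power, so all moments $\int_0^\infty|\beta(\lambda)|^2\lambda^n\,\dd\lambda$ are finite.

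Fix $M\in\N$. On the one hand, Theorem \ref{thm:m0fullexpan} gives $\|m_0(\cdot,k)-\sum_{n=1}^{M-1}\mn_n/k^n\|_\infty\le C_M|k|^{-M}$ for $|k|\ge\kappa_M$, and since $q\in L^1(\R)$,
$$
F(k) = \sum_{n=1}^{M-1}\frac1{k^n}\int_{-\infty}^\infty\ov{q(x)}\mn_n(x)\,\dd x + O(|k|^{-M})\qquad\text{as }k\to-\infty.
$$
On the other hand, inserting the finite geometric identity $\frac{1}{\lambda'-k}=-\sum_{n=0}^{M-1}\frac{(\lambda')^n}{k^{n+1}}+\frac{(\lambda')^M}{k^M(\lambda'-k)}$ into \eqref{eqn:relationtrace} gives
$$
F(k) = -\sum_{n=0}^{M-1}\frac1{k^{n+1}}\left(\frac1{2\pi}\int_0^\infty|\beta(\lambda')|^2(\lambda')^n\,\dd\lambda' + 2\pi\sum_{j=1}^N\lambda_j^n\right) + \rho_M(k),
$$
where the remainder is $\rho_M(k)=\frac1{2\pi k^M}\int_0^\infty\frac{|\beta(\lambda')|^2(\lambda')^M}{\lambda'-k}\,\dd\lambda'+\frac{2\pi}{k^M}\sum_{j=1}^N\frac{\lambda_j^M}{\lambda_j-k}$. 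For $k=-t$ with $t$ large, the eigenvalue part of $\rho_M$ is plainly $O(t^{-M-1})$; for the integral, split $\int_0^\infty=\int_0^t+\int_t^\infty$, bounding $\frac{(\lambda')^M}{\lambda'+t}\le\frac{(\lambda')^M}{t}$ on $[0,t]$ (and using finiteness of the $M$-th moment of $|\beta|^2$) and $\frac{(\lambda')^M}{\lambda'+t}\le(\lambda')^{M-1}$ on $[t,\infty)$ (and using the rapid decay of $|\beta|^2$), so both contributions are $O(t^{-1})$ and hence $\rho_M(k)=O(t^{-M-1})$.

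Comparing the two displays and using uniqueness of asymptotic power-series expansions along the ray $k\to-\infty$, we obtain, for every $n=0,1,\dots,M-2$,
$$
\int_{-\infty}^\infty\ov{q(x)}\mn_{n+1}(x)\,\dd x = -\frac1{2\pi}\int_0^\infty|\beta(\lambda)|^2\lambda^n\,\dd\lambda - 2\pi\sum_{j=1}^N\lambda_j^n,
$$
which is \eqref{eqn:infinfamtraceform}. Since $M\in\N$ is arbitrary, this holds for all $n\in\N_0$. The only point requiring a little care is the estimate $\rho_M(k)=O(t^{-M-1})$ on the Cauchy-transform remainder, which is precisely where the rapid decay of $\beta$ from Theorem \ref{thm:m0fullexpan} enters; everything else is bookkeeping built on Lemma \ref{lem:riemannhilbtraceform} and the full asymptotic expansion of $m_0$.
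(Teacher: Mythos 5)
Your proposal is correct and follows essentially the same route as the paper: both start from the identity \eqref{eqn:relationtrace} of Lemma \ref{lem:riemannhilbtraceform}, insert the full asymptotic expansion of $m_0$ and the rapid decay of $\beta$ from Theorem \ref{thm:m0fullexpan}, expand the Cauchy transforms of the spectral data in inverse powers of $k$, and match coefficients. The only cosmetic difference is that you send $k\to-\infty$ along the negative real axis while the paper takes $k=\ii s$ with $s\to\infty$; since \eqref{eqn:m0fullasymp} is uniform over $\Chat$ for $|k|\ge\kappa_M$ and $k$ eventually lies in $\rho(\CMop)$ along either ray, this changes nothing.
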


More explicitly, after \eqref{eqn:traceident1} the second and third trace formulas are given by 
\begin{align*}
    \int_{-\infty}^\infty\left(\overline{q}(-\ii\partial_x q)-\frac12|q|^4\right)\dd x
    & = \frac{1}{2\pi}\int_0^\infty \abs{\beta(\lambda)}^2\lambda\dd\lambda + 2\pi\sum_{j=1} \lambda_j \,, 
    \intertext{and }
    \int_{-\infty}^\infty\abs{-\ii\partial_x q-qC_+\abs{q}^2}^2\dd x
    & = \frac{1}{2\pi}\int_0^\infty \abs{\beta(\lambda)}^2\lambda^2\dd\lambda + 2\pi\sum_{j=1} \lambda_j^2 \,.
\end{align*}
Each of the terms in the trace identity \eqref{eqn:infinfamtraceform} corresponds to a conserved quantity of the continuum Calogero--Moser equation. In fact, combining these identities with the formal analysis in Section \ref{sec:timeevolscat} implies that the quantities 
\begin{align*}
    \int_{-\infty}^\infty \ov{q(x)}\mn_{n+1}(x) \dd x=\sclp{\CMop^{n}q,q}
\end{align*}
define conservation laws for \eqref{eqn:CMeqn} for all $n$, as proved rigorously in \cite[Lemma 2.4]{gerard_calogero--moser_2023}.

The proof of Theorem \ref{thm:schwartztraceform} relies on the identity in Lemma \ref{lem:riemannhilbtraceform} together with the full asymptotic series for $m_0$ and the decay estimate for $\beta$ from Theorem \ref{thm:m0fullexpan}. In passing, we note that Kaup and Matsuno derived similar identities for the Benjamin--Ono equation in \cite{kaup_inverse_1998}, although their analysis was not rigorous.

\begin{proof}
    Let us fix $M\in \N$. We want to take the limit as $s\rightarrow \infty$ with $k=\ii s$ in the identity \eqref{eqn:relationtrace} from Lemma \ref{lem:riemannhilbtraceform}. We consider the three terms separately.

    \medskip
    
    \emph{Contribution from $q$.}
    It follows from Theorem \ref{thm:m0fullexpan} that 
    \begin{align*}
        \int_{-\infty}^\infty \ov{q(x)} m_0(x,\ii s)\dd x
        = \sum_{n=1}^{M+1} \frac1{(\ii s)^n} \int_{-\infty}^\infty \ov{q(x)} \, \mn_{n}(x) \dd x + o_{s\rightarrow\infty}(s^{-M-1}).
    \end{align*}

    \medskip

    \emph{Contribution from the eigenvalues.}
    Since $N\coloneqq N(\CMop)<\infty$, we clearly have
    \begin{equation*} 
        \sum_{j=1}^N \frac{1}{\lambda_j-\ii s} = - \sum_{j=1}^N \frac{1}{\ii s}\frac{1}{1-\lambda_j/\ii s} = - \sum_{n=0}^M \frac{1}{(\ii s)^{n+1}}\sum_{j=1}^N\lambda_j^{n}+o_{s\rightarrow\infty}(s^{-M-1}).
    \end{equation*}
    
    \medskip

    \emph{Contribution from $\beta$.}
    Similarly, for the term with $\beta$, we find
    \begin{align*}
        \int_0^\infty \frac{|\beta(\lambda)|^2}{\lambda-\ii s}\dd\lambda=-\sum_{n=0}^M\frac{1}{(\ii s)^{n+1}}\int_0^\infty|\beta(\lambda)|^2 \lambda^n\dd \lambda+o_{s\rightarrow\infty}(s^{-M-1}),
    \end{align*}
    where we have used the series expansion above together with the rapid decay of $\beta$ from Theorem \ref{thm:m0fullexpan}. More precisely, we use $(1-z)^{-1} - \sum_{n=0}^M z^n = (1-z)^{-1} z^{M+1}$ with $z=\ii s/\lambda$ and control the remainder using dominated convergence.
    
    \medskip

    \emph{Putting everything together.}
    Inserting the contributions from $q$, from the eigenvalues and from $\beta$ into \eqref{eqn:relationtrace}, we obtain
    \begin{align*}
        \sum_{n=1}^{M+1} \frac1{(\ii s)^n} \int_{-\infty}^\infty \ov{q(x)} \, \mn_{n}(x) \dd x & = - \frac{1}{2\pi}\sum_{n=0}^{M}\frac{1}{(\ii s)^{n+1}}\int_0^\infty \abs{\beta(\lambda)}^2 \lambda^n\dd \lambda - 2\pi
        \sum_{n=0}^M \frac{1}{(\ii s)^{n+1}}\sum_{j=1}^N\lambda_j^{n} \\
        & \quad +o_{s\rightarrow\infty}(s^{-M-1}).
    \end{align*}
    The trace formulas are derived through an iterative process, repeatedly multiplying through by $\ii s$ and taking the limit $s\rightarrow \infty$. Since $M$ can be chosen arbitrarily large, we obtain the claimed infinite family of trace formulas.
\end{proof}


\newpage

\part{Inverse scattering theory}
\section{Time evolution of scattering data} \label{sec:timeevolscat}

In this section we argue non-rigorously that the scattering coefficients evolve in a simple manner under the continuum Calogero--Moser flow \eqref{eqn:CMeqn}. We do not aim for rigour here since, as noted in \cite{wu_jost_2017}, from the perspective of applying the inverse scattering transform, the formulas are only useful insofar as they yield solutions to \eqref{eqn:CMeqn}, which can be verified directly in applications.

To render these arguments rigorous, the crucial step is to show that the potentials remain in the space $L^2_+\cap L^1_+$ in which we have constructed the Jost solutions and scattering coefficients; without this preliminary result, there is no point in pursuing a fully rigorous treatment of the time evolution.

We recall that the Lax structure of \eqref{eqn:CMeqn} states that the continuum Calogero--Moser equation can be formulated equivalently as
\begin{align}\label{eqn:Laxeqn}
    \partial_t\CMop=[B_q,\CMop] \,,
\end{align}
where 
\begin{align*}
    L_q=-\ii \partial_x-\pot \quad \text{and}\quad B_q=\ii\partial_x^2+2 q\partial_xC_+\overline{q}.
\end{align*}
We take the skew-adjoint operator $B_q$ as in \cite{killip_sharp_2023}, as opposed to \cite{gerard_calogero--moser_2023}, since the Calogero--Moser equation \eqref{eqn:CMeqn} can be written equivalently as
\begin{align}\label{eqn:Laxeqn2}
    (\partial_t-B_q)q=0 \,.
\end{align}

Formally, the spectral data constructed in Sections \ref{sec:lap} and \ref{sec:jostinhom} satisfy
\begin{align}
    \partial_t\lambda_j&=0 \,, \label{eqn:lambdajevol} \\
    \partial_t\gamma_j&=-2\lambda_j \,, \label{eqn:gammajevol}\\
    \partial_t\Gamma(\lambda)&=0 \,, \label{eqn:Gammaevol}\\
    \partial_t\beta(\lambda)&=-\ii\lambda^2 \beta(\lambda) \,, \label{eqn:betaevol}
\intertext{and the eigenfunctions and Jost solutions evolve with}
    \partial_t\phi_j&=B_q\phi_j \,, \label{eqn:eigenfncevol}\\
    \partial_tm_0(k)&=B_qm_0(k) \,, \label{eqn:m0evol}\\
    \partial_tm_\e(\lambda\pm0\ii)&=B_q m_\e(\lambda\pm0\ii)+\ii\lambda^2m_\e(\lambda\pm0\ii) \,. \label{eqn:meevol}
\end{align}

    To show this, we will use the Lax structure \eqref{eqn:Laxeqn}. We will also use the fact that if $q$ is a solution to the CM equation then \eqref{eqn:Laxeqn2} holds.

\medskip
    
    We start by considering the eigenvalues $\lambda_j$ and eigenfunctions $\phi_j$. It follows by a standard argument that $\partial_t\lambda_j=0$, that is \eqref{eqn:lambdajevol}. Next, we take $\phi_j$ to solve 
    \begin{align*}
        \CMop\phi_j=\lambda_j\phi_j, \quad \text{with}\quad  \sclp{\phi_j,q}=2\pi\ii. 
    \end{align*}
    Taking the time derivative we get 
    \begin{align*}
        (\partial_t\CMop)\phi_j+\CMop(\partial_t\phi_j)=(\partial_t\lambda_j)\phi_j+\lambda_j(\partial_t\phi_j)
    \end{align*}
    which gives
    \begin{align*}
    0 = [B_q,\CMop]\phi_j+(\CMop-\lambda_j)\partial_t\phi_j=(\CMop-\lambda_j)(\partial_t\phi_j-B_q\phi_j) \,.
    \end{align*}
    From the simplicity of eigenvalues, determined in \cite{gerard_calogero--moser_2023} (see also the discussion after Lemma \ref{lem:qeigenfuncinprod}), we see that $(\partial_t -B_q)\phi_j=\alpha\phi_j$ for some $\alpha\in\C$. Then, using $(\partial_t-B_q)q=0$ and our normalisation, we have 
    \begin{align*}
        \sclp{(\partial_t -B_q)\phi_j,q}=\sclp{\partial_t\phi_j,q}+\sclp{\phi_j,B_qq}=\sclp{\partial_t\phi_j,q}+\sclp{\phi_j,\partial_t q}=\partial_t\sclp{\phi_j,q}=0
    \end{align*}
    which from Lemma \ref{lem:qeigenfuncinprod} implies that $\alpha =0$. Thus we have the evolution \eqref{eqn:eigenfncevol}. 

\medskip
    
    Next, we determine the evolution of $m_0$. For $k\in\R_+^\diamond$ we apply the operators $L_q$ and $B_q$ to $m_0(k)$, even if these functions are not in $L^2_+(\R)$. Following a similar approach to the above, we have  
    \begin{align*}
        (\CMop-k)(\partial_tm_0-B_qm_0)=(\partial_t-B_q)q=0.
    \end{align*}
    Then for $k\in \C\backslash[0,\infty)^\diamond$ we deduce that $(\partial_t-B_q)m_0=0$. For $k=\lambda\pm0\ii$ one needs to argue that $\ov{\e(\lambda)}(\partial_t m_0 - B_qm_0)\to 0$ as $x\to\mp\infty$. Therefore the uniqueness of Jost solutions $m_\e(\lambda\pm0\ii)$ implies that even in this case we have $(\partial_t-B_q)m_0=0$. Thus, for all $k\in\Chat$ we have the evolution \eqref{eqn:m0evol}.

    \medskip

    Similarly, for the Jost solutions $m_\e(\lambda\pm0\ii)$ (with the same caveat as for $m_0(\lambda\pm0\ii)$) we obtain
    $$
    (L_q - k) (\partial_tm_\e-B_qm_\e) = 0 \,.
    $$    
    Assuming we can show that $\ov{\e(\lambda)} (\partial_tm_\e-B_qm_\e)\to\alpha$ as $x\to\mp\infty$ for some constant $\alpha\in\C$, we obtain from the uniqueness of Jost solutions $m_\e(\lambda\pm0\ii)$ that
    \begin{align*}
        \partial_tm_\e-B_qm_\e =\alpha m_\e(\lambda\pm 0\ii).
    \end{align*}
    Then, since $\ov{\e(\lambda)} m_\e(\lambda\pm0\ii) \to 1$ as $x\to\mp\infty$ and
    \begin{align*}
        B_q(\e(\lambda))=-\ii\lambda^2 \e(\lambda)+2 q\partial_xC_+\overline{q}\ee(\lambda) \,,
    \end{align*}
    we should have $\alpha=\ii \lambda^2$. In this way we arrive at \eqref{eqn:meevol}.

    \medskip

    Now if we use the Laurent expansion for $m_0$ about $\lambda_j$, found in Theorem \ref{eqn:improvlaurentexp1},
    \begin{align*}
         m_0(k)=-\frac{\ii \phi_j}{k-\lambda_j}+(\gamma_j+x)\phi_j+o_{k\rightarrow\lambda_j}(1) \,,
    \end{align*}
    then applying $\partial_t-B_q$ to both sides, using \eqref{eqn:m0evol} and \eqref{eqn:eigenfncevol}, and taking $k=\lambda_j$ gives
    \begin{align*}
        0=(\partial_t\gamma_j)\phi_j+[(\gamma_j+x),B_q]\phi_j.
    \end{align*}
    We calculate 
    \begin{align*}
        [(\gamma_j+x),B_q]\phi_j&=- 2\ii \partial_x\phi_j + 2 xq\partial_xC_+\ov{q}\phi_j- 2 q\partial_xC_+\ov{q}x\phi_j\\
        &=-2\ii \partial_x\phi_j- 2 \pot \phi_j+ 2q\left(x C_+\partial_x(\ov{q}\phi_j) - C_+x\partial_x(\ov{q}\phi_j)\right)\\
        &=2 \CMop\phi_j=2 \lambda_j\phi_j,
    \end{align*}
    and find
    \begin{align*}
        (\partial_t\gamma_j+2\lambda_j)\phi_j=0
    \end{align*}
    which is the statement \eqref{eqn:gammajevol}.

    \medskip
        
    To study \eqref{eqn:Gammaevol} apply $\partial_t-B_q$ to \eqref{eqn:megammarel} and recall \eqref{eqn:meevol}, to obtain 
    \begin{align*}
        \ii\lambda^2m_\e(\lambda+0\ii)
        &=(\partial_t\Gamma(\lambda))m_\e(\lambda-0\ii)+\Gamma(\lambda)(\partial_t-B_q)m_\e(\lambda-0\ii)\\
        &=(\partial_t\Gamma(\lambda) +\ii \lambda^2\Gamma(\lambda))m_\e(\lambda-0\ii)
    \end{align*}
    which, after using \eqref{eqn:megammarel} again, gives \eqref{eqn:Gammaevol}. 

    \medskip
    
    Finally, we do the same with \eqref{eqn:m0mebetarel} and recall \eqref{eqn:m0evol} and \eqref{eqn:meevol} to obtain
    \begin{align*}
        0=(\partial_t-B_q)(\beta(\lambda)m_\e(\lambda-0\ii))&=(\partial_t\beta(\lambda))m_\e(\lambda-0\ii)+\beta(\lambda)(\partial_t-B_q)m_\e(\lambda-0\ii)\\
        &=(\partial_t\beta(\lambda)+\ii \lambda^2\beta(\lambda))m_\e(\lambda-0\ii)
    \end{align*}
    which gives \eqref{eqn:betaevol}.

\subsection*{Acknowledgement}
    The first author is grateful for stimulating discussions with Enno Lenzmann that motivated this project.
    
\subsection*{Funding}
    Partial support was provided through US National Science Foundation grant DMS-1954995 (R.L.F.), the German Research Foundation grants EXC-2111-390814868 and TRR 352-Project-ID 470903074 (R.L.F. \& L.R.), and by the European Research Council (ERC) under the European Union’s Horizon 2020 research and innovation programme (grant agreement No. 101097172 – GEOEDP, L.R.). 


\end{document}